\renewcommand*{\backrefalt}[4]{   
	\ifcase #1 (Not cited.)        
	\or        (Cited on page~#2.) 
	\else      (Cited on pages~#2.)
	\fi}
\newcommand{\crosses}[1]{%
	\ifcase#1\relax
	\or
	\rslash\or
	\rslash\mskip-5.5mu\rslash\or
	\rslash\mskip-5.5mu\rslash\mskip-5.5mu\rslash%
	\fi
}
\newcommand{\rslash}{\raisebox{.15ex}{/}}
\numberwithin{equation}{section}
\theoremstyle{plain}
\newtheorem{lemma}{Lemma}[section]
\newtheorem{proposition}[lemma]{Proposition}
\newtheorem{proposition/definition}[lemma]{Proposition/Definition}
\newtheorem*{theorem*}{Theorem}
\newtheorem{theorem}[lemma]{Theorem}
\newtheorem{corollary}[lemma]{Corollary}
\theoremstyle{definition}
\newtheorem{definition}[lemma]{Definition}
\newtheorem{remark}[lemma]{Remark}
\newtheorem{example}[lemma]{Example}
\DeclareRobustCommand{\SkipTocEntry}[5]{}
\DeclareMathOperator{\id}{id}
\DeclareMathOperator{\image}{im}
\DeclareMathOperator{\rank}{rank}
\DeclareMathOperator{\Aut}{Aut}
\newcommand{\D}{\mathop{}\!\mathrm{d}}
\newcommand{\hooklongrightarrow}{\lhook\joinrel\longrightarrow}
\newcommand{\pr}{\mathrm{pr}}
\newcommand{\at}[1]{\big|_{#1}}
\newcommand{\At}[1]{\Big|_{#1}}
\newcommand{\Hom}{\operatorname{\mathsf{Hom}}}
\newcommand{\Lie}{\mathscr{L}}
\newcommand{\bla}{\langle \hspace{-2.7pt} \langle}
\newcommand{\bra}{\rangle\hspace{-2.7pt} \rangle}
\newcommand{\calD}{\mathcal{D}}
\newcommand{\calE}{\mathcal{E}}
\newcommand{\calF}{\mathcal{F}}
\newcommand{\calG}{\mathcal{G}}
\newcommand{\calH}{\mathcal{H}}
\newcommand{\calJ}{\mathcal{J}}
\newcommand{\calK}{\mathcal{K}}
\newcommand{\calL}{\mathcal{L}}
\newcommand{\calQ}{\mathcal{Q}}
\newcommand{\calS}{\mathcal{S}}
\newcommand{\scrL}{\mathscr{L}}
\newcommand{\bbD}{\mathbb{D}}
\newcommand{\bbR}{\mathbb{R}}
\newcommand{\bbT}{\mathbb{T}}
\newcommand{\frakX}{\mathfrak{X}}
\newcommand{\rmD}{\mathrm{D}}
\newcommand{\rmd}{\mathrm{d}}
\newcommand{\ldsb}{[\![}
\newcommand{\rdsb}{]\!]}
\newcommand{\ldab}{\langle\!\langle}
\newcommand{\rdab}{\rangle\!\rangle}
\renewcommand{\theta}{\vartheta}
\newcounter{proof:lem:centralizers}
\title{Weak Dual Pairs in Dirac--Jacobi Geometry}
\author[J.~Schnitzer]{Jonas Schnitzer}
\address{Department of Mathematics, University of Freiburg, Ernst-Zermelo-Straße 1, D-79104 Freiburg, Germany.}
\email{\href{mailto:jonas.schnitzer@math.uni-freiburg.de}{jonas.schnitzer@math.uni-freiburg.de}}
\author[A.~G.~Tortorella]{Alfonso G.~Tortorella}
\address{Department of Mathematics, KU Leuven, Celestijnenlaan 200B - 3001 Leuven, Belgium.}
\email{\href{mailto:alfonsogiuseppe.tortorella@kuleuven.be}{alfonsogiuseppe.tortorella@kuleuven.be}}
\keywords{Dirac and Jacobi geometry; precontact realizations; dual pairs; precontact groupoids; normal forms}
\subjclass[2020]{53D10 (Primary), 
				53D17, 
				53D20, 
				58H05 
			}
\begin{document}

\begin{abstract}
	Adopting the omni-Lie algebroid approach to Dirac--Jacobi structures, we propose and investigate a notion of weak dual pairs in Dirac--Jacobi geometry.
	Their main motivating examples arise from the theory of multiplicative precontact structures on Lie groupoids.
	Among other properties of weak dual pairs, we prove two main results.
	1) We show that the property of fitting in a weak dual pair defines an equivalence relation for Dirac--Jacobi manifolds.
	So, in particular, we get the existence of self-dual pairs and this immediately leads to an alternative proof of the normal form theorem around Dirac--Jacobi transversals.
	2) We prove the characteristic leaf correspondence theorem for weak dual pairs paralleling and extending analogous results for symplectic and contact dual pairs.
	Moreover, the same ideas of this proof apply to get a presymplectic leaf correspondence for weak dual pairs in Dirac geometry (not yet present in literature).
\end{abstract}

\maketitle

\tableofcontents

\section{Introduction}
\label{sec:intro}

Descending from Sophus Lie's function groups~\cite{lie1890theorie}, \emph{symplectic dual pairs} (namely dual pairs in Poisson geometry) found their modern formulation in the works of Weinstein on the local structure of Poisson manifolds~\cite{weinstein1983} and Howe in representation theory~\cite{howe1989remarks}.
A symplectic dual pair consists of two Poisson maps defined on the same symplectic manifold and satisfying a certain orthogonality condition.
A prototypical example is formed by source and target maps of a symplectic groupoid.
Additionally, they naturally pop up in connection with (super)integrable Hamiltonian systems, moment maps and reduction of mechanical systems with symmetries (see, e.g.,~\cite{ortega2013momentum} and references therein).
Symplectic dual pairs can be interpreted as a sort of ``generalized morphisms'' of Poisson manifolds.
Indeed, if two Poisson manifolds fit in a dual pair, then their local structure is very closely related: their leaf spaces identify so that corresponding symplectic leaves have the same transversal structure.
Actually, these properties of symplectic dual pairs lead to the introduce and investigate Morita equivalence of Poisson manifolds~\cite{xu1991}.

In~\cite{FM2018} the notion of \emph{(weak) dual pairs} in Dirac geometry has been also introduced and investigated.
Indeed, Dirac geometry~\cite{C1990} generalizes Poisson geometry representing the geometric framework for studying constrained Hamiltonian systems.
A Dirac structure on a manifold $M$ is a Lagrangian subbundle of the generalized tangent bundle $\bbT M=TM\oplus T^\ast M$ whose sections are closed under the Dorfman bracket.
They generalize involutive distributions, Poisson and presymplectic structures, and also arise as the infinitesimal counterpart of presymplectic groupoids.
As for their local structure, every Dirac manifold admits a singular foliation by presymplectic leaves, with each leaf carrying a transverse Poisson structure.

Recently, in~\cite{STV2019}, Blaga, Salazar, Vizman and the second author proposed and studied a new notion of dual pairs in Jacobi geometry, called \emph{contact dual pairs}, conceptually grounded on the close relation between symplectic/Poisson and contact/Jacobi geometry.
Indeed, on the one hand, contact structures are seen as the odd-dimensional analogue of symplectic structures.
While, on the other hand, Jacobi structures (independently introduced by Kirillov~\cite{K1976} and Lichnerowicz~\cite{lichnerowicz1978varietes}) can be seen as the (possibly degenerate) contravariant generalization of contact structures exactly like Poisson structures are the (possible degenerate) contravariant generalization of symplectic structures.
This point of view, that will be made precise in the paper, consists in adopting the bundle approach of~\cite{marle1991jacobi} and seeing contact/Jacobi geometry as symplectic/Poisson geometry on the \emph{gauge algebroid} $DL$ of a line bundle $L\to M$.

This paper introduces the concept of weak dual pair in Dirac--Jacobi and systematically studies its first properties and applications.
A \emph{Dirac--Jacobi structure} on a line bundle $L\to M$ is a Lagrangian subbundle of $\bbD L:=DL\oplus J^1L$, the \emph{omni-Lie algebroid}~\cite{CHEN2010799} of $L$, such that the module of its section is closed under a Dorfman-like bracket.
They encompass, unifying and generalizing, Jacobi and precontact structures as well as locally conformal Dirac structures~\cite{wade2004locally}.
Further, they arise as infinitesimal counterparts of precontact groupoids~\cite{ponte2006integration}.
Their local structure turns out to be richer than the one of Dirac structures.
Indeed, every Dirac--Jacobi manifold admits a singular characteristic foliation whose leaves inherit a precontact or a locally conformal presymplectic (lcps) structure.
Additionally, each characteristic leaf brings a transverse Dirac--Jacobi structure (unique up to isomorphism) whose type depends on the type of the leaf: it is homogeneous Poisson for precontact leaf and Jacobi for lcps leaves~\cite{DirJacBun}.
In recent years, Dirac--Jacobi geometry has emerged as the right conceptual framework for describing and investigating \emph{generalized contact structures}, namely generalized complex structures on odd-dimensional manifolds (cf., e.g., \cite{IW2005,poon2011generalized,schnitzer2020local,GenConBun} and references therein).

In this introduction we only give rough statements of the main results.
They will be made precise and better explained in the body of the paper.
\begin{theorem*}[{\bf A}]
	For Dirac--Jacobi manifolds $(M_i,L_i,\calL_i)$, $i=0,1$, the property of fitting in a weak dual pair 
	\begin{equation}
		\label{eq:intro:wdp}
		\begin{tikzcd}
			(M_0,L_0,\calL_0)&&(M,L,\operatorname{Gr}\varpi)\arrow[ll, swap, "\Phi_0"]\arrow[rr, "\Phi_1"]&&(M_1,L_1,\calL_1^\mathrm{opp})
		\end{tikzcd}
	\end{equation}
	defines an equivalence relation.
	In particular, each Dirac--Jacobi manifold $(M_0,L_0,\calL_0)$  fits in a \emph{self-dual pair}
	\begin{equation*}
		\begin{tikzcd}
			(M_0,L_0,\calL_0)&&(M,L,\operatorname{Gr}\varpi)\arrow[ll, swap, "\Phi_0"]\arrow[rr, "\Phi_1"]&&(M_0,L_0,\calL_0^\mathrm{opp}).
		\end{tikzcd}
	\end{equation*}
\end{theorem*}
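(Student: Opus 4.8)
The claim is that the relation ``fits in a weak dual pair'' is reflexive, symmetric, and transitive. The plan is to establish symmetry and transitivity directly, and then to deduce reflexivity---equivalently, the existence of self-dual pairs---formally from these two together with the fact that every Dirac--Jacobi manifold fits in \emph{at least one} weak dual pair. This is precisely the force of the ``in particular'' in the statement: once $(M_0,L_0,\calL_0)$ is known to fit in a weak dual pair with some $(M_1,L_1,\calL_1)$, symmetry produces the reversed pair and transitivity composes the two into a self-dual pair on $(M_0,L_0,\calL_0)$. Symmetry itself should be the easy part: given~\eqref{eq:intro:wdp}, I would apply the opposite operation to the whole diagram and interchange the two legs. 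Since the opposite operation is an involution sending $\operatorname{Gr}\varpi$ to $\operatorname{Gr}(-\varpi)$ and $\calL_i^{\opp}$ back to $\calL_i$, the reversed diagram
\begin{equation*}
	\begin{tikzcd}
		(M_1,L_1,\calL_1)&&(M,L,\operatorname{Gr}(-\varpi))\arrow[ll, swap, "\Phi_1"]\arrow[rr, "\Phi_0"]&&(M_0,L_0,\calL_0^{\opp})
	\end{tikzcd}
\end{equation*}
again has both legs forward Dirac--Jacobi, and the orthogonality condition defining a weak dual pair is manifestly symmetric in the two legs and unaffected by replacing $\varpi$ with $-\varpi$, so it transfers verbatim.

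For non-emptiness the geometry enters through a precontact realization. I would produce, for each $(M_0,L_0,\calL_0)$, a surjective forward Dirac--Jacobi submersion $\Phi_0\colon(M,L,\operatorname{Gr}\varpi)\to(M_0,L_0,\calL_0)$ out of a precontact manifold. The second leg $\Phi_1$ is then furnished by the quotient along the $\varpi$-orthogonal complement of the $\Phi_0$-fibers: this complement is involutive, and after passing to a suitable (possibly local) leaf space it lands in the opposite structure $\calL_1^{\opp}$ of an induced Dirac--Jacobi manifold $(M_1,L_1,\calL_1)$, with the two fiber distributions mutually orthogonal by construction. This exhibits $(M_0,L_0,\calL_0)$ inside a genuine weak dual pair.

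Transitivity is the crux. Given two weak dual pairs sharing the middle manifold,
\begin{equation*}
	\begin{tikzcd}
		(M_0,L_0,\calL_0)&&(P,L_P,\operatorname{Gr}\varpi_P)\arrow[ll, swap, "\Phi_0"]\arrow[rr, "\Phi_1"]&&(M_1,L_1,\calL_1^{\opp})
	\end{tikzcd}
\end{equation*}
and
\begin{equation*}
	\begin{tikzcd}
		(M_1,L_1,\calL_1)&&(Q,L_Q,\operatorname{Gr}\varpi_Q)\arrow[ll, swap, "\Psi_1"]\arrow[rr, "\Psi_2"]&&(M_2,L_2,\calL_2^{\opp}),
	\end{tikzcd}
\end{equation*}
I would form the fibered product $R=P\times_{M_1}Q$ and equip it with a precontact structure obtained by coisotropic reduction of $\varpi_P\oplus(-\varpi_Q)$ along $R\hookrightarrow P\times Q$; the composite legs $\Phi_0\circ\pr_P$ and $\Psi_2\circ\pr_Q$ should then form a weak dual pair between $(M_0,L_0,\calL_0)$ and $(M_2,L_2,\calL_2^{\opp})$. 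I expect the conceptually cleanest route to be to recast the construction as the \emph{clean composition of Lagrangian relations} inside the omni-Lie algebroids $\bbD L_P$ and $\bbD L_Q$, so that the Lagrangian property and the involutivity of the composite are automatic, leaving only smoothness to be verified. Those smoothness hypotheses are exactly where I anticipate the main obstacle: one must show that the fibered product is \emph{clean} (so that $R$ and its line bundle $L_R$ are smooth), that the null distribution of the reduced structure is \emph{simple} (so that the quotient is again a line-bundle Dirac--Jacobi manifold), and that the reduced structure is again of graph type $\operatorname{Gr}\varpi_R$. Granting these, the two given orthogonality conditions, matched along $M_1$, should combine to yield orthogonality of the composite legs; and, as explained above, reflexivity together with the self-dual pairs then follows formally from symmetry and transitivity.
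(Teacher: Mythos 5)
Your outline has the right skeleton (symmetry is immediate from the opposite operation; transitivity via a fibered product; reflexivity requires a realization), but two of the three steps contain genuine gaps. For transitivity, the coisotropic reduction you propose is both unnecessary and the very step that fails in general. The paper's construction places the composite structure on the fiber product line bundle $L_{01}\times_{L_1}L_{12}\to M_{01}\times_{M_1}M_{12}$ (which exists simply because the inner legs cover surjective submersions --- no cleanness hypothesis is needed) and takes $\varpi_{02}=P_1^\ast\varpi_{01}-P_2^\ast\varpi_{12}$ \emph{without quotienting}: the whole point of the relative rank condition $\rank(\ker DS\cap\ker\varpi^\flat\cap\ker DT)=\dim M-\dim M_0-\dim M_1-1$ in the definition of a \emph{weak} dual pair is that the middle manifold may be too big, so the degeneracy of $\varpi_{02}$ is harmless and one only has to verify the two conditions of the characterization of weak dual pairs (Proposition~\ref{prop:characterizations_WDPs_II}). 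If you insist on reducing, the null distribution need not be simple, and this is precisely why genuine dual pairs do \emph{not} compose and why the weak notion was introduced; your proof would stall exactly at the smoothness hypotheses you flag.

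For reflexivity, your formal deduction (symmetric $+$ transitive $+$ ``every object is related to something'' $\Rightarrow$ reflexive) is logically fine but pushes all the work into producing \emph{one} weak dual pair containing a given $(M_0,L_0,\calL_0)$, and your sketch of that step is incomplete: you assume a precontact realization exists without constructing it, and you obtain the second leg by passing to the leaf space of $(\ker D\Phi_0)^{\perp\varpi}$, which again need not be simple, so at best you get a local statement. The paper instead constructs the self-dual pair directly and globally: the middle manifold is a neighborhood $U$ of $M$ in the total space of $\calL\to M$ carrying the line bundle $p^\ast L$, the form is $\varpi=\int_0^1(\Phi_t^\Sigma)^\ast P_J^\ast\varpi_{\mathrm{can}}\,\rmd t$ for a Dirac--Jacobi spray $\Sigma$, and the two legs are $S=P$ and $T=P\circ\Phi_1^\Sigma$ --- the second leg comes from the time-$1$ flow of the spray, not from a quotient. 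This also yields a genuine \emph{dual} pair (the dimension condition $\dim U=2\dim M_0+1$ holds), which is strictly stronger than what the abstract symmetry-plus-transitivity argument can ever deliver, since that argument only produces a weak dual pair.
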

\noindent
As direct applications of the existence of self-dual pairs for Dirac--Jacobi manifolds, we get an alternative proof of the Normal Form Theorem around Dirac--Jacobi transversals.

\begin{theorem*}[{\bf B}]
	Dirac--Jacobi manifolds have a normal form around Dirac--Jacobi transversals.
\end{theorem*}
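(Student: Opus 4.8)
The strategy is to \emph{lift} the normal-form problem off the singular Dirac--Jacobi structure and onto a precontact realization, where the geometry is encoded by a single global form and is therefore rigid. Concretely, I would first invoke Theorem~A to attach to $(M_0,L_0,\calL_0)$ a self-dual pair
\[
\begin{tikzcd}
(M_0,L_0,\calL_0) && (M,L,\operatorname{Gr}\varpi) \arrow[ll, swap, "\Phi_0"] \arrow[rr, "\Phi_1"] && (M_0,L_0,\calL_0^{\mathrm{opp}}),
\end{tikzcd}
\]
whose connecting object carries the \emph{precontact} structure $\operatorname{Gr}\varpi$ and whose legs $\Phi_0,\Phi_1$ are mutually orthogonal precontact realizations. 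The gain is that $\operatorname{Gr}\varpi$ is the graph of a genuine (closed, possibly degenerate) form $\varpi$ rather than a singular Lagrangian subbundle, so near suitable submanifolds it admits a Moser/Gotay-type normal form, its degeneracy being governed by the characteristic distribution alone.

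Next I would fix a Dirac--Jacobi transversal $X\subseteq M_0$ and pass to its preimage $\widetilde X:=\Phi_0^{-1}(X)$. Since $\Phi_0$ is a surjective submersive realization and $X$ is transverse to the characteristic foliation, $\widetilde X$ is an embedded submanifold whose $\varpi$-induced structure is again precontact and which sits in $(M,\operatorname{Gr}\varpi)$ in the clean (coisotropic-type) position required by such a normal form. Applying it produces a canonical model for $\varpi$ on a tubular neighborhood of $\widetilde X$, assembled from $\varpi|_{\widetilde X}$ together with the conormal data of $\widetilde X$; this is the genuinely computable step, as it reduces to a linear-algebra model plus a homotopy argument for closed forms.

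It then remains to \emph{descend} this model along $\Phi_0$. Here the orthogonality of the two legs is the crucial input: the fibers of $\Phi_1$ are $\varpi$-orthogonal to those of $\Phi_0$ and therefore supply exactly the directions transverse to the characteristic leaves of $\calL_0$ along $X$, so that pushing the precontact model forward through $\Phi_0$ --- equivalently, quotienting the normalized $\varpi$ by the (now standardized) $\Phi_1$-fibers --- returns a Dirac--Jacobi structure on a neighborhood of $X$. That this structure is precisely the canonical model attached to the induced transversal structure $\calL_X$ is then read off from the realization property of $\Phi_0$ together with the characteristic leaf correspondence, which matches the transverse Dirac--Jacobi data upstairs and downstairs.

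The step I expect to be the main obstacle is this descent, and it is where the line-bundle nature of the problem bites. Unlike the symplectic/Poisson case there is no canonical symplectization: the realization lives on the omni-Lie algebroid $\bbD L=DL\oplus J^1L$, so the line bundle $L$ and its identifications must be transported coherently through every construction --- the tubular neighborhood of $\widetilde X$, the homotopy normalizing $\varpi$, and the quotient by the $\Phi_1$-fibers. Arranging the normalizing isotopy to be $\Phi_0$-projectable and to intertwine the line bundles, so that it truly descends to a Dirac--Jacobi isomorphism rather than a mere diffeomorphism, is the technical heart of the argument; once this is in place, Lagrangianity and involutivity of the descended object follow formally from the properties of realizations already established.
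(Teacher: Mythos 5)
Your opening move is the paper's: produce a self-dual pair for $(M_0,L_0,\calL_0)$ via Theorem~A and pull it back along the transversal. But from there your route diverges, and the divergence is where the gap sits. You propose to normalize $\varpi$ near $\widetilde X=\Phi_0^{-1}(X)$ by a Moser/Gotay-type argument and then descend the normalized form through $\Phi_0$. Two problems. First, the Atiyah $2$-form $\varpi$ produced by the spray construction is degenerate with \emph{non-constant} rank in general (its kernel reflects the singular Dirac--Jacobi structure downstairs), so a Gotay-type constant-rank normal form is not available; and even where it were, you would then face exactly the projectability issue you flag yourself --- making the normalizing isotopy $\Phi_0$-projectable and line-bundle-equivariant --- which you name as ``the technical heart'' but do not resolve. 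That unresolved step is the entire content of the theorem, so as written the argument is incomplete.

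The missing idea is that no normalization of $\varpi$ is needed at all: the statement to be proved (Theorem~\ref{theor:NormalForm_DJtransversals}) is an isomorphism \emph{up to a $B$-field}, and $\varpi$ itself supplies that $B$-field. Concretely, for any (weak) dual pair one has the purely algebraic gauge relation $S^!\calL_0=(T^!\calL_1^{\mathrm{opp}})^{\varpi}$ (Corollary~\ref{cor:prop:characterizations_WDPs_I}). Pulling the self-dual pair back along $I\colon L_N\to L$ on one leg only (Proposition~\ref{prop:TransversePullBackWDPs}) and choosing a splitting of $0\to i^!\calL\to\calL|_N\to\nu_N\to 0$ --- a fat tubular neighborhood $\Phi$ with $S_N\circ\Phi=Q$ --- one sets $\Psi:=T_N\circ\Phi|_U$ and computes in one line
\begin{equation*}
\calL_{\nu_N}|_U=Q|_U^!I^!\calL=\Phi|_U^!S_N^!I^!\calL=\Phi|_U^!\bigl((T_N^!\calL)^{\varpi_N}\bigr)=(\Psi^!\calL)^{\Phi^\ast\varpi_N}.
\end{equation*}
No isotopy, no quotient by the $\Phi_1$-fibers, and no appeal to the characteristic leaf correspondence (which is a separate theorem and does not produce tubular-neighborhood identifications). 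The only analytic input is that $(DT_N)|_N$ is the isomorphism $(\delta,e)\mapsto\delta+\pr_D e$, so $\Psi$ covers an embedding near $N$. You should replace your normalization-and-descent scheme by this backward-transform computation; everything you were trying to arrange by hand is absorbed into the gauge term $B=\Phi^\ast\varpi_N$.
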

\noindent
Notice that the latter result was proved by the first author in~\cite{NorForJac} with different techniques.
Finally, we prove a Characteristic Leaf Correspondence Theorem for weak dual pairs in Dirac--Jacobi geometry.

\begin{theorem*}[{\bf C}]
	If two Dirac--Jacobi manifolds fit in a weak dual pair like~\eqref{eq:intro:wdp}, with connected fibers, then their characteristic leaf spaces can be identified so that corresponding leaves are of the same type, i.e.~both precontact or both lcps, and have the same transverse structure.
\end{theorem*}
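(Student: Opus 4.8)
The plan is to transport the dual-pair leaf-correspondence argument from Poisson and Dirac geometry (cf.\ \cite{weinstein1983,FM2018}) into the omni-Lie algebroid picture, and then to settle the genuinely new point, namely the matching of leaf \emph{types}. Assume, as in the hypotheses, that $\Phi_0,\Phi_1$ are surjective submersions with connected fibers, so their fibers define two foliations $\calF_0,\calF_1$ of $M$. Each $\Phi_i$ is covered by a morphism of gauge algebroids $D\Phi_i\colon DL\to DL_i$, and I write $\ker D\Phi_i\subseteq DL$ for the corresponding subbundle, whose symbol is $T\calF_i$. I record the characteristic distributions $\rho(\calL_i):=\pr_{DL_i}(\calL_i)$ together with the forward-image description of the two Dirac--Jacobi structures: since $\calL_i=(\Phi_i)_*\operatorname{Gr}\varpi$, one has $\rho(\calL_0)=D\Phi_0\big((\ker D\Phi_0)^{\perp_\varpi}\big)$, where $(\ker D\Phi_0)^{\perp_\varpi}=\{a\in DL:\varpi(a)\in\Ann(\ker D\Phi_0)\}$, and symmetrically for $\calL_1$. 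The defining orthogonality of a weak dual pair is precisely the equality $(\ker D\Phi_0)^{\perp_\varpi}=\ker D\Phi_1$, which, as $\varpi$ is skew, is symmetric in the two indices.

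Second, I would prove the saturation lemma that powers the correspondence. Fix a characteristic leaf $\calO_0\subseteq M_0$; its tangent distribution is $T\calO_0=\sigma(\rho(\calL_0))=d\Phi_0\big(\sigma((\ker D\Phi_0)^{\perp_\varpi})\big)$, so taking preimages gives $T\big(\Phi_0^{-1}(\calO_0)\big)=T\calF_0+\sigma\big((\ker D\Phi_0)^{\perp_\varpi}\big)=T\calF_0+T\calF_1$ by the orthogonality equality. In particular $T\calF_1\subseteq T(\Phi_0^{-1}(\calO_0))$, so the connected $\Phi_1$-fibers are contained in $\Phi_0^{-1}(\calO_0)$, i.e.\ it is $\Phi_1$-saturated; by symmetry every $\Phi_1$-preimage of a leaf is $\Phi_0$-saturated. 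Hence $\calO_1:=\Phi_1\big(\Phi_0^{-1}(\calO_0)\big)$ is a characteristic leaf of $M_1$, the common set $N:=\Phi_0^{-1}(\calO_0)=\Phi_1^{-1}(\calO_1)$ satisfies $TN=T\calF_0+T\calF_1$, and $\calO_0\mapsto\calO_1$ is a bijection of characteristic leaf spaces with inverse $\calO_1\mapsto\Phi_0(\Phi_1^{-1}(\calO_1))$.

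Third is the step with no analogue in the Dirac or symplectic setting: matching the type of $\calO_0$ and $\calO_1$. I would use that a characteristic leaf is precontact exactly when the identity (Reeb) section lies in the characteristic distribution, $\mathbbm{1}\in\rho(\calL)$, and lcps otherwise, together with the fact that each $D\Phi_i$ preserves the identity, $D\Phi_i(\mathbbm{1}_L)=\mathbbm{1}_{L_i}$. From the forward-image formula, $\mathbbm{1}_{L_0}\in\rho(\calL_0)$ holds along $\calO_0$ if and only if there is $a\in(\ker D\Phi_0)^{\perp_\varpi}$ with $a-\mathbbm{1}_L\in\ker D\Phi_0$, that is, if and only if $\mathbbm{1}_L\in(\ker D\Phi_0)^{\perp_\varpi}+\ker D\Phi_0$. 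By the orthogonality equality this last space is exactly $\ker D\Phi_0+\ker D\Phi_1$, which is manifestly symmetric in the two indices; running the identical computation for $\Phi_1$ shows that $\mathbbm{1}_{L_1}\in\rho(\calL_1)$ is governed by the same condition $\mathbbm{1}_L\in\ker D\Phi_0+\ker D\Phi_1$. Thus $\calO_0$ is precontact precisely when $\mathbbm{1}_L$ is ``tangent to $N$'' in the gauge-algebroid sense, and this happens for $\calO_0$ if and only if it happens for $\calO_1$, forcing the two leaves to have the same type. I expect this Reeb bookkeeping to be the main obstacle: it requires the precise forward-image computation of $\rho(\calL_i)$, the intertwining of the identity sections, and a check that the membership $\mathbbm{1}_L\in\ker D\Phi_0+\ker D\Phi_1$ is constant along the connected leaf $N$, so that a single type is well defined.

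Finally, for the transverse structures I would restrict the dual pair to transversals. Choosing a Dirac--Jacobi transversal $\tau_0$ to $\calO_0$ and $\tau_1$ to $\calO_1$ and setting $S:=\Phi_0^{-1}(\tau_0)\cap\Phi_1^{-1}(\tau_1)$, the orthogonality relation restricts, so $\tau_0\leftarrow S\to\tau_1^{\opp}$ is again a weak dual pair; since the distinguished leaf on each side is now a single point, this pointed dual pair induces an isomorphism of the transverse Dirac--Jacobi structures, which by the type matching of the previous step is an isomorphism of homogeneous Poisson structures in the precontact case and of Jacobi structures in the lcps case. Alternatively, one may invoke the self-dual pairs and the Normal Form Theorem around Dirac--Jacobi transversals (Theorems~A and~B): fitting in a weak dual pair being an equivalence relation, the germs of $M_0$ near $\calO_0$ and of $M_1$ near $\calO_1$ are realized by the same transverse model carried by $N\subseteq M$, whence their transverse structures agree. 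Combining the leaf-space bijection, the type matching, and the transverse isomorphism completes the proof.
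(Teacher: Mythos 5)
Your overall architecture coincides with the paper's: leaf correspondence via saturation of the pullback foliations, type matching via the behaviour of the identity derivation $\mathbbm{1}$ under $D\Phi_i$, and transverse structure via restriction to transversals. However, there are two genuine gaps. First, your claim that the orthogonality condition of a weak dual pair ``is precisely the equality $(\ker D\Phi_0)^{\perp_\varpi}=\ker D\Phi_1$'' is false: $\varpi(\ker D\Phi_0,\ker D\Phi_1)=0$ only gives the inclusion $\ker D\Phi_1\subseteq(\ker D\Phi_0)^{\perp_\varpi}$, and the equality characterizes the non-degenerate (full contact dual pair) situation, cf.~Remark~\ref{rem:full_contact_dual_pairs}. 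What your argument actually needs in every place you invoke it is the weaker identity
\begin{equation*}
	\ker D\Phi_0+(\ker D\Phi_0)^{\perp_\varpi}=\ker D\Phi_0+\ker D\Phi_1,
\end{equation*}
which is true but requires the rank condition~\ref{enumitem:def:weak_dual_pair:2} in addition to orthogonality; it is obtained in the paper by comparing $\Phi_0^!\calL_0=\ker D\Phi_0+((\ker D\Phi_0)^{\perp_\varpi})^\varpi$ (Lemma~\ref{lem:SurjSub}) with $\Phi_0^!\calL_0=\ker D\Phi_0+(\ker D\Phi_1)^\varpi$ (Corollary~\ref{cor:prop:characterizations_WDPs_I}). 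With that substitution your saturation step and your Reeb bookkeeping for the leaf types both go through, and they match the paper's Lemmas~\ref{lem:LeavesCorrespondence_I} and~\ref{lem:LeavesCorrespondence_II}.

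The second gap is the transverse-structure step, which as written is circular: asserting that a ``pointed'' weak dual pair (corresponding leaves reduced to points) ``induces an isomorphism of the transverse Dirac--Jacobi structures'' is exactly the statement to be proved, and the appeal to the equivalence relation plus the Normal Form Theorem does not by itself produce a germ isomorphism between $(M_0,\calL_0)$ at $\calO_0$ and $(M_1,\calL_1)$ at $\calO_1$. Moreover your set $S=\Phi_0^{-1}(\tau_0)\cap\Phi_1^{-1}(\tau_1)$ needs a transversality/submersion check before Proposition~\ref{prop:TransversePullBackWDPs} applies. The paper's argument (Theorem~\ref{theor:LeafCorrespondence:transverseDJ}) is more direct: choose a \emph{minimal} transversal $\calQ$ to $\calS=\varphi_0^{-1}\calS_0$ in $M$ at a point $x$; since $T\calS=\ker T\varphi_0+\ker T\varphi_1$, each $\varphi_i$ restricts to a local diffeomorphism of $\calQ$ onto a minimal transversal $\calQ_i$ to $\calS_i$, and $\Phi_i$ restricts to a line bundle isomorphism $\Phi_{i\perp}$; then pulling back the identity $\Phi_0^!\calL_0=(\Phi_1^!\calL_1)^\varpi$ along the inclusion of $\calQ$ yields $I_{\calQ_0}^!\calL_0=(\Psi^!(I_{\calQ_1}^!\calL_1))^{B}$ with $\Psi=\Phi_{1\perp}\circ\Phi_{0\perp}^{-1}$ and $B=(I_\calQ\circ\Phi_{0\perp}^{-1})^\ast\varpi$. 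You would need to supply this (or an equivalent) computation to close the argument.
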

\noindent
This property of weak dual pairs suggests the introduction and investigation of Morita equivalence in the Dirac--Jacobi setting: this suggestive idea will be pursued by the authors in a future work.
Moreover, mutatis mutandis, the same arguments used to prove this last property allow to get a Presymplectic Leaf Correspondence Theorem for weak dual pairs in Dirac geometry (in the sense of~\cite{FM2018}).
\begin{theorem*}[{\bf D}]
	If two Dirac manifolds fit in a weak dual pair (in the sense of~\cite{FM2018}), with connected fibers, then their presymplectic leaf spaces can be identified so that corresponding presymplectic leaves have the same transverse Dirac structure.
\end{theorem*}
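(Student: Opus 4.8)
The plan is to transpose, essentially verbatim, the argument proving Theorem~C from the omni-Lie algebroid setting of Dirac--Jacobi geometry to the classical setting of Dirac geometry on $\bbT M=TM\oplus T^\ast M$. Concretely, I would run the same three-stage scheme — (i) an infinitesimal distribution correspondence, (ii) its integration using connectedness of the fibers, and (iii) the identification of transverse structures — after replacing the gauge algebroid $DL$ by $TM$, the omni-Lie algebroid $\bbD L$ by $\bbT M$, and the characteristic (precontact or lcps) leaves by the presymplectic leaves of the Dirac structures, so that the transverse homogeneous-Poisson/Jacobi structures of Theorem~C degenerate to transverse Dirac structures. Throughout, $\Phi_0\colon(M,\operatorname{Gr}\varpi)\to(M_0,\calL_0)$ and $\Phi_1\colon(M,\operatorname{Gr}\varpi)\to(M_1,\calL_1^{\opp})$ are the two forward-Dirac surjective submersions of the weak dual pair, and the orthogonality relation between $\ker T\Phi_0$ and $\ker T\Phi_1$ with respect to $\varpi$ is the structural input driving everything.

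The infinitesimal core is the identity
\[
  T_m\Phi_0\big(\ker T_m\Phi_1\big)\;=\;\rho(\calL_0)_{\Phi_0(m)}
  \quad\text{for every } m\in M,
\]
where $\rho(\calL_0)\subseteq TM_0$ is the characteristic distribution tangent to the presymplectic leaves of $\calL_0$, together with its symmetric counterpart for $\Phi_1$. First I would prove this pointwise: the forward-Dirac condition on $\Phi_0$ presents $\calL_0$ at $\Phi_0(m)$ as the pushforward of $\operatorname{Gr}\varpi$, so a characteristic vector of $\calL_0$ is exactly $T\Phi_0(X)$ for some $X$ whose $\varpi$-contraction annihilates $\ker T\Phi_0$; the $\varpi$-orthogonality between the two fiber-kernels then identifies such $X$, modulo $\ker T\Phi_0$, with tangent vectors to the $\Phi_1$-fibers. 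The inclusion "$\subseteq$" comes from forward-Dirac plus orthogonality, the reverse from surjectivity and a dimension count. This is the step where one must be careful that the degeneracy of $\varpi$ is absorbed: $\ker\varpi$ lies in both fiber-kernels, and one checks the computation descends correctly to the quotient.

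Granting this, I would integrate. Since each fiber of $\Phi_1$ is connected and $\Phi_0$ carries it into an integral manifold of the characteristic distribution $\rho(\calL_0)$, the image $\Phi_0(\Phi_1^{-1}(x_1))$ lies in a single presymplectic leaf of $M_0$; symmetrically for $\Phi_1$. This produces a well-defined assignment on leaf spaces sending the leaf $S_0\subseteq M_0$ to the leaf $S_1\subseteq M_1$ containing $\Phi_1(\Phi_0^{-1}(S_0))$, with an inverse built the same way from the symmetric identity; because $\Phi_0,\Phi_1$ are open maps, the resulting bijection upgrades to a homeomorphism of leaf spaces.

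The hard part will be the transverse structure, step~(iii). For corresponding leaves $S_0\ni x_0=\Phi_0(m)$ and $S_1\ni x_1=\Phi_1(m)$ I would choose a single slice $B\subseteq M$ through $m$ whose tangent complements $\ker T_m\Phi_0+\ker T_m\Phi_1$, and show that $\Phi_0|_B$ and $\Phi_1|_B$ are diffeomorphisms onto transversals $B_0\ni x_0$ and $B_1\ni x_1$ of the two leaves; the matching of transversal dimensions here is precisely what the $\varpi$-orthogonality relation is designed to guarantee. The claim is then that the backward-Dirac pullbacks of $\calL_0$ and of $\calL_1^{\opp}$ to $B$ coincide, so that $\Phi_1|_B\circ(\Phi_0|_B)^{-1}$ is an isomorphism of transverse Dirac structures. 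The genuine obstacle is controlling the gauge two-form contribution along $B$: both transverse structures are realized as backward images of the same $\operatorname{Gr}\varpi$ restricted to $B$, and one must verify that the $\varpi$-orthogonality forces the induced $B$-field terms to match, so that no spurious gauge transformation survives on the transverse quotient. In the Poisson case this collapses to Weinstein's splitting on a transversal where $\varpi$ vanishes; in the genuinely Dirac setting one instead tracks the induced gauge transformation and shows it acts trivially — this bookkeeping, rather than any new idea, is where the proof demands care.
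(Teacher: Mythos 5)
Your steps (i) and (ii) line up with the paper's strategy: the paper establishes the infinitesimal identity in the equivalent form $\pr_T(\varphi_0^!\calL_0)=\ker T\varphi_0+\ker T\varphi_1$ (via $\varphi_0^!\calL_0=\ker T\varphi_0+(\ker T\varphi_1)^{\omega}$, the Dirac analogue of Corollary~\ref{cor:prop:characterizations_WDPs_I}), pulls both leaf correspondences back to the common foliation $\ker T\varphi_0+\ker T\varphi_1$ on $M$, and integrates using connectedness of the fibers exactly as you propose; applying $T\varphi_0$ recovers your identity $T\varphi_0(\ker T\varphi_1)=\rho(\calL_0)$. Up to that point your outline is sound and essentially the paper's argument.

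The gap is in step (iii). On your common slice $B\subset M$ the two pullbacks do \emph{not} coincide: from $\varphi_0^!\calL_0=(\varphi_1^!\calL_1)^{\omega}$ one gets $i_B^!\varphi_0^!\calL_0=\bigl(i_B^!\varphi_1^!\calL_1\bigr)^{i_B^\ast\omega}$, so the two transverse Dirac structures, transported to $B$ by the diffeomorphisms $\varphi_i|_B$, differ precisely by the gauge transformation by the closed $2$-form $i_B^\ast\omega$. The orthogonality $\omega(\ker T\varphi_0,\ker T\varphi_1)=0$ says nothing about $\omega$ restricted to a complement of $\ker T\varphi_0+\ker T\varphi_1$, so there is no mechanism forcing this $B$-field to ``act trivially'', and your plan to show it does cannot succeed in general. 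The correct conclusion --- and the one the paper draws --- is that the transverse structures are identified only up to a $B$-field transform, i.e.\ $i_{\calQ_0}^!\calL_0=(\psi^!(i_{\calQ_1}^!\calL_1))^{B}$ with $B$ the transported $i_B^\ast\omega$; this is how ``same transverse Dirac structure'' must be read in Theorem~D. Only when both legs are Poisson does the paper remove the $B$-field, and even then not by showing it is automatically trivial but by exploiting the extra structure (regularity of $\omega$ with $\ker\omega^\flat\subset\ker T\varphi_0\cap\ker T\varphi_1$) to \emph{choose} the slice $B$ with $\omega$-isotropic tangent spaces, so that $i_B^\ast\omega=0$. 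So you should either weaken the conclusion of your step (iii) to an isomorphism up to gauge transformation, or restrict the $B$-field--free statement to the Poisson case with the isotropic choice of transversal.
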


The paper is organized as follows.
Section~\ref{sec:DiracJacobiReview} reviews the line bundle approach to Dirac--Jacobi geometry including the Atiyah algebroid, the first jet bundle and the omni-Lie algebroid of a line bundle.
Section~\ref{sec:products} describes in detail products and fiber products in the category of line bundles with regular line bundle morphisms, as well as products of Dirac--Jacobi bundles which will be crucial in defining operations on weak dual pairs.
Section~\ref{sec:weak_dual_pairs} introduces weak dual pairs in Dirac--Jacobi geometry with motivating examples coming from over-precontact groupoids.
Further, Section~\ref{sec:properties_operations} discusses their first properties and the relevant operations, namely composition and pull-back along Dirac--Jacobi transversals.
Section~\ref{sec:self-dual_pair} presents a proof of the existence of self-dual pairs.
As direct applications of the latter we also an alternative proof of the Normal Form Theorem around Dirac--Jacobi transversals.
Finally, in Section~\ref{sec:characteristic_leaf_correspondence}, we discuss in detail the Characteristic Leaf Correspondence Theorem for weak dual pairs in Dirac--Jacobi geometry.
The paper also provides, in Appendix~\ref{sec:presymplectic_leaf_correspondence}, a sketch of the Presymplectic Leaf Correspondence Theorem for weak dual pairs in Dirac geometry (not yet present in literature).

\section{A Review of Dirac--Jacobi Geometry}
\label{sec:DiracJacobiReview}

We review the line bundle approach to precontact, Jacobi and Dirac--Jacobi geometry.
According to this approach, slightly generalizing $\calE^1(M)$ Dirac structures~\cite{Wade2000}, a Dirac--Jacobi structure~\cite{DirJacBun} is a Dirac-like structure on the \emph{omni-Lie algebroid} $\bbD L:=DL\oplus J^1L$ of a line bundle $L\to M$.
So that, in particular, precontact/Jacobi structures are seen as presymplectic/Poisson structures on $DL\Rightarrow M$, the \emph{gauge algebroid} of $L$.
Roughly speaking, the guiding principle is that, as detailed below, functions on a manifolds $M$ are replaced by sections of $L$, vector fields on $M$ are replaced by derivations of $L$, i.e.~sections of  $DL$, the \emph{gauge algebroid} of $L$, differential $1$-forms corresponds to sections of $J^1L$, the \emph{1st jet bundle} of $L$, and so forth.

%
%

\subsection{Derivations and the Atiyah algebroid of a vector bundle}
\label{subsec:Atiyah_algebroid}

Even though derivations of vector bundles are very classical topics in differential geometry, we give the basic definitions and properties, which will be necessary throughout this section.
This section is far from being a complete introduction to this topic.
A more detailed discussion can be found for example in \cite{mackenzie_2005} and
\cite{Rubtsov_1980}. 

Throughout this paper, given a vector bundle morphism $\Phi:E\to F$, covering a smooth map $\varphi:M\to N$, we say that $\Phi$ is \emph{regular} if it is fiberwise invertible, i.e.~$\Phi_x:=\Phi|_{E_x}\colon E_x\to F_{\varphi(x)}$ is a linear isomorphism, for any $x\in M$.
Further, a regular $\Phi$ determines a \emph{pull-back of sections} $\Phi^\ast\colon\Gamma(F)\to\Gamma(E),\ s\mapsto\Phi^\ast s$, where
\begin{equation*}
	(\Phi^\ast s)_x=\Phi_x^{-1}(s_{\varphi(x)}),\ \ \text{for all}\ x\in M.
\end{equation*}

A \emph{derivation} of a vector bundle $E\to M$ is an $\bbR$-linear map $\Delta\colon \Gamma(E)\to\Gamma(E)$ such that there is a (unique) $\sigma_\Delta\in\frakX(M)$, the \emph{symbol} of $\Delta$, satisfying the following Leibniz rule, for all $e\in\Gamma(E)$ and $f\in C^\infty(M)$,
\begin{equation*}
	\Delta(fe)=\sigma_\Delta(f)e+f\square e.
\end{equation*}
The set of derivations of $E$, denoted by $\calD E$, has natural structures of $C^\infty(M)$-module and Lie algebra.

The Lie algebra $\calD E$ can also be understood as the Lie algebra of the infinitesimal vector bundle automorphisms of $E\to M$.
Indeed, a derivation $\Delta\in\calD E$ generates, as its flow, the $1$-parameter group $\Phi_t$ of local vector bundle automorphisms of $E\to M$ which is uniquely determined by
\begin{equation}
	\label{eq:Lie_derivative_formula_0}
	\frac{\rmd}{\rmd t}(\Phi_t^\ast e)=\Phi_t^\ast(\Delta e),\ \text{for all}\ e\in\Gamma(E).
\end{equation}
Notice that, in particular, the $1$-parameter group $\varphi_t$ of local diffeomorphisms of $M$ covered by $\Phi_t$ coincides with the flow of $\sigma_\Delta$.
Conversely, each $1$-parameter group $\Phi_t$ of local vector bundle automorphisms of $E\to M$ arises in this way from a unique derivation $\Delta$ of $E\to M$.

The $C^\infty(M)$-module of derivations of $E$ can also be seen as the module of sections of a vector bundle $DE\to M$.
For any $x\in M$, its fiber $D_xE$ consists of the \emph{derivations of $E$ at $x$}, i.e.~those $\bbR$-linear maps $\delta\colon \Gamma(E)\to E_x$ such that there is a (unique) $\sigma_\delta\in T_xM$, called the \emph{symbol of $\delta$}, satisfying the following Leibniz rule $\delta(fe)=\sigma_\delta(f)e_x+f(x)\delta e$, for all $e\in\Gamma(E)$ and $f\in C^\infty(M)$.
 
The vector bundle $DE$ becomes a \emph{transitive} Lie algebroid, called the \emph{gauge or Atiyah algebroid} of $E$, with the commutator of derivations as Lie bracket, and the symbol map $\sigma\colon DE\to TM,\ \delta\mapsto\sigma_\delta$, as anchor.
Further, the \emph{tautological representation} $\nabla$ of $DE$ in $E$ is defined by $\nabla_\Delta e=\Delta e$, for all $\Delta\in\calD E$, and $ e\in\Gamma(E)$.

The construction of the gauge algebroid is functorial: it determines a functor from the category of vector bundles, with regular vector bundle morphisms, to the category of Lie algebroids.
Indeed, each regular VB morphism $\Phi\colon E\to F$ gives raise to the Lie algebroid morphism $D\Phi\colon DE\to DF,\ \delta\mapsto (D\Phi)\delta$, defined by
\begin{equation*}
	((D\Phi)\delta)s=\Phi(\delta(\Phi^\ast s)),\ \ \text{for all}\ s\in\Gamma(F).
\end{equation*}
Further, $\Delta\in\calD E$ and $\square\in\calD F$ are \emph{$\Phi$-related} if $(D\Phi)\circ\Delta=\square\circ\varphi$ (or equivalently $\Phi^\ast\circ\square=\Delta\circ\Phi^\ast$).

\begin{remark}
	\label{rem:derivations_of_line bundles}
	Any derivation of $E$ is actually a first order differential operator.
	So, $DE$ is naturally a vector subbundle of $(J^1E)^\ast\otimes E$, the vector bundle whose sections are the first order differential operators from $\Gamma(E)$ to itself.
	Above $J^1E\to M$ denotes the first jet bundle of $E$.
	The converse, i.e.~the equality $DE=(J^1E)^\ast\otimes E$, holds if and only if $E\to M$ is a line bundle.
\end{remark}

\subsection{The Line Bundle Approach to Contact and Jacobi Geometry}
Fix a line bundle $L\to M$.
The \emph{der-complex of $L$} (see~\cite{Rubtsov_1980}) is the de Rham complex $(\Omega_D^\bullet(L),\rmd_D)$ of the Atiyah algebroid $DL$ with values in its tautological representation in $L$.
The de Rham differential $\rmd_D$ is also called the \emph{der-differential}, and the elements of $\Omega_D^\bullet(L)=\Gamma(\wedge^\bullet(DL)^\ast\otimes L)$ are referred to as \emph{$L$-valued Atiyah forms}.
Then, there exists a Cartan calculus on $\Omega_D^\bullet(L)$ whose structural operations are the der-differential and, for any $\Delta\in\calD L$, the \emph{contraction} and the \emph{Lie derivative} along $\Delta$
\begin{equation*}
	\rmd_D:\Omega_D^\bullet(L)\to\Omega_D^{\bullet+1}(L),\qquad \iota_\Delta:\Omega_D^\bullet(L)\to\Omega_D^{\bullet-1}(L),\qquad\scrL_\Delta=\colon\Omega_D^\bullet(L)\to\Omega_D^\bullet(L).
\end{equation*}
Moreover, these operations are related through the following identities, for any $\square,\Delta\in\calD L$,
\begin{equation*}
	\label{eq:Cartan_identities}
	\begin{gathered}{}
		[\rmd_D,\iota_\Delta]=\scrL_\Delta,\quad
		[\iota_\square,\scrL_\Delta]=\iota_{[\square,\Delta]},\quad [\scrL_\square,\scrL_\Delta]=\scrL_{[\square,\Delta]},
		\quad
		[d_D,d_D]=[d_D,\scrL_\Delta]=[\iota_\Delta,\iota_\Delta]=0,
	\end{gathered}
\end{equation*}
where $[-,-]$ is the graded commutator.
Note that, as it can be computed directly, $\scrL_\mathbbm{1}=\id_{\Omega_D^\bullet(L)}$ where $\mathbbm{1}\in\calD L$ denotes the identity map, i.e.~$\mathbbm{1}e=e$.
This means nothing else that the der-complex $(\Omega^\bullet_D(L),\rmd_D)$ is acyclic, with a contracting homotopy given by $\iota_{\mathbbm{1}}$, i.e.
\begin{equation}
	\label{eq:contracting_homotopy}
	[\rmd_D,\iota_{\mathbbm{1}}]=\id_{\Omega_D^\bullet(L)}.
\end{equation}

\begin{remark}
	Since $DL=(J^1L)^\ast\otimes L$, the $L$-valued pairing $\langle-,-\rangle\colon DL\otimes J^1L\to L$ induces a VB isomorphism $J^1L\overset{\sim}{\to}(DL)^\ast\otimes L$ and this identifies $\Gamma(J^1L)$ with $\Omega_D^1(L)$.
Further, note also that $\Gamma(L)=\Omega_D^0(L)$.
In view of this, $\Omega_D^0(L)\to\Omega_D^1(L),\ \lambda\mapsto d_D\lambda$, coincides with the $1$-st jet prolongation $\Gamma(L)\to\Gamma(J^1L),\ \lambda\mapsto j^1\lambda$.
\end{remark}

The construction of the der-complex is functorial: it gives a contravariant functor from the category of line bundles, with regular LB morphisms, to the category of dg-modules.
Indeed, each regular LB morphism $\Phi\colon L\to L^\prime$ induces a dg-module morphism $\Phi^\ast\colon (\Omega_D^\bullet(L^\prime),\rmd_D)\to(\Omega_D^\bullet(L),\rmd_D),\ \eta^\prime\mapsto\Phi^\ast\eta^\prime,$ defined by, for all $\eta'\in\Omega_D^k(L^\prime)$, $x\in M$, and $\delta_1,\ldots,\delta_k\in D_xE$,
\begin{equation*}
	(\Phi^\ast\eta')(\delta_1,\ldots,\delta_k)=\Phi_x^{-1}(\eta'((D\Phi)\delta_1,\ldots,(D\Phi)\delta_k)).
\end{equation*}

\subsubsection{Precontact Structures}
\label{sec:precontact_structures}
Let us recall the line bundle approach to precontact structures.
For any manifold $M$, by the relation $\calH=\ker\theta$, hyperplane distributions $\calH$ on $M$are in one-to-one correspondence with no-where zero $1$-forms $\theta$ on $M$ with values in some line bundle $L\to M$ unique up to LB isomorphisms.
Assume now that $\calH$ and $\theta$ are related as above.
Then their curvature form $\omega\in\Gamma(\wedge^2 \calH^\ast\otimes L)$ is defined by $\omega(X,Y)=\theta([X,Y])$, for all $X,Y\in\Gamma(\calH)$.
If $\omega$ is non-degenerate, i.e.~the VB morphism $\omega^\flat:C\to C^\ast\otimes L,\ X\mapsto\omega(X,-)$, is an isomorphism, the $1$-form $\theta$ and the corresponding hyperplane distribution $\calH$ are said to be \emph{contact}.
Now precontact structures arise as the generalization of contact structures when we drop the non-degeneracy condition.
More precisely we have the following.
\begin{definition}
	\label{def:precontact}
	A \emph{precontact structure} $(L,\theta)$ on a manifold $M$ consists of a line bundle $L\to M$ and a $1$-form $\theta\in\Omega^1(M;L)$.
	Then a \emph{precontact manifold} $(M,L,\theta)$ is a manifold equipped with a precontact structure.
\end{definition}

In this paper we will take advantage of the line bundle approach to (pre)contact structures and so we will look at them as (pre)symplectic structures on the gauge algebroid of a line bundle.

\begin{definition}
	\label{def:presymplectic_Atiyah}
	Let $L\to M$ be a line bundle.
	An $L$-valued Atiyah $2$-form $\varpi\in\Omega_D^2(L)$ is said to be \emph{presymplectic} if $\rmd_D\varpi=0$.
	Additionally, $\varpi$ is said to be \emph{symplectic} if it is also non-degenerate, i.e.~the VB morphism $\varpi^\flat:DL\to (DL)^\ast\otimes L=J^1L,\ \delta\mapsto\varpi(\delta,-)$ is an isomorphism.
\end{definition}

\begin{proposition}[{\cite[Proposition~3.3]{DirJacBun}}]
	\label{prop:Atiyah_presymplectic_forms}
	For any line bundle $L\to M$, the relation $\theta\circ\sigma=\iota_{\mathbbm{1}}\varpi$ establishes a 1-1 correspondence between $L$-valued (pre)contact forms $\theta$ and $L$-valued (pre)symplectic Atiyah forms $\varpi$.
\end{proposition}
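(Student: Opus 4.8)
\emph{Plan.} The idea is to reduce the entire correspondence to the Cartan calculus on the der-complex, using the contracting homotopy \eqref{eq:contracting_homotopy} together with the single structural fact that $\mathbbm{1}$ spans $\ker\sigma=\End(L)$, so that $\sigma_{\mathbbm{1}}=0$. First I would record the short exact sequence
\[
0 \to \Omega^1(M;L) \xrightarrow{\ \sigma^\ast\ } \Omega_D^1(L) \xrightarrow{\ \iota_{\mathbbm{1}}\ } \Gamma(L) \to 0,
\]
obtained by dualizing $0\to\End(L)\to DL\xrightarrow{\sigma}TM\to 0$ and tensoring by $L$; here $\sigma^\ast\theta=\theta\circ\sigma$ is exactly the precontact $1$-form read as an Atiyah $1$-form, and its image is precisely $\ker\iota_{\mathbbm{1}}$. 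In particular $\sigma^\ast$ is injective, which is what lets the defining relation $\theta\circ\sigma=\iota_{\mathbbm{1}}\varpi$ pin down $\theta$ from $\varpi$ unambiguously.

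Next I would construct the two assignments. Given a presymplectic $\varpi$, the identity $[\iota_{\mathbbm{1}},\iota_{\mathbbm{1}}]=0$ gives $\iota_{\mathbbm{1}}(\iota_{\mathbbm{1}}\varpi)=0$, so $\iota_{\mathbbm{1}}\varpi$ lies in the image of $\sigma^\ast$ and determines a unique $\theta\in\Omega^1(M;L)$ with $\sigma^\ast\theta=\iota_{\mathbbm{1}}\varpi$. Conversely, given $\theta$, set $\varpi:=\rmd_D(\sigma^\ast\theta)$; then $\rmd_D\varpi=0$ by $\rmd_D^2=0$, so $\varpi$ is presymplectic, and since $\iota_{\mathbbm{1}}\sigma^\ast\theta=(\sigma^\ast\theta)(\mathbbm{1})=\theta(\sigma_{\mathbbm{1}})=0$, the contracting homotopy \eqref{eq:contracting_homotopy} gives $\iota_{\mathbbm{1}}\varpi=\sigma^\ast\theta-\rmd_D\iota_{\mathbbm{1}}\sigma^\ast\theta=\sigma^\ast\theta$, so the relation holds. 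That these are mutually inverse is then immediate: one direction is the displayed computation plus injectivity of $\sigma^\ast$, and the other is $\rmd_D\iota_{\mathbbm{1}}\varpi=\varpi-\iota_{\mathbbm{1}}\rmd_D\varpi=\varpi$ for a closed $\varpi$. This settles the (pre)symplectic/(pre)contact correspondence as an isomorphism of $C^\infty(M)$-modules.

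It remains to match the non-degeneracy conditions, and this pointwise linear-algebra step is where the real work lies. Fixing $x$, write $V=D_xL$ and $W=\sigma^{-1}(\calH_x)$ with $\calH_x=\ker\theta_x$, and note $\mathbbm{1}\in W$ while $\sigma\colon W\to\calH_x$ is onto with one-dimensional kernel $\bbR\mathbbm{1}$. The relation $\varpi(\mathbbm{1},\delta)=\theta(\sigma_\delta)$ shows that the $\varpi_x$-orthogonal of $\mathbbm{1}$ is exactly $W$, while expanding $\varpi=\rmd_D\sigma^\ast\theta$ on derivations lifting sections of $\calH$ identifies $\varpi_x|_W$ with minus the curvature form $\omega$ pushed to $\calH_x=W/\bbR\mathbbm{1}$. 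From these two facts both implications follow: if $\omega$ is non-degenerate, any element of the radical of $\varpi_x$ pairs trivially with $\mathbbm{1}$ hence lies in $W$, projects to $0$ in $\calH_x$, and is therefore a multiple of $\mathbbm{1}$ which is then killed because $\theta_x\neq 0$; conversely a curvature-degenerate direction $X\in\calH_x$ can be lifted and corrected by a multiple of $\mathbbm{1}$ — using $\theta_x(\sigma_\eta)\neq 0$ whenever $\sigma_\eta\notin\calH_x$ — to produce a nonzero radical vector. Thus $\varpi_x$ is non-degenerate exactly when $\theta_x\neq 0$ and $\omega$ is non-degenerate on $\calH_x$, and since both conditions are pointwise, $\varpi$ is symplectic iff $\theta$ is contact. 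The main obstacle is precisely this last equivalence: the formal part is a two-line consequence of the contracting homotopy, whereas correctly locating the radical of $\varpi_x$ and relating $\varpi_x|_W$ to the curvature form requires the careful bookkeeping with the distinguished line $\bbR\mathbbm{1}$ sketched above.
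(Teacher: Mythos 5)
Your proof is correct, and it follows what is essentially the intended argument: the paper itself gives no proof of this statement (it is quoted from \cite[Proposition~3.3]{DirJacBun}), and the standard proof there proceeds exactly as you do, using the acyclicity of the der-complex via $[\rmd_D,\iota_{\mathbbm{1}}]=\id$ together with $\sigma_{\mathbbm{1}}=0$ to get the bijection $\varpi=\rmd_D(\sigma^\ast\theta)$, $\sigma^\ast\theta=\iota_{\mathbbm{1}}\varpi$ at the level of (pre)symplectic/(pre)contact forms. Your pointwise identification of the radical of $\varpi_x$ — namely that $\mathbbm{1}^{\perp\varpi}=\sigma^{-1}(\calH_x)$ and that $\varpi_x$ descends to $-\omega_x$ on $\calH_x=\sigma^{-1}(\calH_x)/\bbR\mathbbm{1}$ — is also the right way to match the two non-degeneracy conditions, and it correctly covers the case $\theta_x=0$ (where $\mathbbm{1}$ itself is a radical vector).
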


\subsubsection{Jacobi Structures}
Fix a line bundle $L\to M$.
Denote by $\calD^\bullet L$ the graded space of skew-symmetric multi-derivations of $L$ so that, in particular, $\calD^0L=\Gamma(L)$ and $\calD^1 L=\Gamma(D L)$.
By Remark~\ref{rem:derivations_of_line bundles}, they coincide with the skew-symmetric first order multi-differential operators from $\Gamma(L)$ to itself, i.e.~$\calD^\bullet L=\Gamma(\wedge^\bullet(J^1L)^\ast\otimes L)$.
The Lie algebroid structure of $DL$ and its tautological representation in $L$ extend to a graded Lie bracket $[-,-]$, called \emph{Schouten--Jacobi bracket}~\cite{le2018deformations}, making $(\calD^\bullet L)[1]$ into a graded Lie algebra.

Recall that a \emph{Jacobi structure} on a line bundle $L\to M$ consists of a Lie bracket $\{-,-\}$ on $\Gamma(L)$ that is a derivation of $L$ in each entry.
Now, as it is easy to see, the following relation
$$\{\lambda,\mu\}=\calJ(j^1\lambda,j^1\mu),\quad\text{for all}\ \lambda,\mu\in\Gamma(L)$$establishes a one-to-one correspondence between the Jacobi structures $\{-,-\}$ on $L\to M$ and the \emph{Maurer--Cartan elements} $\calJ$ of $((\calD^\bullet L)[1],[-,-])$, i.e.~$\calJ\in\calD^2L$ satisfying $[\calJ,\calJ]=0$.
Further, a Jacobi structure $\calJ=\{-,-\}$ on $L\to M$ is \emph{non-degenerate} if the VB morphism $J^\sharp\colon J^1L\to DL,\ \eta\mapsto J(\eta,-)$, is invertible.

\begin{proposition}[{\cite[Prop.~3.6]{DirJacBun}}]
	\label{prop:non-degenerate_Jacobi_structure}
	For any line bundle $L\to M$, the relation $\varpi^\flat=(\calJ^\sharp)^{-1}$ establishes a 1-1 correspondence between symplectic Atiyah forms $\varpi\in\Omega^2_D(L)$ and non-degenerate Jacobi structures $\calJ\in\calD^2L$.
\end{proposition}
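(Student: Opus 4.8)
The plan is to decouple the statement into two independent pieces: a fiberwise linear-algebra bijection between the underlying bundle maps, and an identification of the closedness condition $\rmd_D\varpi=0$ with the Maurer--Cartan condition $[\calJ,\calJ]=0$. First I would set up the linear algebra. Under the identifications $DL=(J^1L)^\ast\otimes L$ and $J^1L=(DL)^\ast\otimes L$ coming from the $L$-valued pairing $\langle-,-\rangle\colon DL\otimes J^1L\to L$, an Atiyah $2$-form $\varpi\in\Omega_D^2(L)$ is the same datum as a VB morphism $\varpi^\flat\colon DL\to J^1L$ that is \emph{skew} in the sense $\langle\varpi^\flat\delta,\delta'\rangle=-\langle\varpi^\flat\delta',\delta\rangle$, and likewise a bivector $\calJ\in\calD^2L$ is the same as a skew VB morphism $\calJ^\sharp\colon J^1L\to DL$. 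Non-degeneracy of $\varpi$ (resp.\ $\calJ$) is by definition invertibility of $\varpi^\flat$ (resp.\ $\calJ^\sharp$), and since the inverse of a skew isomorphism is again skew with respect to the pairing, the assignment $\varpi^\flat=(\calJ^\sharp)^{-1}$ is already a bijection between non-degenerate (not necessarily closed) Atiyah $2$-forms and non-degenerate bivectors. What remains is to match the two differential conditions across this bijection.

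The core of the argument is a Hamiltonian-derivation computation. Starting from a non-degenerate $\varpi$, set $\calJ^\sharp:=(\varpi^\flat)^{-1}$ and define $\{\lambda,\mu\}:=\calJ(\rmd_D\lambda,\rmd_D\mu)$ for $\lambda,\mu\in\Gamma(L)$, recalling that $\rmd_D\lambda=j^1\lambda$. For each $\lambda\in\Gamma(L)$ I introduce the \emph{Hamiltonian derivation} $\Delta_\lambda:=\calJ^\sharp(\rmd_D\lambda)\in\calD L$, characterized by $\iota_{\Delta_\lambda}\varpi=\rmd_D\lambda$, so that $\{\lambda,\mu\}=\nabla_{\Delta_\lambda}\mu$ and $\varpi(\Delta_\lambda,\Delta_\mu)=-\{\lambda,\mu\}$. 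Using the Koszul formula for $\rmd_D$ together with the Cartan identities and the Lie-algebroid bracket on $\calD L$, I would establish the key identity
\begin{equation*}
	\rmd_D\varpi(\Delta_\lambda,\Delta_\mu,\Delta_\nu)=\{\lambda,\{\mu,\nu\}\}+\{\mu,\{\nu,\lambda\}\}+\{\nu,\{\lambda,\mu\}\},
\end{equation*}
relating the der-differential of $\varpi$, evaluated on Hamiltonian derivations, to the Jacobiator of $\{-,-\}$.

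To conclude, I would use that the $1$-jets $\rmd_D\lambda$ span $J^1L$ pointwise, so that—$\calJ^\sharp$ being an isomorphism—the Hamiltonian derivations $\Delta_\lambda$ span $DL$ pointwise. Since $\rmd_D\varpi$ is a genuine (tensorial) Atiyah $3$-form, the key identity shows that $\rmd_D\varpi=0$ holds \emph{iff} the Jacobiator of $\{-,-\}$ vanishes identically. On the other side, the skew biderivation $\{-,-\}=\calJ(\rmd_D-,\rmd_D-)$ is a Jacobi bracket—equivalently, has vanishing Jacobiator—exactly when $\calJ$ is a Maurer--Cartan element $[\calJ,\calJ]=0$, by the correspondence between Jacobi brackets and MC bivectors recalled above. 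Combining the two equivalences gives $\rmd_D\varpi=0\iff[\calJ,\calJ]=0$, and running the construction backwards from a non-degenerate Jacobi structure (setting $\varpi^\flat=(\calJ^\sharp)^{-1}$ and checking $\rmd_D\varpi=0$ by the same identity) produces the inverse assignment, so $\varpi^\flat=(\calJ^\sharp)^{-1}$ is the claimed bijection.

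The hard part will be the key identity and its clean passage to $[\calJ,\calJ]$: this demands careful sign bookkeeping in the Koszul formula, where the tautological representation $\nabla$ supplies precisely the terms that in the classical symplectic--Poisson correspondence come from the anchor, together with a fixed normalization of the Schouten--Jacobi bracket so that its value on $1$-jets is the expected multiple of the Jacobiator. The two surrounding steps—the fiberwise skew-isomorphism bijection and the spanning/tensoriality argument that upgrades pointwise values on Hamiltonian derivations to an identity of forms—I expect to be routine.
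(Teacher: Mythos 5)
Your proposal is correct, but note that the paper itself gives no proof of this statement---it is imported verbatim from \cite[Prop.~3.6]{DirJacBun}, where the argument is exactly the one you outline: the fiberwise bijection between skew isomorphisms $\varpi^\flat\colon DL\to J^1L$ and $\calJ^\sharp\colon J^1L\to DL$ under $DL=(J^1L)^\ast\otimes L$, followed by the identification of $\rmd_D\varpi=0$ with the vanishing of the Jacobiator of $\{\lambda,\mu\}=\calJ(j^1\lambda,j^1\mu)$ via Hamiltonian derivations, carried out in the der-complex with its Cartan calculus. The only point deserving care, which you already flag, is to derive the key identity without circular use of $[\Delta_\lambda,\Delta_\mu]=\Delta_{\{\lambda,\mu\}}$; this is handled cleanly by computing $\iota_{[\Delta_\lambda,\Delta_\mu]}\varpi=\scrL_{\Delta_\lambda}\iota_{\Delta_\mu}\varpi-\iota_{\Delta_\mu}\scrL_{\Delta_\lambda}\varpi=\rmd_D\{\lambda,\mu\}-\iota_{\Delta_\mu}\iota_{\Delta_\lambda}\rmd_D\varpi$ from the stated Cartan identities.
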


\subsection{The Omni-Lie Algebroid of a Line Bundle and its Automorphisms}

The omni-Lie algebroid was first introduced in \cite{CHEN2010799} in order to connect Dirac-like subbundles to Lie 
algebroids. Our aim is a little bit different, 
since we are interested in the Dirac analogue in Jacobi geometry, which are not Lie 
algebroids. Nevertheless, we adapt the notion of Chen and Liu from \cite{CHEN2010799} and combine them with notions coming from Dirac geometry. This sections 
follows exactly the same lines as \cite{NorForJac}.

\begin{definition}
	\label{def:omni_Lie_algbd}
	The \emph{omni-Lie algebroid} of a line bundle $L\to M$ is the vector bundle $\mathbb{D}
	L:=DL\oplus J^1 L\to M$ equipped with 
	\begin{enumerate}[label={\arabic*)}]
		\item the \emph{Dorfman-like bracket} $\ldsb-,-\rdsb:\Gamma(\bbD L)\times\Gamma(\bbD L)\to\Gamma(\bbD L)$ defined by
		\begin{align*}
			\ldsb(\Delta_1,\psi_1) ,(\Delta_2,\psi_2 )\rdsb
			=([\Delta_1,\Delta_2],\Lie_{\Delta_1} \psi_2- \iota_{\Delta_2}\D_L\psi_1)
		\end{align*}		 
		\item the \emph{non-degenerate $L$-valued symmetric product} $\ldab-,-\rdab:\bbD L\otimes\bbD L\to L$ defined by
		\begin{align*}
			\bla (\Delta_1,\psi_1) ,(\Delta_2,\psi_2 )\bra := \psi_1(\Delta_2)+\psi_2(\Delta_1)
		\end{align*}
		\item the representation given by the \emph{canonical projection} $\pr_D\colon \mathbb{D}L\to DL,\ (\delta,\alpha)\mapsto\delta$.
	\end{enumerate} 
\end{definition}
We will denote the projection on the second summand by $\pr_J\colon\bbD L\to J^1L,\ (\delta,\alpha)\mapsto\alpha$.
The omni-Lie algebroid $\bbD L$, equipped with the above structural operations, represents the prototypical example of what is called a \emph{Courant-Jacobi algebroid}~\cite{grabowski2013graded,grabowski2002graded,dacosta2004dirac}.
%
Let us now discuss Courant-Jacobi automorphisms of $\bbD L$. 
\begin{definition}
	\label{def:CJ-Aut}
	A \emph{Courant-Jacobi 
		automorphism} $(F,\Phi)$ of the omni-Lie algebroid $\bbD L$ consists of VB automorphisms $F:\bbD L\to\bbD L$ and $\Phi:L\to L$ covering the same diffeomorphism $\varphi:M\to M$ and preserving the structure maps of the omni-Lie algebroid, i.e., for all $u,v\in\Gamma(\bbD L)$,
	\begin{equation*}
		(D\Phi) (\pr_D u)=\pr_D (Fu),\qquad \Phi\ldab u,v\rdab=\ldab Fu,Fv\rdab,\qquad F^\ast \ldsb u,v\rdsb=\ldsb F^\ast u,F^\ast v\rdsb.
	\end{equation*}
\end{definition}

	As it is easy to see, the Courant-Jacobi automorphisms of the omni-Lie algebroid $\bbD L$ form a group under composition that we will denote by $\Aut_{\sf{CJ}}(\bbD L)$.
	Let us now describe the group structure of $\Aut_{\sf{CJ}}(\bbD L)$.
	Each LB automorphism of $L$ gives rise to a Courant--Jacobi automorphism of $\bbD L$ as in the group embedding 
	\begin{equation*}
		\Aut_{\sf{VB}}(L)\hooklongrightarrow\Aut_{\sf{CJ}}(\bbD L),\ \Phi\longmapsto(\bbD\Phi,\Phi).
	\end{equation*}
	where $\bbD \Phi:\bbD L\to \bbD L$ is defined by $(\mathbb{D}\Phi)(\Delta,\alpha):=(D\Phi(\Delta), (D\Phi^{-1})^\ast \alpha)$, for all $(\Delta,\alpha)\in\bbD L$.
	Each closed $L$-valued Atiyah $2$-form $B$ gives rise to a Courant--Jacobi automorphism of $\bbD L$, called \emph{gauge transformation by $B$} (or \emph{$B$-field transformation}), as in the group embedding
	\begin{equation}
		\label{eq:gauge_transformation}
		\Omega^2_{D,\text{cl}}(L)\hooklongrightarrow\Aut_{\sf{CJ}}(\bbD L),\ B\longmapsto(\exp(B),\id_L),
	\end{equation}
	where $\exp(B):\bbD L\to\bbD L$ is defined by $\exp(B)(\Delta,\alpha):=(\Delta,\alpha+\iota_\Delta B)$, for all $(\Delta,\alpha)\in\bbD L$.
	Finally, understanding the above two group embeddings and the natural action of $\Aut_{\sf{VB}}(L)$ on $\Omega^2_{D,\text{cl}}(L)$, one gets that Courant--Jacobi automorphism group of $\bbD L$ is the following semidirect product
	\begin{equation}
		\label{eq:CJ-Aut:semidirect_product}
		\Aut_{\sf{CJ}}(\bbD L)\simeq \Aut_{\sf{VB}}(L)\ltimes\Omega^2_{D,\text{cl}}(L).
	\end{equation}

Let us now discuss infinitesimal Courant-Jacobi automorphisms of the omni-Lie algebroid.
The \emph{infinitesimal Courant--Jacobi automorphisms} of $\bbD L$ are the infinitesimal generators of Courant--Jacobi automorphisms of $\bbD L$, an so they form a Lie algebra denoted by $\mathfrak{aut}_{\sf CJ}(\bbD L)$.
By Definition~\ref{def:CJ-Aut}, the elements of $\mathfrak{aut}_{\sf CJ}(\bbD L)$ are pairs $(\square,\Delta)$ consisting of derivations $\square\in\calD(\bbD L)$ and $\Delta\in\calD L$ that have the same symbol, i.e.~$\sigma_\square=\sigma_\Delta\in\frakX(M)$, and are compatible with the structure maps of $\bbD L$, i.e.
\begin{equation*}
	\pr_D(\square u)=[\Delta,\pr_D u],\quad \Delta\ldab u,v\rdab=\ldab\square u, v\rangle+\langle u,\square v\rdab,\quad \square\ldsb u,v\rdsb=\ldsb\square u,v\rdsb+\ldsb u,\square v\rdsb,
\end{equation*}
for all $u,v\in\Gamma(\bbD L)$.
	Let us now describe the group structure of $\Aut_{\sf{CJ}}(\bbD L)$.
	Each derivation of $L$ gives rise to an infinitesimal Courant--Jacobi automorphism of $\bbD L$ as in the Lie algebra embedding 
	\begin{equation*}
		\calD L\hooklongrightarrow\mathfrak{aut}_{\sf{CJ}}(\bbD L),\ \Delta\longmapsto(\ldsb(\Delta,0),-\rdsb,\Delta).
	\end{equation*}
	Each closed $L$-valued Atiyah $2$-form gives rise to an infinitesimal Courant--Jacobi automorphism of $\bbD L$ as in the abelian Lie algebra embedding
	\begin{equation*}
		\Omega_{D,\text{cl}}^2(L)\hooklongrightarrow\mathfrak{aut}_{\sf CJ}(\bbD L),\ \rmd_D\alpha\longmapsto(\ldsb(0,\alpha),-\rdsb,0).
	\end{equation*}
	Finally, understanding the natural Lie algebra action $\calD(L)\times\Omega^2_{D,\text{cl}}(L)\to \Omega^2_{D,\text{cl}}(L),\ (\Delta,B)\mapsto\scrL_\Delta B$, and the above two Lie algebra embeddings, the group isomorphism~\eqref{eq:CJ-Aut:semidirect_product} induces the Lie algebra isomorphism
\begin{equation}
	\label{eq:CJ-inf_aut:semidirect_product}
	\mathfrak{aut}_{\sf{CJ}}(\bbD L)\simeq \calD L\ltimes\Omega^2_{D,\text{cl}}(L).
\end{equation}
Moreover, for future reference, let us notice that the 1-parameter group of local Courant--Jacobi automorphisms generated by $(\Delta,B)\in \calD L\ltimes\Omega^2_{D,\text{cl}}(L)\simeq\mathfrak{aut}_{\sf CJ}(\bbD L)$ is given by the pair
\begin{equation*}
	\left(\exp\left(\int_0^t(\Phi_{-\epsilon}^\ast B)\rmd\epsilon\right)\circ\bbD \Phi_t,\Phi_t\right),
\end{equation*}
where $\Phi_t:L\to L$ denotes 1-parameter group of local line bundle automorphims generated by $\Delta$

\begin{remark}
	\label{rem:infinitesimal_CJ_aut_LieAlgebra}
	Since the der-complex is acyclic (see~Equation~\eqref{eq:contracting_homotopy}), each infinitesimal Courant--Jacobi automorphism of $\bbD L$ arises from a section of $\bbD L\to M$ according to the following surjective map
	\begin{equation*}
		\Gamma(\bbD L)\longrightarrow\mathfrak{aut}_{\sf CJ}(\bbD L)\simeq \calD L\ltimes\Omega^2_{D,\text{cl}}(L),\ u\longmapsto(\ldsb u,-\rdsb,\pr_D u)\simeq(\pr_Du,-\rmd_D\alpha).
	\end{equation*}

\end{remark}

\subsection{Dirac-Jacobi Bundles}
\label{Sec:DirJac}
Let us now discuss the subbundles of the omni-Lie algebroid we are interested in: the so-called \emph{Dirac-Jacobi bundles}. They are, roughly speaking, Dirac-like subbundles of the omni-Lie algebroid and they were also introduced in \cite{CHEN2010799}. The first time Dirac-like structures appeared in order to model the Dirac analogue in Jacobi geometry 
was in \cite{Wade2000}, which were called $\mathcal{E}^1(M)$-Dirac structures. Note that these bundles are special cases 
of the Dirac-Jacobi bundles we will define, if the line bundle of the omni-Lie algebroid is trivial. 


\begin{definition}
	\label{def:DJ_structure}
	Let $L\to M$ be a line bundle.
	A \emph{Dirac--Jacobi structure} on $L$ is a subbundle $\calL\subset \bbD L$ satisfying the following two conditions
	\begin{enumerate}[label={\arabic*)}]
		\item $\mathcal L$ is involutive with respect to $\ldsb- ,-\rdsb$,
		\item $\mathcal L$ is maximally isotropic with respect to $\bla - ,-\bra$.
	\end{enumerate}
	Then, a \emph{Dirac--Jacobi bundle} $(L,\calL)$ over $M$ is a line bundle $L\to M$ equipped with a Dirac-Jacobi structure $\calL$, and a \emph{Dirac--Jacobi manifold} $(M,L,\calL)$ is a manifold $M$ with a Dirac--Jacobi structure over it. 
\end{definition}


\begin{remark}
	Since the pairing $\bla - ,-\bra$ has split signature, the maximal isotropy condition on $\calL\subset\bbD L$ in Definition~\ref{def:DJ_structure} makes sense and is equivalent to $\calL$ being a \emph{Lagrangian subbundle} of $(\bbD L,\ldab-,-\rdab)$, i.e.
	\begin{equation*}
		\calL=\calL^\perp,
	\end{equation*}
	where the superscript ${}^\perp$ denotes the orthogonal wrt $\ldab-,-\rdab$.
	Moreover, the involutivity of a Lagrangian subbundle $\mathcal{L}\subset\bbD L$ is equivalent to the vanishing of the \emph{Courant tensor} $\Upsilon_\calL\in\Gamma(\wedge^3\calL^\ast\otimes L)$ defined by
	\begin{equation}
		\label{Eq: Torsion}
		\Upsilon_{\calL}(X,Y,Z):=\ldab X,\ldsb Y,Z\rdsb\rdab.
	\end{equation}
\end{remark}

\begin{example}
	\label{ex:PrecontactStructures_DJ}
	A first class of examples is represented by precontact structures.
	Indeed, for any line bundle $L\to M$, mapping each $L$-valued Atiyah $2$-form $\varpi$ to its graph establishes a one-to-one correspondence
	\begin{equation*}
		\Omega^2_D(L)\overset{\sim}{\longrightarrow}\{\calL\subset\bbD L\ \textnormal{Lagrangian}\mid\calL\pitchfork J^1L\},\ \varpi\longmapsto\operatorname{Gr}(\varpi):=\{(\delta,\varpi^\flat\delta)\mid\delta\in DL\}
	\end{equation*}
	such that $\operatorname{Gr}\varpi\subset\bbD L$ is a Dirac--Jacobi structure iff $\varpi$ is a presymplectic Atiyah form, i.e.~$\rmd_D\varpi=0$.
	Hence, $L$-valued precontact structures identify with those Dirac--Jacobi structures $\calL\subset\bbD L$ such that $\calL\pitchfork J^1L$.
\end{example}

\begin{example}
	A second class of examples is represented by Jacobi structures.
	Indeed, for any line bundle $L\to M$, mapping each bi-derivation $J\in\calD^2L$ to its graph establishes a one-to-one correspondence
	\begin{equation*}
		\calD^2(L)\overset{\sim}{\longrightarrow}\{\calL\subset\bbD L\ \textnormal{Lagrangian}\mid\calL\pitchfork DL\},\ J\longmapsto\operatorname{Gr}(J):=\{(J^\sharp\alpha,\alpha)\mid\alpha\in J^1 L\}
	\end{equation*}
	such that $\operatorname{Gr}J\subset\bbD L$ is a Dirac--Jacobi structure iff $J$ is a Jacobi biderivation of $L$, i.e.~$[J,J]=0$.
	Hence, $L$-valued precontact structures identify with those Dirac--Jacobi structures $\calL\subset\bbD L$ such that $\calL\pitchfork DL$.
\end{example}

\begin{example}
	A third class of examples is represented by locally conformal presymplectic structures~\cite[Section 3]{DirJacBun}.
	Let us first recall that, on a line bundle $L\to M$, a \emph{locally conformal presymplectic (or plcs) structure} $(\nabla,\omega)$ consists of a flat connection $\nabla$ on $L$ and a $\rmd_\nabla$-closed $2$-form $\omega\omega^2(M;L)$.
	Then, each lcps structure $(\nabla,\omega)$ on $L$ determines a Dirac--Jacobi structure $\calL_{(\nabla,\omega)}\subset\bbD L$ given by
	\begin{equation*}
		\calL_{(\nabla,\omega)}=\{(\nabla_\xi,\sigma^\ast(\omega^\flat \xi)+\alpha)\mid (\xi,\alpha)\in TM\oplus(\image\nabla)^\circ\}.
	\end{equation*}
	Moreover, the lcps structure $(\nabla,\omega)$ is faithfully encoded into the associated Dirac--Jacobi structure $\calL_{(\nabla,\omega)}$.
\end{example}

Let us focus now on Dirac-Jacobi morphisms: as in the Dirac case, there are two different kinds.
\begin{definition}
	\label{def:backward_forward_DJ}
	Let $\Phi:L\to L^\prime$ be a regular line bundle morphism over a smooth map $\varphi:M\to M^\prime$.
	The \emph{forward transform along $\Phi$} of a Dirac--Jacobi structure $\calL\subset\bbD L$  is the 
	(non-necessarily smooth) Lagrangian family $\Phi_!\calL\subset\varphi^\ast\bbD L^\prime$ defined by
	\begin{equation*}
		(\Phi_!\mathcal{L})_x:=\{(D\Phi(\delta),\alpha^\prime)\in (\mathbb{D}L^\prime)_{\varphi(x)}\mid (\delta,(D_x\Phi)^\ast\alpha^\prime)\in \calL_x\}.
	\end{equation*}
	The \emph{backward transform along $\Phi$} of a Dirac--Jacobi structure $\calL^\prime\subset\bbD L^\prime$ is the 
	(non-necessarily smooth) Lagrangian family $\Phi^!\calL^\prime\subset\bbD L$ defined by
	\begin{equation*}
		(\Phi^!\mathcal{L}^\prime)_x:=\{(\delta,(D_x\Phi)^\ast\alpha^\prime)\in (\mathbb{D}L)_x\mid ((D\Phi)\delta,\alpha^\prime)\in \calL^\prime_{\varphi(x)}\}.
	\end{equation*}
\end{definition}

We refer to~\cite[Remarks~8.3 and 8.6]{DirJacBun} for a proof that $\Phi^!\calL^\prime$ and $\Phi_!\calL$ are actually Lagrangian families.

\begin{definition}
	Let $(M_i,L_i,\calL_i)$ be a Dirac--Jacobi manifold, for $i=1,2$, and let  $\Phi\colon L_1\to L_2$ be a regular line bundle morphism over a smooth map $\varphi:M_1\to M_2$.
	\begin{enumerate}[label={\arabic*)}]
		\item $\Phi:(M_1,L_1,\calL_1)\to (M_2,L_2,\calL_2)$ is a \emph{forward Dirac-Jacobi map} if $\Phi_!\calL_1=\calL_2|_{\varphi(M_1)}$.
		\item $\Phi:(M_1,L_1,\calL_1)\to (M_2,L_2,\calL_2)$ is a \emph{backward Dirac-Jacobi map} if $\calL_1=\Phi^!\calL_2$.
	\end{enumerate}
\end{definition}
It is worth mentioning that with both morphisms Dirac-Jacobi manifolds become a category.
We are going to discuss them in some detail.
For a regular LB morphism $\Phi\colon L_1\to L_2$, over a smooth map $\varphi:M_1\to M_2$, and a Dirac--Jacobi structure $\mathcal{L}_2\subseteq \mathbb{D}L_2$, if $\mathcal{L}_1:=\Phi^!\calL_2$ is a Dirac--Jacobi structure, then $\Phi$ is clearly a backward Dirac--Jacobi map $(M_1,L_1,\calL_1)\to (M_2,L_2,\calL_2)$.
But $\Phi^!\calL_2$ is not necessarily a Dirac-Jacobi structure: 

%

Let us examine under which circumstances the forward and backward transforms of a Dirac--Jacobi bundle are still subbundles. 
For this end, very useful tools are the following \emph{clean intersection} criteria.

\begin{theorem}{\cite[Prop.~8.11]{DirJacBun}}
	\label{thm:CleanInt_forward_DJ}
	Let $\Phi\colon L\to L^\prime$ be a regular LB morphism over $\varphi:M\to M^\prime$ and let $\calL\in \bbD L$ be a Dirac--Jacobi structure.
	If $\calL\cap\ker D\Phi$ has constant rank, then $\Phi_!\calL$ is a Dirac--Jacobi structure. 
\end{theorem}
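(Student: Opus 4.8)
The plan is to realise $\Phi_!\calL$ as the image of a composite of two smooth vector bundle maps over $M$ whose ranks are pinned down by the clean intersection hypothesis, and then to check the two conditions of Definition~\ref{def:DJ_structure} separately. Write $m=\dim M$, $m'=\dim M'$, so that $\rank\bbD L=2m+2$ and $\rank\bbD L'=2m'+2$, and regard $\ker D\Phi=\ker(D\Phi)\oplus 0\subset DL\oplus J^1L=\bbD L$. First I would introduce, over $M$, the two canonical bundle maps
\[
	B\colon DL\oplus\varphi^\ast J^1L'\to\bbD L,\quad (\delta,\alpha')\mapsto(\delta,(D\Phi)^\ast\alpha'),
	\qquad
	C\colon DL\oplus\varphi^\ast J^1L'\to\varphi^\ast\bbD L',\quad (\delta,\alpha')\mapsto(D\Phi(\delta),\alpha').
\]
By the very definition of the forward transform, setting $P:=B^{-1}(\calL)$ one has $\Phi_!\calL=C(P)$.

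The core of the argument is the rank count for $P$. Writing $\pr\colon\bbD L\to\bbD L/\calL$ for the quotient projection, $P=\ker(\pr\circ B)$ is the kernel of a smooth bundle map, hence a smooth subbundle as soon as it has locally constant rank. I would compute $\dim P_x=\dim\ker B_x+\dim(\image B_x\cap\calL_x)$. A direct inspection of the pairing shows $\image B_x=(\ker D\Phi)^\perp$ and $\ker B_x\cong(\image D_x\Phi)^\circ$; using that $\calL$ is Lagrangian, $(\ker D\Phi)^\perp\cap\calL=(\ker D\Phi+\calL)^\perp$, so that $\dim(\image B_x\cap\calL_x)=1+\rank d\varphi_x+\dim(\calL_x\cap\ker D\Phi)$. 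Substituting, the term $\rank d\varphi_x$ cancels and one is left with $\dim P_x=(m'+1)+\dim(\calL_x\cap\ker D\Phi)$. Thus $P$ has locally constant rank exactly because $\calL\cap\ker D\Phi$ does, and so $P$ is a smooth subbundle. Finally $\ker(C|_P)_x=\calL_x\cap\ker D\Phi$, again of constant rank, so $C|_P$ is a constant-rank bundle map; its image $\Phi_!\calL$ is therefore a smooth subbundle of $\varphi^\ast\bbD L'$ of rank $(m'+1)=\tfrac12\rank\bbD L'$.

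It remains to verify that this smooth subbundle is Lagrangian and involutive. For the Lagrangian condition I would invoke the cited Remarks~8.3 and~8.6, by which $\Phi_!\calL$ is in any case an isotropic family; combined with the rank $m'+1$ just obtained, this forces $\Phi_!\calL=(\Phi_!\calL)^\perp$. For involutivity I would argue that the construction presents local sections of $\Phi_!\calL$ as forward images under $C$ of sections of $\calL$ that are $\Phi$-related to them: given a local frame of $\Phi_!\calL$, lift it through the constant-rank map $C|_P$ to sections of $P$, and push these into $\calL$ through $B$, obtaining sections of $\calL$ whose $DL$- and $J^1L$-components are $\Phi$-related to those of the frame. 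Since $D\Phi$ is a Lie algebroid morphism and the whole der-Cartan calculus is natural under regular LB morphisms, $\Phi$-related sections have $\Phi$-related Dorfman brackets; as $\calL$ is involutive, the bracket of two lifts lies in $\calL$, and hence its forward image lies in $\Phi_!\calL$.

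I expect the involutivity step to be the main obstacle, and concretely the production of enough $\Phi$-related local sections: one must check that a local frame of $\Phi_!\calL$ really does lift to $\Phi$-related sections of $\calL$ in both components, the $J^1L$-part relying on regularity of $\Phi$ so that $(D\Phi)^\ast$ intertwines pull-backs of jets correctly. The smoothness established above is exactly what guarantees that these lifts exist locally and vary smoothly, so that the comparison of the Courant tensor $\Upsilon_{\Phi_!\calL}$ with $\Upsilon_{\calL}$ along $\Phi$-related sections is legitimate and the vanishing of the latter yields the vanishing of the former.
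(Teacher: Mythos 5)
The paper itself offers no proof of this statement—it is imported wholesale from \cite[Prop.~8.11]{DirJacBun}—so there is nothing internal to compare against; judged on its own terms, your argument is essentially correct and is the standard clean-intersection scheme. The rank bookkeeping is right: $\ker B_x\cong(\image D_x\Phi)^\circ$ has dimension $m'-\rank d\varphi_x$ (since $D_x\Phi$ preserves $\mathbbm{1}$ and covers $d\varphi_x$, so $\rank D_x\Phi=1+\rank d\varphi_x$), $\image B_x=(\ker D\Phi\oplus 0)^\perp=DL\oplus(\ker D_x\Phi)^\circ$, and $\calL=\calL^\perp$ gives $\dim(\image B_x\cap\calL_x)=1+\rank d\varphi_x+\dim(\calL_x\cap\ker D_x\Phi)$, whence $\dim P_x=m'+1+\dim(\calL_x\cap\ker D_x\Phi)$; the hypothesis is exactly what makes $P=\ker(\pr\circ B)$ and then $\Phi_!\calL=C(P)$ smooth of rank $m'+1=\tfrac12\rank\bbD L'$, and Lagrangianity follows from the already-known pointwise isotropy. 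Two points in the involutivity step deserve the care you yourself flag. First, the Dorfman bracket is defined on $\Gamma(\bbD L')$, so the ``local frame of $\Phi_!\calL$'' you lift must consist of restrictions of genuine sections of $\bbD L'$ over $M'$, not arbitrary sections of $\varphi^\ast\bbD L'$ over $M$; this presupposes that the family $\Phi_!\calL$ actually descends along $\varphi$, an ambiguity already present in the bare statement rather than a defect of your argument. Second, the lift through the constant-rank map $C|_P$ must be arranged so that its $\varphi^\ast J^1L'$-component is the pullback of an honest section of $J^1L'$; the fibers of $C|_P$ over a given $(\delta',\alpha')$ form an affine bundle modelled on $\calL\cap\ker D\Phi$, so the constant-rank hypothesis is again exactly what guarantees smooth local such lifts. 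Once these $\Phi$-related sections are produced, naturality of the der-Cartan calculus under regular LB morphisms gives $\Phi$-related Dorfman brackets, and $\Upsilon_{\Phi_!\calL}=0$ follows from $\Upsilon_{\calL}=0$ as you say.
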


\begin{theorem}{\cite[Prop.~8.4]{DirJacBun}}
	\label{Thm: CleanInt}
	Let $\Phi\colon L\to L^\prime$ be a regular LB morphism over $\varphi:M\to M^\prime$ and let $\mathcal{L}^\prime\in \mathbb{D}L^\prime $ be a Dirac--Jacobi structure.
	If $\ker (D\Phi)^\ast\cap \varphi^\ast \calL^\prime$ has constant rank, then $\Phi^!\calL^\prime$ is a Dirac--Jacobi structure. 
\end{theorem}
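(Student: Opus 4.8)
The statement to be proved has two substantial parts: that $\Phi^!\calL'$ is a smooth subbundle of $\bbD L$, and that it is involutive with respect to $\ldsb-,-\rdsb$; the fact that each fibre $(\Phi^!\calL')_x$ is Lagrangian with respect to $\bla-,-\bra$ is already granted by \cite[Remarks~8.3 and 8.6]{DirJacBun}. The plan is to obtain smoothness by realizing $\Phi^!\calL'$ as the image of an auxiliary subbundle under a vector bundle map, controlling every intervening rank by the single clean-intersection hypothesis, and then to obtain involutivity from the naturality of the Dorfman-like bracket under $\Phi$-related sections together with the tensoriality of the Courant tensor $\Upsilon$.

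For smoothness I would first consider the bundle map $G\colon DL\oplus\varphi^\ast J^1L'\to\varphi^\ast\bbD L'$, $(\delta,\alpha')\mapsto((D\Phi)\delta,\alpha')$, and set $\calK:=G^{-1}(\varphi^\ast\calL')$, so that $\Phi^!\calL'=(\id\oplus(D\Phi)^\ast)(\calK)$. Writing $\dim\calK_x=\dim\ker G_x+\dim(\image G_x\cap\calL'_{\varphi(x)})$ and using that $\image G_x=\image(D\Phi)_x\oplus J^1L'$ already contains the whole jet factor, the computation of $\dim(\image G_x\cap\calL')$ reduces to the rank of $\image(D\Phi)_x+\pr_D\calL'$ inside $DL'$. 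Here the key linear-algebra input is the annihilator duality $(\image D\Phi)^\circ=\ker(D\Phi)^\ast$ together with $\pr_D\calL'=(\calL'\cap J^1L')^\circ$ (valid since $\calL'$ is Lagrangian), giving $(\image(D\Phi)+\pr_D\calL')^\circ=\ker(D\Phi)^\ast\cap\calL'$. One then checks that the \emph{non-constant} contribution $\rank(D\Phi)_x$ cancels between $\dim\ker G_x$ and $\dim(\image G_x\cap\calL')$, leaving $\dim\calK_x=\rank(DL)+k$ with $k:=\rank(\ker(D\Phi)^\ast\cap\varphi^\ast\calL')$ constant by hypothesis; hence $\calK$ is a smooth subbundle. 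Finally, $\ker(\id\oplus(D\Phi)^\ast)$ meets $\calK$ exactly in $\{0\}\oplus(\ker(D\Phi)^\ast\cap\varphi^\ast\calL')$, again of constant rank $k$, so the image $\Phi^!\calL'$ is a smooth subbundle of rank $\rank(DL)$.

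For involutivity I would argue that the Courant tensor $\Upsilon_{\Phi^!\calL'}$, being $C^\infty(M)$-trilinear and now a smooth section of $\wedge^3(\Phi^!\calL')^\ast\otimes L$, vanishes as soon as it vanishes on the open dense locus $U\subseteq M$ where $\D\varphi$ has locally constant rank. On $U$ the constant-rank theorem lets me present $\Phi^!\calL'$ locally by sections $u=(\delta,\Phi^\ast\alpha')$ that are $\Phi$-related to sections $u'=(\delta',\alpha')\in\Gamma(\calL')$, i.e.\ $\delta\sim_\Phi\delta'$ and $\alpha=\Phi^\ast\alpha'$. For two such sections $Y\sim_\Phi Y'$ and $Z\sim_\Phi Z'$, naturality of the structure operations — $D\Phi$ being a Lie algebroid morphism, and $\Phi^\ast$ commuting with $\rmd_D$, $\iota_\delta$ and $\Lie_\delta$ for $\Phi$-related derivations — yields $\ldsb Y,Z\rdsb\sim_\Phi\ldsb Y',Z'\rdsb$; since $\calL'$ is involutive, $\ldsb Y',Z'\rdsb\in\Gamma(\calL')$ and therefore $\ldsb Y,Z\rdsb\in\Gamma(\Phi^!\calL')$. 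As $\Phi^!\calL'$ is isotropic, $\bla X,\ldsb Y,Z\rdsb\bra=0$ for every section $X$ of $\Phi^!\calL'$, and by tensoriality $\Upsilon_{\Phi^!\calL'}$ vanishes on $U$, hence everywhere.

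The main obstacle is precisely the step producing enough $\Phi$-related local generators of $\Phi^!\calL'$: since $\varphi$ is in general neither an immersion nor a submersion, $\Phi$-related sections need not span $\Phi^!\calL'$ globally, so one must either restrict to the constant-rank locus and invoke the density argument above, or factor $\varphi$ through its graph $M\hookrightarrow M\times M'\to M'$ and treat the backward transform along an immersion and along a submersion separately, checking that the clean-intersection hypothesis is inherited at each stage. Verifying the bracket-naturality identities for the $J^1$-component of $\ldsb-,-\rdsb$ is routine Cartan calculus, but the reduction to $\Phi$-related generators is where the real work lies.
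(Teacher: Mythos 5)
The paper offers no proof of this statement: it is quoted with attribution to \cite[Prop.~8.4]{DirJacBun}, so there is no in-paper argument to compare yours against; assessed on its own terms, your proposal is sound. The smoothness half is complete and correct: the variable quantity $\rank(D_x\Phi)$ really does cancel between $\dim\ker G_x$ and $\dim(\image G_x\cap\calL'_{\varphi(x)})$ once one uses $(\image D\Phi)^\circ=\ker(D\Phi)^\ast$ and $\calL'\cap J^1L'=(\pr_D\calL')^\circ$, so $\calK$ has constant fibre dimension $\rank DL+k$, and since $\ker(\id\oplus(D\Phi)^\ast)\cap\calK\cong\ker(D\Phi)^\ast\cap\varphi^\ast\calL'$ has constant rank $k$, the image $\Phi^!\calL'$ is a smooth Lagrangian subbundle. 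For involutivity, both escape routes you name do close the gap, and the factorization of $\Phi$ through $L\to L\times^!L'\to L'$ is the cleaner one because it is global rather than only on a dense open set: the second arrow covers a submersion, for which every point of the backward image lies on a section $\Phi$-related to a section of $\calL'$ with no hypothesis needed, and the first covers an embedding, to which the clean-intersection hypothesis is inherited verbatim since the submersion factor contributes an injective dual map. The one step to make explicit in the embedding case is that the constant rank of $\ker(DI)^\ast\cap\calL'$ is used a second time, beyond smoothness of $I^!\calL'$: it is what allows a given smooth local section of $I^!\calL'$ to be lifted smoothly to a section of $\calL'$ along the submanifold (the lifting map is a surjective vector bundle morphism whose kernel is exactly $\ker(DI)^\ast\cap\calL'$, so it admits smooth splittings precisely because that kernel has constant rank); after extending off the submanifold, the naturality of $\rmd_D$, $\iota_\Delta$ and $\scrL_\Delta$ for $\Phi$-related derivations gives $\ldsb Y,Z\rdsb\sim_\Phi\ldsb Y',Z'\rdsb$, and tensoriality of $\Upsilon_{\Phi^!\calL'}$ finishes the argument exactly as you describe.
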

From this theorem one can deduce a very useful result (see Corollary~\ref{Cor: TrnsMap}) about the smoothness of the backward transform of a Dirac--Jacobi structure along a Dirac--Jacobi transversal.

\begin{definition}
	\label{def:DJ_transversal}
	Let $(M,L,\calL)$ be a Dirac--Jacobi manifold.
	A \emph{Dirac--Jacobi transversal} to $(M,L,\calL)$ is a regular LB morphism $\Phi:L_N\to L$, covering a smooth map $\varphi:N\to M$, such that, for all $x\in N$,
	\begin{equation}
		\label{eq:def:DJ_transversal}
		(D_x\Phi) (D_xL_N)+ \pr_D \calL_{\varphi(x)}=D_{\varphi(x)}L.
	\end{equation}
	If additionally $N\subset M$ is a submanifold, $L_N=L|_N$ and $\Phi$ is the inclusion, one says that $N$ is a \emph{Dirac--Jacobi transversal} to $(M,L,\calL)$.
\end{definition}

\begin{corollary}[{\cite[Prop.~4.2]{NorForJac}}]
	\label{Cor: TrnsMap}
	Let $(M,L,\mathcal{L})$ be a Dirac-Jacobi manifold and let $\Phi\colon L_N \to L$ be a regular LB morphism.
	If $\Phi$ is a Dirac--Jacobi transversal for $(M,L,\calL)$ then $\Phi^!\calL\subseteq\bbD L_N$ is a Dirac--Jacobi structure. 
\end{corollary}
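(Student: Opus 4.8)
The plan is to reduce the statement to the clean intersection criterion of Theorem~\ref{Thm: CleanInt}. That theorem guarantees that $\Phi^!\calL$ is a Dirac--Jacobi structure as soon as the family $\ker(D\Phi)^\ast\cap\varphi^\ast\calL$ has constant rank, so it suffices to prove that under the transversality hypothesis this family is in fact the zero family. Working fibrewise at a point $x\in N$, write $p=\varphi(x)$ and observe that, since $J^1L=(DL)^\ast\otimes L$, the kernel of $(D_x\Phi)^\ast\colon (J^1L)_p\to(J^1L_N)_x$ consists precisely of those $\alpha'\in(J^1L)_p$ that annihilate the image $(D_x\Phi)(D_xL_N)\subseteq D_pL$; here I use that $(D_x\Phi)^\ast\alpha'$ evaluated on $\delta\in D_xL_N$ equals $\Phi_x^{-1}(\alpha'((D_x\Phi)\delta))$, so that $\ker(D_x\Phi)^\ast=\big((D_x\Phi)(D_xL_N)\big)^\circ$. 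Viewing this kernel inside $(\bbD L)_p=D_pL\oplus(J^1L)_p$ as $\{0\}\oplus\ker(D_x\Phi)^\ast$, I must control its intersection with $\calL_p$.

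First I would take an arbitrary element of this intersection, namely a pair $(0,\alpha')\in\calL_p$ with $\alpha'$ annihilating $(D_x\Phi)(D_xL_N)$, and exploit that $\calL$ is Lagrangian. Indeed, isotropy of $\calL_p$ applied to $(0,\alpha')$ and any $(\delta,\beta)\in\calL_p$ gives $\ldab(0,\alpha'),(\delta,\beta)\rdab=\alpha'(\delta)=0$, so $\alpha'$ also annihilates $\pr_D\calL_p$. Consequently $\alpha'$ annihilates the sum $(D_x\Phi)(D_xL_N)+\pr_D\calL_p$, which by the transversality condition~\eqref{eq:def:DJ_transversal} is all of $D_pL$. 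Since an $L$-valued covector on $D_pL$ that kills every derivation is zero, I conclude $\alpha'=0$.

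Therefore the intersection $\ker(D_x\Phi)^\ast\cap\calL_p$ is trivial for every $x\in N$; in particular it has constant rank $0$, and Theorem~\ref{Thm: CleanInt} immediately yields that $\Phi^!\calL$ is a Dirac--Jacobi structure. I expect the only real subtlety to lie in keeping the $L$-twisting of the symmetric pairing and of the jet bundle straight: both the identification $\ker(D_x\Phi)^\ast=\big((D_x\Phi)(D_xL_N)\big)^\circ$ and the isotropy computation are $L$-valued, so one must check that evaluating the pairing and the annihilator conditions is compatible with the isomorphism $J^1L\cong(DL)^\ast\otimes L$. Once this bookkeeping is in place, the argument is a direct combination of the Lagrangian property of $\calL$ with the transversality identity, and no smoothness issues arise because the fibrewise intersection vanishes identically.
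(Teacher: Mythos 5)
Your proof is correct and follows essentially the same route as the paper: both arguments identify $\ker(D_x\Phi)^\ast$ with the annihilator of $\image(D_x\Phi)$, use the Lagrangian property of $\calL$ (via $\calL\cap J^1L=(\pr_D\calL)^\circ$) to show any element of $\ker(D\Phi)^\ast\cap\varphi^\ast\calL$ annihilates $\image(D_x\Phi)+\pr_D\calL_{\varphi(x)}=D_{\varphi(x)}L$, and then invoke Theorem~\ref{Thm: CleanInt} with constant rank zero. The bookkeeping concern you raise about the $L$-twisting is handled exactly as you describe and poses no problem.
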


\begin{proof}
	Let $(x,(0,\alpha))\in \ker D\Phi^\ast\cap \varphi^\ast\mathcal{L}$, thus $(0,\alpha)\in \mathcal{L}_{\varphi(x)}$ and $\alpha\in(\image(D_x\Phi))^\circ$.
	Since $\calL^\perp=\calL$ and so $\calL\cap J^1L=(\pr_D\mathcal{L})^\circ$, we see that $\alpha\in(\image(D_x\Phi)+ \pr_D\calL_{\varphi(x)})$.
	By the Dirac--Jacobi transversality condition~\eqref{eq:def:DJ_transversal} we conclude that $\alpha=0$.
	Hence $\ker D\Phi^\ast\cap\varphi^\ast\calL=0$ and the claim follows from Theorem~\ref{Thm: CleanInt}.
\end{proof}

Corollary \ref{Cor: TrnsMap} implies, in particular, that the backward transform along a regular LB morphism covering a submersion is always a Dirac-Jacobi structure.
Note that we have in special cases that backward and forward transforms are inverse to each other, which will be useful throughout this paper.

\begin{corollary}
	\label{cor:forward_backward}
	Let $\Phi:L\to L^\prime$ be a regular LB morphism covering a surjective submersion $\varphi:M\to M^\prime$.
	Then, for any Dirac--Jacobi structure $\calL^\prime\subset\bbD L^\prime$, one gets
	\begin{equation*}
		\Phi_!\Phi^!\calL^\prime=\calL^\prime.
	\end{equation*}
\end{corollary}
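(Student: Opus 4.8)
The plan is to prove the identity $\Phi_!\Phi^!\calL^\prime=\calL^\prime$ pointwise on $M^\prime$, leveraging the fact that $\varphi$ is a surjective submersion so that both transforms are already known to be smooth Dirac--Jacobi structures (by Corollary~\ref{Cor: TrnsMap} applied to $\Phi^!$, and by Theorem~\ref{thm:CleanInt_forward_DJ} applied to $\Phi_!$, once the relevant clean-intersection hypotheses are checked). Since $\varphi$ is surjective, it suffices to compare the fibers $(\Phi_!\Phi^!\calL^\prime)_{\varphi(x)}$ and $\calL^\prime_{\varphi(x)}$ for each $x\in M$, and since both are Lagrangian subbundles of the same rank, it is enough to establish a single inclusion, say $\calL^\prime_{\varphi(x)}\subseteq(\Phi_!\Phi^!\calL^\prime)_{\varphi(x)}$; the reverse then follows by dimension count.

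First I would unwind the two definitions. Writing $N:=\Phi^!\calL^\prime$, an element $(D\Phi(\delta),\alpha^\prime)$ lies in $(\Phi_!N)_{\varphi(x)}$ precisely when there is some $\delta\in D_xL$ with $(\delta,(D_x\Phi)^\ast\alpha^\prime)\in N_x$, which in turn unwinds to the condition $((D\Phi)\delta,\alpha^\prime)\in\calL^\prime_{\varphi(x)}$. The key structural input is that because $\varphi$ is a submersion, $D_x\Phi\colon D_xL\to D_{\varphi(x)}L^\prime$ is surjective: indeed $\Phi$ is fiberwise invertible and $\sigma\circ D\Phi$ covers $T\varphi$, which is surjective, so the transitivity of the gauge algebroid guarantees that every derivation upstairs in $D_{\varphi(x)}L^\prime$ is hit by $D_x\Phi$. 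This surjectivity is what makes the forward transform recover everything.

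With that in hand, take any $(\square,\beta^\prime)\in\calL^\prime_{\varphi(x)}$. By surjectivity of $D_x\Phi$ choose $\delta\in D_xL$ with $(D_x\Phi)\delta=\square$. Then setting $\alpha^\prime:=\beta^\prime$, the pair $(\delta,(D_x\Phi)^\ast\beta^\prime)$ satisfies exactly the defining condition for membership in $N_x=(\Phi^!\calL^\prime)_x$, namely $((D\Phi)\delta,\beta^\prime)=(\square,\beta^\prime)\in\calL^\prime_{\varphi(x)}$. Pushing this element forward via $\Phi_!$ returns $(D\Phi(\delta),\beta^\prime)=(\square,\beta^\prime)$, so $(\square,\beta^\prime)\in(\Phi_!N)_{\varphi(x)}$. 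This gives the inclusion $\calL^\prime\subseteq\Phi_!\Phi^!\calL^\prime$.

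The main obstacle, and the step deserving the most care, is the dimension/rank bookkeeping that closes the argument: I must verify that $\Phi_!\Phi^!\calL^\prime$ is genuinely a smooth Lagrangian subbundle of the \emph{same} rank as $\calL^\prime$, so that the single inclusion forces equality rather than merely a proper containment. Smoothness of $\Phi^!\calL^\prime$ comes from Corollary~\ref{Cor: TrnsMap} (a regular LB morphism covering a submersion is a Dirac--Jacobi transversal, since $\pr_D\calL^\prime$ need not even be used once $D\Phi$ is surjective), and smoothness of the subsequent forward transform requires checking that $\Phi^!\calL^\prime\cap\ker D\Phi$ has constant rank in order to invoke Theorem~\ref{thm:CleanInt_forward_DJ}. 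Here the kernel of $D_x\Phi$ has constant rank equal to $\dim M-\dim M^\prime$ (again by the submersion hypothesis), and one checks that $\ker D_x\Phi\subset(\Phi^!\calL^\prime)_x$ automatically, since any $\delta\in\ker D_x\Phi$ pairs into $\calL^\prime$ trivially and $(\delta,0)$ meets the defining condition; this pins the intersection rank as constant. Once both transforms are confirmed smooth and Lagrangian of equal rank, the fiberwise inclusion established above upgrades to the asserted equality.
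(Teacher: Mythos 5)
Your proof is correct; note that the paper itself states Corollary~\ref{cor:forward_backward} without proof, so there is no argument to compare against, but yours is the standard one. The two essential points --- surjectivity of $D_x\Phi$ (from $T_x\varphi$ surjective together with $\mathbbm 1\mapsto\mathbbm 1$ in the symbol sequence), which yields the inclusion $\calL'_{\varphi(x)}\subseteq(\Phi_!\Phi^!\calL')_x$, and the fact that both sides are Lagrangian subspaces of $(\bbD L')_{\varphi(x)}$ of the same dimension, which upgrades the inclusion to equality --- are exactly what is needed; the smoothness discussion via Corollary~\ref{Cor: TrnsMap} and Theorem~\ref{thm:CleanInt_forward_DJ} is harmless but not strictly required for the pointwise identity.
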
  
\begin{remark}
	Since, by their very definition, Courant--Jacobi automorphisms of the omni-Lie algebroid preserve both the symmetric product and the Dorfman bracket, if $(F,\Phi)\in\Aut_{\sf{CJ}}(\bbD L)$ and $\calL\subset\bbD L$ is a Dirac--Jacobi structure, then $F\calL\subset\bbD L$ is again a Dirac--Jacobi structure.
	In particular, given an arbitrary LB automorphism $\Phi\in\Aut_{\sf{VB}}(L)$, for any Dirac--Jacobi structure $\calL\subset\bbD L$ one gets that
	\begin{align*}
		(\mathbb{D}\Phi)\mathcal{L}=(\Phi^{-1})^!\calL=\Phi_!\calL
	\end{align*}
	which is a Dirac-Jacobi structure again.
	Moreover, given an arbitrary $B\in \Omega^2_{D,\text{cl}}(L)$, for any Dirac--Jacobi structure $\calL\subset\bbD L$, there is a unique Dirac--Jacobi structure $\calL^B\subset\bbD L$, the \emph{B-field tranform of $\calL$}, such that 
	\begin{equation*}
		\mathcal{L}^B=\exp(B) \mathcal{L}.
	\end{equation*}
\end{remark}

\begin{remark}
	\label{rem:opposite_DJ}
	An additional operation on Dirac--Jacobi structures is the following.
	For any Dirac--Jacobi structure $\calL\subset\bbD L$, the \emph{opposite Dirac-Jacobi structure} $\calL^{\mathrm{opp}}\subset\bbD L$ is defined by 
	\begin{align*}
		\mathcal{L}^{\mathrm{opp}}:=\{(\delta,-\alpha)\in \bbD L\mid(\delta,\alpha)\in \mathcal{L}\}.
	\end{align*}
\end{remark}

\section{Products of Dirac-Jacobi Bundles}
\label{sec:products}

This Section introduces the notion of \emph{products of Dirac--Jacobi bundles} (see Theorem~\ref{theor:product_DJ_structure}).
Indeed, this notion will play a crucial role in studying weak dual pairs and defininig the most relevant operations on them.
As a necessary technical tool, we also recall the notion of products and fiber products of line bundles.

\subsection{Products of Line Bundles}
\label{subsec:LineBundlesProducts}

The category of line bundles should not be seen as a full subcategory of vector bundles, at least not for our purposes. 
The reason is that we want to shrink the  $\Hom$ sets in order to get a category admitting products and a reasonable amount of fiber products.
Precisely, we will consider the category $\mathfrak{Line}$ consisting of line bundles as objects and ``regular'' line bundle morphisms as arrows.

\begin{theorem}
	\label{Thm: ProdCatLine}
	The category $\mathfrak{Line}$ admits products.
\end{theorem}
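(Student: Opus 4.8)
The plan is to build the product of two line bundles explicitly and then obtain arbitrary finite (nonempty) products by iteration, so the essential content is the binary case. A dimension count already reveals the subtlety: a regular morphism \emph{into} a line bundle carries only an $\bbR^\times$-worth of fibrewise data, whereas a \emph{pair} of regular morphisms into $L_0$ and $L_1$ carries an $(\bbR^\times)^2$-worth; the extra $\bbR^\times$ must be absorbed into the base, so the product cannot live over $M_0\times M_1$. Accordingly, I would take as base the open submanifold
\[
	M:=\Iso(L_0,L_1)\subset\pr_0^\ast L_0^\ast\otimes\pr_1^\ast L_1
\]
obtained by removing the zero section, i.e.\ the bundle over $M_0\times M_1$ whose fibre over $(x_0,x_1)$ is the set of linear isomorphisms $(L_0)_{x_0}\overset{\sim}{\to}(L_1)_{x_1}$; write $p_i\colon M\to M_i$ for its projection followed by $\pr_i$.

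On $M$ I would put the line bundle $L:=p_0^\ast L_0$. By construction $M$ carries a \emph{tautological} regular isomorphism $\tau\colon p_0^\ast L_0\overset{\sim}{\to}p_1^\ast L_1$, namely $\tau_c=c$ at the point $c\in M$, so that $L\cong p_1^\ast L_1$ canonically. The product projections are then
\[
	\pi_0\colon L=p_0^\ast L_0\longrightarrow L_0,\qquad \pi_1\colon L\overset{\tau}{\longrightarrow}p_1^\ast L_1\longrightarrow L_1,
\]
the canonical maps covering $p_0$ and $p_1$; both are fibrewise invertible, hence regular LB morphisms.

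For the universal property, let $Q\to N$ carry regular morphisms $f_0\colon Q\to L_0$ and $f_1\colon Q\to L_1$ over $g_0\colon N\to M_0$ and $g_1\colon N\to M_1$. The comparison morphism is forced: the condition $\pi_1\circ f=f_1$ pins down the isomorphism-coordinate of the base map, so necessarily
\[
	g(x):=\big(g_0(x),g_1(x),(f_1)_x\circ(f_0)_x^{-1}\big)\in M,
\]
while $\pi_0\circ f=f_0$ forces $f_x=(f_0)_x$ fibrewise. Conversely this $f$ works: since $\tau_{g(x)}=(f_1)_x\circ(f_0)_x^{-1}$, one gets $\pi_1\circ f_x=(f_1)_x\circ(f_0)_x^{-1}\circ(f_0)_x=(f_1)_x$, and $\pi_0\circ f=f_0$ holds by definition. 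This yields both existence and uniqueness, so $(L,\pi_0,\pi_1)$ is the product; finite (nonempty) products then follow by iterating the binary case, the outcomes agreeing up to canonical isomorphism.

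The one point demanding genuine care is smoothness. That $M$ is a smooth manifold is immediate, being the complement of the zero section in a vector bundle, and $L=p_0^\ast L_0$, $\tau$, $\pi_0$ and $\pi_1$ are manifestly smooth. The crux is the smoothness of the comparison base map $g\colon N\to M$: this holds because $x\mapsto(f_1)_x\circ(f_0)_x^{-1}$ is a smooth nowhere-vanishing section of $g_0^\ast L_0^\ast\otimes g_1^\ast L_1$, which in turn follows from $f_0,f_1$ being smooth regular morphisms whose fibrewise inverses depend smoothly on the base point. Granting this, the fibrewise map $f$ is smooth as well, and the remaining checks are routine fibrewise verifications.
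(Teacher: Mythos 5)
Your construction is correct and coincides with the paper's: the base $\Iso(L_0,L_1)$ is exactly the manifold $M_0\times^!M_1$ of fibrewise isomorphisms used in the paper, the bundle $p_0^\ast L_0$ with the tautological identification and the two projections is the same, and the universal morphism $l_x\mapsto\bigl((f_1)_x\circ(f_0)_x^{-1},(f_0)_x(l_x)\bigr)$ is the one the paper writes down. The only difference is cosmetic: you make the smooth structure on the base explicit (complement of the zero section in $L_0^\ast\boxtimes L_1$) and spell out the smoothness of the comparison map, both of which the paper leaves implicit.
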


\begin{proof}
	Fix arbitrary line bundles $L_i\to M_i$, for $i=1,2$.
	First, we define the smooth manifold
	\begin{align*}
		M_1\times^! M_2 =\{\Psi_{x,y}\colon L_{1,x}\to L_{2,y}\ \text{linear isomorphism}\ |\ (x,y)\in M_1\times M_2\},
	\end{align*}
	with the natural projections $p_i\colon M_1\times^! M_2\to M_i$, for $i=1,2$.
	Second, we construct the line bundle $L_1\times^!L_2\to M_1\times^!M_2$ by setting 
	\begin{align*}
		L_1\times^!L_2:=p_1^\ast L_1
	\end{align*}		 
	together with the regular line bundle morphisms $P_1\colon L_1\times^!L_2 \to L_1$ and $P_2\colon L_1\times^!L_2 \to L_2$ defined by 
	\begin{equation*}
		P_1\colon L_1\times^!L_2 \ni (\Psi_{x,y},\lambda_x)\mapsto \lambda_x\in L_1\qquad\text{and}\qquad P_2\colon L_1\times^!L_2 \ni (\Psi_{x,y},\lambda_x)\mapsto \Psi_{x,y}(\lambda_x)\in L_2.
	\end{equation*}
	Finally, let us consider an arbitrary line bundle $L\to M$ with regular LB morphisms $\Phi_i\colon L\to L_i$, for $i=1,2$.
	Then, it is easy to see that there is a unique regular LB morphism $\Phi_1\times^!\Phi_2\colon L\longrightarrow L_1\times^! L_2$ such that the following diagram
	\begin{center}
		\begin{tikzcd}
			&& L\arrow[lld, bend right=20, swap, "\Phi_1"]\arrow[rrd, bend left=20, "\Phi_2"]\arrow[d, dashed, "\Phi_1\times^!\Phi_2"] &&\\
			L_1 && L_1\times^!L_2\arrow[ll, "P_1"]\arrow[rr, swap, "P_2"] && L_2
		\end{tikzcd}
	\end{center}
	commute, and it is explicitly given by the following map
	\begin{align*}
		\Phi_1\times^!\Phi_2\colon L\longrightarrow L_1\times^!L_2,\ \ l_x\longmapsto (\Phi_{2,x}\circ\Phi_{1,x}^{-1},\Phi_{1,x}(l_x)). 
	\end{align*}  
	Hence, the line bundle $L_1\times^!L_2\to M_1\times^!M_2$, equipped with the regular LB morphisms $P_i\colon L_1\times^!L_2 \to L_i$, for $i=1,2$, has the universal property of the \emph{product of $L_1$ and $L_2$} in the category $\mathfrak{Line}$.
\end{proof}

\begin{remark}
	\label{rem:product_LB_morphisms}
	In the following, given regular LB morphisms $\Phi_1:L^\prime_i\to L_i$, with $i=1,2$, we will denote by $\Phi_1\times^!\Phi_2:L_1^\prime\times^!L_2^\prime\to L_1\times^!L_2$ the unique regular LB morphism s.t.~the following commute
	\begin{center}
		\begin{tikzcd}
			L_1^\prime\arrow[d, swap, "\Phi_1"] && L_1^\prime\times^!L_2^\prime\arrow[ll, swap, "P_1^\prime"]\arrow[rr, "P_2^\prime"]\arrow[d, dashed, "\Phi_1\times^!\Phi_2"] &&L_2^\prime\arrow[d, "\Phi_2"]\\
			L_1 && L_1\times^!L_2\arrow[ll, "P_1"]\arrow[rr, swap, "P_2"] && L_2
		\end{tikzcd}
	\end{center}
	where $P_i:L_1\times^!L_2\to L_i$ and $P_i^\prime:L_1^\prime\times^!L_2^\prime\to L_i^\prime$, for $i=1,2$, denote the natural projections of the products.
\end{remark}
\color{black}
Next, we show that $\mathfrak{Line}$ admits a reasonable amount of fiber products.
In the following we will denote by $\mathfrak{Man}$ the category of smooth manifolds with smooth maps as morphisms.

\begin{theorem}
	\label{Thm:FiberedProduct_LB}
	Let $\Phi_i:L_i\to L$ be regular LB morphisms, covering smooth maps $\varphi_i:M_i\to M$, for $i=1,2$.
	If, in the category of manifolds $\mathfrak{Man}$, there exists the fiber product
	\begin{center}
		\begin{tikzcd}
			M_1\times_M M_2 \arrow[r, "p_2"] \arrow[d, swap, "p_1"]& M_2\arrow[d, "\varphi_2"]\\
			M_1\arrow[r, swap, "\varphi_1"] & M
		\end{tikzcd}
	\end{center} 
	then, in the category of line bundles $\mathfrak{Line}$, there exists the fiber product of $\Phi_1\colon L_1\to L$ and $\Phi_2\colon L_2\to L$.
\end{theorem}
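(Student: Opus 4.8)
The plan is to realise the sought fibre product as a subobject of the product $L_1\times^! L_2$ from Theorem~\ref{Thm: ProdCatLine}, cut out by the condition that the ``comparison isomorphism'' labelling a point of $M_1\times^! M_2$ be precisely the one induced by $\Phi_1$ and $\Phi_2$. Over the base fibre product $M_1\times_M M_2$ (which exists by hypothesis, with projections $p_1,p_2$) and at a point $(x,y)$ with $\varphi_1(x)=\varphi_2(y)=:m$, both $\Phi_{1,x}\colon L_{1,x}\to L_m$ and $\Phi_{2,y}\colon L_{2,y}\to L_m$ are linear isomorphisms, so $\Phi_{2,y}^{-1}\circ\Phi_{1,x}\colon L_{1,x}\to L_{2,y}$ is a distinguished element of the fibre of $M_1\times^! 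M_2\to M_1\times M_2$ over $(x,y)$. First I would check that $(x,y)\mapsto \Phi_{2,y}^{-1}\circ\Phi_{1,x}$ is a smooth section over $M_1\times_M M_2$, whose image $M_1\times^!_L M_2$ is a submanifold diffeomorphic to $M_1\times_M M_2$. Equivalently, and more economically, I would set the total space to be $L_1\times^!_L L_2:=p_1^\ast L_1\to M_1\times_M M_2$, namely the restriction of $L_1\times^! L_2=p_1^\ast L_1$ along this section.

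Next I would equip $L_1\times^!_L L_2$ with its two projections, obtained by restricting $P_1,P_2$. Explicitly, $P_1\colon((x,y),\lambda_x)\mapsto\lambda_x$ is the canonical pull-back morphism covering $p_1$, while $P_2\colon((x,y),\lambda_x)\mapsto \Phi_{2,y}^{-1}(\Phi_{1,x}(\lambda_x))$ covers $p_2$; both are regular since fibrewise they are compositions of linear isomorphisms, and $P_2$ is smooth because $\Phi_1$ is smooth and $\Phi_2$ is a regular, hence fibrewise-invertible, LB morphism. A one-line computation then gives the commuting square $\Phi_1\circ P_1=\Phi_2\circ P_2$, since $\Phi_{1,x}(\lambda_x)=\Phi_{2,y}\big(\Phi_{2,y}^{-1}(\Phi_{1,x}(\lambda_x))\big)$.

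For the universal property, consider a line bundle $K\to N$ with regular LB morphisms $\Psi_i\colon K\to L_i$ over $\psi_i\colon N\to M_i$ satisfying $\Phi_1\circ\Psi_1=\Phi_2\circ\Psi_2$. On bases, $\varphi_1\circ\psi_1=\varphi_2\circ\psi_2$, so the universal property of $M_1\times_M M_2$ in $\mathfrak{Man}$ yields a unique smooth $u\colon N\to M_1\times_M M_2$ with $p_i\circ u=\psi_i$. I would then define $\Theta\colon K\to L_1\times^!_L L_2$ by $\Theta(k_n)=(u(n),\Psi_{1,n}(k_n))$, which is regular (fibrewise it is $\Psi_{1,n}$) and covers $u$. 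The identity $\Phi_{1,\psi_1(n)}\circ\Psi_{1,n}=\Phi_{2,\psi_2(n)}\circ\Psi_{2,n}$ coming from the equalizing hypothesis gives at once $P_1\circ\Theta=\Psi_1$ and $P_2\circ\Theta=\Psi_2$; from the viewpoint of the product construction, $\Theta$ is just the factorisation of $\Psi_1\times^!\Psi_2$ through the submanifold $M_1\times^!_L M_2$, into which it lands precisely because $\Psi_{2,n}\circ\Psi_{1,n}^{-1}=\Phi_{2,\psi_2(n)}^{-1}\circ\Phi_{1,\psi_1(n)}$. Uniqueness of $\Theta$ follows from the uniqueness of $u$ together with the constraint $P_1\circ\Theta=\Psi_1$, which pins down the $L_1$-component.

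The only genuinely analytic point — and the step I would treat most carefully — is the smoothness (and, in the submanifold picture, the embeddedness) of the comparison section $(x,y)\mapsto\Phi_{2,y}^{-1}\circ\Phi_{1,x}$, equivalently the smoothness of the twisted projection $P_2$; everything else is formal diagram-chasing reusing Theorem~\ref{Thm: ProdCatLine}. This is exactly where the hypothesis enters: once $M_1\times_M M_2$ is available as a manifold in $\mathfrak{Man}$, the regularity of $\Phi_2$ makes $y\mapsto\Phi_{2,y}^{-1}$ smooth and forces the fibre product of line bundles to live over $M_1\times_M M_2$, so that no transversality assumption beyond the existence of the base fibre product is required.
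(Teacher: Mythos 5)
Your proposal is correct and follows essentially the same route as the paper: you realise the fibre product as the pull-back of $L_1\times^!L_2=p_1^\ast L_1$ along the comparison map $M_1\times_M M_2\to M_1\times^! M_2$, $(x,y)\mapsto\Phi_{2,y}^{-1}\circ\Phi_{1,x}$, with the same explicit projections and the same universal-property argument (your formula for the comparison map is in fact the corrected version of a small typo in the paper's displayed map $p$). No substantive differences to report.
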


\begin{proof}
	Since $M_1\times_M M_2$ is a smooth manifold by assumption, we can consider the smooth map 
	\begin{align*}
		p\colon M_1\times_M M_2\longrightarrow M_1\times^!M_2,\ \ (x,y)\longmapsto \Phi_{2,y}^{-1}\circ \Phi^{-1}_{1,x}.
	\end{align*}
	Then, we can construct the line bundle $L_1\times_LL_2\to M_1\times_MM_2$ by setting 
	\begin{align*}
	L_1\times_L L_2:=p^\ast(L_1\times^!L_2)\simeq\{(\lambda_{1,x},\lambda_{2,y})\in L_1\times L_2\mid (x,y)\in M_1\times_MM_1\ \text{and}\ \Phi_1\lambda_{1,x}=\Phi_2\lambda_{2,y}\}
\end{align*}
	together with the regular LB morphisms $P_i\colon L_1\times_LL_2 \to L_i$, covering $p_i:M\times_MM_2\to M_i$, defined by 
	\begin{equation*}
		P_1\colon L_1\times_LL_2 \ni (\lambda_x,\lambda_y)\mapsto \lambda_x\in L_1\qquad\text{and}\qquad P_2\colon L_1\times_LL_2 \ni (\lambda_x,\lambda_y)\mapsto \lambda_y\in L_2.
	\end{equation*}
	Now it is easy to see that the latter fits in the following commutative diagram of regular LB morphisms
	\begin{equation}
	\label{Lem:Pullback_Proof}
		\begin{tikzcd}
			L_1\times_L L_2\arrow[r, "P_2"] \arrow[d, swap, "P_1"]& L_2\arrow[d, "\Phi_2"]\\
			L_1\arrow[r, swap, "\Phi_1"] & L
		\end{tikzcd}
	\end{equation}
	Finally, let us consider regular LB morphisms $\Psi_i:L^\prime\to L_i$, for $i=1,2$, such that $\Phi_1\circ\Psi_1=\Phi_2\circ\Psi_2$.
	Then there is a unique regular LB morphism $\Psi\colon L^\prime\rightarrow L_1\times_L L_2$ such that the following diagram commutes
	\begin{center}
		\begin{tikzcd}
			L^\prime\arrow[rrd, bend left=25, "\Psi_2"]\arrow[ddr, bend right=40, swap, "\Psi_1"]\arrow[rd, dashed, "\Psi"]&&\\
			&L_1\times_L L_2\arrow[r, "P_2"] \arrow[d, swap, "P_1"]& L_2\arrow[d, "\Phi_2"]\\
			&L_1\arrow[r, swap, "\Phi_1"] & L
		\end{tikzcd}
	\end{center}
 	As it is easy to see, the latter is explicitly given by the map $\Psi\colon L^\prime\longrightarrow L_1\times_LL_2,\ l_x\longmapsto(\Psi_1l_x,\Psi_2l_x)$.
 	So, the line bundle $L_1\times_LL_2\to M_1\times_MM_2$, with the regular LB morphisms $P_i\colon L_1\times_LL_2 \to L_i$, for $i=1,2$, has the universal property of the \emph{fiber product of $\Phi_1:L_1\to L$ and $\Phi_2:L_2\to L$} in the category $\mathfrak{Line}$.
\end{proof}

For later use we consider the Atiyah algebroid and the first jet bundle of a (fiber) product line bundle. 

\begin{lemma}
	\label{Lem: SplitDProd}
	Let $L_i\to M_i$ be line bundles for $i=1,2$.
	Then, for the product line bundle $L_1\times^!L_2\to M_1\times^!M_2$, with projections $P_i\colon L_1\times^!L_2 \to L_i$, for $i=1,2$, its Atiyah algebroid decomposes as follows 
	\begin{align*}
		D(L_1\times^!L_2) =\ker DP_1\oplus \ker DP_2.
	\end{align*}
\end{lemma}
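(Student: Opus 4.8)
The plan is to prove the decomposition as one of vector bundles by combining a rank count with the vanishing of the intersection $\ker DP_1\cap\ker DP_2$. Write $M=M_1\times^!M_2$, $L=L_1\times^!L_2$, and let $\pi=(p_1,p_2)\colon M\to M_1\times M_2$, $\Psi_{x,y}\mapsto(x,y)$. Since the fiber of $\pi$ over $(x,y)$ is the set of linear isomorphisms $L_{1,x}\to L_{2,y}$, an $\bbR^\ast$-torsor, the map $\pi$ is a surjective submersion with $1$-dimensional fibers; hence $p_1,p_2$ are submersions and $V:=\ker dp_1\cap\ker dp_2=\ker d\pi$ is the rank-$1$ vertical bundle of $\pi$.

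First I would describe the kernels. Because $P_i$ is fiberwise invertible, $((DP_i)\delta)s=P_i(\delta(P_i^\ast s))$ vanishes iff $\delta(P_i^\ast s)=0$, so $\delta\in\ker DP_i$ iff $\delta$ annihilates every pull-back section $P_i^\ast s$, $s\in\Gamma(L_i)$. Writing $s=f\lambda_i$ for a local frame $\lambda_i$ of $L_i$ and $f\in C^\infty(M_i)$, and using $P_i^\ast(f\lambda_i)=(p_i^\ast f)\,P_i^\ast\lambda_i$, the Leibniz rule splits this into the two conditions $\sigma_\delta\in\ker dp_i$ and $\delta(P_i^\ast\lambda_i)=0$. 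I would then check that the symbol restricts to an isomorphism $\sigma\colon\ker DP_i\xrightarrow{\ \sim\ }\ker dp_i$: injectivity holds because a symbol-free derivation is a multiple $a\mathbbm{1}$ of the identity and $(DP_i)(a\mathbbm{1})=a\mathbbm{1}$ forces $a=0$; surjectivity holds because any lift of $v\in\ker dp_i$ can be corrected by a multiple of $\mathbbm{1}$ so as to kill $\delta(P_i^\ast\lambda_i)$. In particular each $\ker DP_i$ is a subbundle of rank $\dim\ker dp_i$, whence
\begin{equation*}
	\rank\ker DP_1+\rank\ker DP_2=\dim\ker dp_1+\dim\ker dp_2=\dim M+1=\rank DL .
\end{equation*}

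The heart of the argument is the triviality of the intersection, and this is the step I expect to be delicate. If $\delta\in\ker DP_1\cap\ker DP_2$, then $\sigma_\delta\in V$ and $\delta$ kills both local frames $P_1^\ast\lambda_1$ and $P_2^\ast\lambda_2$ of $L$. These frames differ by a nowhere-zero $g\in C^\infty(M)$ with $P_2^\ast\lambda_2=g\,P_1^\ast\lambda_1$; the tautological formulas $(P_1^\ast\lambda_1)_\Psi=(\Psi,\lambda_1(x))$ and $(P_2^\ast\lambda_2)_\Psi=(\Psi,\Psi^{-1}\lambda_2(y))$ show that $g(s\cdot\Psi)=s^{-1}g(\Psi)$ under rescaling $\Psi\mapsto s\Psi$, so $g$ is non-constant along the fibers of $\pi$ with nowhere-vanishing vertical derivative. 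Applying $\delta$ to $P_2^\ast\lambda_2=g\,P_1^\ast\lambda_1$ and using $\delta(P_1^\ast\lambda_1)=0$ yields $\sigma_\delta(g)\,(P_1^\ast\lambda_1)=0$, hence $\sigma_\delta(g)=0$; since $\sigma_\delta\in V$ and the vertical derivative of $g$ is nonzero, this forces $\sigma_\delta=0$. Then $\delta=a\mathbbm{1}$ and $\delta(P_1^\ast\lambda_1)=a\,(P_1^\ast\lambda_1)=0$ gives $a=0$, so $\delta=0$.

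Finally, the trivial intersection together with the rank identity forces $DL=\ker DP_1\oplus\ker DP_2$ as vector bundles, completing the proof. The only genuinely subtle point is the intersection computation, and within it the observation that the ratio $g$ of the two tautological frames has nowhere-vanishing derivative along the $\bbR^\ast$-fibers of $M_1\times^!M_2$; this is precisely where the definition of the product $\times^!$ as a bundle of isomorphisms (rather than an ordinary Cartesian product) is used in an essential way.
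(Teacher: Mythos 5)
Your proof is correct and follows essentially the same strategy as the paper's: both reduce the lemma to showing $\ker DP_1\cap\ker DP_2=0$ together with a rank count, and both exploit the $\bbR^\times$-torsor structure of $M_1\times^!M_2$ over $M_1\times M_2$ in the key step. The only cosmetic difference is that the paper kills the intersection by explicitly constructing the generator $\Delta$ of the lifted $\bbR^\times$-action (with $DP_1\Delta=0$ and $DP_2\Delta=\mathbbm{1}$) and expanding elements of the intersection as $a\mathbbm{1}+b\Delta$, whereas you encode the same homogeneity information in the degree $-1$ scaling of the transition function $g$ between the two tautological frames.
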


\begin{proof}
	Note that the map $p\colon M_1\times^!M_2 \longrightarrow M_1\times M_2,\ \Phi_{x,y}\longmapsto(x,y)$ is a surjective submersion, with $\dim(M_1\times^!M_2)=\dim M_1+\dim M_2$, and hence $\rank\ker Tp=1$. 
	Moreover, $\ker Tp=\ker Tp_1\cap \ker Tp_2$ and it is generated by the fundamental vector field of the action of the 
	\begin{align*}
		\varphi\colon \mathbb{R}^\times \times (M_1\times^!M_2)\longrightarrow M_1\times^!M_2,\ (\alpha,\Psi_{x,y})\mapsto \alpha^{-1}\Psi_{x,y},
	\end{align*}
	where $\mathbb{R}^\times$ is the multiplicative group of non-zero real numbers.
	Further, the latter lifts to an action of $\mathbb{R}^\times$ on $L_1\times^!L_2\to M_1\times^!L_2$ by line bundle automorphisms
	\begin{align*}
		\Phi\colon \mathbb{R}^\times \times (L_1\times^!L_2)\longrightarrow L_1\times^!L_2,\ \ (\alpha,(\Psi_{x,y},\lambda_x))\longmapsto \Phi_\alpha(\Psi_{x,y},\lambda_x):=(\alpha^{-1}\Psi_{x,y},\lambda_x).
	\end{align*}	 
	By the construction of $L_1\times^!L_2$ (see the proof of Theorem~\ref{Thm: ProdCatLine}) we have that 
	\begin{align}\label{Eq: ProjProd}
		P_1\circ \Phi_\alpha=P_1 \text{ and } P_1\circ \Phi_\alpha =\alpha^{-1}P_1\text{ for all } \alpha\in \mathbb{R}^\times.
	\end{align}
	Let us denote by $\Delta$ the infinitesimal generator of the $\mathbb{R}^\times$-action on $L_1\times^!L_2\to M_1\times^!M_2$ by LB automorphisms, i.e.~the derivation $\Delta\in \calD(L_1\times^!L_2)$ given by 
	\begin{align*}
		\Delta(\lambda)=\frac{\D}{\D t}\At{t=0} \Phi_{\exp(t)}^\ast \lambda.
	\end{align*}
	So, in particular, $\ker Tp=\langle\sigma(\Delta)\rangle$.
	Moreover, using Equations \eqref{Eq: ProjProd}, we have 
	\begin{align*}
		DP_1(\Delta_{\Psi_{x,y}})=0 \text{ and } DP_2(\Delta_{\Psi_{x,y}})=\mathbbm{1}_y.
	\end{align*}
	Fix now $\square\in \ker DP_1\cap \ker DP_2$.
	Then $\sigma(\square)\in \ker Tp_1\cap \ker Tp_2$ and so $\square=a\mathbbm{1}+b\Delta$ with $a,b\in \mathbb{R}$.
	Consequently, one can compute
	\begin{equation*}
		0=DP_1(\square)=a\mathbbm{1}\qquad\text{and}\qquad 0=DP_2(\square)=(a+b)\mathbbm{1},
	\end{equation*}
	and so $a=b=0$.
	This means that $\ker DP_1\cap \ker DP_2=\{0\}$ and counting dimensions the claim follows. 
\end{proof}

Here are convenient expressions the Atiyah algebroid and the first jet bundle of a product line bundle. 

\begin{proposition}
	\label{prop:D-FunProd}
	Let $L_i\to M_i$ be line bundles for $i=1,2$.
	Then there exist canonical VB isomorphisms
	\begin{equation}
		\label{eq:prop:D-FunProd}
		D(L_1\times^!L_2) \cong p_1^\ast(DL_1)\oplus p_2^\ast(DL_2)\qquad\text{and}\qquad J^1(L_1\times^!L_2) \cong p_1^\ast(J^1L_1)\oplus p_2^\ast(J^1L_2)
	\end{equation}
	with the natural projections $p_1\colon M_1\times^! M_2\rightarrow M_1,\ \Psi_{x,y}\mapsto x$, and  $p_2\colon M_1\times^! M_2\rightarrow M_2,\ \Psi_{x,y}\mapsto y$.
\end{proposition}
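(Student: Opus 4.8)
The plan is to derive both isomorphisms from the splitting of Lemma~\ref{Lem: SplitDProd}, combined with a rank count and the line-bundle duality $J^1L\cong (DL)^\ast\otimes L$.

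For the first isomorphism in~\eqref{eq:prop:D-FunProd} I would consider the VB morphism over $\id_{M_1\times^!M_2}$ given by
\[
	(DP_1,DP_2)\colon D(L_1\times^!L_2)\longrightarrow p_1^\ast(DL_1)\oplus p_2^\ast(DL_2),
\]
where each $DP_i$ is regarded, as usual, as a map into the pull-back bundle $p_i^\ast(DL_i)$ because $P_i$ covers $p_i$. Its kernel is precisely $\ker DP_1\cap\ker DP_2$, which vanishes by Lemma~\ref{Lem: SplitDProd}; hence $(DP_1,DP_2)$ is fiberwise injective. A rank count then closes the argument: since $\rank DL=\dim M+1$ for a line bundle $L\to M$ (Remark~\ref{rem:derivations_of_line bundles}), and $p\colon M_1\times^!M_2\to M_1\times M_2$ has one-dimensional fibres (as in the proof of Lemma~\ref{Lem: SplitDProd}), so that $\dim(M_1\times^!M_2)=\dim M_1+\dim M_2+1$, both source and target have rank $\dim M_1+\dim M_2+2$. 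A fiberwise injective VB morphism between bundles of equal rank is a fiberwise isomorphism, hence a VB isomorphism.

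For the second isomorphism I would dualize the first one and tensor by $L_1\times^!L_2$, using $J^1L\cong (DL)^\ast\otimes L$ for every line bundle. Concretely,
\[
	J^1(L_1\times^!L_2)\cong \big(D(L_1\times^!L_2)\big)^\ast\otimes (L_1\times^!L_2)\cong \big(p_1^\ast(DL_1)^\ast\oplus p_2^\ast(DL_2)^\ast\big)\otimes (L_1\times^!L_2),
\]
where the last step uses that pull-back commutes with dualization. Distributing the tensor product over the direct sum and invoking the two canonical identifications $L_1\times^!L_2\cong p_1^\ast L_1$ (by construction, as $L_1\times^!L_2:=p_1^\ast L_1$) and $L_1\times^!L_2\cong p_2^\ast L_2$ (induced by the regular LB morphism $P_2$ covering $p_2$), I would rewrite the two summands as $p_1^\ast\big((DL_1)^\ast\otimes L_1\big)=p_1^\ast J^1L_1$ and $p_2^\ast\big((DL_2)^\ast\otimes L_2\big)=p_2^\ast J^1L_2$, since pull-back commutes with tensor products as well. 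This yields the second iso, and all the maps involved are manifestly canonical.

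The only genuinely delicate point is the bookkeeping in the second step: one must use \emph{both} descriptions of $L_1\times^!L_2$ as a pull-back line bundle — as $p_1^\ast L_1$ for the first summand and as $p_2^\ast L_2$ for the second — so that each dual factor $(DL_i)^\ast$ is tensored against the matching $p_i^\ast L_i$ and thereby absorbed into $p_i^\ast J^1L_i$. The first isomorphism, by contrast, is essentially immediate once Lemma~\ref{Lem: SplitDProd} provides the vanishing intersection of the two kernels and the ranks are matched.
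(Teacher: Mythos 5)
Your proposal is correct and follows essentially the same route as the paper: the paper also considers the map $\Delta\mapsto(DP_1(\Delta),DP_2(\Delta))$, deduces injectivity from Lemma~\ref{Lem: SplitDProd}, concludes by comparing ranks, and obtains the second identification by dualizing. Your write-up merely fills in the rank count and the bookkeeping with $L_1\times^!L_2\cong p_1^\ast L_1\cong p_2^\ast L_2$ that the paper leaves implicit.
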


\begin{proof}
	Let us simply write down the map
	\begin{align*}
		D(L_1\times^!L_2) \longrightarrow p_1^\ast(DL_1)\oplus p_2^\ast(DL_2),\quad \Delta_{\Psi_{x,y}} \longmapsto (\Psi_{x,y},(DP_1(\Delta_{\varphi_{x,y}}),DP_2(\Delta_{\varphi_{x,y}})))
	\end{align*}
	which is injective by Lemma \ref{Lem: SplitDProd}.
	Comparing the ranks of $p_1^\ast(DL_1)\oplus p_2^\ast(DL_2)$ and $D(L_1\times^!L_2)$, the first identification in Equation~\eqref{eq:prop:D-FunProd} follows.
	Dualizing one also obtains the second identification.
\end{proof}

As a final Corollary, we consider the Atiyah algebroid of the fiber product line bundle.

\begin{corollary}
	\label{Cor: SplittingPPDFun}
	Let $\Phi_i\colon L_i\to L$ be regular LB morphisms, for $i=1,2$.
	Under the same assumptions of Theorem~\ref{Thm:FiberedProduct_LB} there exist canonical VB isomorphisms
	\begin{equation}
		\label{eq:Cor: SplittingPPDFun}
		D(L_1\times_LL_2) \cong  DL_1\times_{DL}  DL_2\quad\text{and}\quad J^1(L_1\times_LL_2)\cong 
		J^1L_1\sqcup_{J^1L}J^1L_2.
	\end{equation}
\end{corollary}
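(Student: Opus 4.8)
The plan is to derive both isomorphisms from the functoriality of the gauge algebroid construction together with the symbol exact sequence, and to handle the first jet bundle by linear duality.

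First I would apply the functor $D$ to the defining pullback square~\eqref{Lem:Pullback_Proof} of the fiber product $L_1\times_L L_2$. Since $D$ sends regular LB morphisms to Lie algebroid morphisms, this produces a commutative square of Lie algebroid morphisms and hence a canonical VB morphism
\[
\Theta\colon D(L_1\times_L L_2)\longrightarrow DL_1\times_{DL}DL_2,\qquad \delta\longmapsto(DP_1\,\delta,\,DP_2\,\delta),
\]
landing in the fiber product $DL_1\times_{DL}DL_2=\{(\delta_1,\delta_2)\mid D\Phi_1\,\delta_1=D\Phi_2\,\delta_2\}$, regarded as a vector bundle over $M_1\times_M M_2$. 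Since $\Theta$ is a smooth VB morphism, it suffices to check that it is a fiberwise linear isomorphism.

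To this end I would compare the two symbol short exact sequences over $M_1\times_M M_2$. On the source we have $0\to\bbR\to D(L_1\times_L L_2)\xrightarrow{\sigma}T(M_1\times_M M_2)\to 0$. On the target, an element $(\delta_1,\delta_2)$ of $DL_1\times_{DL}DL_2$ has symbol pair $(\sigma\delta_1,\sigma\delta_2)$ constrained by $T\varphi_1\,\sigma\delta_1=T\varphi_2\,\sigma\delta_2$, so the symbol lands in $TM_1\times_{TM}TM_2$; transversality of $\varphi_1,\varphi_2$ (which underlies the smooth fiber product) identifies this canonically with $T(M_1\times_M M_2)$. The kernel of the symbol on the target consists of pairs $(a_1\mathbbm{1}_{L_1},a_2\mathbbm{1}_{L_2})$ with $a_1\mathbbm{1}_L=D\Phi_1(a_1\mathbbm{1})=D\Phi_2(a_2\mathbbm{1})=a_2\mathbbm{1}_L$, hence $a_1=a_2$, so it is the line spanned by $(\mathbbm{1}_{L_1},\mathbbm{1}_{L_2})$. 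As $DP_i$ preserves identity derivations, $\Theta$ carries $\mathbbm{1}$ to $(\mathbbm{1}_{L_1},\mathbbm{1}_{L_2})$ and covers the identity of $T(M_1\times_M M_2)$ on symbols; the short five lemma then shows $\Theta$ is an isomorphism, giving the first identification in~\eqref{eq:Cor: SplittingPPDFun}.

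For the second identification I would dualize. The isomorphism just proved fits into the short exact sequence
\[
0\to D(L_1\times_L L_2)\xrightarrow{(DP_1,DP_2)} p_1^\ast DL_1\oplus p_2^\ast DL_2\xrightarrow{D\Phi_1-D\Phi_2}\phi^\ast DL\to 0,
\]
where $\phi:=\varphi_1\circ p_1=\varphi_2\circ p_2$ and whose right-exactness is again the transversality of $\varphi_1,\varphi_2$. Taking duals, tensoring by the line bundle $\calL:=L_1\times_L L_2$, and using the canonical identifications $\calL\cong p_i^\ast L_i$ (via $P_i$) and $\calL\cong\phi^\ast L$ (via $\Phi_i\circ P_i$) together with $J^1E=(DE)^\ast\otimes E$, I obtain
\[
0\to\phi^\ast J^1L\xrightarrow{((D\Phi_1)^\ast,-(D\Phi_2)^\ast)} p_1^\ast J^1L_1\oplus p_2^\ast J^1L_2\to J^1(L_1\times_L L_2)\to 0.
\]
This exhibits $J^1(L_1\times_L L_2)$ as the cokernel of the pullback-of-forms maps $(D\Phi_i)^\ast\colon\phi^\ast J^1L\to p_i^\ast J^1L_i$, i.e.\ as the amalgamated sum $J^1L_1\sqcup_{J^1L}J^1L_2$, as claimed. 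The main obstacle, and the only place where genuine geometric input enters, is the exactness of the symbol sequence for $DL_1\times_{DL}DL_2$: the kernel computation hinges on $D\Phi_i$ sending $\mathbbm{1}_{L_i}$ to $\mathbbm{1}_L$, while the surjectivity onto $\phi^\ast DL$ (equivalently $T(M_1\times_M M_2)\cong TM_1\times_{TM}TM_2$) is precisely the transversality guaranteeing the smooth fiber product of Theorem~\ref{Thm:FiberedProduct_LB}; everything else is diagram-chasing and linear duality.
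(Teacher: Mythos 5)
Your proof is correct and follows essentially the same route as the paper's: the paper defines the very same map $\Delta\mapsto(DP_1\Delta,DP_2\Delta)$, checks it is an isomorphism by a dimension count using the commutative square~\eqref{Lem:Pullback_Proof}, and obtains the jet-bundle statement by duality. Your symbol-sequence/five-lemma argument and the explicit dual short exact sequence merely flesh out the paper's terse ``counting dimension'' and ``follows by duality'', and they correctly isolate the transversality of $\varphi_1,\varphi_2$ as the geometric input that both arguments tacitly rely on.
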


\begin{proof}
The map 
	\begin{align*}
	D(L_1\times_LL_2)\ni \Delta \mapsto (DP_1(\Delta), DP_2(\Delta))\in DL_1\times_{DL}  DL_2.
	\end{align*}	 
is a linear ismorphism, which can be seen by counting dimension and by using the commutative diagram 
\eqref{Lem:Pullback_Proof}. The second statement follows by duality. 
\end{proof}

\subsection{Products of Dirac-Jacobi Bundles}
\label{subsec:ProdofDJ}

The section introduces the product of two Dirac--Jacobi structures and studies its first properties.
As well-known the product of two Dirac structures lives on the product manifold, similarly the product of two Dirac--Jacobi structures lives on the product line bundle constructed in Section~\ref{subsec:LineBundlesProducts}.
As a preliminary step, we start introducing the sum of two Dirac--Jacobi structures defined on the same line bundle.

\begin{definition}
	Let $L \to M$ be a line bundle and let $\mathcal L_1, \mathcal L_2 \subset \mathbb D L$ be Dirac-Jacobi structures on $L$.
	The \emph{sum of $\mathcal{L}_1$ and $\mathcal{L}_2$} is the Lagrangian family $\mathcal{L}_1\star\mathcal{L}_2\subset\mathbb{D}L$ defined by 
\begin{equation*}
	\mathcal{L}_1 \star \mathcal{L}_2 := \{ (\delta, \alpha_1 + \alpha_2) \mid (\delta, \alpha_i) \in \mathcal{L}_i,\ \text{for}\ i = 1,2\} \subset 
	\mathbb{D} L.
\end{equation*}
\end{definition}

Note that, as stated in definition, $\mathcal{L}_1\star\mathcal{L}_2\subset \mathbb D L$ is a Lagrangian family.
Indeed, by construction, its isotropy follows from the isotropy of $\mathcal{L}_1$ and $\mathcal{L}_2$.
Moreover, $\calL_1\star\calL_2$ fits in the (pointwise) exact sequence
\begin{equation}
	\label{Lem:Prod_Proof}
	\begin{tikzcd}
		0\arrow[r]&(\pr_\rmD \calL_1\cap \pr_\rmD\calL_2)^\circ\arrow[r, "\text{incl}"]&\calL_1\star\calL_2\arrow[r, "\pr_\rmD"]&\pr_\rmD\calL_1\cap \pr_\rmD\calL_2\arrow[r]&0,
	\end{tikzcd}
\end{equation}
where we use that $\pr_D(\calL_1\star\calL_2)=\pr_\rmD\calL_1\cap\pr_\rmD\calL_2$ and $J^1L\cap(\calL_1\star\calL_2)=(\pr_\rmD\calL_1\cap\pr_\rmD\calL_2)^\circ$.
From the latter one concludes that $\rank(\calL_1\star\calL_2)=\rank DL=\frac{1}{2}\rank\bbD L$ and hence $\calL_1\star\calL_2$ is maximal isotropic.

\begin{lemma}
	\label{Lem: Product}
	Let $\calL_1,\calL_2\subset\bbD L$ be Dirac--Jacobi structures.
	If $\calL_1\star\calL_2\subset\bbD L$ is smooth, then $\calL_1\star\calL_2$ is a Dirac-Jacobi structure on $L$.
\end{lemma}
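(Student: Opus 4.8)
The plan is to use that $\calL:=\calL_1\star\calL_2$ is already shown, in the paragraph preceding the statement, to be a Lagrangian family of constant rank $\tfrac12\rank\bbD L$; the smoothness hypothesis therefore upgrades it to a genuine Lagrangian \emph{subbundle}, and the only property left to verify is involutivity under $\ldsb-,-\rdsb$. By the Remark containing~\eqref{Eq: Torsion}, involutivity of a Lagrangian subbundle is equivalent to the vanishing of its Courant tensor $\Upsilon_{\calL}\in\Gamma(\wedge^3\calL^\ast\otimes L)$. Since $\Upsilon_\calL$ is a genuine tensor, it may be checked on sections drawn from any local frame, so it suffices to exhibit a convenient frame and to prove $\ldab u,\ldsb v,w\rdsb\rdab=0$ on it.

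The frame I would use consists of sections of \emph{common-symbol} form $v=(\delta,\alpha_1+\alpha_2)$ with $(\delta,\alpha_1)\in\Gamma(\calL_1)$ and $(\delta,\alpha_2)\in\Gamma(\calL_2)$ sharing the \emph{same} $\delta$; these span $\calL$ by the exact sequence~\eqref{Lem:Prod_Proof} (the jet directions being recovered by taking $\delta=0$). For two such sections $v=(\delta_v,\alpha_v+\beta_v)$ and $w=(\delta_w,\alpha_w+\beta_w)$, the explicit Dorfman-like bracket gives
\begin{equation*}
	\ldsb v,w\rdsb=\big([\delta_v,\delta_w],\ (\scrL_{\delta_v}\alpha_w-\iota_{\delta_w}\rmd_D\alpha_v)+(\scrL_{\delta_v}\beta_w-\iota_{\delta_w}\rmd_D\beta_v)\big).
\end{equation*}
The two jet-valued summands are precisely the jet components of $\ldsb(\delta_v,\alpha_v),(\delta_w,\alpha_w)\rdsb\in\Gamma(\calL_1)$ and $\ldsb(\delta_v,\beta_v),(\delta_w,\beta_w)\rdsb\in\Gamma(\calL_2)$, both carrying the common symbol part $[\delta_v,\delta_w]$. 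Hence, by the \emph{separate} involutivity of $\calL_1$ and of $\calL_2$, the bracket $\ldsb v,w\rdsb$ is again of common-symbol form, so it lies in $\Gamma(\calL)$. As $\calL$ is isotropic, this gives $\Upsilon_\calL(u,v,w)=\ldab u,\ldsb v,w\rdsb\rdab=0$ for all frame sections $u$, and tensoriality promotes this to $\Upsilon_\calL\equiv0$, i.e.\ involutivity.

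The step I expect to be the main obstacle is the existence of \emph{smooth} common-symbol generators: a smooth section $\delta$ of $\pr_D\calL_1\cap\pr_D\calL_2$ can be lifted smoothly to $(\delta,\alpha_i)\in\Gamma(\calL_i)$ only where the fibrewise-surjective restrictions $\pr_D\colon\calL_i\to\pr_D\calL_i$ have locally constant rank, which is not automatic at singular points. I would bypass this by observing that the ranks of $\pr_D\calL_1$, of $\pr_D\calL_2$, and of their intersection are semicontinuous, hence locally constant on an open dense set $U\subseteq M$; on $U$ the lifts exist and the computation above yields $\Upsilon_\calL|_U=0$. Since $\calL$ is by hypothesis a smooth subbundle, $\Upsilon_\calL$ is a smooth (in particular continuous) tensor, so its vanishing on the dense set $U$ forces $\Upsilon_\calL\equiv0$ on all of $M$, completing the proof.
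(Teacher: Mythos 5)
Your proposal is correct and follows essentially the same route as the paper's proof: both reduce involutivity to the vanishing of the Courant tensor and evaluate it on sections of common-symbol form $(\Delta,\eta_1+\eta_2)$ with $(\Delta,\eta_i)\in\Gamma(\calL_i)$, using the involutivity of each $\calL_i$ together with the isotropy of $\calL_1\star\calL_2$. The only difference is that the paper asserts directly that every element of a fibre of $\calL_1\star\calL_2$ extends to such a smooth common-symbol section, whereas you produce the smooth lifts only over an open dense set where the relevant ranks are locally constant and then conclude by continuity of $\Upsilon_{\calL_1\star\calL_2}$ --- a more cautious treatment of a lifting step that the paper leaves implicit.
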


\begin{proof}
	We only have to prove that the smooth Lagrangian subbundle $\calL_1\star\calL_2\subset\bbD L$ is involutive.
	
	Fix a point $p\in M$, and choose an arbitrary element $(\delta,\alpha_1+\alpha_2)$ of $(\calL_1\star\calL_2)_p$.
	Since $\mathcal{L}_1\star\mathcal{L}_2$ is smooth and $\delta_p\in\pr_\rmD\calL_1\cap\pr_\rmD\calL_2=\pr_D(\calL_1\star\calL_2)$, there exists in particular a derivation $\Delta\in\calD L$ taking values in $\pr_D(\calL_1\star\calL_2)$ such that, additionally, $\Delta_p=\delta$.
	Therefore, for $i=1,2$, $\Delta$ takes values in $\pr_\rmD\calL_i$ and so there exists a local section $\eta_i$ of $J^1L_i$, defined in a neighborhood of $p$ in $M$, such that $(\Delta,\eta_i)$ takes values in $\calL_i$ and $(\Delta_p,\eta_{i,p})=(\delta,\alpha_i)$. 
	So $(\Delta,\eta_1+ \eta_2)$ is a local section of $\calL_1\star\calL_2$, defined in a neighborhood of $p$ in $M$, whose value at $p$ coincides with $(\delta,\alpha_1+\alpha_2)$.
	 
	Now it is easy to check that the Courant tensor of $\calL_1\star\calL_2$ vanishes on this kind of sections, i.e.
	\begin{equation*}
		\Upsilon_{\calL_1\star\calL_2}((\Delta,\eta_1+\eta_2),(\Delta^\prime,\eta^\prime_1+\eta^\prime_2),(\Delta^{\prime\prime},\eta^{\prime\prime}_1+\eta^{\prime\prime}_2))=0
	\end{equation*}
	for all $(\Delta,\eta_i),(\Delta^\prime,\eta^\prime_i),(\Delta^{\prime\prime},\eta^{\prime\prime}_i)\in\Gamma(\calL_i)$, with $i=1,2$.
	So $\Upsilon_{\calL_1\star\calL_2}$ vanishes identically.
\end{proof}

\begin{proposition}
	\label{prop:smoothness_sum}
	Let $\calL_1,\calL_2\subset\bbD L$ be Dirac--Jacobi structures.
	If $\pr_\rmD\calL_1+\pr_\rmD\calL_2\subset DL$ has constant rank, 
	then $\calL_1\star\calL_2$ is a Dirac--Jacobi structure on $L$.
\end{proposition}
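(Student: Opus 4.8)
The plan is to reduce the statement to smoothness via Lemma~\ref{Lem: Product} and then to exhibit $\calL_1\star\calL_2$ as the image of a vector bundle morphism of locally constant rank, so that the classical constant-rank theorem for bundle maps produces a smooth subbundle. First I would recall that, by Lemma~\ref{Lem: Product}, it suffices to prove that the Lagrangian family $\calL_1\star\calL_2\subset\bbD L$ is a smooth subbundle, since involutivity is then automatic. Note also that the exact sequence~\eqref{Lem:Prod_Proof} already shows $\calL_1\star\calL_2$ to have constant rank equal to $\rank DL$; the only remaining issue is local triviality, which constant fiber dimension alone does not guarantee.

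The core idea is to realize $\calL_1\star\calL_2$ through the fiber product $\calL_1\times_{DL}\calL_2$. I would consider the vector bundle morphism over $\id_M$
\[
	\Theta\colon\calL_1\oplus\calL_2\longrightarrow DL,\qquad\bigl((\delta_1,\alpha_1),(\delta_2,\alpha_2)\bigr)\longmapsto\delta_1-\delta_2,
\]
whose fiberwise image is exactly $\pr_D\calL_1+\pr_D\calL_2$. By hypothesis this has constant rank, so $\Theta$ has locally constant rank, and hence its kernel $\ker\Theta=\calL_1\times_{DL}\calL_2$ is a smooth vector subbundle of $\calL_1\oplus\calL_2$ (a vector bundle morphism of locally constant rank has smooth kernel and smooth image). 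This is the step where the transversality-type assumption on $\pr_D\calL_1+\pr_D\calL_2$ is actually used, via rank--nullity applied to $\Theta$.

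Next I would compose with the ``sum'' morphism
\[
	\Sigma\colon\calL_1\times_{DL}\calL_2\longrightarrow\bbD L,\qquad\bigl((\delta,\alpha_1),(\delta,\alpha_2)\bigr)\longmapsto(\delta,\alpha_1+\alpha_2),
\]
whose fiberwise image is precisely $\calL_1\star\calL_2$ by definition of the sum. Since~\eqref{Lem:Prod_Proof} gives that this image has constant rank $\rank DL$, the morphism $\Sigma$ (defined on the smooth bundle $\ker\Theta$) has locally constant rank, and therefore $\calL_1\star\calL_2=\image\Sigma$ is a smooth subbundle of $\bbD L$. Invoking Lemma~\ref{Lem: Product} then completes the proof.

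The main obstacle, and really the only nontrivial point, is the rank bookkeeping: one must check that the image of $\Theta$ is genuinely $\pr_D\calL_1+\pr_D\calL_2$ (so that the hypothesis translates, via rank--nullity, into smoothness of $\ker\Theta$), and that the constant rank of $\calL_1\star\calL_2$ recorded in~\eqref{Lem:Prod_Proof} upgrades $\Sigma$ to a constant-rank morphism. Once these two elementary identities are secured, no analytic difficulty remains, as both the kernel and the image of a constant-rank bundle map are automatically smooth subbundles.
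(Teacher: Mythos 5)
Your proposal is correct and follows essentially the same route as the paper: the paper also forms the difference map $K\colon\calL_1\oplus\calL_2\to DL$ (your $\Theta$), uses the constant-rank hypothesis to get a smooth kernel, and then takes the image of the sum map $S$ (your $\Sigma$) out of that kernel. The only cosmetic difference is that the paper certifies the constant rank of the second map by identifying its kernel with $(\pr_D\calL_1+\pr_D\calL_2)^\circ$, whereas you read off the constant rank of its image from the exact sequence~\eqref{Lem:Prod_Proof}; these are equivalent by rank--nullity.
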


\begin{proof}
	Since $\pr_\rmD\calL_1+\pr_\rmD\calL_2\subset DL$ has constant rank, one can introduce the surjective VB morphism
	\begin{align*}
		K\colon \calL_1\oplus\calL_2\longrightarrow\pr_\rmD\calL_1+ \pr_\rmD\calL_2,\quad((\delta_1,\alpha_1),(\delta_2,\alpha_2))\longmapsto\delta_1-\delta_2.
	\end{align*}
	Consequently, $\ker K\subset\calL_1\oplus\calL_2$ is a smooth subbundle and we can consider the VB morphism
	\begin{align*}
		S\colon \ker K\longrightarrow\bbD L,\quad ((\delta,\alpha),(\delta,\beta))\longmapsto (\delta,\alpha+\beta). 
	\end{align*}
	Since the kernel of $S$ is isomorphic to $(\pr_D\calL_1+\pr_D\calL_2)^\circ$ and so it has contant rank, also its image $\image S=\calL_1\star \mathcal L_2$ is a smooth subbundle and so, by Lemma \ref{Lem: Product}, it is a Dirac--Jacobi structure.  
\end{proof} 

\begin{remark}
	\label{Rem: SoothnessProd}
	Actually, the last Proposition is just a special case of the following fact which can be checked by elementary techniques.
	We are referring to the fact that the (contant rank) subbundle $\mathcal L_1 \star \mathcal L_2\subset\bbD L$ is smooth if and only if the singular subbundle $\pr_D\mathcal{L}_1\cap \pr_D\mathcal{L}_2\subset DL$ is smooth in the sense that, for any $p\in M$, its fiber over $p$ is generated by the values at $p$ of its smooth sections.
	If this is the case then, clearly, the compactly supported smooth sections of $\pr_D\calL_1\cap\pr\calL_2=\pr_D(\calL_1\star\calL_2)$ form a singular Lie subalgebroid of $DL$ in the sense of~\cite{zambon2018singular}.
\end{remark}

After these necessary preliminaries, the next theorem defines the product of Dirac--Jacobi structures. 

\begin{theorem}
	\label{theor:product_DJ_structure}
	Let $(M_i,L_i,\mathcal{L}_i)$ be a Dirac--Jacobi manifold, with $i=1,2$.
	Consider the product line bundle $L_1\times^!L_2\to M_1\times^!M_2$ with projections $P_i:L_1\times^!L_2\to L_i$, for $i=1,2$.
	Then the following is a smooth (constant rank) subbundle
	\begin{equation*}
		\calL_1\times^! \calL_2:=(P_1^!\calL_1)\star(P_2^!\calL_2)\subseteq\bbD(L_1\times^!L_2)
	\end{equation*}
	and so it is a Dirac-Jacobi structure on $L_1\times^!L_2\to M_1\times^!M_2$, called the \emph{product of $\calL_1$ and $\calL_2$}.
	Moreover, for $i=1,2$, the projection $P_i:L_1\times^!L_2\to L_i$ gives rise to a forward Dirac--Jacobi map
	\begin{equation*}
		\begin{tikzcd}
			(M_1\times^!M_2,L_1\times^!L_2,\calL_1\times^!\calL_2)\arrow[rr, "P_i"]&&(M_i,L_i,\calL_i).
		\end{tikzcd}
	\end{equation*}  
\end{theorem}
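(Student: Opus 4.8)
The plan is to construct $\calL_1\times^!\calL_2$ in the two stages dictated by its definition as a $\star$-sum of backward transforms, and then to read off the forward-map property by a direct fibrewise computation. Throughout I would work in the canonical splittings $D(L_1\times^!L_2)\cong p_1^\ast(DL_1)\oplus p_2^\ast(DL_2)$ and $J^1(L_1\times^!L_2)\cong p_1^\ast(J^1L_1)\oplus p_2^\ast(J^1L_2)$ of Proposition~\ref{prop:D-FunProd}, under which $DP_i$ is the $i$-th projection and, by Lemma~\ref{Lem: SplitDProd}, $\ker DP_1=p_2^\ast(DL_2)$ and $\ker DP_2=p_1^\ast(DL_1)$.

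First I would check that each projection $P_i\colon L_1\times^!L_2\to L_i$, which covers the surjective submersion $p_i$, is a Dirac--Jacobi transversal. Indeed $DP_i$ sends the tautological derivation $\mathbbm 1$ to $\mathbbm 1$ and, since $p_i$ is a submersion, its symbol covers all of $TM_i$; hence $(D_xP_i)\bigl(D_x(L_1\times^!L_2)\bigr)=D_{p_i(x)}L_i$ and the transversality condition~\eqref{eq:def:DJ_transversal} holds trivially. Corollary~\ref{Cor: TrnsMap} then guarantees that $P_i^!\calL_i\subset\bbD(L_1\times^!L_2)$ is a Dirac--Jacobi structure, so that $\calL_1\times^!\calL_2=(P_1^!\calL_1)\star(P_2^!\calL_2)$ is at least a $\star$-sum of genuine Dirac--Jacobi structures.

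Next, to establish smoothness I would invoke Proposition~\ref{prop:smoothness_sum}, which only requires that $\pr_D(P_1^!\calL_1)+\pr_D(P_2^!\calL_2)$ have constant rank. Unwinding the backward transform of Definition~\ref{def:backward_forward_DJ} in the splitting above gives $\pr_D(P_1^!\calL_1)=(DP_1)^{-1}(\pr_D\calL_1)=p_1^\ast(\pr_D\calL_1)\oplus p_2^\ast(DL_2)$, since $P_1^!\calL_1$ imposes no constraint on the $DL_2$-component (that is, $\ker DP_1\subset\pr_D(P_1^!\calL_1)$), and symmetrically for $P_2^!\calL_2$. Their sum is therefore all of $D(L_1\times^!L_2)$, which has constant rank, so $\calL_1\times^!\calL_2$ is a Dirac--Jacobi structure; being Lagrangian it automatically has constant rank $\tfrac12\rank\bbD(L_1\times^!L_2)$. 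The same computation yields the clean closed form $\calL_1\times^!\calL_2=\{((\delta_1,\delta_2),(\alpha_1,\alpha_2))\mid(\delta_i,\alpha_i)\in\calL_i\}$, using that the pulled-back jet satisfies $(DP_1)^\ast\alpha_1'=(\alpha_1',0)$ and $(DP_2)^\ast\alpha_2'=(0,\alpha_2')$.

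Finally, for the forward-map claim I would compute $(P_1)_!(\calL_1\times^!\calL_2)$ fibrewise from the closed form. Feeding it into the forward transform, the membership $\bigl((\delta_1,\delta_2),(DP_1)^\ast\alpha_1'\bigr)\in\calL_1\times^!\calL_2$ becomes $(\delta_1,\alpha_1')\in\calL_1$ together with $(\delta_2,0)\in\calL_2$, and the output is $(\delta_1,\alpha_1')$; the auxiliary condition on $\delta_2$ is vacuous because $(0,0)\in\calL_2$, so the resulting family is exactly $\calL_1$. Since $p_1$ is surjective this is precisely the forward Dirac--Jacobi condition $(P_1)_!(\calL_1\times^!\calL_2)=\calL_1|_{p_1(M_1\times^!M_2)}$, and the argument for $P_2$ is identical. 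The hard part will be bookkeeping rather than conceptual: the delicate step is to verify carefully how the backward transform interacts with the canonical splittings, in particular that $(DP_1)^\ast\alpha_1'$ really lands in the summand $p_1^\ast(J^1L_1)$ with vanishing $p_2^\ast(J^1L_2)$-component, which involves making the line-bundle identification $L_1\times^!L_2\cong p_1^\ast L_1$ hidden in $P_1^{-1}$ explicit; once this is pinned down, both the constant-rank count and the forward-transform computation go through cleanly.
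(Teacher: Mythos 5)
Your proposal is correct and follows essentially the same route as the paper: both use the splitting $D(L_1\times^!L_2)=\ker DP_1\oplus\ker DP_2$ of Lemma~\ref{Lem: SplitDProd} to verify the constant-rank hypothesis of Proposition~\ref{prop:smoothness_sum}, and both obtain the forward-map property by a fibrewise computation (the paper lifts $\delta_1\in\pr_D\calL_1$ to $\ker DP_2$ and invokes Lagrangianity for one inclusion, whereas you read off equality directly from the description $\calL_1\times^!\calL_2\simeq\calL_1\oplus\calL_2$ in the canonical splitting --- the same computation in different clothing). Your preliminary check that each $P_i$ is a Dirac--Jacobi transversal, so that $P_i^!\calL_i$ is a genuine Dirac--Jacobi structure before the $\star$-sum is formed, makes explicit a hypothesis of Proposition~\ref{prop:smoothness_sum} that the paper leaves implicit.
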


\begin{proof}
	By the definition of the backward transforms, we have $\ker DP_i\subset P_i^!\calL_i$, for $i=1,2$ and hence also
	\begin{equation*}
		\pr_D(P_1^!\calL_1)+\pr_D(P_2^!\calL_2)\supset\ker DP_1+ \ker DP_2=D(L_1\times^!L_2),
	\end{equation*}
	where in the last step we used Lemma~\ref{Lem: SplitDProd}.
	Applying now Proposition~\ref{prop:smoothness_sum}, we see that  
	$(P_1^!\calL_1)\star (P_2^!\calL_2)$ is a Dirac--Jacobi structure on $L_1\times^!L_2\to M_1\times^!M_2$. 
	Now we want to prove that, for $i=1,2$,
	\begin{align*}
		(P_i)_!((P_1^!\calL_1)\star (P_2^!\calL_2))=\calL_i.
	\end{align*}		
	Fix arbitrary $x\in M_1\times^!M_2$ and $(\delta_1, \alpha_1)\in \calL_{1,p_1x}$.
	Using again Lemma~\ref{Lem: SplitDProd}, we can find $\widetilde\delta\in\ker D_xP_2$, such that $(DP_1)\widetilde\delta=\delta_1$ and therefore $(\widetilde\delta,(Dp_1)^\ast\alpha_1)\in (P_1^!\calL_1)_x$. 
	Since $\widetilde\delta\in\ker Dp_2\subset P_2^!\calL_2$, we also have that $(\widetilde\delta,(DP_1)^\ast\alpha_1)\in (P_1^!\calL_1)\star (P_1^!\calL_2)$ and therefore also $(\delta_1, \alpha_1)=((DP_1)\widetilde\delta,\alpha_{p_1(x)})\in(P_i)_!((P_1^!\calL_1)\star(P_2^!\calL_2))$.
	So, we have just proved the following inclusion
	\begin{align*}
		\calL_i\subset(P_i)_!(P_1^!(\mathcal{L}_1)\star P_2^!(\mathcal{L}_2)).
	\end{align*}	  	
	Further, since both sides are Lagrangian, the equality holds, i.e.~$\calL_i=(P_i)_!((P_1^!\calL_1)\star (P_2^!\calL_2))$.
\end{proof}

\begin{remark}
	Notice that the product Dirac--Jacobi structure $\calL_1\times^!\calL_2$, with the forward Dirac--Jacobi maps $P_i\colon(M_1\times^!M_2,L_1\times^!L_2,\calL_1\times^!\calL_2)\longrightarrow(M_i,L_i,\calL_i)$, for $i=1,2$, does not satisfy the universal property of the product of $\calL_1$ and $\calL_2$ in the category of Dirac--Jacobi manifolds with forward Dirac--Jacobi maps.
\end{remark}

\begin{corollary}
	\label{cor:product_contact_structures}
	Let $L_i\to M_i$ be a line bundle and let $\varpi_i\in\Omega_D^2(L_i)$ be a presymplectic Atiyah form, $i=1,2$.
	Then there is a unique presymplectic Atiyah form $\varpi_1\times^!\varpi_2\in\Omega_D^2(L_1\times^! L_2)$, the \emph{product of $\varpi_1$ and $\varpi_2$}, s.t.
	\begin{equation*}
		\operatorname{Gr}(\varpi_1\times^!\varpi_2)=\operatorname{Gr}(\varpi_1)\times^!\operatorname{Gr}(\varpi_2).
	\end{equation*}
\end{corollary}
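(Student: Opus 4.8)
The plan is to reduce the statement to the earlier product theorem (Theorem~\ref{theor:product_DJ_structure}) applied to the Dirac--Jacobi structures $\operatorname{Gr}(\varpi_i)$. The key observation is that a presymplectic Atiyah form on $L_1 \times^! L_2$ is completely determined by its graph among those Lagrangian subbundles transverse to $J^1(L_1\times^!L_2)$, via the correspondence of Example~\ref{ex:PrecontactStructures_DJ}. So the whole corollary follows once I establish that $\operatorname{Gr}(\varpi_1)\times^!\operatorname{Gr}(\varpi_2)$ is itself of the form $\operatorname{Gr}(\varpi)$ for some (necessarily presymplectic) Atiyah $2$-form $\varpi$ on $L_1\times^!L_2$.

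First I would invoke Theorem~\ref{theor:product_DJ_structure} directly: since each $\operatorname{Gr}(\varpi_i)$ is a Dirac--Jacobi structure (by Example~\ref{ex:PrecontactStructures_DJ}, as $\varpi_i$ is presymplectic), the product $\operatorname{Gr}(\varpi_1)\times^!\operatorname{Gr}(\varpi_2)$ is automatically a smooth Dirac--Jacobi structure on $L_1\times^!L_2$. Thus involutivity and the Lagrangian condition come for free, and the only thing left to check is transversality to $J^1(L_1\times^!L_2)$, i.e.~that
\begin{equation*}
	\bigl(\operatorname{Gr}(\varpi_1)\times^!\operatorname{Gr}(\varpi_2)\bigr)\cap J^1(L_1\times^!L_2)=0.
\end{equation*}
Once this transversality holds, Example~\ref{ex:PrecontactStructures_DJ} produces a \emph{unique} presymplectic form $\varpi_1\times^!\varpi_2$ whose graph is the product, which is exactly the assertion (the presymplectic condition $\rmd_D(\varpi_1\times^!\varpi_2)=0$ is then equivalent to involutivity, already guaranteed).

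To verify transversality I would work with the splitting $D(L_1\times^!L_2)\cong p_1^\ast(DL_1)\oplus p_2^\ast(DL_2)$ and its dual $J^1(L_1\times^!L_2)\cong p_1^\ast(J^1L_1)\oplus p_2^\ast(J^1L_2)$ from Proposition~\ref{prop:D-FunProd}. Under this splitting, unwinding the definitions of $P_i^!$ and of the sum $\star$, an element $(0,\alpha)$ of $\operatorname{Gr}(\varpi_1)\times^!\operatorname{Gr}(\varpi_2)$ lying in the jet part forces, through the backward-transform conditions, that the corresponding jets $\alpha_i$ satisfy $(0,\alpha_i)\in\operatorname{Gr}(\varpi_i)$; but $\operatorname{Gr}(\varpi_i)\pitchfork J^1L_i$ gives $\alpha_i=0$, whence $\alpha=0$. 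I expect this bookkeeping to be the main obstacle: one must carefully track how the derivation-component zero condition propagates through $DP_i$ and its transpose, using $\ker DP_i\subset P_i^!\operatorname{Gr}(\varpi_i)$ and Lemma~\ref{Lem: SplitDProd}, to conclude that vanishing of the derivation part in the product genuinely forces vanishing of each summand $\delta_i$. Alternatively, and perhaps more cleanly, I would give an explicit formula $\varpi_1\times^!\varpi_2:=P_1^\ast\varpi_1+P_2^\ast\varpi_2$ (using the pullback of Atiyah forms along the regular morphisms $P_i$) and verify directly that its graph agrees with the product; uniqueness is then immediate from the injectivity of $\varpi\mapsto\operatorname{Gr}(\varpi)$ in Example~\ref{ex:PrecontactStructures_DJ}.
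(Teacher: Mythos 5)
Your proposal is correct and follows essentially the same route as the paper: smoothness and involutivity come from Theorem~\ref{theor:product_DJ_structure}, the only point to check is transversality to $J^1(L_1\times^!L_2)$, and this is done via $P_i^!(\operatorname{Gr}\varpi_i)=\operatorname{Gr}(P_i^\ast\varpi_i)$ together with Example~\ref{ex:PrecontactStructures_DJ} (the paper phrases the transversality check as $(\operatorname{Gr}\varpi_1\times^!\operatorname{Gr}\varpi_2)\cap J^1(L_1\times^!L_2)=(\pr_D(\cdots))^\circ=(D(L_1\times^!L_2))^\circ=0$, which is the same computation as your element chase, and the resulting form is indeed $P_1^\ast\varpi_1+P_2^\ast\varpi_2$ as in your alternative). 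The bookkeeping you worry about is harmless: in the $\star$-sum there is a single derivation component $\delta$, so $\delta=0$ immediately gives $(DP_i)\delta=0$ and hence $\alpha_i=0$ by transversality of each $\operatorname{Gr}\varpi_i$ to $J^1L_i$.
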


\begin{proof}
	Using the fact that $P_i^!(\operatorname{Gr}\varpi_i)=\operatorname{Gr}(P_i^\ast\varpi_i)$, for $i=1,2$, one can easily compute
	\begin{align*}
		(\operatorname{Gr}\varpi_1\times^!\operatorname{Gr}\varpi_2)\cap J^1(L_1\times^!L_2)=(\pr_D(\operatorname{Gr}(P_1^\ast\varpi_1)\star\operatorname{Gr}(P_2^\ast\varpi_2)))^\circ=(D(L_1\times^!L_2))^\circ=0,
	\end{align*}
	So, by Example~\ref{ex:PrecontactStructures_DJ}, $\operatorname{Gr}\varpi_1\times^!\operatorname{Gr}\varpi_2$ is the graph of a unique closed $(L_1\times^!L_2)$-valued Atiyah $2$-form.
\end{proof}

%
%

%
Now we want to introduce fiber products of Dirac-Jacobi manifolds, note that fiber products do not always exist in the category of line bundles.

\begin{corollary}
	\label{Cor: PBDJ}
	Let $\Phi_i:(M_i,L_i,\calL_i)\to(M,L,\calL)$ be a forward Dirac--Jacobi map, with $i=1,2$.
	Under the same assumptions of Theorem~\ref{Thm:FiberedProduct_LB}, so that there exists the fiber product line bundle 
	\begin{center}
		\begin{tikzcd}
			L_1\times_LL_2 \arrow[r, "P_2"]\arrow[d, swap, "P_1"] & L_2\arrow[d, "\Phi_2"] \\
			L_1\arrow[r, swap, "\Phi_1"] & L
		\end{tikzcd},
	\end{center}
	$P_1^!\mathcal{L}_1$, $P_2^!\mathcal{L}_2$ and 
	$(P_1^!\calL_1)\star (P_2^!\calL_2) $ are Dirac-Jacobi structures with the $P_i$'s being forward Dirac-Jacobi maps.
\end{corollary}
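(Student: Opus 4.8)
The plan is to run the proof of Theorem~\ref{theor:product_DJ_structure} almost verbatim, with the splitting $D(L_1\times^!L_2)=\ker DP_1\oplus\ker DP_2$ of Lemma~\ref{Lem: SplitDProd} replaced by the fiber-product splitting $D(L_1\times_LL_2)\cong DL_1\times_{DL}DL_2$ of Corollary~\ref{Cor: SplittingPPDFun}, and with the hypothesis that the $\Phi_i$ are \emph{forward} Dirac--Jacobi maps doing the essential work. As in the product case, taking the $J^1$-component to be zero in Definition~\ref{def:backward_forward_DJ} shows $\ker DP_i\subset P_i^!\calL_i$; and under the splitting one reads off $\image DP_i=(D\Phi_i)^{-1}(\image D\Phi_j)$ and $\pr_D(P_i^!\calL_i)=(DP_i)^{-1}(\pr_D\calL_i)$, for $\{i,j\}=\{1,2\}$.

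To see that each $P_i^!\calL_i$ is a Dirac--Jacobi structure I would apply the clean-intersection criterion Theorem~\ref{Thm: CleanInt} to $P_i$. Using $\calL_i\cap J^1L_i=(\pr_D\calL_i)^\circ$, the intersection $\ker(DP_i)^\ast\cap p_i^\ast\calL_i$ is identified with the annihilator $(\pr_D\calL_i+\image DP_i)^\circ$, so one is reduced to checking that $\pr_D\calL_i+(D\Phi_i)^{-1}(\image D\Phi_j)\subset DL_i$ has constant rank. This is where the forward hypothesis enters: since $(\Phi_j)_!\calL_j=\calL|_{\varphi_j(M_j)}$ is a genuine subbundle, $\image D\Phi_j$ and $\pr_D\calL$ are rigid along the common image, and this fixes the rank in question.

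For the $\star$-sum I would invoke Proposition~\ref{prop:smoothness_sum}, for which it suffices that $\pr_D(P_1^!\calL_1)+\pr_D(P_2^!\calL_2)\subset D(L_1\times_LL_2)$ be of constant rank; Lemma~\ref{Lem: Product} then upgrades the resulting smooth maximal isotropic subbundle to a Dirac--Jacobi structure for free. This constant-rank statement is the main obstacle, and it is genuinely more delicate than in Theorem~\ref{theor:product_DJ_structure}: there $\ker DP_1+\ker DP_2$ already exhausted $D(L_1\times^!L_2)$, whereas here $\ker DP_1+\ker DP_2\cong\ker D\Phi_1\oplus\ker D\Phi_2$ is proper, so the full rank is not formal. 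I expect to settle it by combining the transversality of $\varphi_1,\varphi_2$ underlying the very existence of the manifold fiber product $M_1\times_MM_2$ (which forces $\image D\Phi_1+\image D\Phi_2=DL$) with the forward identities $(\Phi_i)_!\calL_i=\calL|_{\varphi_i(M_i)}$ (which determine $\pr_D\calL$ on the overlap), so that $\pr_D(P_1^!\calL_1)+\pr_D(P_2^!\calL_2)$ is cut out with locally constant dimension.

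Finally, to prove that the $P_i$ are forward Dirac--Jacobi maps I would show $(P_i)_!\big((P_1^!\calL_1)\star(P_2^!\calL_2)\big)=\calL_i|_{p_i(M_1\times_MM_2)}$ by the inclusion-plus-Lagrangian-dimension-count of Theorem~\ref{theor:product_DJ_structure}: it is enough to lift each $(\delta_i,\alpha_i)\in\calL_i$ to an element of the $\star$-sum projecting onto it. The difference from the product case, where $\ker DP_j$ was large enough to furnish the lift immediately, is that the partner derivation now exists precisely because $\Phi_j$ is forward: $(\delta_i,\alpha_i)\in\calL_i$ forces $(D\Phi_i)\delta_i\in\pr_D\big((\Phi_i)_!\calL_i\big)\subseteq\image D\Phi_j$, which is exactly the solvability condition for completing $\delta_i$ to an element of $DL_1\times_{DL}DL_2$. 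A short diagram chase through the splitting of Corollary~\ref{Cor: SplittingPPDFun} then yields the forward-map identity and completes the proof.
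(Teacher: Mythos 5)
Your overall strategy---rerun the proof of Theorem~\ref{theor:product_DJ_structure} with the splitting $D(L_1\times_LL_2)\cong DL_1\times_{DL}DL_2$ of Corollary~\ref{Cor: SplittingPPDFun} in place of Lemma~\ref{Lem: SplitDProd}---is exactly what the paper's (one-line) proof indicates, and your diagnosis of where the product argument stops being automatic is accurate: since $\ker DP_1+\ker DP_2\cong\ker D\Phi_2\oplus\ker D\Phi_1$ is now in general a proper subbundle of $D(L_1\times_LL_2)$, none of the constant-rank hypotheses of Theorem~\ref{Thm: CleanInt} and Proposition~\ref{prop:smoothness_sum} comes for free.

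The gap is that you never actually verify those rank conditions; you only announce that you ``expect to settle'' them, and the mechanism you sketch does not obviously close. Under the splitting one has $\pr_D(P_i^!\calL_i)=(DP_i)^{-1}(\pr_D\calL_i)$, so the rank of $\pr_D(P_1^!\calL_1)+\pr_D(P_2^!\calL_2)$ is controlled by the ranks of subbundles of $DL$ of the form $(D\Phi_1)(\pr_D\calL_1)+(D\Phi_2)(\pr_D\calL_2)$ and $(D\Phi_i)(\pr_D\calL_i)+\image D\Phi_j$ (and the clean-intersection condition for $P_i$ alone reduces to constancy of the rank of $\pr_D\calL_i+(D\Phi_i)^{-1}(\image D\Phi_j)$). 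The forward hypothesis only yields the inclusion $\pr_D\calL\subseteq(D\Phi_i)(\pr_D\calL_i)\cap\image D\Phi_j$; it does not pin down $(D\Phi_i)(\pr_D\calL_i)$ itself, because an element $(\delta_i,\alpha_i)\in\calL_i$ with $\alpha_i\notin\image(D\Phi_i)^\ast$ contributes to $\pr_D\calL_i$ without being detected by $(\Phi_i)_!\calL_i$. Transversality of $\varphi_1,\varphi_2$ (which, incidentally, is not forced by the mere existence of the manifold fiber product, only sufficient for it) controls $\image D\Phi_1+\image D\Phi_2$ but not these sums. The same issue resurfaces in your lifting argument for the forward-map property: to complete $\delta_1\in\pr_D\calL_1$ to an element of $DL_1\times_{DL}DL_2$ you need $(D\Phi_1)\delta_1\in\image D\Phi_2$, and the forward condition guarantees this only for those $\delta_1$ whose partner $\alpha_1$ lies in $\image(D\Phi_1)^\ast$. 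A slightly cleaner packaging, still within the paper's references, is to factor $P_i=P_i'\circ J$ through the canonical regular LB morphism $J\colon L_1\times_LL_2\to L_1\times^!L_2$ and use the Claim in Lemma~\ref{Lem: BackProd} to write $(P_1^!\calL_1)\star(P_2^!\calL_2)=J^!(\calL_1\times^!\calL_2)$, so that everything reduces to a single clean-intersection check for $J$ against $\calL_1\times^!\calL_2$; but that check is equivalent to the same rank statement, so it must be supplied explicitly in either formulation before the corollary can be considered proved.
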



%
\begin{proof}
The proof follows from Theorem~\ref{theor:product_DJ_structure} and Corollary \ref{Cor: SplittingPPDFun}.
\end{proof}
Let us now discuss some properties of products and fiber products wrt backward/forward Dirac--Jacobi maps that will be relevant for the aims of this paper.

\begin{lemma}
	\label{Lem: BackProd}
	Let $\Phi_i:(M_i^\prime,L_i^\prime,\calL_i^\prime)\longrightarrow(M_i,L_i,\calL_i)$ be a backward Dirac--Jacobi map, for $i=1$.
	Then the following regular LB morphism (cf.~Remark~\ref{rem:product_LB_morphisms}) is a backward Dirac--Jacobi map
	\begin{equation*}
		\begin{tikzcd}
			(M_1^\prime\times^!M_2^\prime,L_1^\prime\times^!L_2^\prime,\calL_1^\prime\times^!\calL_2^\prime)\arrow[rr, "\Phi_1\times^!\Phi_2"]&&(M_1\times^!M_2,L_1\times^!L_2,\calL_1\times^!\calL_2).
		\end{tikzcd}
	\end{equation*}
\end{lemma}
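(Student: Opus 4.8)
The plan is to deduce the claim from two elementary functorial properties of the backward transform combined with the commutativity square defining $\Phi_1\times^!\Phi_2$. First I would unpack the statement: being a backward Dirac--Jacobi map means $\calL_i^\prime=\Phi_i^!\calL_i$ for $i=1,2$, while by Theorem~\ref{theor:product_DJ_structure} the two product structures are $\calL_1\times^!\calL_2=(P_1^!\calL_1)\star(P_2^!\calL_2)$ and $\calL_1^\prime\times^!\calL_2^\prime=((P_1^\prime)^!\calL_1^\prime)\star((P_2^\prime)^!\calL_2^\prime)$, where $P_i$ and $P_i^\prime$ denote the projections of the two products. Thus the assertion to be proved is the identity of Lagrangian families
\[
(\Phi_1\times^!\Phi_2)^!\big((P_1^!\calL_1)\star(P_2^!\calL_2)\big)=((P_1^\prime)^!\calL_1^\prime)\star((P_2^\prime)^!\calL_2^\prime).
\]

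The first ingredient I would establish is the functoriality of the backward transform under composition, namely $(\Psi\circ\Phi)^!=\Phi^!\circ\Psi^!$ for composable regular LB morphisms. This follows directly from Definition~\ref{def:backward_forward_DJ} using the gauge-algebroid identities $D(\Psi\circ\Phi)=D\Psi\circ D\Phi$ and, dually, $(D(\Psi\circ\Phi))^\ast=(D\Phi)^\ast\circ(D\Psi)^\ast$. Feeding in the commutativity relations $P_i\circ(\Phi_1\times^!\Phi_2)=\Phi_i\circ P_i^\prime$ of Remark~\ref{rem:product_LB_morphisms}, I would then obtain, for each $i=1,2$,
\begin{align*}
(\Phi_1\times^!\Phi_2)^!(P_i^!\calL_i)
&=\big(P_i\circ(\Phi_1\times^!\Phi_2)\big)^!\calL_i
=\big(\Phi_i\circ P_i^\prime\big)^!\calL_i\\
&=(P_i^\prime)^!(\Phi_i^!\calL_i)=(P_i^\prime)^!\calL_i^\prime.
\end{align*}

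The second ingredient is that the backward transform distributes over the sum $\star$: for any regular LB morphism $\Phi$ and any Dirac--Jacobi structures $\calA,\calB$ on the target one has $\Phi^!(\calA\star\calB)=(\Phi^!\calA)\star(\Phi^!\calB)$ as Lagrangian families. This is a one-line pointwise check: an element of the left-hand side reads $(\delta,(D\Phi)^\ast(\alpha^\prime+\beta^\prime))$ with $((D\Phi)\delta,\alpha^\prime)\in\calA$ and $((D\Phi)\delta,\beta^\prime)\in\calB$, and by linearity of $(D\Phi)^\ast$ this is precisely the sum of $(\delta,(D\Phi)^\ast\alpha^\prime)\in\Phi^!\calA$ and a compatible element of $\Phi^!\calB$; the reverse inclusion is identical.

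Putting the two ingredients together would immediately yield
\begin{align*}
(\Phi_1\times^!\Phi_2)^!(\calL_1\times^!\calL_2)
&=\big((\Phi_1\times^!\Phi_2)^!(P_1^!\calL_1)\big)\star\big((\Phi_1\times^!\Phi_2)^!(P_2^!\calL_2)\big)\\
&=((P_1^\prime)^!\calL_1^\prime)\star((P_2^\prime)^!\calL_2^\prime)=\calL_1^\prime\times^!\calL_2^\prime,
\end{align*}
which is exactly the backward compatibility asserted. The only non-formal point to address is smoothness, but it costs nothing here: the right-hand side is already a smooth Dirac--Jacobi structure by Theorem~\ref{theor:product_DJ_structure}, so the pointwise equality of families upgrades automatically to an equality of Dirac--Jacobi structures, with no clean-intersection argument required. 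I expect the only mild obstacle to be keeping the two pullbacks $(D\Phi)^\ast$ correctly ordered in the composition step; once functoriality and distributivity over $\star$ are in place, the conclusion is purely formal.
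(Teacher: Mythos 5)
Your proposal is correct and follows essentially the same route as the paper: the paper also reduces the lemma to the distributivity identity $\Phi^!(\calL_1\star\calL_2)=(\Phi^!\calL_1)\star(\Phi^!\calL_2)$, proved by the same pointwise computation (the paper gets the reverse inclusion from both sides being Lagrangian, while you verify it directly — either works). You are merely more explicit about the functoriality $(\Psi\circ\Phi)^!=\Phi^!\circ\Psi^!$ and the commuting square from Remark~\ref{rem:product_LB_morphisms}, steps the paper leaves implicit when it declares the lemma ``a special case'' of the claim.
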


\begin{proof}
	The statement is just a special case of the following claim that we are actually going to prove.
	\paragraph{\textbf{Claim}}
	Let $\Phi\colon L^\prime\to L$ be a regular LB morphism and $\calL_i\subset\bbD L$ be Dirac--Jacobi structures, $i=1,2$.
	Then
	\begin{equation}
		\label{eq:BackPro}
		\Phi^!(\calL_1\star\calL_2)=(P^!\calL_1)\star (P^!\calL_2).
	\end{equation}
		
	\paragraph{\emph{Proof of the Claim}}
	Fix an arbitrary $(\delta^\prime,(D\Phi)^\ast\alpha)\in\Phi^!(\calL_1\star\calL_2)$ so that $((D\Phi)\delta^\prime,\alpha)\in\calL_1\star\calL_2$.
	Then there are $\alpha_i\in J^1L$, with $i=1,2$, such that $\alpha=\alpha_1+\alpha_2$ and $((D\Phi)\delta^\prime,\alpha_i)\in\calL_i$, for $i=1,2$.
	So, we get 
	\begin{equation*}
		(\delta^\prime,(D\Phi)^\ast\alpha)=(\delta^\prime,(D\Phi)^\ast\alpha_1+(D\Phi)^\ast\alpha_2)\in(\Phi^!\calL_1)\star(\Phi^!\calL_2).
	\end{equation*}
	This proves that $\Phi^!(\calL_1\star\calL_2)\subset 
	(\Phi^!\calL_1)\star(\Phi^!\calL_2)$.
	Both bundles are Lagrangian and thus equal.
\end{proof}

The next lemma shows the interplay between forward/backward Dirac--Jacobi maps and fiber products.

\begin{lemma}
	\label{lem:transverse_forward}
	Let $\Phi_1:(M_1,L_1,\calL_1)\to(M,L,\calL)$ be a forward Dirac--Jacobi map, covering $\varphi_1\colon M_1\to M$, and let 
	$\Phi_2\colon L_2\to L$ be a Dirac--Jacobi transversal to $(M,L,\calL)$, covering $\varphi_2\colon M_2\to M$.
	Then the fiber product of $\Phi_1:L_1\to L$ and $\Phi_2:L_2\to L$ in $\mathfrak{Line}$ exists 
	\begin{equation}
		\label{eq:lem:transverse_forward}
		\begin{tikzcd}
			L_1\times_LL_2 \arrow[r, "P_2"]\arrow[d, swap, "P_1"]& L_2 \arrow[d, "\Phi_2"]\\
			L_1 \arrow[r, swap, "\Phi_1"]& L
		\end{tikzcd}
	\end{equation}
	and $P_2:(M_1\times_MM_2,L_1\times_LL_2,P_1^!\calL_1)\longrightarrow(M_2,L_2,\Phi_2^!\calL)$ is a forward Dirac--Jacobi map. 
\end{lemma}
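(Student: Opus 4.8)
\emph{Overall strategy.} I will treat the two assertions separately: first the existence of the fibre product line bundle, then the forward Dirac--Jacobi property of $P_2$. Throughout I write $m=\varphi_1(x)=\varphi_2(y)$ for a point of $M_1\times_MM_2$, and I use the symbol map $\sigma\colon DL\to TM$ together with the identity $\sigma\circ D\Phi=T\varphi\circ\sigma$. For the existence it suffices, by Theorem~\ref{Thm:FiberedProduct_LB}, to produce the fibre product $M_1\times_MM_2$ in $\mathfrak{Man}$, and for this I check that $\varphi_1$ and $\varphi_2$ are transverse, i.e.\ $\image T_x\varphi_1+\image T_y\varphi_2=T_mM$. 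Applying $\sigma$ to the transversality condition~\eqref{eq:def:DJ_transversal} for $\Phi_2$ gives $\image T_y\varphi_2+\sigma(\pr_D\calL_m)=T_mM$. On the other hand, the forward identity $(\Phi_1)_!\calL_1=\calL$ forces every element of $\calL_m$ to be of the form $(D\Phi_1(\delta),\alpha')$ with $\delta\in\pr_D\calL_{1,x}$; hence $\pr_D\calL_m\subseteq D\Phi_1(\pr_D\calL_{1,x})$ and in particular $\sigma(\pr_D\calL_m)\subseteq\image T_x\varphi_1$. Combining the two displays yields the desired transversality, so~\eqref{eq:lem:transverse_forward} exists by Theorem~\ref{Thm:FiberedProduct_LB}.

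\emph{$P_1^!\calL_1$ is a Dirac--Jacobi structure.} This is the step I expect to be the main obstacle, since it is exactly here that the transversality of $\Phi_2$ and the forward nature of $\Phi_1$ must be combined. The plan is to show that $P_1\colon L_1\times_LL_2\to L_1$ is itself a Dirac--Jacobi transversal to $(M_1,L_1,\calL_1)$ and then invoke Corollary~\ref{Cor: TrnsMap}. Using the identification $D(L_1\times_LL_2)\cong DL_1\times_{DL}DL_2$ of Corollary~\ref{Cor: SplittingPPDFun}, the image of $D_zP_1$ is $\{\delta_1\in D_xL_1\mid D\Phi_1(\delta_1)\in\image D\Phi_2\}$. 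Given an arbitrary $\delta_1\in D_xL_1$, the transversality~\eqref{eq:def:DJ_transversal} of $\Phi_2$ lets me write $D\Phi_1(\delta_1)=D\Phi_2(\delta_2)+c$ with $c\in\pr_D\calL_m$; by the inclusion $\pr_D\calL_m\subseteq D\Phi_1(\pr_D\calL_{1,x})$ established in the previous paragraph, $c=D\Phi_1(c')$ with $c'\in\pr_D\calL_{1,x}$, whence $D\Phi_1(\delta_1-c')=D\Phi_2(\delta_2)$ and so $\delta_1-c'\in\image D_zP_1$. This proves $\image D_zP_1+\pr_D\calL_{1,x}=D_xL_1$, i.e.\ the transversality condition for $P_1$, and Corollary~\ref{Cor: TrnsMap} then makes $P_1^!\calL_1$ a Dirac--Jacobi structure.

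\emph{Forward property.} Since $\Phi_2$ is a Dirac--Jacobi transversal, Corollary~\ref{Cor: TrnsMap} also makes the target structure $\Phi_2^!\calL$ a Dirac--Jacobi structure, so it remains to verify the pointwise equality $(P_2)_!(P_1^!\calL_1)=\Phi_2^!\calL$ over $p_2(M_1\times_MM_2)$. Both sides are Lagrangian families of the same rank, so it is enough to prove one inclusion, which I will obtain by a diagram chase along the commuting square $\Phi_1\circ P_1=\Phi_2\circ P_2$. Unwinding the definitions of forward and backward transform and using the description $J^1(L_1\times_LL_2)\cong J^1L_1\sqcup_{J^1L}J^1L_2$ of Corollary~\ref{Cor: SplittingPPDFun} to rewrite the constraint $(D_zP_2)^\ast\gamma=(D_zP_1)^\ast\beta$ as $\beta=(D\Phi_1)^\ast\xi$, $\gamma=(D\Phi_2)^\ast\xi$ for a common $\xi\in J^1_mL$, a typical element of $(P_2)_!(P_1^!\calL_1)$ at $y$ reads $(\delta_2,(D\Phi_2)^\ast\xi)$ with $(\delta_1,(D\Phi_1)^\ast\xi)\in\calL_{1,x}$ and $D\Phi_1(\delta_1)=D\Phi_2(\delta_2)$. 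The forward identity $(\Phi_1)_!\calL_1=\calL$ converts $(\delta_1,(D\Phi_1)^\ast\xi)\in\calL_{1,x}$ into $(D\Phi_1(\delta_1),\xi)=(D\Phi_2(\delta_2),\xi)\in\calL_m$, which is precisely $(\delta_2,(D\Phi_2)^\ast\xi)\in\Phi_2^!\calL$; running the same chain of equivalences backwards yields the reverse inclusion. This shows that $P_2$ is a forward Dirac--Jacobi map and completes the proof.
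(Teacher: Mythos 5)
Your proof is correct and follows essentially the same route as the paper: transversality of $\varphi_1$ and $\varphi_2$ via the symbol map, smoothness of $P_1^!\calL_1$ by showing $P_1$ is itself a Dirac--Jacobi transversal and invoking Corollary~\ref{Cor: TrnsMap}, and a pointwise chase through the splittings of Corollary~\ref{Cor: SplittingPPDFun} for the forward property. You in fact spell out in more detail the verification that $P_1$ is transversal to $\calL_1$, a step the paper only asserts.
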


\begin{proof}
	The Dirac--Jacobi transversality condition $(DL)|_{\varphi_2(M_2)}=\pr_D\calL+(D\Phi_2)DL_2$ and the Dirac--Jacobi forward map condition $\Phi_{1!}\calL_1=\calL|_{\varphi_1(M_2)}$ allow to easily compute
	\begin{equation*}
		(TM)|_{\varphi_1(M_1)\cap\varphi_2(M_2)}=\sigma\left(\pr_D(\Phi_{1!}\calL_1)+(D\Phi_2)DL_2\right)=(T\varphi_1)(TM_1)+(T\varphi_2)TM_2.
	\end{equation*}
	The latter means that the maps $\varphi_1:M_1\to M$ and $\varphi_2:M_2\to M$ are transversal and this is sufficient for the existence of their fiber product in $\mathfrak{Man}$
	\begin{equation*}
		\begin{tikzcd}
			M_1\times_M M_2 \arrow[r, "p_2"]\arrow[d, swap, "p_1"]& M_2 \arrow[d, "\varphi_2"]\\
			M_1\arrow[r, swap, "\varphi_1"]& M
		\end{tikzcd}
	\end{equation*}
	Consequently, by Theorem~\ref{Thm:FiberedProduct_LB}, the fiber product~\eqref{eq:lem:transverse_forward} in $\mathfrak{Line}$ exists.
	
	As for the second part of the statement, the hypotesis and the canonical isomorphism $D(L_1\times_LL_2)\simeq DL_1\times_{DL}DL_2$ from Corollary~\ref{Cor: SplittingPPDFun} allow to check that $P_1\colon L_1\times_L L_2\to L_1$ is transverse to $(M_1,L_1,\calL_1)$, i.e.
	\begin{equation*}
		(DL_1)|_{p_1(M_1\times_MM_2)}=\pr_D\calL_1+(DP_1)(D(L_1\times_LL_2))
	\end{equation*}and hence $P_1^!\calL_1$ is a Dirac--Jacobi structure on $L_1\times_L L_2\to M_1\times_M M_2$ by Corollary~\ref{Cor: TrnsMap}.	
	Now fix arbitrary $x_1\in M_1$, $x_2\in M_2$, with $\varphi_1(x_1)=\varphi_2(x_2)=:x$.
	Unravelling the definitions, we get that $(P_{2!}P_1^!\calL_1)_{(x_1,x_2)}$ consists of those $(\delta_2,\alpha_2)\in D_{x_2}L_2\oplus J^1_{x_2}L_2$ such that%
	, for some unique $\alpha_1\in J^1_{x_1}L_1$ $\delta_1\in D_{x_1}L_1$,
	\begin{equation*}
		(D_{x_2}\Phi_2)\delta_2=(D_{x_1}\Phi_1)\delta_1=:\delta,\quad (D_{(x_1,x_2)}P_2)^\ast\alpha_2=(D_{(x_1,x_2)}P_1)^\ast\alpha_1,\quad (\delta_1,\alpha_1)\in(\calL_1)_{x_1},
	\end{equation*}
	where we have used Corollary~\ref{Cor: SplittingPPDFun}.
	For the same reasons, $(D_{(x_1,x_2)}P_2)^\ast\alpha_2=(D_{(x_1,x_2)}P_1)^\ast\alpha_1$ implies that $\alpha_i=(D_{x_i}\Phi_i)^\ast\alpha$, for $i=1,2$, and for some (unique) $\alpha\in J^1_xL$.
	Consequently, $(\delta_1,(D\Phi_1)^\ast\alpha)=(\delta_1,\alpha_1)\in(\calL_1)_{x_1}$ and $((D\Phi_1)\delta_1,\alpha)=(\delta,\alpha)=((D\Phi_2)\delta_2,\alpha)\in(\Phi_{1!}\calL_1)_{(x_1,x_2)}=\calL_x$ imply that $$(\delta_2,\alpha_2)=(\delta_2,(D_x\Phi_2)^\ast\alpha)\in(\Phi_2^!\calL)_{(x_1,x_2)}.$$
	This proves the inclusion $P_{2!}P_1^!\calL_1\subset p_2^\ast(\Phi_2^!\calL)$, and so also the equality $P_{2!}P_1^!\calL_1=p_2^\ast(\Phi_2^!\calL)$.
\end{proof}

\section{Weak Dual Pairs in Dirac--Jacobi Geometry}
\label{sec:weak_dual_pairs}

In this Section we introduce the notion of (weak) dual pairs in Dirac--Jacobi geometry (see Definition~\ref{def:weak_dual_pair}), prove their very first properties and show how they naturally pop up in connection with the theory of multiplicative precontact structures on Lie groupoids (see Theorem~\ref{theor:over-precontact_groupoid:WDP}). 
\subsection{Definition and First Properties}
Let us begin by simply giving the definition of a weak dual pair. 

\begin{definition}
	\label{def:weak_dual_pair}
	A \emph{weak dual pair} is a pair of forward Dirac--Jacobi maps on the same precontact manifold
\begin{equation}
	\label{eq:def:weak_dual_pair}
	\begin{tikzcd}
	(M_0,L_0,\calL_0)&(M,L,\operatorname{Gr}\varpi)\arrow[l, swap, "S"]\arrow[r, "T"]&(M_1,L_1,\calL_1^\text{opp})
	\end{tikzcd}
\end{equation}
	covering surjective submersions $\begin{tikzcd}M_0&M\arrow[l, swap, "s"]\arrow[r, "t"]&M_1\end{tikzcd}$ and satisfying the following two conditions:
	\begin{enumerate}[label={\arabic*)}]
		\item
		\label{enumitem:def:weak_dual_pair:1}
		$\ker DS$ and $\ker DT$ are orthogonal to each other wrt $\varpi$, i.e.$$\varpi(\ker DS, \ker DT)=0$$
		\item
		\label{enumitem:def:weak_dual_pair:2}
		$\ker DS\cap \ker\varpi^\flat \cap \ker DT$ has constant rank equal to $\dim(M)-\dim(M_0)-\dim(M_1)-1$, i.e.
		$$\rank(\ker DS\cap \ker\varpi^\flat \cap \ker DT )=\dim(M)-\dim(M_0)-\dim(M_1)-1$$
	\end{enumerate}	
	Additionally, if $\dim(M)=\dim(M_0)+\dim(M_1)+1$, we say that the diagram~\eqref{eq:def:weak_dual_pair} is a \emph{dual pair}.
\end{definition}

	We refer to the Dirac--Jacobi manifolds $(M_i,L_i,\calL_i)$, with $i=0,1$, as the \emph{legs} of the weak dual pair~~\eqref{eq:def:weak_dual_pair}.

\begin{remark}
	\label{rem:full_contact_dual_pairs}
	The notion of weak dual pair is related to the notion of contact dual pair introduced in~\cite{STV2019}.
	Indeed, it is easy to see that:
	\begin{itemize}
		\item If the legs of a weak dual pair~\eqref{eq:def:weak_dual_pair} are Jacobi manifolds, then $\varpi\in\Omega^2_D(L)$ is regular, with
		\begin{equation*}
			\label{eq:rem:full_contact_dual_pairs}
			\ker\varpi^\flat=\ker DS\cap\ker\varpi^\flat\cap\ker DT,
		\end{equation*}
		i.e.~$\ker\varpi^\flat\subset\ker DS\cap\ker DT$, and so with $\rank\varpi=2+\dim M_0+\dim M_1$.
		\item
		If the legs of a dual pair~\eqref{eq:def:weak_dual_pair} are Jacobi manifolds, then $\varpi\in\Omega^2_D(L)$ is non-degenerate, i.e.~$\varpi$ is a symplectic Atiyah form. 
		Hence $(M,L,\operatorname{Gr}\varpi)$ is a contact manifold, and $\ker DS=(\ker DT)^{\perp\varpi}$.
	\end{itemize}
	This proves that a dual pair~\eqref{eq:def:weak_dual_pair} with Jacobi legs is the same thing as a \emph{full contact dual pair} (see~\cite{STV2019}, in particular, Definition~3.1 and Proposition~4.4).
\end{remark}

A first consequence of the definition of weak dual pairs is the following.

\begin{proposition}
	\label{prop:isotropy_bundle}
	Let~\eqref{eq:def:weak_dual_pair} be a weak dual pair.
	Then $\ker DS\cap\ker\varpi^\flat\cap\ker DT\subset\bbD L$ is a smooth subbundle. 
\end{proposition}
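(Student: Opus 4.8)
The plan is to realize the family $\calK:=\ker DS\cap\ker\varpi^\flat\cap\ker DT$ as the kernel of a \emph{single} smooth vector bundle morphism over $M$, and then to invoke the constant rank theorem for vector bundle morphisms. The point worth stressing is that $\ker\varpi^\flat\subset DL$ need not be a smooth subbundle (the Atiyah $2$-form $\varpi$ is only presymplectic, hence possibly of non-constant rank), so smoothness of the triple intersection is not automatic; it is the constant rank hypothesis~\ref{enumitem:def:weak_dual_pair:2} that will save the day.

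First I would recall that, since $S\colon L\to L_0$ and $T\colon L\to L_1$ are regular LB morphisms covering the submersions $s$ and $t$, the induced Lie algebroid morphisms $DS\colon DL\to DL_0$ and $DT\colon DL\to DL_1$ are smooth and fibrewise surjective; read over $M$, they are smooth VB morphisms $DS\colon DL\to s^\ast DL_0$ and $DT\colon DL\to t^\ast DL_1$. Together with the smooth VB morphism $\varpi^\flat\colon DL\to J^1L$ they assemble into a single smooth VB morphism over $M$,
\begin{equation*}
	\phi:=DS\oplus\varpi^\flat\oplus DT\colon DL\longrightarrow s^\ast DL_0\oplus J^1L\oplus t^\ast DL_1,
\end{equation*}
whose kernel is, by construction,
\begin{equation*}
	\ker\phi=\ker DS\cap\ker\varpi^\flat\cap\ker DT=\calK .
\end{equation*}

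I would then conclude via the standard fact that a vector bundle morphism whose fibrewise kernel has locally constant dimension is of constant rank, and therefore has smooth (locally trivial) kernel and image. Indeed, by~\ref{enumitem:def:weak_dual_pair:2} the fibre $\calK_x=(\ker\phi)_x$ has constant dimension $\dim M-\dim M_0-\dim M_1-1$; since $\dim(\ker\phi)_x=\rank DL-\rank\phi_x$, the rank of $\phi$ is constant, whence $\ker\phi=\calK$ is a smooth subbundle of $DL$, and a fortiori of $\bbD L$.

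I do not anticipate any serious obstacle: the entire content is the reformulation of $\calK$ as the kernel of the \emph{one} morphism $\phi$, which side-steps the fact that the pairwise intersection $\ker DS\cap\ker DT$ need not itself be of constant rank (both $s$ and $t$ are submersions, but $(s,t)\colon M\to M_0\times M_1$ need not be of constant rank). Restricting $\varpi^\flat$ to $\ker DS\cap\ker DT$ would therefore first require establishing smoothness of that pairwise intersection, which is precisely what the single morphism $\phi$ lets us bypass. Note, incidentally, that the orthogonality condition~\ref{enumitem:def:weak_dual_pair:1} is not needed for this particular statement.
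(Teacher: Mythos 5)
Your proof is correct and rests on the same mechanism as the paper's: the triple intersection is exhibited as the kernel of a single smooth vector bundle morphism whose rank is forced to be constant by condition~\ref{enumitem:def:weak_dual_pair:2} of Definition~\ref{def:weak_dual_pair}, whence the kernel is a smooth subbundle. The only real difference is the choice of that morphism. You take $\phi=DS\oplus\varpi^\flat\oplus DT\colon DL\to s^\ast DL_0\oplus J^1L\oplus t^\ast DL_1$, whereas the paper takes $K\colon\ker DS\oplus\ker DT\to\bbD L$, $(\delta,\delta^\prime)\mapsto(\delta-\delta^\prime,\varpi^\flat\delta^\prime)$, whose kernel is the diagonal copy of the same intersection (this is legitimate because $\ker DS$ and $\ker DT$ are already smooth subbundles, $s$ and $t$ being submersions). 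Your $\phi$ is arguably more direct and, as you observe, sidesteps any discussion of the pairwise intersection $\ker DS\cap\ker DT$. What the paper's $K$ buys in exchange is that its image is $\ker DS+(\ker DT)^\varpi$, so the very same constant-rank argument simultaneously establishes that this sum is a smooth subbundle of $\bbD L$ --- a by-product that is exploited later (cf.\ Remark~\ref{rem:isotropy_bundle} and the proof of Proposition~\ref{prop:characterizations_WDPs_I}); with your $\phi$ that fact would require a separate, though equally easy, step. You are also right that condition~\ref{enumitem:def:weak_dual_pair:1} plays no role in this particular statement.
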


\begin{proof}
	The VB morphism $K:\ker DS\oplus\ker DT\to\bbD L,\ (\delta,\delta^\prime)\mapsto(\delta-\delta^\prime,\varpi^\flat\delta^\prime)$ leads to the following short (pointwise) exact sequence
	\begin{equation*}
		\begin{tikzcd}
			0\arrow[r]&\ker DS\cap\ker\varpi^\flat\cap\ker DT\arrow[r]&\ker DS\oplus\ker DT\arrow[r, "K"]&\ker DS+(\ker DT)^\varpi\arrow[r]&0   
		\end{tikzcd}
	\end{equation*}
	where the second arrow is induced by the diagonal.
	Hence condition~\ref{enumitem:def:weak_dual_pair:2} in Definition~\ref{def:weak_dual_pair} means that the VB morphism $K:\ker DS\oplus\ker DT\to\bbD L$ has constant rank.
	Consequently, $\ker DS\cap\ker\varpi^\flat\cap\ker DT$ (and $\ker DS+(\ker DT)^\varpi$ as well) is a smooth subbundle of $\bbD L$.
\end{proof}

\begin{remark}
	\label{rem:isotropy_bundle}
	For future reference we point out here that, as it is easy to see, the following identities hold
	\begin{equation}
		\label{eq:rem:isotropy_bundle}
		\ker DS\cap\ker\varpi^\flat\cap\ker DT=\ker DS\cap(\ker DT)^\varpi=(\ker DS)^\varpi\cap\ker DT.
	\end{equation}
	Moreover, the latter has constant rank equal to $\dim M-\dim M_0-\dim M_1-1$ if and only if both $\ker DS+(\ker DT)^\varpi$ and $\ker DT+(\ker DS)^\varpi$ have constant rank equal to $\dim M+1$.
\end{remark}

\begin{remark}
	\label{rem:tiny_technical_remark}
	For future reference we also point out that the following identity holds
	\begin{equation}
		\label{eq:rem:tiny_technical_remark}
		\varpi^\flat(\ker DS\cap\ker DT)=(\ker DS + \ker DT)^\circ.
	\end{equation}
	Indeed, Condition~\ref{enumitem:def:weak_dual_pair:1} in Definition~\ref{def:weak_dual_pair} immediately implies that $\varpi^\flat(\ker DS\cap\ker DT)\subset(\ker DS + \ker DT)^\circ$.
	Further, the latter have the same rank, and so they are equal, as a consequence of Condition~\ref{enumitem:def:weak_dual_pair:2}.
\end{remark}

Note that not every two Dirac--Jacobi manifolds admit a (weak) dual pair connecting them.
Later on, in Section~\ref{sec:characteristic_leaf_correspondence}, we will find consequences of the existence of a weak dual pair between two Dirac--Jacobi manifolds which would make it easy to construct examples of Dirac--Jacobi manifolds which cannot fit in the same weak dual pair.
However, for now, we want to show that any two pre-contact manifols always fit in a dual pair, and hence a weak dual pair.  

\begin{example}
	\label{ex:product_precontact_manifolds}
	Let $(M_i,L_i,\operatorname{Gr}\varpi_i)$ be a precontact manifold, for $i=1,2$.
	Then their product,	in the sense of Corollary~\ref{cor:product_contact_structures}, and the projection maps form a dual pair (actually a full contact dual pair by Remark~\ref{rem:full_contact_dual_pairs})
	\begin{equation}
		\begin{tikzcd}
			(M_0,L_0,\operatorname{Gr}\varpi_0)&(M_0\times^!M_1,L_0\times^!L_1,\operatorname{Gr}(\varpi_0\times^!(-\varpi_1))\arrow[l, swap, "P_0"]\arrow[r, "P_1"])&(M_1,L_1,\operatorname{Gr}(-\varpi_1)).
		\end{tikzcd}
	\end{equation}
\end{example}

\subsection{(Weak) Dual Pairs from (Over-)Precontact Groupoids}
\label{sec:contact_groupoids}

The first main motivating examples of (weak) dual pairs in Dirac--Jacobi geometry (see Definition~\ref{def:weak_dual_pair}) appear in the context of (over-)precontact groupoids.
Let us recall, in particular, that the role of (pre)contact groupoids in (Dirac--)Jacobi geometry is analogous to the one played by (pre)symplectic groupoids in (Dirac)Poisson geometry.
Indeed, (pre)contact groupoids arise as ``desingularizations'' of (Dirac--)Jacobi manifolds, in the sense that, up to certain technical conditions, a (Dirac--)Jacobi manifold $M$ integrates to a (pre)contact groupoid (cf., e.g.,~\cite{ponte2006integration}).

Let $\calG\rightrightarrows\calG_0$ be a Lie groupoid with structure maps $s,t:\calG\to\calG_0$, $m:\calG^{(2)}:=\calG{}_s{\times}_t\calG\to\calG$, $u:\calG_0\to\calG$, and $i:\calG\to\calG$, and let $\Phi:\calG\times_{\calG_0} L_0\to L_0$ be a groupoid representation of $\calG\rightrightarrows\calG_0$ on a line bundle $L_0\to\calG_0$.
Notice that the associated action groupoid $L:=t^\ast L_0\rightrightarrows L_0$ has a natural structure of trivial-core LB groupoid with its structure maps, $S,T:L\to L_0$, $M:L^{(2)}:=L{}_S{\times}_TL\to L$, $U:L_0\to L$, and $I:L\to L$, being regular LB morphisms over the structure maps of $\calG\rightrightarrows\calG_0$.

As recalled in Proposition~\ref{prop:Atiyah_presymplectic_forms}, $L$-valued precontact forms $\theta$ on $\calG$ are in one-to-one correspondence with $L$-valued presymplectic Atiyah forms $\varpi$ on $\calG$ by means of the relation $\varpi=\theta\circ\sigma$.
Assume now to have equipped $\calG$ with the precontact structure equivalently given by the presymplectic Atiyah form $\varpi$ or the corresponding precontact form $\theta$.
Then the following conditions become equivalent:
\begin{itemize}
	\item the $L$-valued $1$-form $\theta$ is \emph{multiplicative}, i.e.~$M^\ast\theta=\operatorname{Pr}_1^\ast\theta+\operatorname{Pr}_2^\ast\theta$,
	\item the $L$-valued presymplectic form $\varpi$ is \emph{multiplicative}, i.e.~$M^\ast\varpi=\operatorname{Pr}_1^\ast\varpi+\operatorname{Pr}_2^\ast\varpi$.
\end{itemize}
Above $\operatorname{Pr}_1,\operatorname{Pr}_2:L^{(2)}:=L{}_S{\times}_TL\to L$ denote the standard projections.
We look now at when the multiplicative precontact structure on $\calG\rightrightarrows\calG_0$ can be pushed forward to a Dirac--Jacobi structure on $L_0\to\calG_0$ and so give rise to a weak dual pair.
A sufficient condition for this is found in the following.

\begin{proposition}
	\label{prop:robust_multiplicative_precontact}
	Assume that the multiplicative $L$-valued presymplectic Atyiah form $\varpi$ on $\calG$ is \emph{robust}, i.e.
	\begin{equation*}
		\rank(\ker DS\cap\varpi^\flat\cap\ker DT)|_{\calG_0}=\dim\calG-2\dim\calG_0-1,
	\end{equation*}
	Then there is a unique Dirac--Jacobi structure $\calL_0$ on $L_0\to\calG_0$ satisfying the following equivalent conditions:
	\begin{enumerate}
		\item
		\label{enumitem:prop:robust_multiplicative_precontact:1}
		$T:(\calG,L,\operatorname{Gr}\varpi)\longrightarrow (\calG_0,L_0,\calL_0)$ is a forward Dirac--Jacobi map, covering $t:\calG\to\calG_0$,
		\item
		\label{enumitem:prop:robust_multiplicative_precontact:2}
		$S:(\calG,L,\operatorname{Gr}\varpi)\longrightarrow (\calG_0,L_0,\calL_0^\text{opp})$ is a forward Dirac--Jacobi map, covering $s:\calG\to\calG_0$.
	\end{enumerate}
\end{proposition}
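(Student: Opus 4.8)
The strategy is to \emph{define} the sought structure as the forward transform $\calL_0:=T_!(\operatorname{Gr}\varpi)$ and then, in turn, to verify that it is unique, that conditions \ref{enumitem:prop:robust_multiplicative_precontact:1} and \ref{enumitem:prop:robust_multiplicative_precontact:2} are equivalent, and — the genuine work — that it is smooth. Uniqueness is the easy part: since $t$ is a surjective submersion, any Dirac--Jacobi structure making $T$ a forward Dirac--Jacobi map must satisfy $\calL_0=\calL_0|_{t(\calG)}=T_!\operatorname{Gr}\varpi$, so there is at most one candidate, and condition \ref{enumitem:prop:robust_multiplicative_precontact:1} holds by construction the moment $T_!\operatorname{Gr}\varpi$ is shown to be a genuine (smooth) Dirac--Jacobi structure.

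The argument rests on three standard consequences of multiplicativity, all obtained by specialising $M^\ast\varpi=\operatorname{Pr}_1^\ast\varpi+\operatorname{Pr}_2^\ast\varpi$ to the units, the diagonal and the graph of inversion: the unit section is isotropic, $U^\ast\varpi=0$; the two kernels are $\varpi$-orthogonal, $\varpi(\ker DS,\ker DT)=0$ on all of $\calG$; and the inversion reverses the form, $I^\ast\varpi=-\varpi$, where $I\colon L\to L$ is the regular LB inversion covering $i\colon\calG\to\calG$ and $T\circ I=S$ (so $T=S\circ I$, as $I$ is an involution). Observe that $\varpi(\ker DS,\ker DT)=0$ is exactly condition~\ref{enumitem:def:weak_dual_pair:1} of Definition~\ref{def:weak_dual_pair} and, together with the robustness hypothesis (condition~\ref{enumitem:def:weak_dual_pair:2}), shows that the whole diagram is a weak dual pair: the proposition amounts to exhibiting its legs.

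Granting smoothness, the equivalence \ref{enumitem:prop:robust_multiplicative_precontact:1}$\Leftrightarrow$\ref{enumitem:prop:robust_multiplicative_precontact:2} is then formal. Forward transform is functorial and commutes with taking opposites, $\Phi_!(\calL^{\opp})=(\Phi_!\calL)^{\opp}$, while for the LB isomorphism $I$ one has $I_!\operatorname{Gr}\varpi=\operatorname{Gr}(I^\ast\varpi)=\operatorname{Gr}(-\varpi)=(\operatorname{Gr}\varpi)^{\opp}$ by Remark~\ref{rem:opposite_DJ}. Hence
\begin{equation*}
	\calL_0=T_!\operatorname{Gr}\varpi=S_!\big(I_!\operatorname{Gr}\varpi\big)=S_!\big((\operatorname{Gr}\varpi)^{\opp}\big)=\big(S_!\operatorname{Gr}\varpi\big)^{\opp},
\end{equation*}
so that $S_!\operatorname{Gr}\varpi=\calL_0^{\opp}$, which is precisely condition~\ref{enumitem:prop:robust_multiplicative_precontact:2}; the reverse implication is symmetric.

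The heart of the matter, and the main obstacle, is the \emph{smoothness} of the Lagrangian family $\calL_0=T_!\operatorname{Gr}\varpi$. The clean intersection criterion of Theorem~\ref{thm:CleanInt_forward_DJ} is \emph{not} directly available: it would require $\operatorname{Gr}\varpi\cap\ker DT=\{(\delta,0)\mid\delta\in\ker DT\cap\ker\varpi^\flat\}$ to have locally constant rank, whereas $\ker DT\cap\ker\varpi^\flat$ may jump in rank even when the pushforward stays smooth — this already happens for a presymplectic pair groupoid, where the forward image is a graph and hence smooth, but the above intersection has the varying rank of the fibrewise kernel. I would therefore use multiplicativity \emph{structurally} rather than pointwise, in the spirit of the integration theory of precontact groupoids: multiplicativity renders the $t$-fibrewise behaviour of $\varpi$ right-translation invariant, so the robustness condition, imposed only along the units $\calG_0$, propagates by right translation to the constant rank $\dim\calG-2\dim\calG_0-1$ on all of $\calG$, and the forward family is recovered from, and varies smoothly with, its restriction along the unit section, descending to a smooth subbundle of $\bbD L_0$. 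Once smoothness is in hand, $\calL_0$ is automatically Lagrangian (as a forward transform) and involutive (forward transport of an involutive Lagrangian along a surjective submersion preserves the vanishing of the Courant tensor, compatibly with Corollary~\ref{cor:forward_backward}), hence a Dirac--Jacobi structure. I expect establishing the right-invariance that makes the robustness rank propagate, and the consequent descent to a smooth subbundle, to be the technically most demanding step of the whole proof.
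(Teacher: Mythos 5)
Your overall architecture coincides with the paper's: take $\calL_0$ to be the restriction of $T_!\operatorname{Gr}\varpi$ along the units, get uniqueness from surjectivity of $t$, and derive the equivalence of the two conditions from $I^\ast\varpi=-\varpi$ and $S\circ I=T$ — all of that matches, and your observation that the clean-intersection criterion of Theorem~\ref{thm:CleanInt_forward_DJ} is not directly applicable is also correct. The genuine gap is that the decisive step, smoothness of $\calL_0$, is only announced as a plan (``propagate the robustness rank by right translation and descend'') and never executed; you yourself flag it as the hardest part. As written, the proposal therefore contains no proof of the one claim that actually uses the robustness hypothesis.

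For comparison, the paper closes this gap without propagating any rank condition over all of $\calG$: multiplicativity yields the identity $\varpi_g((D_{tg}R_g)\delta_{tg},\delta_g)=\varpi_{tg}(\delta_{tg},(D_gT)\delta_g)$ for $\delta_{tg}\in\ker D_{tg}S$, whence $(\ker DS)|_{\calG_0}$ is $\varpi$-orthogonal to $(\ker DT)|_{\calG_0}$ and the assignment $F\colon(\ker DS)|_{\calG_0}\to\bbD L_0$, $\delta\mapsto((DT)\delta,\varpi^\flat\delta)$, is a well-defined VB morphism over $\id_{\calG_0}$. Its kernel is exactly $(\ker DS\cap\ker\varpi^\flat\cap\ker DT)|_{\calG_0}$, so robustness says $F$ has constant rank $\dim\calG_0+1$; hence $\image F$ is a smooth subbundle of that rank, and it contains (hence equals) the candidate Lagrangian family $\calL_0$. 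The right-translation identity is then used a second time, together with a short exact sequence coming from a dimension count, to verify $(T_!\operatorname{Gr}\varpi)_g=\calL_{0,tg}$ at every $g$ and not only along the units — a verification your sketch also leaves implicit, since your descent argument would need to show that the fiber of the pushforward over $g$ depends only on $tg$. To complete the proof along your own lines you would have to supply both computations; alternatively, adopt the constant-rank morphism $F$, which converts robustness directly into smoothness.
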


\begin{proof}
	Since $\varpi$ is multiplicative, one gets that $I^\ast\varpi=-\varpi$.
	This, together with $S\circ I= T$, imply the equivalence of Conditions~\ref{enumitem:prop:robust_multiplicative_precontact:1} and~\ref{enumitem:prop:robust_multiplicative_precontact:2}.
	Let us continue proving separately uniqueness and existence of $\calL_0$.
	
	\emph{Uniqueness.}
	If there exists a Dirac--Jacobi structure $\calL_0\subset\bbD L_0$ such that $T_!\operatorname{Gr}\varpi=t^\ast\calL_0$, then it will be necessarily given by the Lagrangian family $\calL_0:=(T_!\operatorname{Gr}\varpi)|_{\calG_0}$.
	For later use in this proof, let us point out that, using the multiplicativity of $\varpi$, the latter can also be expressed as
	\begin{equation}
		\label{eq:proof:prop:robust_multiplicative_precontact:0}
		\calL_0=\{(DT)\delta_1+\delta_2,\varpi^\flat\delta_1)\mid(\delta_1,\delta_2)\in(\ker DS)|_{\calG_0}\oplus((\ker\varpi^\flat)|_{\calG_0}\cap DL_0)\}.
	\end{equation}
	
	\emph{Existence.}
	We have to prove that $\calL_0$ is smooth and $T_!\operatorname{Gr}\varpi=t^\ast\calL_0$.
	As a preliminary step, let us point out that, since $\varpi$ is multiplicative, the following identity holds, for all $g\in\calG$, $\delta_{tg}\in \ker D_{tg}S$ and $\delta_g\in D_gL$,
	\begin{equation}
		\label{eq:proof:prop:robust_multiplicative_precontact:1}
		\varpi_g(D_{tg}R_g)\delta_{tg},\delta_g)=\varpi_{tg}(\delta_{tg},(D_gT)\delta_g).
	\end{equation}
	Let us also notice that the latter immediately implies that $\varpi(\ker DS,\ker DT)=0$.
	
	Let us start proving that $\calL_0\subset\bbD L_0$ is smooth subbundle.
	From Equation~\eqref{eq:proof:prop:robust_multiplicative_precontact:1} it follows that $(\ker DS)|_{\calG_0}\subset(\ker DT)|_{\calG_0}^\perp$ and so $\varpi^\flat(\ker DS)|_{\calG_0}\subset(\ker DT)|_{\calG_0}^\circ=((DT)^\ast(J^1L_0))|_{\calG_0}\simeq J^1L_0$.
	Therefore, we have the following well-defined VB morphism over $\id_{\calG_0}$
	\begin{equation*}
		F\colon\ker(DS)|_{\calG_0}\longrightarrow\bbD L_0,\ \delta\longmapsto((DT)\delta,\varpi^\flat\delta).
	\end{equation*}
	Since $\ker F=(\ker DS\cap\ker\varpi^\flat\cap\ker DT)|_{\calG_0}$ and $\rank(\ker DS)|_{\calG_0}=\dim\calG-\dim\calG_0$, the robustness assumption on $\varpi$ implies that $F\colon\ker(DS)|_{\calG_0}\longrightarrow\bbD L_0$ has constant rank equal to $\dim\calG_0+1$.
	So, $\image F\subset\bbD L_0$ is a smooth subbundle of rank $\dim\calG_0+1$.
	However, from Equation~\eqref{eq:proof:prop:robust_multiplicative_precontact:0} it follows that the Lagrangian family $\calL_0$ is actually included in $\image F$, and so $\calL_0=\image F$.
	This proves that $\calL_0$ is a smooth subbundle of $\bbD L_0$ and so a Dirac--Jacobi structure on $L_0\to\calG_0$.
	
	Let us continue proving that $T_!\operatorname{Gr}\varpi=t^\ast\calL_0$.
	From Equation~\eqref{eq:proof:prop:robust_multiplicative_precontact:1}, through a straigthforward computation, it follows that $(D_gT)(\ker D_gS\cap\ker\varpi^\flat_g)\subset\ker\varpi_{tg}^\flat\cap D_{tg}L_0$.
	Moreover, using again the multiplicativity of $\varpi$, one can easily compute, for all $g\in\calG$
	\begin{align*}
		\dim(\ker D_gS\cap\varpi^\flat_g)-\dim(\ker\varpi_{tg}^\flat\cap D_{tg}L_0)&=\dim(\ker D_{tg}S\cap\varpi^\flat_{tg})-\dim(\ker\varpi_{tg}^\flat\cap D_{tg}L_0)\\
		&=\frac{1}{2}\left(\dim\ker\varpi^\flat_{tg}-2\dim\calG_0+\dim\calG\right)\\
		&\phantom{=}-\frac{1}{2}\left(\dim\ker\varpi^\flat_{tg}+2\dim\calG_0-\dim\calG+1\right)\\
		&=\dim\calG-2\dim\calG_0-1.
	\end{align*}
	By this and the robustness of $\varpi$ one gets, for any $g\in\calG$, the following short exact sequence of linear maps
	\begin{equation}
		\label{eq:proof:prop:robust_multiplicative_precontact:2}
		\begin{tikzcd}
			0\arrow[r]&\ker D_gS\cap\ker\varpi^\flat_g\cap\ker D_gT\arrow[r, "\text{incl}"]&\ker D_gS\cap\ker\varpi^\flat_g\arrow[r, "D_gT"]&\ker\varpi^\flat_{tg}\cap D_{tg}L_0\arrow[r]&0.
		\end{tikzcd}
	\end{equation}
	We use the latter to prove $(T_!\operatorname{Gr}\varpi)_g=\calL_{0,tg}$.
	By Equation~\eqref{eq:proof:prop:robust_multiplicative_precontact:0}, an arbitrary $u\in\calL_{0,tg}$ can be written as 
	\begin{equation*}
		u=((D_{tg}T)\delta_1+\delta_2,\varpi_{tg}^\flat\delta_1),
	\end{equation*}
	for some $\delta_1\in\ker D_{tg}S$ and $\delta_2\in\ker\varpi^\flat_{tg}\cap D_{tg}L_0$.
	Further, also in view of of~\eqref{eq:proof:prop:robust_multiplicative_precontact:2}, there are $\widetilde\delta_1\in\ker D_gS$ and $\widetilde\delta_2\in\ker D_gS\cap\ker\varpi^\flat_g$ such that $\delta_1=(D_{tg}R_g)\widetilde\delta_1$ and $\delta_2=(D_gT)\widetilde\delta_2$.
	So, by Equation~\eqref{eq:proof:prop:robust_multiplicative_precontact:0}, one gets
	\begin{equation*}
		(D_gT)(\widetilde\delta_1+\widetilde\delta_2)=(D_gT)\widetilde\delta_1+\delta_2=(D_{tg}T)\delta_1+\delta_2\quad\text{and}\quad\varpi^\flat_g(\widetilde\delta_1+\widetilde\delta_2)=\varpi^\flat_g\widetilde\delta_1=(D_gT)^\ast(\varpi_{tg}^\flat\delta_1),
	\end{equation*}
	so that $(\widetilde\delta_1+\widetilde\delta_2,(D_gT)^\ast\varpi^\flat_{tg}\delta_1)\in(\operatorname{Gr}\varpi)_g$ and $u=((D_{tg}T)\delta_1+\delta_2,\varpi_{tg}^\flat\delta_1)\in (T_!\operatorname{Gr}\varpi)_g$.
	This proves the inclusion $T_!\operatorname{Gr}\varpi\subset t^\ast\calL_0$ and so the equality of these two Lagrangian families $T_!\operatorname{Gr}\varpi$ and $t^\ast\calL_0$.
\end{proof}

The last proposition represents the analogue for precontact/Dirac--Jacobi structures of the results first obtained in the presymplectic/Dirac setting~\cite{bursztyn2004integration} and, moreover, it motivates the next definition.
\begin{definition}
	\label{def:over-precontact_groupoid}
	An \emph{over-precontact groupoid} consists consists of a Lie groupoid $\calG\rightrightarrows\calG_0$, a trivial core LB groupoid $L\rightrightarrows L_0$ over $\calG\rightrightarrows\calG_0$, and a multiplicative presymplectic Atiyah form $\varpi\in\Omega_D^2(L)$ such that
	\begin{equation*}
		\rank(\ker DS\cap\varpi^\flat\cap\ker DT)|_{\calG_0}=\dim\calG-2\dim\calG_0-1.
	\end{equation*}
	If additionally $\dim\calG=2\dim\calG_0+1$, the latter is said to be a \emph{precontact groupoid}.
\end{definition}

Now the content of Proposition~\ref{prop:robust_multiplicative_precontact} can be rephrased as follows.
\begin{theorem}
	\label{theor:over-precontact_groupoid:WDP}
	Let $(\calG\rightrightarrows\calG_0,L\rightrightarrows L_0,\varpi)$ be an over-precontact groupoid.
	Denote by $\calL_0$ the Dirac--Jacobi structure induced on $L_0\to\calG_0$ as in Proposition~\ref{prop:robust_multiplicative_precontact}.
	Then the following is a weak dual pair
	\begin{equation}
		\label{eq:theor:Jacobi}
		\begin{tikzcd}
			(\calG_0,L_0,\calL_0^\text{opp})&(\calG,L,\operatorname{Gr}\varpi)\arrow[l, swap, "S"]\arrow[r, "T"]&(\calG_0,L_0,\calL_0).
		\end{tikzcd}
	\end{equation}
\end{theorem}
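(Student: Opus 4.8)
The plan is to read off every clause of Definition~\ref{def:weak_dual_pair} from Proposition~\ref{prop:robust_multiplicative_precontact} and its proof, so that the theorem reduces to assembling facts that are already available. First I would note that, since $\varpi$ is a (multiplicative) presymplectic Atiyah form, $\operatorname{Gr}\varpi\subset\bbD L$ is a genuine Dirac--Jacobi structure by Example~\ref{ex:PrecontactStructures_DJ}, so $(\calG,L,\operatorname{Gr}\varpi)$ is the common precontact manifold at the center of the diagram. Matching~\eqref{eq:theor:Jacobi} against the template~\eqref{eq:def:weak_dual_pair}, the left leg is $(\calG_0,L_0,\calL_0^\text{opp})$ and the right leg $(\calG_0,L_0,\calL_0)=(\calG_0,L_0,(\calL_0^\text{opp})^\text{opp})$ fits the ``opposite'' convention carried by the right-hand map. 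That $s,t\colon\calG\to\calG_0$ are surjective submersions is immediate, being the source and target of a Lie groupoid, and the two forward Dirac--Jacobi map requirements are precisely Conditions~\ref{enumitem:prop:robust_multiplicative_precontact:2} and~\ref{enumitem:prop:robust_multiplicative_precontact:1} of Proposition~\ref{prop:robust_multiplicative_precontact}, which hold by the construction of $\calL_0$.

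It then remains to verify Conditions~\ref{enumitem:def:weak_dual_pair:1} and~\ref{enumitem:def:weak_dual_pair:2}. For the orthogonality $\varpi(\ker DS,\ker DT)=0$ I would invoke the multiplicativity of $\varpi$ directly: the identity~\eqref{eq:proof:prop:robust_multiplicative_precontact:1}, relating $\varpi_g$ to $\varpi_{tg}$ through right translation $R_g$, forces this vanishing, exactly as already observed inside the proof of the proposition.

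The only substantive point, and the step I expect to be the genuine obstacle, is Condition~\ref{enumitem:def:weak_dual_pair:2}: robustness in the definition of an over-precontact groupoid only prescribes the rank of $\ker DS\cap\ker\varpi^\flat\cap\ker DT$ along the unit section $\calG_0$, whereas a weak dual pair demands this rank be constant over all of $\calG$ and equal to $\dim\calG-\dim\calG_0-\dim\calG_0-1$. The plan is to propagate the rank off $\calG_0$ using multiplicativity: right translation $R_g$ identifies $\ker D_gS\cap\ker\varpi^\flat_g$ with its counterpart $\ker D_{tg}S\cap\ker\varpi^\flat_{tg}$ at the unit $tg\in\calG_0$, and then the dimension count in the proof of Proposition~\ref{prop:robust_multiplicative_precontact}, together with the pointwise short exact sequence~\eqref{eq:proof:prop:robust_multiplicative_precontact:2} valid at every $g\in\calG$, yields $\dim(\ker D_gS\cap\ker\varpi^\flat_g\cap\ker D_gT)=\dim\calG-2\dim\calG_0-1$ independently of $g$. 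Since this computation is already carried out there, the final proof need only collect these observations and conclude that~\eqref{eq:theor:Jacobi} meets all the requirements of Definition~\ref{def:weak_dual_pair}.
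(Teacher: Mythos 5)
Your proposal is correct and matches the paper's treatment: the paper offers no separate proof, presenting the theorem as a direct rephrasing of Proposition~\ref{prop:robust_multiplicative_precontact}, and your write-up simply makes explicit the bookkeeping that this entails (matching the legs and opposites against Definition~\ref{def:weak_dual_pair}, reading off the orthogonality $\varpi(\ker DS,\ker DT)=0$ from multiplicativity, and propagating the rank condition off the unit section via the short exact sequence~\eqref{eq:proof:prop:robust_multiplicative_precontact:2} valid at every $g\in\calG$). In particular you correctly flag that robustness is only stated along $\calG_0$ and that the constancy of $\rank(\ker DS\cap\ker\varpi^\flat\cap\ker DT)$ over all of $\calG$ is the one point genuinely needing the dimension count already carried out in the proposition's proof.
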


\section{Properties of Weak Dual Pairs}
\label{sec:properties_operations}

In this Section we study additional properties of weak dual pairs that will be crucial in discussing, in the next Sections~\ref{sec:self-dual_pair} and~\ref{sec:characteristic_leaf_correspondence}, the main results about them.
Specifically, we focus here on their possible equivalent characterizations (see Propositions~\ref{prop:characterizations_WDPs_I} and~\ref{prop:characterizations_WDPs_II}) and the most relevant operations we can perform on them, namely \emph{composition} (see Proposition~\ref{prop:transitivityWDP}) and \emph{pull-back along transversals} (see Proposition~\ref{prop:TransversePullBackWDPs}).

\subsection{Equivalent Characterizations of Weak Dual Pairs}
Let us now continue examining the structures of weak dual pairs and give some equivalent descriptions.
We start with a general statement about Dirac-Jacobi structures and maps covering surjective submersions.

\begin{lemma}
	\label{lem:SurjSub}
	Let $P\colon L\to L_0$ be a regular line bundle morphism covering a surjective submersion $p\colon M\to M_0$.
	Then, for any $L$-valued presymplectic Atiyah form $\varpi\in\Omega_D^2(L)$, one gets that
	\begin{equation}
		\label{eq:lem:SurjSub}
		P^!P_!\operatorname{Gr}\varpi= \ker DP + (\ker(DP)^{\perp_\varpi})^\varpi.
	\end{equation}	 
	Moreover, the latter is smooth if and only if $P_!\operatorname{Gr}\varpi$ is smooth.
\end{lemma}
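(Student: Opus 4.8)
The plan is to reduce the identity~\eqref{eq:lem:SurjSub} to pointwise linear algebra and then treat the smoothness claim separately. Throughout, fix $x\in M$, write $y=p(x)$ and $W:=\ker D_xP\subseteq D_xL$, and recall that, since $p$ is a surjective submersion and $P$ is regular, $D_xP\colon D_xL\to D_yL_0$ is surjective while $(D_xP)^\ast\colon J^1_yL_0\to J^1_xL$ is injective with image the annihilator $W^\circ$. For a subspace $W\subseteq D_xL$ I use the notation $W^{\perp_\varpi}=\{\eta\in D_xL\mid \varpi(\eta,w)=0\ \text{for all}\ w\in W\}$ and $W^\varpi=\{(\eta,\varpi^\flat\eta)\mid \eta\in W\}\subseteq(\bbD L)_x$, consistent with the usage in Proposition~\ref{prop:isotropy_bundle}; note that $\eta\in W^{\perp_\varpi}$ exactly when $\varpi^\flat\eta\in W^\circ$.

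First I would unwind Definition~\ref{def:backward_forward_DJ} on $\operatorname{Gr}\varpi=\{(\delta,\varpi^\flat\delta)\}$ (Example~\ref{ex:PrecontactStructures_DJ}). The forward transform is $(P_!\operatorname{Gr}\varpi)_x=\{((D_xP)\delta,\alpha_0)\mid (D_xP)^\ast\alpha_0=\varpi^\flat\delta\}$, so membership forces $\varpi^\flat\delta\in W^\circ$, i.e.\ $\delta\in W^{\perp_\varpi}$. Feeding this family into the backward transform at the same point $x$ --- the composition is necessarily read fibrewise over $M$, because $P_!\operatorname{Gr}\varpi$ does not descend to $M_0$ in general --- gives
\begin{equation*}
	(P^!P_!\operatorname{Gr}\varpi)_x=\{(\delta',(D_xP)^\ast\alpha_0)\mid \exists\,\delta,\ (D_xP)(\delta-\delta')=0,\ (D_xP)^\ast\alpha_0=\varpi^\flat\delta\}.
\end{equation*}
Setting $\beta:=(D_xP)^\ast\alpha_0\in W^\circ$ and $\eta:=\delta$, so that $\eta=\delta'+k$ with $k:=\delta-\delta'\in W$, the constraints become $\varpi^\flat\eta=\beta\in W^\circ$, i.e.\ $\eta\in W^{\perp_\varpi}$, and $\delta'=\eta-k$. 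Hence $(\delta',\beta)=(\eta,\varpi^\flat\eta)+(-k,0)$, with $-k\in W$, ranges exactly over $W+(W^{\perp_\varpi})^\varpi$, which is~\eqref{eq:lem:SurjSub}. Two quick sanity checks: $\varpi=0$ makes both sides $DL$, and $P=\id$ makes both sides $\operatorname{Gr}\varpi$.

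For the smoothness equivalence I would first note that both families are fibrewise Lagrangian (by~\cite[Remarks~8.3 and 8.6]{DirJacBun}), hence of constant rank $\dim M_0+1$ and $\dim M+1$ respectively; thus the real content is genuine smoothness of the assignment $x\mapsto(\cdot)_x$, not a rank count. I would then prove the general fact that the backward transform along $P$ \emph{preserves and reflects} smoothness of Lagrangian families. Factor the fibrewise backward operation as $(P^!\calF)_x=R_x\big(Q_x^{-1}(\calF_x)\big)$ for a Lagrangian family $\calF\subseteq p^\ast\bbD L_0$, where
\begin{equation*}
	Q\colon DL\oplus p^\ast J^1L_0\to p^\ast\bbD L_0,\ (\delta,\alpha_0)\mapsto((DP)\delta,\alpha_0),\qquad R\colon DL\oplus p^\ast J^1L_0\to\bbD L,\ (\delta,\alpha_0)\mapsto(\delta,(D_xP)^\ast\alpha_0).
\end{equation*}
Here $Q$ is a smooth surjective VB morphism of constant rank with $\ker Q=\ker DP\oplus 0$, and $R$ is a smooth injective VB morphism, hence a VB embedding onto $DL\oplus W^\circ$. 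Since $0\in\calF_x$, we have $\ker Q\subseteq Q^{-1}\calF$, so $Q$ restricts to a constant-rank surjection $Q^{-1}\calF\twoheadrightarrow\calF$; therefore $\calF$ is smooth iff $Q^{-1}\calF$ is smooth, and since $R$ is an embedding, $Q^{-1}\calF$ is smooth iff $R(Q^{-1}\calF)=P^!\calF$ is smooth. Applying this with $\calF=P_!\operatorname{Gr}\varpi$ closes the claim.

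The bulk is routine, so I expect the main obstacle to be conceptual rather than computational: one must resist reading $P^!P_!\operatorname{Gr}\varpi$ as the backward transform of a Dirac--Jacobi structure living on $L_0$ (which would let one invoke the remark after Corollary~\ref{Cor: TrnsMap} directly), since $P_!\operatorname{Gr}\varpi$ genuinely fails to descend along $p$. This is exactly why the smoothness equivalence is handled through the factorization $P^!\calF=R\circ Q^{-1}$, where the inclusion $\ker Q\subseteq Q^{-1}\calF$ is the crucial point making the reflecting direction ($P^!\calF$ smooth $\Rightarrow\calF$ smooth) go through.
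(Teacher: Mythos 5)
Your proof is correct, and the identity~\eqref{eq:lem:SurjSub} is established by essentially the same unwinding of the two transforms as in the paper; the difference lies entirely in how you handle the smoothness equivalence. The paper deduces both directions from its clean-intersection criteria: for ``$P_!\operatorname{Gr}\varpi$ smooth $\Rightarrow$ $P^!P_!\operatorname{Gr}\varpi$ smooth'' it observes that $\ker(DP)^\ast=0$ because $p$ is a submersion and invokes Theorem~\ref{Thm: CleanInt}, and for the converse it reads off from~\eqref{eq:lem:SurjSub} that $\ker DP\cap\operatorname{Gr}\varpi$ has constant rank and invokes Theorem~\ref{thm:CleanInt_forward_DJ}. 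You instead prove a self-contained statement --- that for a regular $P$ covering a surjective submersion the fibrewise backward transform both preserves and reflects smoothness of (pulled-back) Lagrangian families --- via the factorization $P^!\calF=R(Q^{-1}\calF)$, with the inclusion $\ker Q\subseteq Q^{-1}\calF$ supplying the constant-rank condition needed to push $Q^{-1}\calF$ back down to $\calF$. Your route is slightly longer but has two advantages: it does not require re-reading Theorem~\ref{Thm: CleanInt} for a Lagrangian family over $M$ that does not descend to $M_0$ (a point the paper passes over silently, and which you rightly flag), and it isolates a reusable general fact rather than an ad hoc rank computation. Both arguments are sound.
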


\begin{proof}
	The proof of the first part is an easy computation
	\begin{align*}
		P^!P_!\operatorname{Gr}\varpi&=\{(\delta,(DP)^\ast\alpha)\mid(\delta^\prime,(DP)^\ast\alpha)\in\operatorname{Gr}\varpi\ \ \text{and}\ \ (DP)\delta=(DP)\delta^\prime\}\\
		&=\ker(DP)+\operatorname{Gr}\varpi\cap\pr_J^{-1}((\ker DP)^0)=\ker DP+((\ker DP)^{\perp\varpi})^\varpi.
	\end{align*}
	For the second part, assume first that $P_!\operatorname{Gr}\varpi$ is smooth.
	Since $P$ covers a submersion, $\ker(DP)^\ast=0$ and so $\ker(DP)^\ast\cap P_!\operatorname{Gr}\varpi=0$.
	Hence $P^!P_!\operatorname{Gr}\varpi$ is smooth by Theorem~\ref{Thm: CleanInt}.
	Conversely, assume that $P^!P_!\operatorname{Gr}\varpi$ is smooth.
	From Equation~\eqref{eq:lem:SurjSub} it follows that $\ker DP+(\ker(DP)^{\perp\varpi})$ has constant rank.
	So, also $\ker DP\cap(\ker(DP)^{\perp\varpi})=\ker(DP)\cap\operatorname{Gr}\varpi$ has constant rank.
	Hence $P_!P^!\operatorname{Gr}\varpi$ is smooth by Theorem~\ref{thm:CleanInt_forward_DJ}.
\end{proof}

The next Propositions~\ref{prop:characterizations_WDPs_I} and~\ref{prop:characterizations_WDPs_II} are useful tools for proving certain properties of (weak) dual pairs.
The analogue statements for weak dual pairs in Dirac geometry can be found in~\cite[Prop.~6]{FM2018}. 

\begin{proposition}[{\bf Characterization of Weak Dual Pairs I}]
	\label{prop:characterizations_WDPs_I}
	Let $S:L\to L_0$ and $T:L\to L_1$  be regular LB morphisms covering surjective submersion.
	For any presymplectic Atiyah form $\varpi\in\Omega^2_D(L)$ and Dirac--Jacobi structures $\calL_0\subset\bbD L_0$ and  $\calL_1\subset\bbD L_1$, we get that
	\begin{equation}
		\label{eq:prop:characterizations_WDPs_I}
		\begin{tikzcd}
			(M_0,L_0,\calL_0)&(M,L,\operatorname{Gr}\varpi)\arrow[l, swap, "S"]\arrow[r, "T"]&(M_1,L_1,\calL_1^\text{opp})
		\end{tikzcd}
	\end{equation}
	is a weak dual pair if and only if the following two conditions hold:
	\begin{enumerate}[label={\arabic*)}]
		\item
		\label{enumitem:prop:characterizations_WDPs_I:1}
		$S^!\calL_0$ is the B-field transform of $T^!\calL_1$ via $\varpi$, i.e.~$S^!\calL_0=(T^!\calL_1)^\varpi$,
		\item
		\label{enumitem:prop:characterizations_WDPs_I:2}
		$\ker DS\cap \ker\varpi^\flat \cap \ker DT$ has constant rank equal to $\dim(M)-\dim(M_0)-\dim(M_1)-1$.
	\end{enumerate}	

\end{proposition}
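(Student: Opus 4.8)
The plan is to reduce both implications to a single fiberwise statement about the kernels $\ker DS$, $\ker DT$ and the form $\varpi$, after first recasting the two ``forward Dirac--Jacobi map'' conditions as backward-transform identities. Since $S$ and $T$ cover surjective submersions, Corollary~\ref{cor:forward_backward} gives $S_!S^!=\id$ and $T_!T^!=\id$ on Dirac--Jacobi structures, while Lemma~\ref{lem:SurjSub} computes $S^!S_!\operatorname{Gr}\varpi=\ker DS+((\ker DS)^{\perp_\varpi})^\varpi$ (and likewise for $T$). Combining these, I would first record the equivalence: \emph{$S$ is a forward Dirac--Jacobi map onto $(M_0,L_0,\calL_0)$ if and only if $S^!\calL_0=\ker DS+((\ker DS)^{\perp_\varpi})^\varpi$}. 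Indeed, applying $S^!$ to $S_!\operatorname{Gr}\varpi=\calL_0$ and invoking Lemma~\ref{lem:SurjSub} gives one direction; for the converse, applying $S_!$ (using that $S_!$ sees $\operatorname{Gr}\varpi$ only through $\operatorname{Gr}\varpi\cap\pr_J^{-1}((\ker DS)^0)=((\ker DS)^{\perp_\varpi})^\varpi$) and Corollary~\ref{cor:forward_backward} recovers $S_!\operatorname{Gr}\varpi=\calL_0$ as Lagrangian families. Using $T^!(\calL_1^{\mathrm{opp}})=(T^!\calL_1)^{\mathrm{opp}}$, forward-ness of $T$ becomes the analogous identity for $T^!\calL_1^{\mathrm{opp}}$.

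Next I would rewrite condition~(1). With the explicit gauge transform $\exp(\varpi)(\delta,\alpha)=(\delta,\alpha+\varpi^\flat\delta)$ together with $T^!(\calL_1^{\mathrm{opp}})=(T^!\calL_1)^{\mathrm{opp}}$, one computes that, \emph{when $T$ is forward}, $(T^!\calL_1)^\varpi=\operatorname{Gr}(\varpi|_{\ker DT})+(\ker DT)^{\perp_\varpi}\oplus 0$. Thus, modulo the forward conditions, condition~(1) becomes the purely fiberwise identity
\begin{equation*}
	\ker DS+((\ker DS)^{\perp_\varpi})^\varpi=\operatorname{Gr}(\varpi|_{\ker DT})+(\ker DT)^{\perp_\varpi},
\end{equation*}
an equality of two Lagrangian subbundles determined solely by $\ker DS$, $\ker DT$ and $\varpi$.

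The technical heart, and the step I expect to be the main obstacle, is the fiberwise linear algebra. I would prove that orthogonality $\varpi(\ker DS,\ker DT)=0$ together with the rank condition force the two kernel identities $(\ker DS)^{\perp_\varpi}=\ker DT+\ker\varpi^\flat$ and $\ker\varpi^\flat=(\ker DS\cap\ker\varpi^\flat)+(\ker DT\cap\ker\varpi^\flat)$. Orthogonality gives for free the inclusions $\ker DT+\ker\varpi^\flat\subseteq(\ker DS)^{\perp_\varpi}$ (and symmetrically), so the content is to upgrade these to equalities. The clean way is to pass to the quotient $DL/\ker\varpi^\flat$, on which $\varpi$ induces a fiberwise \emph{symplectic} form: orthogonality descends to symplectic-orthogonality of the images of $\ker DS$ and $\ker DT$, whence $\dim\overline{\ker DS}+\dim\overline{\ker DT}\le\rank DL-\rank\ker\varpi^\flat$; combined with the rank condition (which fixes $\dim(\ker DS\cap\ker\varpi^\flat\cap\ker DT)$) a dimension count then yields $\dim\big((\ker DS\cap\ker\varpi^\flat)+(\ker DT\cap\ker\varpi^\flat)\big)\ge\rank\ker\varpi^\flat$, i.e.\ the second identity, and the first follows in the same way. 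Since both sides of the fiberwise identity above are Lagrangian of rank $\rank DL$, one inclusion — checked on the generators $\ker DS$ and $\operatorname{Gr}(\varpi|_{(\ker DS)^{\perp_\varpi}})$ using these equalities — suffices.

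Finally I would assemble both directions. For ($\Rightarrow$): the rank condition is literally condition~(2); forward-ness of $S$ and $T$ supplies the two backward-transform formulas from the first paragraph, and the fiberwise identity of the third paragraph converts them into~(1). For ($\Leftarrow$): again~(2) is immediate, and orthogonality is extracted directly from~(1): since $\ker DS\oplus 0\subseteq S^!\calL_0=(T^!\calL_1)^\varpi$, applying $\exp(-\varpi)$ places $(\delta,-\varpi^\flat\delta)$ in $T^!\calL_1$ for every $\delta\in\ker DS$, so $\varpi^\flat\delta\in(\ker DT)^0$, which is orthogonality. Setting $\calM:=S^!\calL_0=(T^!\calL_1)^\varpi$, it contains both $\ker DS\oplus 0$ (being a backward transform along $S$) and $\operatorname{Gr}(\varpi|_{\ker DT})$ (being $\exp(\varpi)$ of $\ker DT\oplus 0\subseteq T^!\calL_1$); writing an arbitrary generator $(\eta,\varpi^\flat\eta)$ with $\eta\in(\ker DS)^{\perp_\varpi}=\ker DT+\ker\varpi^\flat$ as a sum of such elements via the two kernel identities shows $\ker DS+((\ker DS)^{\perp_\varpi})^\varpi\subseteq\calM$, hence equality by the Lagrangian/rank argument, i.e.\ $S^!\calL_0=\ker DS+((\ker DS)^{\perp_\varpi})^\varpi$, so $S$ is forward; symmetrically $T$ is forward. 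Throughout, the rank condition is precisely what promotes the orthogonality inclusions to the equalities that make the argument close.
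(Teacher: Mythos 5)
Your proposal is correct and follows essentially the same route as the paper's proof: both directions rest on Lemma~\ref{lem:SurjSub} and Corollary~\ref{cor:forward_backward} to identify forwardness of $S$ with $S^!\calL_0=S^!S_!\operatorname{Gr}\varpi=\ker DS+((\ker DS)^{\perp_\varpi})^\varpi$, extract orthogonality from condition~1) via the inclusion $\ker DS\subset S^!\calL_0=(T^!\calL_1)^\varpi$ and isotropy, and close with a Lagrangian rank comparison. The only divergence is in how that rank comparison is executed: the paper compares everything to the single Lagrangian family $\ker DS+(\ker DT)^\varpi$ and counts its rank to $\dim M+1$ via Remark~\ref{rem:isotropy_bundle}, whereas you derive the equivalent explicit identities $(\ker DS)^{\perp_\varpi}=\ker DT+\ker\varpi^\flat$ and $\ker\varpi^\flat=(\ker DS\cap\ker\varpi^\flat)+(\ker DT\cap\ker\varpi^\flat)$ by a dimension count in the symplectic quotient $DL/\ker\varpi^\flat$ — a heavier but valid way of reaching the same conclusion.
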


\begin{proof}
	Assume that~\eqref{eq:prop:characterizations_WDPs_I} is a weak dual pair.
	Since $S$ is a forward Dirac map, by Lemma \ref{lem:SurjSub} we have 
	\begin{equation*}
		S^!\mathcal{L}_0=S^!S_!\operatorname{Gr}\varpi=\ker DS+((\ker DS)^{\perp_\varpi})^\varpi.
	\end{equation*}
	Since $\ker DT\subset(\ker DS)^{\perp_\varpi}$ by Condition~\ref{enumitem:def:weak_dual_pair:1} in Definition~\ref{def:weak_dual_pair}, we conclude that
	\begin{equation*}
		S^!\mathcal{L}_0\supset \ker DS+ (\ker DT)^\varpi.
	\end{equation*}
	From the definition~\ref{def:backward_forward_DJ} and the proof of Proposition~\ref{prop:isotropy_bundle}, we deduce that both sides of the latter have rank $\dim M+1$ and so they are equal.
	This, and a similar argument with $(T,-\varpi)$ in place of $(S,\varpi)$, show that
	\begin{align*}
		S^!\mathcal{L}_0=\ker DS+ (\ker DT)^\varpi\qquad\text{and}\qquad T^!\mathcal{L}_1= \ker DT+ (\ker DS)^{-\varpi},
	\end{align*}
	and hence $S^!\mathcal{L}_0=(T^!\calL_1)^\varpi$ holds.
	
	Assume that Conditions~\ref{enumitem:prop:characterizations_WDPs_I:1} and~\ref{enumitem:prop:characterizations_WDPs_I:2} hold.
	From $\ker DS\subset S^!\calL_0$ and $\ker DT\subset T^!\calL_0$ it follows first that
	\begin{equation}
		\label{eq:proof:prop:characterizations_WDPs_I:1}
		\ker DS+(\ker DT)^\varpi\subset S^!\calL_0=(T^!\calL_1)^\varpi
	\end{equation}
	and then, by isotropy of $S^!\calL_0=(T^!\calL_1)^\varpi$, also that $\ker DS\subset(\ker(DT))^{\perp\varpi}$.
	Hence, one easily gets
	\begin{equation}
		\label{eq:proof:prop:characterizations_WDPs_I:2}
		\ker DS+(\ker DT)^\varpi\subset \ker DS+((\ker DS)^{\perp_\varpi})^\varpi=S^!S_!\operatorname{Gr}\varpi,
	\end{equation}
	where we used Lemma~\ref{lem:SurjSub}.
	From the rank condition and Remark~\ref{rem:isotropy_bundle} it follows that the rank of $\ker DS+(\ker DT)^\varpi$ is $\dim M+1$.
	Moreover, the RHS's of Equations~\eqref{eq:proof:prop:characterizations_WDPs_I:1} and~\eqref{eq:proof:prop:characterizations_WDPs_I:2} are isotropic.
	Hence $S^!\calL_0=S^!S_!\operatorname{Gr}\varpi$ and so, using Corollary~\ref{cor:forward_backward}, one can compute
	\begin{equation*}
		\mathcal{L}_0= S_!S^!\mathcal{L}_0=S_!S^!S_!\operatorname{Gr}\varpi=S_!\operatorname{Gr}\varpi.
	\end{equation*}
	The same arguments can be used to obtain that $T$ is also a forward map and so~\eqref{eq:prop:characterizations_WDPs_I} is a weak dual pair. 
\end{proof}

As a straightforward consequence of the proof of Proposition~\ref{prop:characterizations_WDPs_I} we get the following
\begin{corollary}
	\label{cor:prop:characterizations_WDPs_I}
	For any weak dual pair~\eqref{eq:def:weak_dual_pair}, the following identities of Dirac--Jacobi structures hold
	\begin{equation*}
		S^!\calL_0=\ker DS+(\ker DT)^\varpi\qquad\text{and}\qquad T^!\calL_1^\mathrm{opp}=\ker DT+(\ker DS)^\varpi.
	\end{equation*}
\end{corollary}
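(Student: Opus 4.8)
The plan is to observe that both identities are, essentially verbatim, already contained in the proof of Proposition~\ref{prop:characterizations_WDPs_I}, so the work reduces to isolating the relevant steps and taking due care of the opposite structure carried by the second leg. Thus I would not reprove anything from scratch but rather package the intermediate computations of that proof into the two stated equalities.

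For the first identity I would start from the fact that, since~\eqref{eq:def:weak_dual_pair} is a weak dual pair, $S$ is a forward Dirac--Jacobi map, i.e.~$S_!\operatorname{Gr}\varpi=\calL_0$. Applying Lemma~\ref{lem:SurjSub} to $S$ then gives
\[
	S^!\calL_0=S^!S_!\operatorname{Gr}\varpi=\ker DS+((\ker DS)^{\perp_\varpi})^\varpi.
\]
The orthogonality Condition~\ref{enumitem:def:weak_dual_pair:1} reads $\ker DT\subset(\ker DS)^{\perp_\varpi}$, whence $(\ker DT)^\varpi\subset((\ker DS)^{\perp_\varpi})^\varpi$ and so the inclusion $\ker DS+(\ker DT)^\varpi\subset S^!\calL_0$ follows. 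To promote this to an equality I would compare ranks: $S^!\calL_0$ is a Dirac--Jacobi structure (by Corollary~\ref{Cor: TrnsMap}, as $S$ covers a submersion) and hence has rank $\dim M+1$, while $\ker DS+(\ker DT)^\varpi$ also has rank $\dim M+1$ by Remark~\ref{rem:isotropy_bundle} together with Condition~\ref{enumitem:def:weak_dual_pair:2}. This yields $S^!\calL_0=\ker DS+(\ker DT)^\varpi$.

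For the second identity I would run the symmetric argument with $T$ in place of $S$. The second leg of~\eqref{eq:def:weak_dual_pair} is $(M_1,L_1,\calL_1^{\mathrm{opp}})$, so $T$ being forward means $T_!\operatorname{Gr}\varpi=\calL_1^{\mathrm{opp}}$, and Lemma~\ref{lem:SurjSub} gives $T^!\calL_1^{\mathrm{opp}}=\ker DT+((\ker DT)^{\perp_\varpi})^\varpi$. Using the same orthogonality relation, now written as $\ker DS\subset(\ker DT)^{\perp_\varpi}$, together with the companion rank equality $\rank(\ker DT+(\ker DS)^\varpi)=\dim M+1$ supplied by Remark~\ref{rem:isotropy_bundle}, I would conclude $T^!\calL_1^{\mathrm{opp}}=\ker DT+(\ker DS)^\varpi$.

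The only place calling for genuine care -- and the step I would flag as the main (if minor) obstacle -- is the sign bookkeeping needed to square this with the proof of Proposition~\ref{prop:characterizations_WDPs_I}, where the analogous computation is recorded for $\calL_1$ as $T^!\calL_1=\ker DT+(\ker DS)^{-\varpi}$. These are reconciled through the identity $T^!\calL_1^{\mathrm{opp}}=(T^!\calL_1)^{\mathrm{opp}}$ and the elementary observations that $(\ker DT)^{\mathrm{opp}}=\ker DT$ (the first summand sits in $DL$ with zero jet component) and $((\ker DS)^{-\varpi})^{\mathrm{opp}}=(\ker DS)^\varpi$, so that taking opposites of the proposition's formula reproduces exactly the claimed expression. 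No new estimates are required beyond those already established in the proof of Proposition~\ref{prop:characterizations_WDPs_I}.
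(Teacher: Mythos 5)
Your proposal is correct and follows essentially the same route as the paper, which obtains the corollary by extracting exactly these two computations (Lemma~\ref{lem:SurjSub} plus the orthogonality condition plus the rank count from Remark~\ref{rem:isotropy_bundle}) from the proof of Proposition~\ref{prop:characterizations_WDPs_I}. Your direct treatment of the second leg via $T_!\operatorname{Gr}\varpi=\calL_1^{\mathrm{opp}}$, and the reconciliation with the paper's $(T,-\varpi)$ formulation through $T^!\calL_1^{\mathrm{opp}}=(T^!\calL_1)^{\mathrm{opp}}$, are both accurate.
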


\begin{lemma}
	\label{lem:forward_DJ_product}
	Under the the same assumptions of Proposition~\ref{prop:characterizations_WDPs_I}, one gets that the following
	\begin{equation*}
		S\times^!T:(M,L,\operatorname{Gr}\varpi)\to (M_0\times^! M_1,L_0\times^!L_1,\calL_0\times^!\calL_1)
	\end{equation*}
	is a forward Dirac--Jacobi morphism if and only if the following two inclusions hold
	\begin{equation}
	\label{Eq: DPinclusions}
		S^!\calL_0\subset \ker DS + (\ker  DT)^\varpi \qquad \text{and}
	\qquad T^!\calL_1\subset \ker DT + (\ker DS)^{-\varpi}.
\end{equation}
\end{lemma}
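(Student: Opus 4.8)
The plan is to verify directly that $\Phi:=S\times^!T$ satisfies $\Phi_!\operatorname{Gr}\varpi=\calL_0\times^!\calL_1$ if and only if the two inclusions~\eqref{Eq: DPinclusions} hold, by reducing everything to a single pointwise inclusion of Lagrangian families and then decoupling that inclusion into its two legs.

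First I would reduce the problem. Since $\Phi_!\operatorname{Gr}\varpi$ is a Lagrangian family (cf.\ the discussion after Definition~\ref{def:backward_forward_DJ}) and $\calL_0\times^!\calL_1$ is a Lagrangian subbundle, both have fibres of the same dimension $\operatorname{rank}D(L_0\times^!L_1)$. Hence $\Phi$ is a forward Dirac--Jacobi morphism, i.e.\ $\Phi_!\operatorname{Gr}\varpi=\calL_0\times^!\calL_1$, if and only if the pointwise inclusion $(\calL_0\times^!\calL_1)_{\varphi(x)}\subseteq(\Phi_!\operatorname{Gr}\varpi)_x$ holds for every $x$: equality of maximal isotropic subspaces follows from the inclusion by dimension count, and equality with the smooth bundle $\calL_0\times^!\calL_1$ automatically supplies the smoothness of $\Phi_!\operatorname{Gr}\varpi$ required for a forward map.

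Next I would make the two sides explicit via Proposition~\ref{prop:D-FunProd}. Under the identifications $D(L_0\times^!L_1)\cong p_1^\ast DL_0\oplus p_2^\ast DL_1$ and $J^1(L_0\times^!L_1)\cong p_1^\ast J^1L_0\oplus p_2^\ast J^1L_1$, the derivation map reads $D\Phi\,\delta=(DS\,\delta,DT\,\delta)$, while its transpose decomposes, up to the sign dictated by the asymmetric product convention $L_0\times^!L_1=p_1^\ast L_1$, as $(D\Phi)^\ast(a_0,a_1)=(DS)^\ast a_0-(DT)^\ast a_1$; this relative sign is exactly what makes $\calL_1$ (rather than its opposite) together with $-\varpi$ appear on the second leg. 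Consequently $(\calL_0\times^!\calL_1)_{\varphi(x)}=\{((d_0,d_1),(a_0,a_1))\mid(d_0,a_0)\in\calL_0,\ (d_1,a_1)\in\calL_1\}$ and $(\Phi_!\operatorname{Gr}\varpi)_x=\{((DS\,\delta,DT\,\delta),(a_0,a_1))\mid(DS)^\ast a_0-(DT)^\ast a_1=\varpi^\flat\delta\}$, so the inclusion above unwinds to the single condition that for all compatible $(d_0,a_0)\in\calL_0$ and $(d_1,a_1)\in\calL_1$ there is $\delta\in DL$ with $DS\,\delta=d_0$, $DT\,\delta=d_1$ and $\varpi^\flat\delta=(DS)^\ast a_0-(DT)^\ast a_1$.

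The heart of the argument is then to decouple this condition into the two legs. For the forward implication I would test it on elements supported on a single leg: taking $(d_1,a_1)=(0,0)$ produces, for each $(d_0,a_0)\in\calL_0$, a derivation $\delta\in\ker DT$ with $DS\,\delta=d_0$ and $\varpi^\flat\delta=(DS)^\ast a_0$; writing any $\delta_0$ with $DS\,\delta_0=d_0$ as $\delta_0=(\delta_0-\delta)+\delta\in\ker DS+(\ker DT)^\varpi$ yields exactly $S^!\calL_0\subseteq\ker DS+(\ker DT)^\varpi$, and the symmetric choice $(d_0,a_0)=(0,0)$ yields $T^!\calL_1\subseteq\ker DT+(\ker DS)^{-\varpi}$. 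Conversely, given the inclusions and compatible $(d_0,a_0)\in\calL_0$, $(d_1,a_1)\in\calL_1$, I would apply~\eqref{Eq: DPinclusions} to lifts of the two elements to get $\delta_2\in\ker DT$ with $DS\,\delta_2=d_0$, $\varpi^\flat\delta_2=(DS)^\ast a_0$ and $\eta_2\in\ker DS$ with $DT\,\eta_2=d_1$, $\varpi^\flat\eta_2=-(DT)^\ast a_1$; then $\delta:=\delta_2+\eta_2$ satisfies the three required equations, proving the pointwise inclusion and hence the forward property. Surjectivity of $DS$ and $DT$, which cover surjective submersions, is what makes these lifts available. The main obstacle I anticipate is bookkeeping: pinning down the relative signs through the product line bundle identification (equivalently, tracking the opposite structures), and confirming that $\Phi_!\operatorname{Gr}\varpi$ is genuinely a Lagrangian family so that the equal-rank reduction of the first step is legitimate; once these are settled, the splitting into the two one-leg tests is routine.
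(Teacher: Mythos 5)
Your proposal is correct and follows essentially the same route as the paper's proof: reduce the forward-map condition to a pointwise inclusion of Lagrangian families, write both sides explicitly via the splittings $D(L_0\times^!L_1)\cong p_1^\ast DL_0\oplus p_2^\ast DL_1$ and $J^1(L_0\times^!L_1)\cong p_1^\ast J^1L_0\oplus p_2^\ast J^1L_1$, decouple into the two one-leg conditions, and use surjectivity of $DS$ and $DT$ to convert these into the backward-transform inclusions~\eqref{Eq: DPinclusions}. The only cosmetic difference is where you place the relative minus sign (in the transpose $(D\Phi)^\ast$ rather than in passing to $\calL_1^{\mathrm{opp}}$ with the pairing taken additively, as the paper does in its proof); the two conventions are equivalent bookkeeping and lead to the same pair of inclusions.
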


\begin{proof}
	Recall that the product line bundle $L_0\times^!L_1$ was introduced in Theorem~\ref{Thm: ProdCatLine}, where also the product $S\times^!T:L\to L_0\times^!L_1$ was defined as the unique regular LB morphism s.t.~the following diagram commute 
	\begin{equation*}
		\begin{tikzcd}
			&& L\arrow[lld, bend right=20, swap, "S"]\arrow[rrd, bend left=20, "T"]\arrow[d, dashed, "S\times ^! T"] &&\\
			L_0 && L_0\times^!L_1\arrow[ll, "P_0"]\arrow[rr, swap, "P_1"] && L_1
		\end{tikzcd}
	\end{equation*}
	where the horizontal arrows denote the standard projections.
	Note also that, for any $x\in M$ and $\lambda_x\in L_x$, we have $(S\times^!T)\lambda_x=T_x\circ S_x^{-1}$ and the following canonical identifications hold (see Lemma~\ref{Lem: SplitDProd})
	\begin{equation*}
	(D(L_0\times^!L_1))_{T_xS_x^{-1}}\simeq(DL_0)_{sx}\oplus(DL_1)_{tx}\quad\textnormal{and}\quad(J^1(L_0\times^!L_1))_{T_xS_x^{-1}}\simeq(J^1L_0)_{sx}\oplus(J^1L_1)_{tx}.
	\end{equation*}
	Understanding these splitting, we obtain first the following expression for the fiber of $(S\times^!T)_!\operatorname{Gr}\varpi$
	\begin{equation}
	\label{eq:lem:forward_DJ_product:1}
		((S\times^!T)_!\operatorname{Gr}\varpi)_{T_xS_x^{-1}}=\{((D_xS)\delta_x+(D_xT)\delta_x,\alpha_{sx}+\alpha_{tx})\mid \varpi^\flat\delta_x=(D_xS)^\ast\alpha_{sx}+(D_xT)^\ast\alpha_{tx}\}.
	\end{equation}
	Moreover, we can also compute
	\begin{equation*}
		(P_0^!\calL_0)_{T_xS_x^{-1}}=(\calL_0)_{sx}\oplus(DL_1)_{tx}\quad\textnormal{and}\quad(P_1^!\calL_1)_{T_xS_x^{-1}}=(\calL_1)_{tx}\oplus(DL_1)_{tx},
	\end{equation*}
	and then we obtain the following expression for the fiber of $\calL_0\times^!\calL_1$ over $T_xS_x^{-1}$
	\begin{equation}
	\label{eq:lem:forward_DJ_product:2}
		(\calL_0\times^!\calL_1^\textnormal{opp})_{T_xS_x^{-1}}=(\calL_0)_{sx}\oplus(\calL_1^\textnormal{opp})_{tx}
	\end{equation}
	Now, from the comparison of Equations~\eqref{eq:lem:forward_DJ_product:1} and~\eqref{eq:lem:forward_DJ_product:2}, we get that $(\calL_0\times^!\calL_1^\textnormal{opp})_{T_xS_x^{-1}}\subset ((S\times^!T)_!\operatorname{Gr}\varpi)_{T_xS_x^{-1}}$ is equivalent to the following two inclusions
	\begin{equation}
		\label{eq:lem:forward_DJ_product:3}
		\begin{aligned}
			(\calL_0)_{sx}&\subset\{((D_xS)\delta_x,\alpha_{sx})\mid\delta_x\in\ker D_xT,\ \text{and}\ \varpi^\flat\delta_x=(D_xS)^\ast\alpha_{sx}\}\\
			(\calL_1)_{tx}&\subset\{((D_xT)\delta_x,\alpha_{tx})\mid\delta_x\in\ker D_xS,\ \text{and}\ \varpi^\flat\delta_x=(D_xT)^\ast\alpha_{tx}\}.
		\end{aligned}
	\end{equation}
	Finally, since $S:L\to L_0$ and $T:L\to L_1$ cover surjective submersions, in particular $D_xS:D_xL\to D_{sx}L_0$ and $D_xT:D_xL\to D_{tx}L_1$ are surjective and so the inclusions~\eqref{eq:lem:forward_DJ_product:3} are equivalent to the inclusions $(S^!\calL_0)_x\subset\ker D_xS+(\ker D_xT)^\varpi$ and $(T^!\calL_1)_{T_xS_x^{-1}}\subset\ker D_xT+(\ker D_xS)^\varpi$.
	This proves the claim.
\end{proof}

\begin{proposition}[{\bf Characterization of Weak Dual Pairs II}]
	\label{prop:characterizations_WDPs_II}
	Let $\Phi_i:L\to L_i$ be a regular LB morphism covering a surjective submersion $\varphi_i:M\to M_i$, for $i=0,1$.
	For any presymplectic Atiyah form $\varpi\in\Omega^2_D(L)$ and any Dirac--Jacobi structures $\calL_i\subset\bbD L_i$, with $i=0,1$,  we get that
	\begin{equation}
		\label{eq:prop:characterizations_WDPs_II}
		\begin{tikzcd}
			(M_0,L_0,\calL_0)&(M,L,\operatorname{Gr}\varpi)\arrow[l, swap, "{\Phi_0}"]\arrow[r, "{\Phi_1}"]&(M_1,L_1,\calL_1)
		\end{tikzcd}
	\end{equation}
	is a weak dual pair if and only if the following two conditions hold:
	\begin{enumerate}[label={\arabic*)}]
		\item
		\label{enumitem:prop:characterizations_WDPs_II:1}
		${\Phi_0}\times^! {\Phi_1}\colon (M,L,\operatorname{Gr}\varpi)\to (M_0\times^!M_1,L_0\times^!L_1,\calL_0\times^!\calL_1^{\mathrm{opp}})$ is a forward Dirac-Jacobi map and
		\item
		\label{enumitem:prop:characterizations_WDPs_II:2}
		one (and so all) of the following three properties are satisfied:
		\begin{enumerate}
			\item\label{enumitem:prop:characterizations_WDPs_II:2a}
			$\varpi(\ker D{\Phi_0},\ker D{\Phi_1})=0$
			\item\label{enumitem:prop:characterizations_WDPs_II:2b}
			$\rank(\ker D{\Phi_0} \cap \ker\varpi^\flat\cap \ker D{\Phi_1})=\dim(M)-\dim(M_0)-\dim(M_1)-1$,
			\item\label{enumitem:prop:characterizations_WDPs_II:2c}
			${\Phi_0}^!\calL_0=({\Phi_1}^!\calL_1)^\varpi$
		\end{enumerate}
	\end{enumerate}	 
\end{proposition}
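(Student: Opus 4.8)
The plan is to reduce the whole statement to the already-established Characterization~I (Proposition~\ref{prop:characterizations_WDPs_I}), using Lemma~\ref{lem:forward_DJ_product} to unpack Condition~\ref{enumitem:prop:characterizations_WDPs_II:1} and a single rank count to collapse the three options in Condition~\ref{enumitem:prop:characterizations_WDPs_II:2}. Throughout I would write $K := \ker D\Phi_0 + (\ker D\Phi_1)^\varpi \subset \bbD L$. First I would invoke Lemma~\ref{lem:forward_DJ_product} to record that Condition~\ref{enumitem:prop:characterizations_WDPs_II:1} is equivalent to the pair of inclusions $\Phi_0^!\calL_0 \subset K$ and $\Phi_1^!\calL_1 \subset \ker D\Phi_1 + (\ker D\Phi_0)^{-\varpi}$; applying the gauge transformation $\exp(\varpi)$ to the second one rewrites it as $(\Phi_1^!\calL_1)^\varpi \subset K$. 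Since $\ker D\Phi_0 \subset \Phi_0^!\calL_0$ and hence $(\ker D\Phi_1)^\varpi \subset (\Phi_1^!\calL_1)^\varpi$ hold unconditionally, under Condition~\ref{enumitem:prop:characterizations_WDPs_II:2c} these inclusions already force $K = \Phi_0^!\calL_0 = (\Phi_1^!\calL_1)^\varpi$.

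The crux is the mutual equivalence of the three options under Condition~\ref{enumitem:prop:characterizations_WDPs_II:1}. Two elementary facts drive this. A direct evaluation of the symmetric product $\ldab-,-\rdab$ on two elements $(\delta_0+\delta_1,\varpi^\flat\delta_1)$ of $K$ reduces to $\varpi(\delta_1,\delta_0')+\varpi(\delta_1',\delta_0)$, so that $K$ is isotropic if and only if $\varpi(\ker D\Phi_0,\ker D\Phi_1)=0$, i.e.~\ref{enumitem:prop:characterizations_WDPs_II:2a}; and the short exact sequence from the proof of Proposition~\ref{prop:isotropy_bundle} gives $\rank K = 2\dim M - \dim M_0 - \dim M_1 - r$ with $r=\rank(\ker D\Phi_0\cap\ker\varpi^\flat\cap\ker D\Phi_1)$, so that $\rank K = \dim M + 1$ is equivalent to~\ref{enumitem:prop:characterizations_WDPs_II:2b}. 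Since $\Phi_0^!\calL_0$ and $(\Phi_1^!\calL_1)^\varpi$ are Lagrangian of rank $\dim M+1$, I would then argue: under \ref{enumitem:prop:characterizations_WDPs_II:2a}, isotropy bounds $\rank K\le \dim M+1$ while $\Phi_0^!\calL_0\subset K$ bounds it below, forcing $\rank K=\dim M+1$ (hence \ref{enumitem:prop:characterizations_WDPs_II:2b}) and $K=\Phi_0^!\calL_0=(\Phi_1^!\calL_1)^\varpi$ (hence \ref{enumitem:prop:characterizations_WDPs_II:2c}); under \ref{enumitem:prop:characterizations_WDPs_II:2b}, the common rank upgrades both inclusions to the equality $\Phi_0^!\calL_0=K=(\Phi_1^!\calL_1)^\varpi$ (so \ref{enumitem:prop:characterizations_WDPs_II:2c}) and exhibits $K$ as Lagrangian, hence isotropic (so \ref{enumitem:prop:characterizations_WDPs_II:2a}); and under \ref{enumitem:prop:characterizations_WDPs_II:2c}, the equality $K=\Phi_0^!\calL_0$ noted in the first paragraph makes $K$ Lagrangian, giving both remaining options. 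This settles the ``one (and so all)'' clause.

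With this equivalence in hand, the main biconditional is short. For the reverse implication, assuming Condition~\ref{enumitem:prop:characterizations_WDPs_II:1} together with any one of the options, the previous paragraph yields both \ref{enumitem:prop:characterizations_WDPs_II:2b} and \ref{enumitem:prop:characterizations_WDPs_II:2c}; since \ref{enumitem:prop:characterizations_WDPs_II:2c} is verbatim Condition~\ref{enumitem:prop:characterizations_WDPs_I:1} and \ref{enumitem:prop:characterizations_WDPs_II:2b} is Condition~\ref{enumitem:prop:characterizations_WDPs_I:2} of Proposition~\ref{prop:characterizations_WDPs_I}, that proposition immediately gives that~\eqref{eq:prop:characterizations_WDPs_II} is a weak dual pair. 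For the forward implication, a weak dual pair satisfies \ref{enumitem:prop:characterizations_WDPs_II:2a} and \ref{enumitem:prop:characterizations_WDPs_II:2b} by Definition~\ref{def:weak_dual_pair}, while Corollary~\ref{cor:prop:characterizations_WDPs_I} supplies the equalities $\Phi_0^!\calL_0 = \ker D\Phi_0 + (\ker D\Phi_1)^\varpi$ and $\Phi_1^!\calL_1 = \ker D\Phi_1 + (\ker D\Phi_0)^{-\varpi}$, the latter obtained after passing to opposites via $\Phi_1^!(\calL_1^{\mathrm{opp}})=(\Phi_1^!\calL_1)^{\mathrm{opp}}$; these are exactly the two inclusions characterizing Condition~\ref{enumitem:prop:characterizations_WDPs_II:1} through Lemma~\ref{lem:forward_DJ_product}.

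I expect the main obstacle to be the sign-and-opposite bookkeeping: keeping straight how the $-\varpi$ in Lemma~\ref{lem:forward_DJ_product}, the gauge transform $\exp(\varpi)$, and the opposite $(-)^{\mathrm{opp}}$ interact, so that Condition~\ref{enumitem:prop:characterizations_WDPs_II:2c} lines up \emph{exactly} with Condition~\ref{enumitem:prop:characterizations_WDPs_I:1} rather than with a $B$-field transform of its opposite. By contrast, the geometric content — the dichotomy ``$K$ isotropic'' versus ``$\rank K = \dim M + 1$'', each matched to one of the numerical conditions, together with the Lagrangian rank count that collapses the three options — is routine once that dichotomy for $K$ is isolated.
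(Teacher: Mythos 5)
Your proof is correct and follows essentially the same route as the paper: both directions are reduced to Proposition~\ref{prop:characterizations_WDPs_I} via Lemma~\ref{lem:forward_DJ_product} and Corollary~\ref{cor:prop:characterizations_WDPs_I}, and the heart of the matter is the equivalence of the three options in Condition~\ref{enumitem:prop:characterizations_WDPs_II:2} under Condition~\ref{enumitem:prop:characterizations_WDPs_II:1}. The only (harmless) variation is that you organize that equivalence around the single subbundle $K=\ker D\Phi_0+(\ker D\Phi_1)^\varpi$, getting the upper bound $\rank K\le\dim M+1$ from a direct isotropy computation, whereas the paper cycles through \ref{enumitem:prop:characterizations_WDPs_II:2a}$\Rightarrow$\ref{enumitem:prop:characterizations_WDPs_II:2b}$\Rightarrow$\ref{enumitem:prop:characterizations_WDPs_II:2c}$\Rightarrow$\ref{enumitem:prop:characterizations_WDPs_II:2a} and obtains that bound from the sandwich $T^!\calL_1\subset\ker DT+(\ker DS)^\varpi\subset T^!T_!\operatorname{Gr}\varpi$ supplied by Lemma~\ref{lem:SurjSub}.
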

	
\begin{proof}
	Let us start checking that, if Condition~\ref{enumitem:prop:characterizations_WDPs_II:2} holds, then the three conditions~\ref{enumitem:prop:characterizations_WDPs_II:2a}-\ref{enumitem:prop:characterizations_WDPs_II:2c} become equivalent.
	\\
	\ref{enumitem:prop:characterizations_WDPs_II:2a} $\Rightarrow$ \ref{enumitem:prop:characterizations_WDPs_II:2b}
	Since Condition~\ref{enumitem:prop:characterizations_WDPs_II:2} is equivalent to the inclusions~\eqref{Eq: DPinclusions}, from $\ker DS\subset(\ker DT)^{\perp\varpi}$ we get
	$$T^!\calL_1\subset\ker DT+(\ker DS)^\varpi\subset\ker DT+((\ker DT)^{\perp\varpi})^\varpi=T^!T_!\operatorname{Gr}\varpi,$$
	where we also used Lemma~\ref{lem:SurjSub}.
	Further, since $T^!\calL_1$ and $T^!T_!\operatorname{Gr}\varpi$ are both maximally isotropic, the latter tells that $\rank(\ker DT+(\ker DS)^\varpi)=\dim M+1$ and this  is equivalent to Condition~\ref{enumitem:prop:characterizations_WDPs_II:2b} by Remark~\ref{rem:isotropy_bundle}.
	\\
	\ref{enumitem:prop:characterizations_WDPs_II:2b} $\Rightarrow$ \ref{enumitem:prop:characterizations_WDPs_II:2c}
	Since Condition~\ref{enumitem:prop:characterizations_WDPs_II:2} is equivalent to the inclusions~\eqref{Eq: DPinclusions} by Lemma~\ref{lem:forward_DJ_product} and Condition~\ref{enumitem:prop:characterizations_WDPs_II:2b} is equivalent to $\rank(\ker DT+(\ker DS)^\varpi)=\dim M+1$ by Remark~\ref{rem:isotropy_bundle}, we get
	$$S^!\calL_1=\ker DS+(\ker DT)^\varpi\qquad\text{and}\qquad T^!\calL_1=\ker DT+(\ker DS)^\varpi.$$
	Hence we can compute $S^!\calL_0=(T^!\calL_1)^\varpi$.
	\\
	\ref{enumitem:prop:characterizations_WDPs_II:2b} $\Rightarrow$ \ref{enumitem:prop:characterizations_WDPs_II:2a}
	Since $\ker DS\subset S^!\calL_0$ and $\ker DT\subset T^!\calL_1$, from $S^!\calL_0=(T^!\calL_1)^\varpi$ we get first the inclusion
	$$\ker DS+\ker(DT)^\varpi\subset S^!\calL_0=(T^!\calL_1)^\varpi$$
	Then, by the isotropy of the RHS, we get $\varpi(\ker DS,\ker DT)=0$.
	
	Now let us assume Conditions~\ref{enumitem:prop:characterizations_WDPs_II:1} and~\ref{enumitem:prop:characterizations_WDPs_II:2}.
	This, and the previous step of the proof, implies that, in particular, Conditions~\ref{enumitem:prop:characterizations_WDPs_I:1} and~\ref{enumitem:prop:characterizations_WDPs_I:2} in Proposition~\ref{prop:characterizations_WDPs_I} hold.
	So~\eqref{eq:prop:characterizations_WDPs_II} is a weak dual pair.

	Now let us assume that~\eqref{eq:prop:characterizations_WDPs_II} is a weak dual pair.
	By Definition~\ref{def:weak_dual_pair}, we have $\varpi(\ker DS,\ker DT)=0$.
	Additionally, by Corollary~\ref{cor:prop:characterizations_WDPs_I}, we also have 
	\begin{equation*}
		S^!\calL_0= \ker DS +(\ker DT)^\varpi\quad \text{and}\quad T^!\calL_1= \ker DT +(\ker DS)^{-\varpi}
	\end{equation*}
	By Lemma~\ref{lem:forward_DJ_product}, the latter implies that $S\times^! T\colon (M,L,\operatorname{Gr}\varpi)\to (M_0\times^!M_1,L_0\times^!L_1,\calL_0\times^!\calL_1^{\mathrm{opp}})$ is a forward Dirac-Jacobi map.
	So, both Conditions~\ref{enumitem:prop:characterizations_WDPs_II:1} and~\ref{enumitem:prop:characterizations_WDPs_II:2} hold.
%
\end{proof}

\subsection{Operations on Weak Dual Pairs}
The relation ``being connected by a dual pair'' does not form an \emph{equivalence} in the category of Dirac-Jacobi manifolds because it fails to satisfy transitivity.
This is actually the reason to introduce weak dual pairs, since the relation ``being connected by a weak dual pair'' is an equivalence.
The idea and the proofs of this claim can be found in~\cite{FM2018} in the Dirac setting.
Since symmetry is obvious, we start here proving transitivity: in Section~\ref{sec:self-dual_pair}, we will discuss reflexivity.
We stress that, as we claimed before, transitivity does not work with dual pairs, but reflexivity does (see Theorem~\ref{thm:existence_self-dual_pair}).

\begin{proposition}[{\bf Transitivity}]
	\label{prop:transitivityWDP}
	Let $(M_0,L_0,\calL_0)$ and $(M_1,L_1,\calL_1)$ be connected by the weak dual pair
	\begin{equation}
		\label{eq:Lem: TransitivityDP:1}
		\begin{tikzcd}
			(M_0,L_0,\calL_0)&(M_{01},L_{01},\operatorname{Gr}\varpi_{01})\arrow[l, swap, "S_{01}"]\arrow[r, "T_{01}"]&(M_1,L_1,\calL_1^\mathrm{opp})
		\end{tikzcd}
	\end{equation}
	and let $(M_1,L_1,\calL_1)$ and $(M_2,L_2,\calL_2)$ be connected by the weak dual pair
	\begin{equation}
		\label{eq:Lem: TransitivityDP:2}
		\begin{tikzcd}
			(M_1,L_1,\calL_1)&(M_{12},L_{12},\operatorname{Gr}\varpi_{12})\arrow[l, swap, "S_{12}"]\arrow[r, "T_{12}"]&(M_2,L_2,\calL_2^\mathrm{opp}).
		\end{tikzcd}
	\end{equation}
	Then also the Dirac--Jacobi manifolds $(M_0,L_0,\calL_0)$ and $(M_1,L_1,\calL_1)$ are connected by a weak dual pair. 
\end{proposition}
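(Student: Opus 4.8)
The plan is to realize the composite as a fibered product over the shared leg $M_1$, equipped with the sum of the two pulled-back presymplectic forms, and to prove it is a weak dual pair through Proposition~\ref{prop:characterizations_WDPs_II} rather than by naively composing forward maps. Concretely: the inner legs $t_{01}\colon M_{01}\to M_1$ and $s_{12}\colon M_{12}\to M_1$ are surjective submersions, hence transversal, so $M:=M_{01}\times_{M_1}M_{12}$ exists in $\mathfrak{Man}$ and, by Theorem~\ref{Thm:FiberedProduct_LB}, the fibered product line bundle $L:=L_{01}\times_{L_1}L_{12}$ exists in $\mathfrak{Line}$ with projections $P_{01},P_{12}$ covering surjective submersions. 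I would set $\varpi:=P_{01}^\ast\varpi_{01}+P_{12}^\ast\varpi_{12}$, closed by functoriality of $\rmd_D$, and take as candidate legs $S:=S_{01}\circ P_{01}$ and $T:=T_{12}\circ P_{12}$, covering surjective submersions, so that the claimed composite is
\[
\begin{tikzcd}
(M_0,L_0,\calL_0)&(M,L,\operatorname{Gr}\varpi)\arrow[l, swap, "S"]\arrow[r, "T"]&(M_2,L_2,\calL_2^{\mathrm{opp}}).
\end{tikzcd}
\]
By Proposition~\ref{prop:characterizations_WDPs_II} it suffices to verify the orthogonality~\ref{enumitem:prop:characterizations_WDPs_II:2a} and that $S\times^!T$ is forward onto $\calL_0\times^!\calL_2^{\mathrm{opp}}$ (condition~\ref{enumitem:prop:characterizations_WDPs_II:1}); the rank condition~\ref{enumitem:prop:characterizations_WDPs_II:2b} then comes for free from the equivalence of~\ref{enumitem:prop:characterizations_WDPs_II:2a}--\ref{enumitem:prop:characterizations_WDPs_II:2c}.

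The computations rest on the canonical splitting $D(L_{01}\times_{L_1}L_{12})\cong DL_{01}\times_{DL_1}DL_{12}$ of Corollary~\ref{Cor: SplittingPPDFun}: I write a derivation of $L$ as a compatible pair $(\delta_{01},\delta_{12})$ with $DT_{01}\delta_{01}=DS_{12}\delta_{12}$, so that $\ker DS$ is cut out by $\delta_{01}\in\ker DS_{01}$, $\ker DT$ by $\delta_{12}\in\ker DT_{12}$, and $\varpi\big((\delta_{01},\delta_{12}),(\delta_{01}',\delta_{12}')\big)=\varpi_{01}(\delta_{01},\delta_{01}')+\varpi_{12}(\delta_{12},\delta_{12}')$ under the identifications induced by $T_{01}\circ P_{01}=S_{12}\circ P_{12}$. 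For orthogonality, given $(\delta_{01},\delta_{12})\in\ker DS$ and $(\delta_{01}',\delta_{12}')\in\ker DT$, I set $\eta:=DT_{01}\delta_{01}$ and $\eta':=DS_{12}\delta_{12}'$. Since the first pair is a weak dual pair, $\varpi_{01}^\flat\delta_{01}$ annihilates $\ker DT_{01}$, so pushing $\delta_{01}$ forward along $T_{01}$ yields $\beta$ with $(DT_{01})^\ast\beta=\varpi_{01}^\flat\delta_{01}$ and $(\eta,\beta)\in T_{01!}\operatorname{Gr}\varpi_{01}=\calL_1^{\mathrm{opp}}$, whence $\varpi_{01}(\delta_{01},\delta_{01}')=\beta(\eta')$; symmetrically, pushing $\delta_{12}'$ forward along $S_{12}$ gives $\gamma'$ with $(\eta',\gamma')\in\calL_1$ and $\varpi_{12}(\delta_{12},\delta_{12}')=-\gamma'(\eta)$. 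As $(\eta,-\beta),(\eta',\gamma')\in\calL_1$ and $\calL_1$ is isotropic, $\bla(\eta,-\beta),(\eta',\gamma')\bra=-\beta(\eta')+\gamma'(\eta)=0$, so $\varpi\big((\delta_{01},\delta_{12}),(\delta_{01}',\delta_{12}')\big)=\beta(\eta')-\gamma'(\eta)=0$. The conceptual point is that the orthogonality of the composite is \emph{exactly} the isotropy of the common leg $\calL_1$.

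For condition~\ref{enumitem:prop:characterizations_WDPs_II:1} I invoke Lemma~\ref{lem:forward_DJ_product}, reducing it to the inclusions~\eqref{Eq: DPinclusions}: $S^!\calL_0\subset\ker DS+(\ker DT)^\varpi$ and $T^!\calL_2\subset\ker DT+(\ker DS)^{-\varpi}$. Writing $S^!\calL_0=P_{01}^!S_{01}^!\calL_0$ and using $S_{01}^!\calL_0=\ker DS_{01}+(\ker DT_{01})^{\varpi_{01}}$ from Corollary~\ref{cor:prop:characterizations_WDPs_I} for the first pair, a typical element of $S^!\calL_0$ is $\big((\delta_{01},\delta_{12}),(DP_{01})^\ast\varpi_{01}^\flat b\big)$ with $\delta_{01}=a+b$, $a\in\ker DS_{01}$, $b\in\ker DT_{01}$. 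The decisive move is the splitting
\[
\big((a+b,\delta_{12}),(DP_{01})^\ast\varpi_{01}^\flat b\big)=\big((a,\delta_{12}),0\big)+\big((b,0),\varpi^\flat(b,0)\big),
\]
where the first summand lies in $\ker DS$ and the second in $(\ker DT)^\varpi$, since $(b,0)\in\ker DT$ and $\varpi^\flat(b,0)=(DP_{01})^\ast\varpi_{01}^\flat b$; the second inclusion follows by the mirror-image splitting on the $L_{12}$ side. Feeding the two verified conditions into Proposition~\ref{prop:characterizations_WDPs_II} shows the composite is a weak dual pair, and in particular that $S,T$ are forward maps, establishing transitivity (symmetry and reflexivity being handled elsewhere).

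I expect the forward property to be the genuine obstacle. One cannot get it by composing forward maps, because neither $P_{01}$ nor $P_{12}$ is itself a forward map onto the inner graphs: the cross term $P_{12}^\ast\varpi_{12}$ (resp.\ $P_{01}^\ast\varpi_{01}$) acts as a nontrivial $B$-field that does not disappear under the projection. The forward property materializes only for the \emph{full} composite legs, after the $B$-field cross terms cancel, and it must be extracted from the characterization via the explicit fibered-product splitting and the decomposition above; tracking the line-bundle identifications coming from $T_{01}\circ P_{01}=S_{12}\circ P_{12}$ throughout is the main bookkeeping burden.
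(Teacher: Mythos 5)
Your construction is exactly the paper's: the fibered product line bundle $L_{01}\times_{L_1}L_{12}$ over $M_{01}\times_{M_1}M_{12}$, the composite legs $S_{01}\circ P_{01}$ and $T_{12}\circ P_{12}$, and the $2$-form $P_{01}^\ast\varpi_{01}+P_{12}^\ast\varpi_{12}$ (the paper's displayed definition carries a stray minus sign, but its subsequent computation uses the plus sign you chose, which is the correct one). Where you diverge is in which of the equivalent conditions you verify. The paper checks condition~\ref{enumitem:prop:characterizations_WDPs_II:2c} by the two-line formal chain
$S_{02}^!\calL_0=P_1^!((T_{01}^!\calL_1)^{\varpi_{01}})=(P_2^!S_{12}^!\calL_1)^{P_1^\ast\varpi_{01}}=(T_{02}^!\calL_2)^{\varpi_{02}}$,
using naturality of backward transforms under $B$-fields and the identity $T_{01}\circ P_1=S_{12}\circ P_2$, and then verifies the rank condition~\ref{enumitem:prop:characterizations_WDPs_II:2b} by an explicit dimension count through the splitting of Corollary~\ref{Cor: SplittingPPDFun}; this is precisely the pair of conditions in Proposition~\ref{prop:characterizations_WDPs_I}. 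You instead verify condition~\ref{enumitem:prop:characterizations_WDPs_II:1} via Lemma~\ref{lem:forward_DJ_product} and the element-wise splittings of $S^!\calL_0$ and $T^!\calL_2$, and condition~\ref{enumitem:prop:characterizations_WDPs_II:2a} by pushing the two kernel derivations into $\calL_1^{\mathrm{opp}}$ and $\calL_1$ respectively and invoking isotropy of $\calL_1$ --- a nice conceptual point the paper's proof leaves implicit. Both routes are sound: the paper's is shorter but hides the geometry in the formal calculus of $({}^!,{}^\varpi)$ and requires the explicit rank bookkeeping, while yours trades that for hands-on splittings and gets the rank condition for free from the equivalence of~\ref{enumitem:prop:characterizations_WDPs_II:2a}--\ref{enumitem:prop:characterizations_WDPs_II:2c} under condition~\ref{enumitem:prop:characterizations_WDPs_II:1}. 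Your checks of compatibility of the pairs $(a,\delta_{12})$, $(b,0)$ under $DT_{01}=DS_{12}$ and of $\varpi^\flat(b,0)=(DP_{01})^\ast\varpi_{01}^\flat b$ are correct, so I see no gap.
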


\begin{proof}
	Since regular LB morphisms $T_{01}:L_{01}\to L_1$ and $S_{12}:L_{12}\to L_1$ cover surjective submersions, we can consider the fiber product line bundle $L_{02}=L_{01}\times_{L_1} L_{12}\to M_{02}=M_{01}\times_{M_1}M_{12}$ (see Theorem~\ref{Thm:FiberedProduct_LB})
	\begin{center}
		\begin{tikzcd}
			L_{02}\arrow[d, swap, "P_1"]\arrow[r, "P_2"] & L_{12}\arrow[d, "S_{12}"] \\
			L_{01}\arrow[r, swap, "T_{01}"] & L_1
		\end{tikzcd}
	\end{center}
	and, additionally, we get that also the projection maps $P_1$ and $P_2$ cover surjective submersions.
	Consequently, $S_{02} =S_{01}\circ P_1:L_{02}\to L_0$ and 
	$T_{02}=T_{12}\circ P_2:L_{02}\to L_2$ are regular LB morphisms covering surjective submersions and we can draw the following commutative diagram
	\begin{equation*}
		\begin{tikzcd}
			& & L_{02}\arrow[dl,"P_1"']\arrow[dr, "P_2"]\arrow[lldd, bend right=45, swap, "S_{02}"]\arrow[rrdd, bend left=45, "T_{02}"] & & \\
			& L_{01}\arrow[dl, "S_{01}"']\arrow[dr, "T_{01}"'] & & L_{12}\arrow[dl, "S_{12}"]\arrow[dr, "T_{12}"] &\\
			L_0& & L_1 & & L_2
		\end{tikzcd}
	\end{equation*}
	
	Let us define the closed $L_{02}$-valued Atiyah 2-form  $\varpi_{02}:=P_1^\ast\varpi_{01}-P_2^\ast\varpi_{12}$.
	Now we want to prove that the following is a weak dual pair
	\begin{equation}
		\label{eq:Lem: TransitivityDP:3}
		\begin{tikzcd}
			(M_0,L_0,\calL_0)&(M_{02},L_{02},\operatorname{Gr}\varpi_{02})\arrow[l, swap, "S_{02}"]\arrow[r, "T_{02}"]&(M_2,L_2,\calL_2^\mathrm{opp}).
		\end{tikzcd}
	\end{equation}
	First, we check that $S_{02}^!\mathcal{L}_0=(T_{02}^!\mathcal{L}_2)^{\omega_{02}}$ through the following straightforward computation
	\begin{align*}
		S_{02}^!\mathcal{L}_0&=P_1^!S_{01}^!\mathcal{L}_0=P_1^!((T_{01}^!\mathcal{L}_1)^{\omega_{01}})=(P_1^!T_{01}^!\mathcal{L}_1)^{P_1^\ast \omega_{01}}=(P_2^!S_{12}^!\mathcal{L}_1)^{P_1^\ast \omega_{01}},\\
		(T_{02}^!\mathcal{L}_2)^{\omega_{02}}&=(P_2^!T_{12}^!\mathcal{L}_2)^{P_1^\ast \omega_{01}+P_2^\ast\omega_{12}}=(P_2^!((T_{12}^!\mathcal{L}_2)^{\omega_{12}}))^{P_1^\ast \omega_{01}}=(P_2^!S_{12}^!\mathcal{L}_1)^{P_1^\ast \omega_{01}}.
	\end{align*}
	Then, in view of Corollary~\ref{Cor: SplittingPPDFun}, we get that $DL_{02}=DL_{01}\times_{DL_1}DL_{12}$.
	So, we can easily get also that
	\begin{equation*}
		\ker D S_{02}\cap\ker\varpi_{02}^\flat\cap\ker D T_{02}=(\ker D S_{01}\cap\ker\varpi_{01}^\flat)\times_{DL_1}(\ker\varpi_{01}^\flat\cap\ker D T_{12}),
	\end{equation*}
	where we use the definition $\varpi_{02}=P^\ast\varpi_{01}+P_2^\ast\varpi_{12}$.
	Consequently we can compute
	\begin{align*}
		\rank (\ker D S_{02}\cap\ker\varpi_{02}^\flat\cap\ker D T_{02})&=\rank((\ker D S_{01}\cap\ker\varpi_{01}^\flat\cap\ker DT_{01})+\rank DL_1\\
		&\phantom{=}+\rank(\ker DS_{12}\cap\ker\varpi_{01}^\flat\cap\ker D T_{12})\\
		&=\dim M_{01}-\dim M_0-\dim M_1-1+\dim M_1 +1\\
		&\phantom{=}+\dim M_{12}-\dim M_1-\dim M_2-1\\
		&=\dim M_{02}-\dim M_0-\dim M_2-1.
	\end{align*}
	So, by Lemma~\ref{prop:characterizations_WDPs_II}, we get that~\eqref{eq:Lem: TransitivityDP:3} is a weak dual pair and this proves the claim.
\end{proof}

\begin{remark}
	\label{rem:composition}
	The weak dual pair~\eqref{eq:Lem: TransitivityDP:3} is called the \emph{composition} of the weak dual pairs~\eqref{eq:Lem: TransitivityDP:1} and~\eqref{eq:Lem: TransitivityDP:2}.
\end{remark}

We discuss next operations with (weak) dual pairs. Again transversals play an important role, namely there is a notion of ``pulling back'' (weak) dual pairs along Dirac--Jacobi transversals.

\begin{proposition}
	\label{prop:TransversePullBackWDPs}
	For any (weak) dual pair 
	\begin{equation}
		\begin{tikzcd}
			\label{eq:prop:TransversePullBackWDPs:given_WDP}
			(M_0,L_0,\calL_0)&(M,L,\operatorname{Gr}\varpi)\arrow[l, swap, "S"]\arrow[r, "T"]&(M_1,L_1,\calL_1^\mathrm{opp})
		\end{tikzcd}
	\end{equation}
	and any regular LB morphism $\Phi_i\colon L_{N_i}\hookrightarrow L_i$ transversal to $\mathcal{L}_i$, with $i=0,1$, define: 
	\begin{enumerate}[label={\arabic*)}]
		\item line bundle $L_\Sigma:={L_{N_0}}\times_{L_0} {L}\times_{L_1} L_{N_1}\to\Sigma:=N_0\times_{M_0} M \times_{M_1} N_1$,
		\item regular LB morphisms $S_\Sigma\colon L_\Sigma\to L_{N_0},\ (\lambda_{x_0},\lambda_x,\lambda_{x_1})\mapsto \lambda_{x_0}$, $T_\Sigma\colon L_\Sigma\to L_{N_1},\ (\lambda_{x_0},\lambda_x,\lambda_{x_1})\mapsto \lambda_{x_1}$, and $P_2:L_\Sigma\to L,\ (\lambda_{x_0},\lambda_x,\lambda_{x_1})\mapsto\lambda_x$,
		\item $L_\Sigma$-valued Atiyah presymplectic form $\varpi_\Sigma:=P_2^\ast\varpi$, 
	\end{enumerate}
	Then the following is a (weak) dual pair, the \emph{transverse pull-back} of~\eqref{eq:prop:TransversePullBackWDPs:given_WDP},
	\begin{equation}
		\label{eq:prop:TransversePullBackWDPs:transverse_pullback}
		\begin{tikzcd}
			(N_0,L_{N_0},\Phi_0^!\calL_0)&(\Sigma,L_\Sigma,\operatorname{Gr}\varpi_\Sigma)\arrow[l, swap, "S_\Sigma"]\arrow[r, "T_\Sigma"]&(N_1,L_{N_1},(\Phi_1^!\calL_1)^\mathrm{opp})
		\end{tikzcd}
	\end{equation}
	if and only if both $S_\Sigma:L_\Sigma\to L_{N_0}$ and $T_\Sigma:L_\Sigma\to L_{N_1}$ cover surjective submersions. 
\end{proposition}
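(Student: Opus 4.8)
The converse implication is immediate: by Definition~\ref{def:weak_dual_pair} the legs of any weak dual pair cover surjective submersions, so if~\eqref{eq:prop:TransversePullBackWDPs:transverse_pullback} is a weak dual pair then in particular $S_\Sigma$ and $T_\Sigma$ do. For the direct implication I would assume that $S_\Sigma$ and $T_\Sigma$ cover surjective submersions and verify the two conditions of the Characterization of Weak Dual Pairs~II (Proposition~\ref{prop:characterizations_WDPs_II}), whose standing hypotheses are then met. Throughout I would use the three defining relations of the construction, namely $\Phi_0\circ S_\Sigma=S\circ P_2$, $\Phi_1\circ T_\Sigma=T\circ P_2$ and $\varpi_\Sigma=P_2^\ast\varpi$, and I would note that $\Phi_0^!\calL_0$ and $\Phi_1^!\calL_1$ are genuine Dirac--Jacobi structures by Corollary~\ref{Cor: TrnsMap}, since $\Phi_0,\Phi_1$ are transversals.

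First I would check Condition~\ref{enumitem:prop:characterizations_WDPs_II:2c}, that is $S_\Sigma^!\Phi_0^!\calL_0=(T_\Sigma^!\Phi_1^!\calL_1)^{\varpi_\Sigma}$. By contravariant functoriality of the backward transform and the relations above one has, as Lagrangian families, $S_\Sigma^!\Phi_0^!\calL_0=(\Phi_0\circ S_\Sigma)^!\calL_0=(S\circ P_2)^!\calL_0=P_2^!S^!\calL_0$, and likewise $T_\Sigma^!\Phi_1^!\calL_1=P_2^!T^!\calL_1$. Since pulling back a closed Atiyah $2$-form intertwines $B$-field and backward transforms, i.e.~$P_2^!(\calK^\varpi)=(P_2^!\calK)^{\varpi_\Sigma}$ for every $\calK\subset\bbD L$ (checked directly from the definitions), feeding in the identity $S^!\calL_0=(T^!\calL_1)^\varpi$ valid for the given weak dual pair (Proposition~\ref{prop:characterizations_WDPs_I}) yields $S_\Sigma^!\Phi_0^!\calL_0=P_2^!\big((T^!\calL_1)^\varpi\big)=(P_2^!T^!\calL_1)^{\varpi_\Sigma}=(T_\Sigma^!\Phi_1^!\calL_1)^{\varpi_\Sigma}$. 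The hypotheses on $S_\Sigma,T_\Sigma$ ensure that the outer terms are smooth subbundles (the backward transform along a submersion is Dirac--Jacobi), so this is a genuine equality of Dirac--Jacobi structures.

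Next I would verify Condition~\ref{enumitem:prop:characterizations_WDPs_II:1}, that $S_\Sigma\times^! T_\Sigma$ is a forward Dirac--Jacobi map. The point is that $L_\Sigma$ is canonically the fiber product in $\mathfrak{Line}$ of $S\times^! T\colon L\to L_0\times^! L_1$ and $\Phi_0\times^!\Phi_1\colon L_{N_0}\times^! L_{N_1}\to L_0\times^! L_1$, with the two projections being $P_2$ and $S_\Sigma\times^! T_\Sigma$. Now $S\times^! T$ is a forward Dirac--Jacobi map onto $\calL_0\times^!\calL_1^{\mathrm{opp}}$ by Proposition~\ref{prop:characterizations_WDPs_II} applied to the given weak dual pair, while $\Phi_0\times^!\Phi_1$ is a Dirac--Jacobi transversal to $\calL_0\times^!\calL_1^{\mathrm{opp}}$: under the splittings of Proposition~\ref{prop:D-FunProd} the transversality condition~\eqref{eq:def:DJ_transversal} for the product decouples into those for $\Phi_0$ and $\Phi_1$ separately, and passing to the opposite leaves $\pr_D$ unchanged. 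Hence Lemma~\ref{lem:transverse_forward} applies, and the projection $S_\Sigma\times^! T_\Sigma$ is forward onto $(\Phi_0\times^!\Phi_1)^!(\calL_0\times^!\calL_1^{\mathrm{opp}})=\Phi_0^!\calL_0\times^!(\Phi_1^!\calL_1)^{\mathrm{opp}}$, the last equality because backward transforms distribute over products (Lemma~\ref{Lem: BackProd} with Remark~\ref{rem:product_LB_morphisms}) and commute with opposites; moreover the induced precontact structure on $L_\Sigma$ is $P_2^!\operatorname{Gr}\varpi=\operatorname{Gr}\varpi_\Sigma$, as required. With both conditions in hand, Proposition~\ref{prop:characterizations_WDPs_II} shows that~\eqref{eq:prop:TransversePullBackWDPs:transverse_pullback} is a weak dual pair, the orthogonality/rank condition being automatic from the equivalences established there. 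Finally, since $\dim\Sigma=\dim N_0+\dim N_1+\dim M-\dim M_0-\dim M_1$, a (full) dual pair, for which $\dim M=\dim M_0+\dim M_1+1$, pulls back to a (full) dual pair.

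The step I expect to be the main obstacle is the canonical identification of $L_\Sigma$ with the $\mathfrak{Line}$-fiber product of $S\times^! T$ and $\Phi_0\times^!\Phi_1$, together with the matching of the two projections with $P_2$ and $S_\Sigma\times^! T_\Sigma$ and of the induced structure with $\operatorname{Gr}\varpi_\Sigma$. Unwinding the definitions of $\times^!$ (Theorem~\ref{Thm: ProdCatLine}) and of the iterated fiber product (Theorem~\ref{Thm:FiberedProduct_LB}) is elementary but bookkeeping-heavy, and it is exactly here, via Lemma~\ref{lem:transverse_forward}, that the surjective submersion hypotheses on $S_\Sigma,T_\Sigma$ (guaranteeing smoothness of all the backward transforms involved) are genuinely used; one must also keep careful track of the $\mathrm{opp}$ and sign conventions when comparing with the product structures.
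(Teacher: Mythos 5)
Your proposal is correct and follows essentially the same route as the paper's proof: identify $L_\Sigma$ as the $\mathfrak{Line}$-fiber product of $S\times^!T$ and $\Phi_0\times^!\Phi_1$, check that $\Phi_0\times^!\Phi_1$ is a Dirac--Jacobi transversal to the product structure so that Lemma~\ref{lem:transverse_forward} gives Condition~\ref{enumitem:prop:characterizations_WDPs_II:1}, compute $S_\Sigma^!\Phi_0^!\calL_0=P_2^!S^!\calL_0=(P_2^!T^!\calL_1)^{\varpi_\Sigma}=(T_\Sigma^!\Phi_1^!\calL_1)^{\varpi_\Sigma}$ for Condition~\ref{enumitem:prop:characterizations_WDPs_II:2c}, and conclude with Proposition~\ref{prop:characterizations_WDPs_II}. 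The only additions are the explicit (and trivially correct) converse direction and the dimension count for full dual pairs, which the paper leaves implicit.
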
 

\begin{proof}
	By Theorem~\ref{Thm: ProdCatLine}, $S\times^!T:L\to L_0\times^!L_1$ and $S_\Sigma\times^!T_\Sigma:L_\Sigma\to L_{N_0}\times^!L_{N_1}$ are the unique regular LB morphisms s.t.~the following diagram, with the  the natural projections as horizontal arrows, commute
	\begin{equation*}
		\begin{tikzcd}
			&& L\arrow[lld, bend right=20, swap, "S"]\arrow[rrd, bend left=20, "T"]\arrow[d, dashed, "S\times ^! T"] &&\\
			L_0 && L_0\times^!L_1\arrow[ll]\arrow[rr] && L_1
		\end{tikzcd}
		\qquad
		\begin{tikzcd}
			&& L\arrow[lld, bend right=20, swap, "S_\Sigma"]\arrow[rrd, bend left=20, "T_\Sigma"]\arrow[d, dashed, "S_\Sigma\times ^! T_\Sigma"] &&\\
			L_{N_0} && L_{N_0}\times^!L_{N_1}\arrow[ll]\arrow[rr] && L_{N_1}
		\end{tikzcd}
	\end{equation*}
	By Lemma~\ref{Lem: BackProd} there exists a unique regular LB morphism $\Phi_0\times^!\Phi_1:L_{N_0}\times^! L_{N_1}\to L_0\times^!L_1$ such that the following diagram, where the horizontal arrows are the natural projections, commutes
	\begin{equation*}
		\begin{tikzcd}
			L_{N_0}\arrow[d, swap, "\Phi_0"] && L_{N_0}\times^!L_{N_1}\arrow[ll]\arrow[rr]\arrow[d, dashed, "\Phi_0\times^!\Phi_1"] && L_{N_1}\arrow[d, "\Phi_1"]\\
			L_0 && L_0\times^!L_1\arrow[ll]\arrow[rr] && L_1
		\end{tikzcd}
	\end{equation*}
	and, additionally, $\Phi_0\times^!\Phi_1:(N_0\times^!N_1,L_{N_0}\times^!L_{N_1},(\Phi^!\calL_0)\times^!(\Phi_1^!\calL_1))\to(M_0\times^!M_1,L_0\times^!L_1,\calL_0\times^!\calL_1)$ is a backward Dirac-Jacobi map.
	Further, since $\Phi_i:L_{N_i}\to L_i$ is transversal to $\calL_i$, for $i=0,1$, using the splittings of $D(L_0\times^!L_1)$ and $D(L_{N_0}\times^!L_{N_1})$ from Lemma~\ref{Lem: SplitDProd} it is easy to see that $\Phi_0\times^!\Phi_1:L\to L_1\times^!L_2$ is transversal to $\calL_1\times^!\calL_2$.
	
	Note that, by construction, we have that the following commutative diagram of regular LB morphisms 
	\begin{equation}
		\label{Eq: PBDLine}
		\begin{tikzcd}
			L_\Sigma \arrow[rr, "S_\Sigma\times^!T_\Sigma"]\arrow[d, swap, "P_2"] &&  L_{N_0}\times^!L_{N_1} \arrow[d, "\Phi_0\times^! \Phi_1"]\\
			L \arrow[rr, swap, "S\times^! T"] && L_0\times^! L_1 
		\end{tikzcd}
	\end{equation}
	and, moreover, it is easy to see that the latter is a pull-back diagram in $\mathfrak{Line}$ as in Theorem~\ref{Thm:FiberedProduct_LB}. 
	So, Lemma~\ref{lem:transverse_forward} implies that the following is a forward Dirac--Jacobi map
	\begin{equation*}
		S_\Sigma\times^!T_\Sigma:(M_\Sigma,L_\Sigma,\operatorname{Gr}\varpi_\Sigma)\longrightarrow(N_0\times^!N_1,L_{N_0}\times^!L_{N_1},(\Phi_0^!\calL_0)\times^!(\Phi_1^!\calL_1)),
	\end{equation*}
	where we use the fact that $P_2:(\Sigma,L_\Sigma,\operatorname{Gr}\varpi_\Sigma)\to(M,L,\operatorname{Gr}\varpi)$ is a backward Dirac--Jacobi map by construction of $\varpi_\Sigma$.
	Note also that, by the very definition of $L_\Sigma$, we have the following commutative diagram of regular line bundle morphisms
	\begin{center}
		\begin{tikzcd}
			L_{N_0}\arrow[d, swap, "\Phi_0"] && L_\Sigma\arrow[ll, swap, "S_\Sigma"]\arrow[rr, "T_\Sigma"]\arrow[d, "P_2"] && L_{N_1}\arrow[d, "\Phi_1"]\\
			L_0 && L\arrow[ll, "S"]\arrow[rr, swap, "T"] && L_1
		\end{tikzcd}
	\end{center}
	and hence, using $S^!\calL_0=(T^!\calL_1)^\varpi$ and basic properties of the backward transform, we can compute 
	\begin{equation*}
		S_\Sigma^!\Phi_0^!\calL_0=P_2^!S^!\calL_0=P_2^!((T^!\calL_1)^\varpi)=(P_2^!T^!\calL_1)^{P_2^\ast\varpi}=(T_\Sigma^!\Phi_1^!\calL_1)^{\varpi_\Sigma}.
	\end{equation*}
By Proposition~\ref{prop:characterizations_WDPs_II}, this proves that~\eqref{eq:prop:TransversePullBackWDPs:transverse_pullback} is a weak dual pair iff $S_\Sigma:L_\Sigma\to L_{N_0}$ and $T_\Sigma:L_\Sigma\to L_{N_1}$ cover surjective submersions. 
\end{proof}

\section{Existence of Self Dual Pairs with Applications}
\label{sec:self-dual_pair}

In this Section we complete the proof of the fact that the property of fitting in a weak dual pair like~\eqref{eq:def:weak_dual_pair} defines an equivalence relation for Dirac--Jacobi manifolds $(M_1,L_1,\calL_1)$ and $(M_2,L_2,\calL_2)$.
Actually, the only part that still remains to prove, namely the reflexivity, follows from a slightly stronger result represented by the existence of self dual pairs (Theorem~\ref{thm:existence_self-dual_pair}).
In addition to prove the latter, we also apply it to get an alternative proof (Theorem~\ref{theor:NormalForm_DJtransversals}) of the Normal Norm Theorem around Dirac--Jacobi transversals~\cite{NorForJac}.

\subsection{Dirac--Jacobi Sprays and Proof of the Existence}
This section aims at proving that, for every Dirac-Jacobi manifold $(M_0,L_0,\calL_0)$, there is a line 
bundle $L\to M$ and a closed $L$-valued Atiyah form $\varpi\in\Omega_D^2(L)$, together with regular LB morphisms
$S,T\colon L\to L_0$, such that the following is a dual pair, which we will call a \emph{self dual pair} for $(M_0,L_0,\calL_0)$,
\begin{equation*}
	\begin{tikzcd}
		(M_0,L_0,\calL_0)&&(M,L,\operatorname{Gr}\varpi)\arrow[ll, swap, "S"]\arrow[rr, "T"]&&(M_0,L_0,\calL_0^\mathrm{opp}).
	\end{tikzcd}
\end{equation*}
This will represent the analogue in Dirac--Jacobi geometry of the result first obtained in the Dirac setting in~\cite{FM2018}.
Let us collect the necessary ingredients for proving this result starting with the Dirac--Jacobi sprays.

\begin{definition}
	\label{def:DJ_spray}
	Let $(M,L,\calL)$ be a Dirac-Jacobi manifold and let $p^\ast L:=\calL\times_ML\to\calL$ be the pull-back line bundle given by the diagram
	\begin{equation}
		\label{eq:def:DJ_spray}
		\begin{tikzcd}
			p^\ast L \arrow[r, "P"]\arrow[d] & L\arrow[d]\\
			\calL\arrow[r, swap, "p"]  & M
		\end{tikzcd}.
	\end{equation}
	A \emph{Dirac--Jacobi spray} for $(M,L,\calL)$ is a derivation $\Sigma\in \calD(p^\ast L)$ satisfying the following two properties 
	\begin{enumerate}[label={\arabic*)}]
		\item
		\label{enumitem:def:DJ_spray:1}
		$(DP)\Sigma_u=\pr_D u$, for all $u\in\calL$,
		\item
		\label{enumitem:def:DJ_spray:2}
		$M_t^\ast\Sigma=t\Sigma$, for all $t\in\bbR^\times$,
	\end{enumerate}
	where, for any $t\in\bbR$, we denote by $m_t\colon\calL\to\calL,\ u\mapsto tu,$ the fiberwise scalar multiplication and by $M_t\colon p^\ast L\to p^\ast L$ the line bundle morphism, covering $m_t\colon\calL\to\calL$, given by $M_t(u,\lambda)=(tu,\lambda)$.
\end{definition}

\begin{remark}
	By Condition~\ref{enumitem:def:DJ_spray:2}, the flow of $\Sigma$ is a $1$-parameter group $\Phi^\Sigma_\epsilon$ of local LB automorphisms s.t.
\begin{equation}
	\label{Eq: FlowSprayI}
	\Phi^\Sigma_\epsilon\circ M_t=M_t\circ\Phi^\Sigma_{t\epsilon},\quad \text{for all}\ t,\epsilon\in\bbR.
\end{equation}
Consequently, $\Phi^\Sigma_\epsilon$ covers a $1$-parameter group $\varphi^\Sigma_\epsilon$ of local diffeomorphisms, i.e.~the flow of $\sigma(\Sigma)$, satisfying
\begin{equation}
	\label{Eq: FlowSprayII}
	\varphi^\Sigma_\epsilon\circ m_t=m_t\circ\varphi^\Sigma_{t\epsilon},\quad \text{for all}\ t,\epsilon\in\bbR.
\end{equation}
Actually, Condition~\ref{enumitem:def:DJ_spray:2} is equivalent to~\eqref{Eq: FlowSprayI} and can also be written as $(DM_t)\Sigma=t^{-1}\Sigma$, for all $t\in\bbR^\times$.
\end{remark}

To prove the existence of self-dual pairs, we will need the following properties of Dirac--Jacobi sprays.

\begin{lemma}
	\label{rem:existence_DJ_spray}
	Each Dirac--Jacobi manifold admits a Dirac--Jacobi spray.
\end{lemma}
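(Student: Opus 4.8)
The plan is to produce the spray by a single horizontal-lift construction, with no averaging: a linear connection yields a derivation of $p^\ast L$ satisfying both defining conditions exactly, the homogeneity (Condition~\ref{enumitem:def:DJ_spray:2}) coming for free from the linearity of the connection. First I would fix a linear connection $\nabla$ on the vector bundle $p\colon\calL\to M$ and write $h_u\colon T_{p(u)}M\to T_u\calL$ for its horizontal lift at $u\in\calL$. The single property I will really use is compatibility with fibrewise rescaling, namely $Tm_t\circ h_u=h_{m_t(u)}$ for all $t\in\bbR^\times$, which holds precisely because $\nabla$ is linear.

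Next I would promote $h$ from tangent vectors to derivations, building a fibrewise right inverse $j\colon p^\ast(DL)\to D(p^\ast L)$ of the fibrewise surjective map $DP\colon D(p^\ast L)\to DL$ (surjective since $P$ covers the submersion $p$). Given $\delta\in D_{p(u)}L$, I define $j_u(\delta)\in D_u(p^\ast L)$ by prescribing its symbol to be $h_u(\sigma_\delta)$ and its values on pull-back sections by $j_u(\delta)(P^\ast s):=P_u^{-1}(\delta(s))$ for $s\in\Gamma(L)$, extended by the Leibniz rule. Since the $P^\ast s$ generate $\Gamma(p^\ast L)$ over $C^\infty(\calL)$, this determines a genuine, smoothly varying derivation, and the chosen normalization gives $DP(j_u(\delta))=\delta$; this is just the pull-back of the gauge algebroid along $P$ relative to $\nabla$.

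The crux is the $M_t$-equivariance of $j$, i.e.\ $j_{m_t(u)}(\delta)=(DM_t)(j_u(\delta))$. Here I would exploit the two elementary identities $P\circ M_t=P$ (so $M_t^\ast(P^\ast s)=P^\ast s$) and the fact that $M_t$ is the identity on the $L$-fibres of $p^\ast L$. Evaluating on a pull-back section gives $(DM_t)(j_u(\delta))(P^\ast s)=M_t(j_u(\delta)(P^\ast s))=M_t(P_u^{-1}(\delta(s)))=P_{m_t(u)}^{-1}(\delta(s))=j_{m_t(u)}(\delta)(P^\ast s)$, while on symbols the rescaling compatibility of $\nabla$ yields $Tm_t(h_u(\sigma_\delta))=h_{m_t(u)}(\sigma_\delta)$; since a derivation is pinned down by its symbol together with its values on pull-back sections, equivariance follows. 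This is the only genuinely delicate step, and it is exactly where the linearity of $\nabla$ is indispensable.

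Finally I would set $\Sigma_u:=j_u(\pr_D u)$, which is smooth because $u\mapsto\pr_D u$ is a linear bundle map and $j$ is a smooth bundle map. Condition~\ref{enumitem:def:DJ_spray:1} is immediate from $DP\circ j=\id$. For Condition~\ref{enumitem:def:DJ_spray:2}, using $(M_t^\ast\Sigma)_u=(DM_t)^{-1}(\Sigma_{m_t(u)})$, the linearity of $\pr_D$ (so $\pr_D(tu)=t\,\pr_D u$), and the equivariance just established, I compute
\[
(M_t^\ast\Sigma)_u=(DM_t)^{-1}\!\bigl(j_{m_t(u)}(t\,\pr_D u)\bigr)=t\,(DM_t)^{-1}(DM_t)\bigl(j_u(\pr_D u)\bigr)=t\,\Sigma_u ,
\]
that is $M_t^\ast\Sigma=t\Sigma$. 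Thus $\Sigma$ is a Dirac--Jacobi spray, and everything apart from the equivariance step is routine bookkeeping with the gauge-algebroid functor.
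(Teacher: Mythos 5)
Your proof is correct and follows essentially the same route as the paper: both construct a splitting $j=\operatorname{hor}$ of the surjection $DP\colon D(p^\ast L)\to p^\ast(DL)$ that is equivariant for the $M_t$-action (the paper's ``linear Ehresmann connection''), and then set $\Sigma_u=j_u(\pr_D u)$, with Condition~1 immediate and Condition~2 following from equivariance plus linearity of $\pr_D$. The only difference is that where the paper invokes a partition-of-unity argument for the existence of such a linear splitting, you build it explicitly from a linear connection on the vector bundle $\calL\to M$ together with the forced values $j_u(\delta)(P^\ast s)=P_u^{-1}(\delta s)$ on pull-back sections — a harmless and slightly more concrete variant of the same idea.
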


\begin{proof}
	Given a Dirac--Jacobi manifold $(M,L,\calL)$, an associated Dirac-Jacobi spray $\Sigma$ can be constructed as follows.
	Let us start considering the following short exact sequence of VB morphisms over $\id_\calL$
	\begin{equation}
		\label{eq:Ehresman_connection:splitting}
		\begin{tikzcd}
			0\arrow[rr]&&p^\ast\calL\arrow[rr, "\operatorname{vert}"]&&D(p^\ast L)\arrow[rr, "DP"]&&p^\ast(DL)\arrow[rr]&&0,
		\end{tikzcd}
	\end{equation}
	where $\operatorname{vert}\colon p^\ast\calL=\calL\times_M\calL\longrightarrow D(p^\ast L),\ (u,e)\longmapsto e^\uparrow_u$, is the canonical \emph{vertical lift} defined by
	\begin{equation}
		\label{eq:vertical_lift}
		e^\uparrow_u(fp^\ast\mu)=\left.\frac{\rmd}{\rmd t}\right|_{t=0}f(u+te)\mu_x,
	\end{equation}
	for all $x\in M$, $(u,e)\in\calL_x\times\calL_x=(p^\ast\calL)_u$, $f\in C^\infty(M)$ and $\mu\in\Gamma(L)$.
	Now, in this setting, an \emph{Ehresmann connection} is given by an \emph{horizontal lift}, i.e.~a VB-morphism $\operatorname{hor}:p^\ast(DL)\longrightarrow D(p^\ast L),\ (u,\delta)\longmapsto\operatorname{hor}_u\delta$, splitting~\eqref{eq:Ehresman_connection:splitting}.
	Further, an Ehresmann connection is called \emph{linear} if it satisfies $(DM_t)\circ\operatorname{hor}_u=\operatorname{hor}_{tu}$, for all $u\in\calL$ and $t\in\bbR$.
	By a partition of unity argument, one can easily prove the existence of linear Ehresmann connection.
	Finally, for any choice of a linear Ehresmann connection $\operatorname{hor}:p^\ast(DL)\longrightarrow D(p^\ast L)$, one can construct a Dirac--Jacobi spray $\Sigma$ for $(M,L,\calL)$ by setting $\Sigma_u=\operatorname{hor}_u(\pr_Du)$, for all $u\in\calL$.
\end{proof}

\begin{remark}
	Notice that, if restricted along $M$, the vertical lift gives rise to a VB morphism $\operatorname{vert}\colon\calL\to(D(p^\ast L))|_M,\ e_x\mapsto e_x^\uparrow$, characterized by
	\begin{equation}
		\label{eq:vertical_lift_on_M}
		e^\uparrow_x\lambda=\left.\frac{\rmd}{\rmd t}\right|_{t=0}M_0((M_t^\ast\lambda)_{e_x}).
	\end{equation}
	for all $x\in M$, $e_x\in\calL_x$ and $\lambda\in\Gamma(L)$.
	Similarly, if restricted along $M$, the short exact sequence~\eqref{eq:Ehresman_connection:splitting} reduces to the following short exact sequence of VB morphisms over $\id_M$
	\begin{equation}
		\label{eq:Ehre}
		\begin{tikzcd}
			0\arrow[rr]&&\calL\arrow[rr, "\operatorname{vert}"]&&(D(p^\ast L))|_M\arrow[rr, "DP"]&&DL\arrow[rr]&&0,
		\end{tikzcd}
	\end{equation}
	The latter has a canonical splitting given by $DI:DL\to D(p^\ast L)|_M$, where the regular LB morphism $I:L\to p^\ast L$ is the natural inclusion defined by $I(\lambda_x)=(x,\lambda_x)$, for all $x\in M$ and $\lambda_x\in L_x$.
	Equivalently, this canonical splitting is also given by the projection $\pr_\calL:D(p^\ast L)\to\calL$ defined by
	\begin{equation*}
		\psi(\pr_\calL\delta_x)\lambda_x=\delta_x(\psi p^\ast\lambda),
	\end{equation*}
	for all $x\in M$, $\delta_x\in D_x(p^\ast L)$, $\psi\in\Gamma(\calL^\ast)$ and $\lambda\in\Gamma(L)$, where the sections of the dual bundle $\calL^\ast$ identify with the fiber-wise linear functions on $\calL$.
	In the following, we will systematically understand the induced direct sum decomposition of vector bundles over $M$
	\begin{equation*}
		(D(p^\ast L))|_M=DL\oplus\calL.
	\end{equation*}
\end{remark}

\begin{lemma}
	\label{Lem: PropDJS}
	Let $(M,L,\calL)$ be a Dirac--Jacobi manifold with a Dirac--Jacobi spray $\Sigma\in\calD(p^\ast L)$.
	Then 
	\begin{enumerate}[label={\arabic*)}]
		\item
		\label{enumitem:Lem: PropDJS:1}
		$\Sigma|_M=0$, i.e.~the local LB automorphisms $\Phi_\epsilon^\Sigma:p^\ast L\longrightarrow p^\ast L$ is the identity map on $L=(p^\ast L)|_M$,
		\item
		\label{enumitem:Lem: PropDJS:2}
		$(D\Phi^{\Sigma}_\epsilon)|_M\colon (Dp^\ast L)\at{M}\to (Dp^\ast L)\at{M}$ is given by $(\delta_x,u_x)\mapsto
		(\delta_x+\epsilon\pr_D(u_x),u_x)$.
	\end{enumerate}
\end{lemma}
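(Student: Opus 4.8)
The plan is to prove part~\ref{enumitem:Lem: PropDJS:1} directly from the homogeneity of the spray, and then to deduce part~\ref{enumitem:Lem: PropDJS:2} by combining homogeneity with the defining property $(DP)\Sigma_u=\pr_D u$ from Definition~\ref{def:DJ_spray}\ref{enumitem:def:DJ_spray:1}. For part~\ref{enumitem:Lem: PropDJS:1}, I would specialize the flow identity~\eqref{Eq: FlowSprayI}, namely $\Phi^\Sigma_\epsilon\circ M_t=M_t\circ\Phi^\Sigma_{t\epsilon}$, to $t=0$. Since $M_0(u,\lambda)=(0_{p(u)},\lambda)$ has image exactly $(p^\ast L)|_M$ and $\Phi^\Sigma_0=\id$, this gives $\Phi^\Sigma_\epsilon\circ M_0=M_0$, i.e.\ $\Phi^\Sigma_\epsilon$ restricts to the identity on $(p^\ast L)|_M\cong L$. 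Differentiating in $\epsilon$ then yields $\Sigma|_M=0$. (Equivalently,~\eqref{Eq: FlowSprayII} at $t=0$ shows the zero section is fixed pointwise, so $\sigma(\Sigma)|_M=0$, while the fibrewise action being trivial kills the remaining part of $\Sigma|_M$.)

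For part~\ref{enumitem:Lem: PropDJS:2}, write $A_\epsilon:=(D\Phi^\Sigma_\epsilon)|_M$, which by part~\ref{enumitem:Lem: PropDJS:1} is an endomorphism of $(D(p^\ast L))|_M=DL\oplus\calL$. I would first record how $DM_t$ acts on this splitting. From $M_t\circ I=I$ (as $M_t$ fixes $(p^\ast L)|_M$) and functoriality of $D$ one gets $DM_t\circ DI=DI$, so $DM_t$ is the identity on the $DL$-summand; a short computation with the vertical lift~\eqref{eq:vertical_lift_on_M} gives $DM_t(u_x^\uparrow)=t\,u_x^\uparrow$, so that $DM_t|_M=\left(\begin{smallmatrix}\id & 0\\ 0 & t\end{smallmatrix}\right)$. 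Likewise $\Phi^\Sigma_\epsilon\circ I=I$ gives $A_\epsilon\circ DI=DI$, hence $A_\epsilon=\left(\begin{smallmatrix}\id & B_\epsilon\\ 0 & C_\epsilon\end{smallmatrix}\right)$ for some $B_\epsilon\colon\calL\to DL$ and $C_\epsilon\colon\calL\to\calL$.

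Next I would apply $D$ to~\eqref{Eq: FlowSprayI} and restrict to $M$, obtaining $A_\epsilon\circ(DM_t|_M)=(DM_t|_M)\circ A_{t\epsilon}$, which in block form forces $B_{t\epsilon}=t\,B_\epsilon$ and $C_\epsilon=C_{t\epsilon}$ for all $t\in\bbR^\times$. Together with $A_0=\id$ and smoothness of the flow, this gives $C_\epsilon=\id$ and $B_\epsilon=\epsilon B_1$ with $B_1\colon\calL\to DL$ independent of $\epsilon$. Thus $A_\epsilon(\delta_x,u_x)=(\delta_x+\epsilon B_1u_x,u_x)$, and it only remains to identify $B_1=\pr_D$.

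This last identification is the step I expect to be the main obstacle. Extend $u_x$ to a section $\tilde u\in\Gamma(\calL)$ with vertical lift $\tilde u^\uparrow\in\Gamma(D(p^\ast L))$; since $0_x$ is a fixed point of the flow ($\Sigma_{0_x}=0$ by part~\ref{enumitem:Lem: PropDJS:1}), the linearization is well defined and $B_1u_x$ is the $DL$-component of $\tfrac{\rmd}{\rmd\epsilon}\big|_0 A_\epsilon(0,u_x)=-DP\bigl([\Sigma,\tilde u^\uparrow]_{0_x}\bigr)$. The flow $N_r$ of $\tilde u^\uparrow$ is the fibrewise translation $v\mapsto v+r\tilde u(p(v))$ on $\calL$, lifted to $p^\ast L$ so that $P\circ N_r=P$ (because $\tilde u^\uparrow$ annihilates the pulled-back sections $p^\ast\mu$). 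Hence $(DP)[\Sigma,\tilde u^\uparrow]_{0_x}=-\tfrac{\rmd}{\rmd r}\big|_0 D(P\circ N_{-r})(\Sigma_{r u_x})=-\tfrac{\rmd}{\rmd r}\big|_0(DP)\Sigma_{ru_x}$, and the spray condition $(DP)\Sigma_{ru_x}=\pr_D(ru_x)=r\,\pr_D u_x$ gives $(DP)[\Sigma,\tilde u^\uparrow]_{0_x}=-\pr_D u_x$. Unwinding the pushforward sign of the linearization then yields $B_1u_x=\pr_D u_x$, so $A_\epsilon(\delta_x,u_x)=(\delta_x+\epsilon\,\pr_D u_x,u_x)$, as claimed. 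The delicate points are the well-definedness of the linearization at the fixed point (which rests on part~\ref{enumitem:Lem: PropDJS:1}) and keeping careful track of the bracket/pushforward sign.
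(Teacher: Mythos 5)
Your proof is correct. Part~\ref{enumitem:Lem: PropDJS:1} is exactly the paper's argument: set $t=0$ in~\eqref{Eq: FlowSprayI} and use $\Phi^\Sigma_\epsilon\circ M_0=M_0$. For part~\ref{enumitem:Lem: PropDJS:2}, however, you take a genuinely different route. The paper works directly for every fixed $\epsilon$: it computes $(DP)(D\Phi^\Sigma_\epsilon)e_x^\uparrow=\epsilon\,\pr_D e_x$ and $\pr_\calL\bigl((D\Phi^\Sigma_\epsilon)e_x^\uparrow\bigr)=e_x^\uparrow$ by applying these derivations to sections $P^\ast\lambda$ and $\psi\,p^\ast\lambda$, using the characterization~\eqref{eq:vertical_lift_on_M} of the vertical lift along $M$ together with~\eqref{Eq: FlowSprayI}, $P\circ M_t=P$, the flow equation, and the spray condition. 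You instead first pin down the block form $\bigl(\begin{smallmatrix}\id & \epsilon B_1\\ 0&\id\end{smallmatrix}\bigr)$ purely from the scaling equivariance $A_\epsilon\circ DM_t=DM_t\circ A_{t\epsilon}$ and $DM_t|_M=\bigl(\begin{smallmatrix}\id&0\\0&t\end{smallmatrix}\bigr)$, and only then identify $B_1=\pr_D$ by a single infinitesimal computation, namely $-DP\bigl([\Sigma,\tilde u^\uparrow]_{0_x}\bigr)=\pr_D u_x$, using the flow of the vertical lift and $P\circ N_r=P$. The sign bookkeeping in that last step is consistent with the paper's conventions ($\tfrac{\rmd}{\rmd\epsilon}\big|_0(D\Phi^\Sigma_\epsilon)\square=-[\Sigma,\square]$, and the linearization at $0_x$ is well defined because $\Sigma|_M=0$), so the argument closes. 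What your route buys is that the full $\epsilon$-dependence and the triangular shape come for free from homogeneity, leaving only one derivative at $\epsilon=0$ to compute; what the paper's route buys is that it never invokes the linearization-at-a-fixed-point formula or the flow of $\tilde u^\uparrow$, so there is no bracket sign to track --- everything reduces to evaluating derivations on the two generating classes of sections $P^\ast\lambda$ and $\psi\,p^\ast\lambda$.
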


\begin{proof}
	Let us start proving point~\ref{enumitem:Lem: PropDJS:1}.
	Since the line bundle morphism $M_0:p^\ast L\to p^\ast L$ acts like the identity map on $L=(p^\ast L)|_M$, setting $t=0$ in Equation~\eqref{Eq: FlowSprayI} one immediately gets that $\Phi_\epsilon^\Sigma|_M=\id_L$. 
	
	Let us continue with point~\ref{enumitem:Lem: PropDJS:2}.
	In view of point~\ref{enumitem:Lem: PropDJS:1} above, $D\Phi_\epsilon^\Sigma$ is the identity on $DL\subset(D(p^\ast L))|_M$, i.e.
	\begin{equation*}
		(D\Phi_\epsilon^\Sigma)\delta_x=\delta_x,
	\end{equation*}
	for all $x\in M$ and $\delta_x\in D_xL$.
	Fix now arbitrary $x\in M$ and $e_x\in \calL_x$.
	For any $\lambda\in \Gamma(L)$, one can compute 
	\begin{align*}
		((DP)(D\Phi_\epsilon^\Sigma)e_x^\uparrow)\lambda&=P\Phi_\epsilon^\Sigma(e_x^\uparrow((\Phi_\epsilon^\Sigma)^\ast P^\ast\lambda)=\left.\frac{\rmd}{\rmd t}\right|_{t=0}P\Phi_\epsilon^\Sigma M_0((M_t^\ast(\Phi_\epsilon^\Sigma)^\ast P^\ast\lambda)_{e_x})\\
		&=\left.\frac{\rmd}{\rmd t}\right|_{t=0}P(((\Phi_{t\epsilon}^\Sigma)^\ast P^\ast\lambda)_{e_x})=\epsilon((D_{e_x}P)\Sigma)\lambda=\epsilon\pr_D(e_x)\lambda,
	\end{align*}
	where we used Equations~\eqref{Eq: FlowSprayI} and~\eqref{eq:vertical_lift_on_M}, the identity $P\circ M_t=P$, for all $t\in\bbR^\times$, as well as Condition~\ref{enumitem:def:DJ_spray:1} in Definition~\ref{def:DJ_spray}.
	This proves that, for all $x\in M$, $e_x\in\calL_x$, we have
	\begin{equation*}
		(DP)(D\Phi_\epsilon^\Sigma) e_x^\uparrow=\pr_D e_x.
	\end{equation*}
	Moreover, for any $\psi\in\Gamma(\calL^\ast)$ and $\lambda\in \Gamma(L)$, one can also easily compute
	\begin{align*}
		((D\Phi^\Sigma_\epsilon)e_x^\uparrow)(\psi p^\ast\lambda)&=\Phi_\epsilon^\Sigma(e_x^\uparrow(\Phi_\epsilon^\Sigma)^\ast(\psi p^\ast\lambda))=\left.\frac{\rmd}{\rmd t}\right|_{t=0}\Phi_\epsilon^\Sigma M_0\left((M_t^\ast(\Phi_\epsilon^\Sigma)^\ast(\psi p^\ast\lambda))_{e_x}\right)\\
		&=\left.\frac{\rmd}{\rmd t}\right|_{t=0}t M_0\left(((\Phi_{t\epsilon}^\Sigma)^\ast(\psi p^\ast\lambda))_{e_x}\right)=M_0\left((\psi p^\ast\lambda)_{e_x}\right)=\psi(e_x)\lambda_x,
	\end{align*}
	where we used the fiber-wise linearity of $\psi$, Equations~\eqref{Eq: FlowSprayI} and~\eqref{eq:vertical_lift_on_M}, as well as the identity $\Phi_\epsilon^\Sigma\circ M_0=M_0$, for all $\epsilon\in\bbR$, which follows from point~\ref{enumitem:Lem: PropDJS:1} above.
	This proves that we have
	\begin{equation*}
		\pr_\calL ((D\Phi^\Sigma_\epsilon)e_x^\uparrow)=e_x^\uparrow,
	\end{equation*}
	for all $x\in M$, $e_x\in\calL_x$, and so it concludes the proof.
\end{proof}

\begin{remark}
	To give a constructive proof of the existence of self-dual pairs, we also need to recall that, for any line bundle $L\to M$, the total space of its first jet bundle $J^1L\overset{\pi}{\to}M$ is endowed with a canonical contact structure.
	To this end let us consider the pull-back line bundle given by the following diagram
	\begin{equation}
		\label{eq:def:canonical_contact_structure}
		\begin{tikzcd}
			\pi^\ast L \arrow[r, "\Pi"]\arrow[d] & L\arrow[d]\\
			J^1L\arrow[r, swap, "\pi"]  & M
		\end{tikzcd}.
	\end{equation}
	Then the \emph{tautological $\pi^\ast L$-valued Atiyah $1$-form} on $J^1L$ is the unique $\lambda_{\text{taut}}\in\Omega^1_D(\pi^\ast L)=\Gamma(J^1(\pi^\ast L))$ which satisfies the tautological property:
	\begin{equation}
		\label{eq:tautological_property}
		\alpha^\ast\lambda_{\text{taut}}=\alpha,\ \text{for all}\ \alpha\in\Omega^1_D(L)=\Gamma(J^1L).
	\end{equation}
	In equivalent terms, the tautological $1$-form is also completely characterized by the following property
	\begin{equation*}
		\label{eq:tautological_1-form}
		\lambda_{\text{taut},\alpha_x}(\delta_{\alpha_x})=\alpha_x((D\Pi)\delta_{\alpha_x}),
	\end{equation*}
	for all $x\in M$, $\alpha_x\in J^1_xL$ and $\delta_{\alpha_x}\in D_{\alpha_x}(\pi^\ast L)$.
	Further, the \emph{Cartan contact structure} on $J^1L$ is encoded by the \emph{canonical $\pi^\ast L$-valued symplectic Atiyah form} $\varpi_{\text{can}}\in\Omega^2_D(\pi^\ast L)$ given by
	\begin{equation}
		\label{eq:canonical_symplectic_Atiyah_form}
		\varpi_\text{can}=-\rmd_D\lambda_\text{taut}.
	\end{equation}
\end{remark}

Fixing the notation for the following, we notice that~\eqref{eq:def:DJ_spray} and~\eqref{eq:def:canonical_contact_structure} fit in the larger commutative diagram:
\begin{center}
	\begin{tikzcd}
		p^\ast  L \arrow[r, swap, "P_J" ]\arrow[d]\arrow[rr, "P", bend left] & \pi^\ast  L \arrow [r, swap, "\Pi"] \arrow[d]& L\arrow[d] \\
		\mathcal{L} \arrow[r, "\pr_J"]\arrow[rr, swap, "p", bend right] & J^1L\arrow[r,"\pi"] & M
	\end{tikzcd}.
\end{center}

As stated, this section aims at proving the following existence theorem for self-dual pairs of Dirac--Jacobi manifolds, and now we have all the necessary tools for its proof.

\begin{theorem}
	\label{thm:existence_self-dual_pair}
	Let $(M,L,\calL)$ be a Dirac--Jacobi manifold.
	For any associated Dirac-Jacobi spray $\Sigma$, there is a neighborhood $U$ of $M$ in $\calL$ such that the following is a well-defined $(p^\ast L)|_U$-valued closed Atiyah $2$-form
	\begin{equation}
		\label{eq:thm:existence_self-dual_pair:2-form}
		\varpi=\int_0^1 \left.\Phi^{\Sigma}_{t}\right|_U^\ast P_J^\ast\varpi_\text{can} \D t
	\end{equation}
	and, together with $S:=P\colon (p^\ast L)|_U \to L$ and $T:=P\circ \Phi_{1}^{\Sigma}\colon (p^\ast L)|_U\to L$, they form the self-dual pair
	\begin{equation}
		\label{eq:thm:existence_self-dual_pair:SDP}
		\begin{tikzcd}
			(M,L,\calL)&(U,(p^\ast L)|_U,\operatorname{Gr}\varpi)\arrow[l, swap, "S"]\arrow[r, "T"]&(M,L,\calL^\mathrm{opp}).
		\end{tikzcd}
	\end{equation}
	Additionally, if $\calL=\operatorname{Gr}\calJ$ for some (unique) Jacobi structure $\calJ\in\calD^2L$, then the closed $2$-form~\eqref{eq:thm:existence_self-dual_pair:2-form} is non-degenerate and~\eqref{eq:thm:existence_self-dual_pair:SDP} is a full contact dual pair. 
\end{theorem}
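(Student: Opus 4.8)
The plan is to follow the spray-based presymplectic realisation strategy of the Dirac setting~\cite{FM2018}, transported to the Atiyah-algebroid world. First I would settle the well-definedness and closedness of $\varpi$. Since $\Sigma|_M=0$ by point~\ref{enumitem:Lem: PropDJS:1} of Lemma~\ref{Lem: PropDJS}, every point of the zero section $M\subset\calL$ is a fixed point of the flow $\Phi^\Sigma_t$; by continuous dependence of flows on initial conditions there is therefore an open neighbourhood $U$ of $M$ in $\calL$ on which $\Phi^\Sigma_t$ is defined for all $t\in[0,1]$, so the integrand of~\eqref{eq:thm:existence_self-dual_pair:2-form} is a smooth $1$-parameter family of Atiyah $2$-forms and $\varpi\in\Omega^2_D((p^\ast L)|_U)$ is well defined. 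Closedness is then immediate: by~\eqref{eq:canonical_symplectic_Atiyah_form} the form $\varpi_\text{can}=-\rmd_D\lambda_\text{taut}$ is $\rmd_D$-closed, each pull-back $\Phi^{\Sigma\ast}_tP_J^\ast\varpi_\text{can}$ stays $\rmd_D$-closed by functoriality of the der-complex, and $\rmd_D$ commutes with integration in $t$, whence $\rmd_D\varpi=0$.

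Next I would verify that $S$ and $T$ cover surjective submersions and pin down the candidate legs. The map $S=P$ covers $s=p\colon U\to M$, the restriction to $U$ of the bundle projection $\calL\to M$, which is a surjective submersion since $U$ contains the zero section. As $\Phi^\Sigma_1$ fixes $M$, the map $T=P\circ\Phi^\Sigma_1$ covers $t=p\circ\varphi^\Sigma_1$, which restricts to $\id_M$ along $M$; using the explicit description of $(D\Phi^\Sigma_1)|_M$ from point~\ref{enumitem:Lem: PropDJS:2} of Lemma~\ref{Lem: PropDJS}, one finds that on the splitting $(D(p^\ast L))|_M=DL\oplus\calL$ the morphism $DT$ acts by $(\delta_x,u_x)\mapsto\delta_x+\pr_Du_x$, which is surjective onto $DL$. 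Hence $DT$, and therefore its symbol, is surjective along $M$, so $t$ is a submersion there; since submersivity is an open condition and $t|_M$ is onto, shrinking $U$ makes both $s$ and $t$ surjective submersions.

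The heart of the argument is to show that $S$ is a forward Dirac--Jacobi map onto $(M,L,\calL)$, i.e.~$S_!\operatorname{Gr}\varpi=\calL$, and $T$ a forward Dirac--Jacobi map onto the opposite $(M,L,\calL^\mathrm{opp})$. For $S$ I would combine the tautological characterisation~\eqref{eq:tautological_1-form} of $\lambda_\text{taut}$ with the spray defining property~\ref{enumitem:def:DJ_spray:1}, $(DP)\Sigma_u=\pr_Du$: these interlock to force the forward transform of $\operatorname{Gr}\varpi$ along $P$ to have $\pr_D$-image equal to $\pr_D\calL$ and to reproduce exactly the first-jet part of $\calL$, giving $S_!\operatorname{Gr}\varpi\subseteq\calL$; as both are Lagrangian families of equal rank, equality follows. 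For $T$ I would run the symmetric computation, observing that $\Phi^\Sigma_1$ conjugates the construction to its time-reversed version and that the homogeneity~\eqref{Eq: FlowSprayI} of $\Sigma$ supplies the sign that lands $T_!\operatorname{Gr}\varpi$ in $\calL^\mathrm{opp}$. I expect the identity $S_!\operatorname{Gr}\varpi=\calL$ to be the main obstacle, as it is precisely the point where the spray condition and the canonical contact structure on $J^1L$ must conspire.

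Finally I would close out the dual-pair conditions and the non-degenerate case. The two forward Dirac--Jacobi statements, together with the orthogonality $\varpi(\ker DS,\ker DT)=0$ which the construction makes manifest, verify the hypotheses of Definition~\ref{def:weak_dual_pair} (equivalently, Proposition~\ref{prop:characterizations_WDPs_II}), so~\eqref{eq:thm:existence_self-dual_pair:SDP} is a weak dual pair. The dimension count $\dim\calL=\dim M+\rank\calL=\dim M+(\dim M+1)=\dim M+\dim M+1$ then shows that the total space has exactly the dimension of a \emph{dual} pair, forcing the constant rank in condition~\ref{enumitem:def:weak_dual_pair:2} of Definition~\ref{def:weak_dual_pair} to equal $0$; hence~\eqref{eq:thm:existence_self-dual_pair:SDP} is a dual pair. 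If $\calL=\operatorname{Gr}\calJ$ for a Jacobi structure $\calJ$, both legs are Jacobi manifolds, and since we have already produced a dual pair, Remark~\ref{rem:full_contact_dual_pairs} applies verbatim to conclude that $\varpi$ is non-degenerate, hence symplectic, and that~\eqref{eq:thm:existence_self-dual_pair:SDP} is a full contact dual pair.
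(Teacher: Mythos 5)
Your setup (well-definedness and closedness of $\varpi$, submersivity of the base maps after shrinking $U$, and the appeal to Remark~\ref{rem:full_contact_dual_pairs} in the Jacobi case) matches the paper, but the heart of the proof is missing. You defer the identity $S_!\operatorname{Gr}\varpi=\calL$ to an unspecified computation in which the tautological property and the spray condition ``interlock'', and you yourself flag it as the main obstacle. A direct description of $\operatorname{Gr}\varpi$ at a point $u\in U$ off the zero section is not available, because there $\varpi$ is only given by the integral~\eqref{eq:thm:existence_self-dual_pair:2-form} over the flow. The mechanism the paper uses is the one idea you cannot do without: the spray condition $(DP)\Sigma_u=\pr_Du$ together with the tautological property of $\lambda_\text{taut}$ shows that $(\Sigma,P_J^\ast\lambda_\text{taut})$ is a \emph{section of} $P^!\calL$; hence $\ldsb(\Sigma,P_J^\ast\lambda_\text{taut}),-\rdsb$ is an infinitesimal Courant--Jacobi automorphism preserving $S^!\calL$, its time-$\epsilon$ flow is $\exp\bigl(\int_0^\epsilon(\Phi^\Sigma_{-t})^\ast\rmd_DP_J^\ast\lambda_\text{taut}\,\rmd t\bigr)\circ\bbD\Phi^\Sigma_\epsilon$ by Remark~\ref{rem:infinitesimal_CJ_aut_LieAlgebra}, and evaluating at $\epsilon=-1$ gives exactly $S^!\calL=((\Phi^\Sigma_1)^!S^!\calL)^\varpi=(T^!\calL)^\varpi$. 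The forward-map statements then follow from Proposition~\ref{prop:characterizations_WDPs_I}; they are not proved directly.

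Second, your closing logic is circular. You assert that $\varpi(\ker DS,\ker DT)=0$ is ``manifest'' (it is not away from $M$: along $M$ it follows from the explicit formula for $\varpi|_M$ and the isotropy of $\calL$, but off $M$ it is again a consequence of $S^!\calL=(T^!\calL)^\varpi$), and you deduce the rank condition~\ref{enumitem:def:weak_dual_pair:2} \emph{from} the conclusion that you already have a weak dual pair, whereas that condition is a hypothesis of Definition~\ref{def:weak_dual_pair} that must be checked independently --- the dimension count only tells you what the rank would have to be, not that the intersection actually achieves it. The paper verifies it by computing $\varpi|_M$, $(DS)|_M$ and $(DT)|_M$ on the splitting $(D(p^\ast L))|_M\simeq DL\oplus\calL$ via Lemma~\ref{Lem: PropDJS}, concluding $(\ker DS\cap\ker\varpi^\flat\cap\ker DT)|_M=0$, and then shrinking $U$ using upper semicontinuity of the rank. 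Without this step and without the flow argument above, the proof does not go through.
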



\begin{proof}
	Since $\Sigma|_M=0$ by Lemma~\ref{Lem: PropDJS}, there is a neighbourhood $U$ of $M$ in $\calL$ such that $\Phi_\epsilon^\Sigma|_U:(p^\ast L)|_U\to p^\ast L$ is defined for $\epsilon\in[-1,1]$.
	So, Equation~\eqref{eq:thm:existence_self-dual_pair:2-form} gives a well-defined closed $(p^\ast L)|_U$-valued Atiyah $2$-form $\varpi$ and $S,T\colon(p^\ast L)|_U\to L$ are well-defined regular line bundle morphisms.
	Moreover, we claim that
	\begin{equation*}
		(\ker DS\cap \ker\varpi^\flat\cap \ker DT)|_M=0.
	\end{equation*}
	Indeed, for arbitrary $x\in M$, and $(\delta_1,e_1),(\delta_2,e_2)\in(Dp^\ast L)_x\simeq D_xL\oplus\calL_x$, a straightforward computation in local adapted coordinates shows first that
	\begin{align*}
		(P_J^\ast\varpi_\text{can})_x((\delta_1,e_1)), (\delta_2,e_2))=(\pr_Je_2)\delta_1-(\pr_Je_1)\delta_2,
	\end{align*}
	and then, using point~\ref{enumitem:Lem: PropDJS:2} in Lemma~\ref{Lem: PropDJS} and Equation~\eqref{eq:thm:existence_self-dual_pair:2-form}, one also gets that
	\begin{equation}
		\label{eq:proof:thm:existence_self-dual_pair:1a}
		\varpi_x((\delta_1,e_1),(\delta_2,e_2))= 
		(\pr_Je_2)(\delta_1+\frac{1}{2}\pr_De_1)-(\pr_Je_1)(\delta_2+\frac{1}{2}\pr_De_2).
	\end{equation}
	Moreover, using again point~\ref{enumitem:Lem: PropDJS:2} in Lemma~\ref{Lem: PropDJS} and the definition of $S$ and $T$, one can easily compute
	\begin{equation}
		\label{eq:proof:thm:existence_self-dual_pair:1b}
		(D_xS)(\delta_1,e_1)=(D_xP)(\delta_1,e_1)=\delta_1 \quad\text{ and }\quad (D_xT)(\delta_1,e_1)
		=(D_xP)(D_x\Phi^{\Sigma}_1)(\delta_1,e_1)=\delta_1+\pr_De_1.
	\end{equation}
	Hence, Equations~\eqref{eq:proof:thm:existence_self-dual_pair:1a} and~\eqref{eq:proof:thm:existence_self-dual_pair:1b} imply that if $(\delta_1,e_1)$ belongs to $\ker D_xS\cap\ker\varpi_x\cap\ker D_xT$ then $\delta_1=e_1=0$, and so our claim is proven.
	Consequently, since $\rank(\ker DS\cap \ker\varpi^\flat\cap \ker DT)_u$ is an upper semicontinuous function of $u\in U$, we can assume, up to shrink $U$ around $M$ if necessary, that
	\begin{equation}
		\label{eq:proof:thm:existence_self-dual_pair:1}
		\ker DS\cap \ker\varpi^\flat\cap \ker DT=0.
	\end{equation}

	From Lemma~\ref{Lem: PropDJS}, the definition~\eqref{eq:canonical_symplectic_Atiyah_form} of $\varpi_\text{can}$ and the tautological property~\eqref{eq:tautological_property} of $\lambda_\text{taut}$, we get that
	\begin{align*}
	(D_uP)(\Sigma_u)=\pr_D u\qquad \text{and}\qquad(P_J^\ast\lambda_\text{taut})_u=(D_uP)^\ast(\pr_Ju),
	\end{align*}
	for any $u\in\calL$.
	This shows that $(\Sigma,P_J^\ast\lambda_\text{taut})$ is a section of $P^!\calL\to\calL$ so that, in particular,
	\begin{equation*}
		(\Sigma,P_J^\ast\lambda_\text{taut})|_U\in\Gamma(S^!\calL),
	\end{equation*}
	Consequently, by Remark~\ref{rem:infinitesimal_CJ_aut_LieAlgebra}, the infinitesimal automorphism $\ldsb(\Sigma,P_J^\ast\lambda_\text{taut})|_U,-\rdsb$ generates a 1-parameter group of local Courant--Jacobi automorphisms of $\bbD(p^\ast L)|_U$ which is given by
	\begin{equation}
		\label{eq:proof:thm:existence_self-dual_pair:2_pre}
		\exp\left(\int_0^\epsilon(\Phi_{-t}^\Sigma)^\ast \rmd_D P_J^\ast\lambda_\text{taut}\rmd t\right)\circ \bbD\Phi_\epsilon^\Sigma
	\end{equation}
	and preserves $S^!\calL$ whenever it exists.
	Further, since by assumption $\Phi_\epsilon^\Sigma|_U:(p^\ast L)|_U\to p^\ast L$ is defined for $\epsilon\in[-1,1]$, the map~\eqref{eq:proof:thm:existence_self-dual_pair:2_pre} induces a global automorphism of $S^!\calL$ for all $\epsilon\in[-1,1]$, and so, in particular, for $\epsilon=-1$.
	A straightforward computation shows that 
	\begin{equation*}
		\int_0^{-1}(\Phi_{-\tau}^\Sigma)^\ast \rmd_DP_J^\ast\lambda_\text{taut}\rmd\tau=-\int_0^1(\Phi_{\tau}^\Sigma)^\ast \rmd_DP_J^\ast\lambda_\text{taut}\rmd\tau=\int_0^1(\Phi_{\tau}^\Sigma)^\ast P_J^\ast \varpi_\text{can}\rmd\tau=\varpi,
	\end{equation*}
	where we used the naturality of $\rmd_D$.
	So, the map~\eqref{eq:proof:thm:existence_self-dual_pair:2_pre} preserving $S^!\calL$ for $\epsilon=-1$ can be rewritten as
\begin{equation}
	\label{eq:proof:thm:existence_self-dual_pair:2}
	S^!\calL=(\mathbb D\Phi_{-1}^\Sigma(S^!\calL))^\varpi	=((\Phi_{1}^\Sigma)^!S^!\calL)^\varpi=(T^!\calL)^\varpi.
\end{equation}
	Finally, since $\dim U=\dim\calL=2\dim M+1$, Equations~\eqref{eq:proof:thm:existence_self-dual_pair:1} and~\eqref{eq:proof:thm:existence_self-dual_pair:2} imply that~\eqref{eq:thm:existence_self-dual_pair:SDP} is a dual pair by Proposition~\ref{prop:characterizations_WDPs_I}.
	Clearly, in view of Remark~\ref{rem:full_contact_dual_pairs}, if $\calL=\operatorname{Gr}\calJ$ for some (unique) Jacobi structure $\calJ\in\calD^2L$, then the closed $2$-form~\eqref{eq:thm:existence_self-dual_pair:2-form} is non-degenerate and~\eqref{eq:thm:existence_self-dual_pair:SDP} is a full contact dual pair. 
\end{proof}

%

\subsection{Application: Normal Forms around Dirac--Jacobi Transversals}

This Section makes use of the existence of self-dual pairs (Theorem~\ref{thm:existence_self-dual_pair}) to prove the normal form theorem around Dirac--Jacobi transversals.
Notice that this theorem was proved by the first author in~\cite{NorForJac}, with different tools, generalizing to the Dirac--Jacobi manifolds a similar result for Dirac manifolds~\cite{FM2017}.

We start recalling how to construct the \emph{local normal form} of a Dirac--Jacobi manifold around a Dirac--Jacobi transversal.
Let $(M,L,\calL)$ be a Dirac-Jacobi manifold and $I\colon L_N\to L$ be a transversal for $(M,L,\calL)$ covering an embedding $\iota\colon N\hookrightarrow M$.
Let us consider $q\colon\nu_N=(TM)|_N/TN\longrightarrow N$, the normal bundle to $N$ in $M$, and let $L_{\nu_N}:=q^\ast L_N\equiv\nu_N\times_NL_N\to\nu_N$ be the pull-back line bundle given by the diagram
\begin{equation}
	\begin{tikzcd}
		L_{\nu_N} \arrow[r, "Q"]\arrow[d] & L_N\arrow[d]\\
		\nu_N\arrow[r, swap, "q"]  & N
	\end{tikzcd}.
\end{equation}
Now, since the regular LB morphism $Q$ covers the surjective submersion $q$ and $I\colon L_N\to L$ is transversal to $(M,L,\calL)$, Corollary~\ref{Cor: TrnsMap} assures that the Lagrangian family $Q^!I^!\calL\subset\bbD L_{\nu_N}$ is smooth and so a Dirac--Jacobi structure.
This justifies the following definition.

\begin{definition}
	\label{def:linear_model_around_DJ_transversal}
	The \emph{local normal form} of a Dirac--Jacobi manifold $(M,L,\calL)$ around a Dirac--Jacobi transversal $I\colon L_N\to L$ is the Dirac--Jacobi manifold given by
	\begin{equation*}
		(\nu_N,L_{\nu_N},\calL_{\nu_N}:=Q^!I^!\calL).
	\end{equation*}
\end{definition}

The Normal Form Theorem~\cite{NorForJac} states that, around a Dirac--Jacobi transversal, a Dirac--Jacobi manifold is isomorphic to its local normal form.
The proof of its following variation relies on the existence of self-dual pairs (Theorem~\ref{thm:existence_self-dual_pair}) and the possibility of pulling back weak dual pairs along transversals (Proposition~\ref{prop:TransversePullBackWDPs}).

\begin{theorem}
	\label{theor:NormalForm_DJtransversals}
	Let $(M,L,\calL)$ be a Dirac--Jacobi manifold and $I\colon L_N\to L$ be a transversal for $(M,L,\calL)$ covering an embedding $\iota\colon N\hookrightarrow M$.
	Then there exist a neighborhood $U$ of $N$ in $\nu_N$, a regular line bundle morphism  $\Psi\colon L_{\nu_N}|_U\to L $, covering a local diffeomorphism $\psi:U\to M$ acting like the identity on $N$, and a closed $L_{\nu_N}$-valued Atiyah 2-form $B\in \Omega^2_D(L_{\nu_N})$, such that 
	\begin{align*}
		\calL_{\nu_N}|_U=(\Psi^!\calL)^B.
	\end{align*}
\end{theorem}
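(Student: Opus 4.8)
The plan is to specialize the self‑dual pair of Theorem~\ref{thm:existence_self-dual_pair} to a conormal‑type slice sitting over $N$ and then read off the normal form from the gauge identity that the self‑dual pair already carries. Fix a Dirac--Jacobi spray for $(M,L,\calL)$ and let
\[
(M,L,\calL)\xleftarrow{\,S\,}(\calU,(p^\ast L)|_{\calU},\operatorname{Gr}\varpi)\xrightarrow{\,T\,}(M,L,\calL^{\mathrm{opp}})
\]
be the associated self‑dual pair on a neighbourhood $\calU$ of the zero section $M$ in $\calL$, so that $S=P$, $T=P\circ\Phi_1^\Sigma$, and, by Proposition~\ref{prop:characterizations_WDPs_I} (as established inside the proof of Theorem~\ref{thm:existence_self-dual_pair}),
\[
S^!\calL=(T^!\calL)^\varpi .
\]
I will also use the first‑order description along $M$ from Lemma~\ref{Lem: PropDJS}: under $(D(p^\ast L))|_M=DL\oplus\calL$ one has $D_xS(\delta,e)=\delta$ and $D_xT(\delta,e)=\delta+\pr_D e$, and both $S$ and $T$ cover the identity on $M\subset\calL$.

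First I would carve out the slice. Since $I\colon L_N\to L$ is a Dirac--Jacobi transversal, the symbol composed with the projection gives a surjection $\sigma\circ\pr_D\colon\calL|_N\to\nu_N$; choosing a complement to its kernel produces a subbundle $W\subset\calL|_N$ of rank $\operatorname{codim} N$ on which $\sigma\circ\pr_D\colon W\xrightarrow{\ \sim\ }\nu_N$ is an isomorphism. This fixes identifications $W\cong\nu_N$ of vector bundles over $N$ and $(p^\ast L)|_W\cong L_{\nu_N}$, under which the bundle projection $W\to N$ becomes $q$ and $S|_W$ becomes $Q$. Using $D_xT(\delta,e)=\delta+\pr_D e$ together with $T|_N=\operatorname{id}$, the differential of $T|_W$ at $x\in N$ is the inclusion $T_xN\hookrightarrow T_xM$ on the horizontal part and $\sigma\circ\pr_D$ on $W_x$; these combine to an isomorphism onto $T_xN\oplus\nu_{N,x}=T_xM$ precisely because $W_x\cong\nu_{N,x}$. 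Hence $T|_W$ is a local diffeomorphism on a neighbourhood $U$ of $N$ in $W\cong\nu_N$, acting as the identity on $N$; I denote it $\psi\colon U\to M$ and the induced regular LB morphism $\Psi\colon L_{\nu_N}|_U\to L$.

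Finally I would restrict the gauge identity along the inclusion $j\colon (p^\ast L)|_U\hookrightarrow (p^\ast L)|_{\calU}$ of the slice. Backward transform is contravariantly functorial and commutes with $B$‑field transform at the level of Lagrangian families, so applying $j^!$ to $S^!\calL=(T^!\calL)^\varpi$ gives
\[
(S\circ j)^!\calL=\big((T\circ j)^!\calL\big)^{B},\qquad B:=j^\ast\varpi .
\]
Now $S\circ j$ covers $\iota\circ q$, whence $(S\circ j)^!\calL=Q^!I^!\calL=\calL_{\nu_N}$ under $W\cong\nu_N$ (cf.\ Definition~\ref{def:linear_model_around_DJ_transversal}), while $T\circ j=\Psi$ covers the diffeomorphism $\psi$, whence $(T\circ j)^!\calL=\Psi^!\calL$. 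This is exactly $\calL_{\nu_N}|_U=(\Psi^!\calL)^B$, and $B$ is closed because $\varpi$ is. Smoothness is automatic here: the right‑hand side is visibly a smooth Dirac--Jacobi structure (pullback along the diffeomorphism $\psi$, followed by a gauge transform), and the identity forces the left‑hand side to coincide with it, so no separate clean‑intersection check for $j$ is required (the relevant backward transforms along the submersion $q$ and along $\psi$ are honest Dirac--Jacobi structures by Corollary~\ref{Cor: TrnsMap} and triviality of the diffeomorphism case).

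The step I expect to be the main obstacle is the construction of the slice $W$ and the proof that $T|_W$ is a local diffeomorphism: this is where the transversality of $N$ and the fine first‑order structure of the self‑dual pair (the formulas for $D_xS$, $D_xT$ in Lemma~\ref{Lem: PropDJS}) are genuinely used, and where one must verify that the identification $W\cong\nu_N$ is compatible with the line bundles so that $(S\circ j)^!\calL$ really is the normal form of Definition~\ref{def:linear_model_around_DJ_transversal}. Everything else—functoriality of $(-)^!$, its commutation with $(-)^{\varpi}$, and the interpretation of the two sides—is formal. If one prefers to stay strictly within the weak‑dual‑pair formalism, the same argument can be packaged by first pulling the self‑dual pair back along $I$ on its left leg via Proposition~\ref{prop:TransversePullBackWDPs}, obtaining a dual pair over $\calL|_N$ with defining identity $S_\Sigma^!I^!\calL=(T_\Sigma^!\calL)^{\varpi_\Sigma}$, and then restricting that identity to $W$.
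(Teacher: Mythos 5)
Your proposal is correct and is essentially the paper's own argument: a Dirac--Jacobi spray gives the self-dual pair, a splitting of $0\to i^!\calL\to\calL|_N\to\nu_N\to 0$ carves out the slice $W\cong\nu_N$ on which $S$ becomes $I\circ Q$, the first-order formulas of Lemma~\ref{Lem: PropDJS} show $T|_W$ is a local diffeomorphism near $N$, and restricting $S^!\calL=(T^!\calL)^\varpi$ to the slice yields $\calL_{\nu_N}|_U=(\Psi^!\calL)^B$. The only (cosmetic) difference is that the paper first packages the restriction as a transverse pull-back of the self-dual pair along $I$ via Proposition~\ref{prop:TransversePullBackWDPs} and then composes with the fat tubular neighborhood $\Phi$, which is exactly the alternative you mention at the end.
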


\begin{proof}
	Fix a Dirac--Jacobi spray $\Sigma$ of the Dirac--Jacobi manifold $(M,L,\calL)$.
	By Theorem~\ref{thm:existence_self-dual_pair}, the choice of $\Sigma$ determines a self-dual pair for $(M,L,\calL)$ 
	\begin{equation*}
		\begin{tikzcd}
			(M,L,\calL)&&(P,\ell,\operatorname{Gr}\varpi)\arrow[ll, swap, "S"]\arrow[rr, "T"]&&(M,L,\calL^\mathrm{opp}).
		\end{tikzcd}
	\end{equation*} 
	The Dirac--Jacobi transversality condition allows to pull-back the latter along $\Phi_0=I:L_N\to L$ and $\Phi_1=\id_L:L\to L$, as in Proposition~\ref{prop:TransversePullBackWDPs}, obtaining the following dual pair 
	\begin{equation}
		\begin{tikzcd}
			\label{eq:proof:theor:NormalForm_DJtransversals:a}
			(N,L_N,I^!\calL)&&(P_N,\ell_N,\operatorname{Gr}\varpi_N)\arrow[ll, swap, "S_N"]\arrow[rr, "T_N"]&&(M,L,\calL^\text{opp}),
		\end{tikzcd}
	\end{equation}
	where 
	\begin{align*}
		P_N=s^{-1}(N), \ \ell_N=\ell|_{P_N}, \ \varpi_N=\varpi|_{P_N}, \ S_N=S|_{P_N}\ \text{and}\ T_N=T|_{P_N}.
	\end{align*}
	Note that, by construction of the self-dual pair, $s_N$ is a surjective submersion while $t_N$ is a submersion whose image contains $N$.
	So, up to replace $M$ in~\eqref{eq:proof:theor:NormalForm_DJtransversals:a} by the neighborhood $t_N(P_N)$ of $N$ in $M$, we get that~\eqref{eq:proof:theor:NormalForm_DJtransversals:a} is a dual pair by Proposition~\ref{prop:TransversePullBackWDPs}. 
	
	The Dirac--Jacobi transversality allows to construct the short exact sequence of VB morphisms over $\id_N$
	\begin{equation}
		\label{eq:proof:theor:NormalForm_DJtransversals:b}
		\begin{tikzcd}
			0\arrow[r] & i^! \calL\arrow[r]& \calL|_N\arrow[r] & \nu_N\arrow[r] & 0
		\end{tikzcd}	
	\end{equation}
	where $i^!\calL\longrightarrow\calL|_N,\ (\delta,(DI)^\ast\alpha)\longmapsto((DI)\delta,\alpha),$ and $\calL|_N\longrightarrow\nu_N,\ u\longmapsto\sigma\pr_Du+TN$.
	The latter lifts canonically to the following commutative diagram 
	\begin{equation}
		\label{eq:proof:theor:NormalForm_DJtransversals:c}
		\begin{tikzcd}
			K \arrow[r]\arrow[d]& (p^\ast L)|_{\calL|_N} \arrow[r]\arrow[d] & L_{\nu_N}\arrow[d]\\
			i^\ast \calL\arrow[r]& \calL|_N\arrow[r] & \nu_N
		\end{tikzcd}
	\end{equation}
	where $K$ is the suitable pull-back line bundle and  the upper row consists of regular LB morphisms. 
	Let us choose a VB morphism $\varphi\colon \nu_N\to \calL|_N$ over $\id_N$ which splits the short exact sequence~\eqref{eq:proof:theor:NormalForm_DJtransversals:b}.
	Then the latter admits a canonical lift to a regular LB morphism $\Phi\colon L_{\nu_N} \to (p^\ast L)|_{\calL|_N}$ and we immediately get that
	\begin{equation*}
		S_N\circ\Phi=Q.
	\end{equation*}
	Note that $\Phi$ is an instance of a \emph{fat tubular neighborhood} of $L_N\to N$ in $L\to M$ (cf.~\cite[Definition~3.10]{le2018deformations}).
	
	By Lemma~\ref{Lem: PropDJS} and noticing that the canonical splitting $(D(p^\ast L))|_M\simeq\calL\oplus DL$ induces the canonical splitting $(D\ell_N)|_N=(D(p^\ast L_N))|_N\simeq\calL|_N\oplus DL_N$, we get that
	\begin{align*}
		(DT_N)|_N\colon DL_N\oplus \calL|_N\longrightarrow DL_N,\quad (\delta, e)\longmapsto \delta+\pr_D(e).
	\end{align*}
	So, there exists a neighborhood $U$ of $N$ in $\nu_N$ such that the following is a well-defined regular LB morphism
	\begin{align*}
		\Psi:=T_N\circ\Phi|_U\colon L_{\nu_N}|_U\longrightarrow L
	\end{align*}
	extending $I\colon L_N\to L$ and covering an embedding $\psi:=t_N\circ\varphi|_U\colon U \rightarrow M$.
	Now we can compute
	\begin{equation*}
		\calL_{\nu_N}|_U=Q|_U^!I^!\calL=\Phi|_U^!S_N^!I^!\calL=\Phi|_U^!((T_N^!\calL)^{\varpi_N})=(\Psi^!\calL)^{\Phi^\ast \varpi_N},
	\end{equation*}
	which proves that $\calL$ is locally isomorphic, up to a gauge transform, to its linear model $\calL_{\nu_N}$ around $N$.
\end{proof}

\begin{remark}
Note that this Normal Form Theorem is not directly applicable to Jacobi manifolds.
Indeed, because of the presence of gauge transformations~\eqref{eq:gauge_transformation}, not every isomorphism of Dirac--Jacobi manifolds is also an isomorphism of Jacobi manifolds.
Nevertheless, it can be modified in such a way that it will give a Normal Form Theorem for Jacobi manifolds.
This is carried out in~\cite[Theorems 5.4 and 5.9]{SplittingThmJac}. 
\end{remark}

\section{Characteristic Leaf Correspondence}
\label{sec:characteristic_leaf_correspondence}

After having proved in the previous sections that the property of fitting in a weak dual pair defines an equivalence relation for Dirac--Jacobi manifolds, it is natural to ask what, if any, are the implications on the geometry of two Dirac--Jacobi manifolds that are due to the existence of a weak dual pair connecting them.
Indeed, for any weak dual pair, the local structure of its legs is very closely related through the \emph{characteristic leaf correspondence}, as we are going to show in this section.

\subsection{The Characteristic Foliation of Dirac-Jacobi Manifolds}

As a necessary preliminary for discussing the characteristic leaf correspondence, we give first a glimpse at the local structure of Dirac--Jacobi manifolds~\cite{SplittingThmJac}.
In doing this, we mainly follow~\cite{DirJacBun}.

Let $(M,L,\calL)$ be a Dirac--Jacobi manifold $\calL\subset\bbD L$.
The vector bundle $\calL\to M$ is endowed with the Lie algebroid structure $([-,-]_\calL,\rho_\calL)$ and the Lie algebroid representation $\nabla^\calL$ on $L$ which are defined by
\begin{equation*}
	[u,v]_\calL=\ldsb u,v\rdsb,\quad\rho_\calL(u)=(\sigma\circ\pr_D)(u),\quad\nabla^\calL_u\lambda=\pr_D(u)\lambda,
\end{equation*}
for all $u,v\in\Gamma(\calL)$ and $\lambda\in\Gamma(L)$.
As a consequence, the image of the anchor map $\rho_\calL$
\begin{align*}
	K_\mathcal{L}=\sigma(\pr_D\calL)\subset TM
\end{align*}
gives a singular distribution on $M$ which is integrable á la Stefan--Sussman.
The singular foliation integrating $\calK_\calL$, denoted by $\calF_\calL$, is called the \emph{characteristic foliation of the Dirac--Jacobi manifold $(M,L,\calL)$}, and its leaves are called the \emph{characteristic leaves of $(M,L,\calL)$} (cf.~\cite[Sec.~5]{DirJacBun}).
In particular, if the associated algebroid is transitive, i.e.~$\calK_\calL=TM$, one says that the Dirac--Jacobi structure $\calL$ is \emph{transitive}.

Given a Dirac-Jacobi manifold $(M,L,\calL)$, for any characteristic leaf $\iota:S\hookrightarrow M$, we define the pull-back line bundle $L_S:=\iota^\ast L=S\times_ML\to S$ and the regular LB morphism $I:L_S\to L,\ (x,\lambda_{\iota{x}})\mapsto\lambda_{\iota{x}}$
The latter allows us to consider the backward transform
of the Dirac-Jacobi structure $\mathcal{L}$.
\begin{lemma}
	\label{Lem: SmoCharFol}
	Let $(M,L,\calL)$ be a Dirac--Jacobi manifold and let $\iota\colon S\hookrightarrow M $ be one of its characteristic leaves.
	Then $\calL_S:=I^!\calL$ is the (unique) transitive Dirac--Jacobi structure on $L_S\to S$ such that
	\begin{equation*}
		I_S:(S,L_S,\calL_S)\to (M,L,\calL)
	\end{equation*}
	is a backward Dirac--Jacobi map.
	Moreover $DI:DL_S\to DL$ identifies $\pr_D\calL_S$ with $pr_D\calL|_S$.
\end{lemma}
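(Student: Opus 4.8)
The plan is to verify the clean-intersection hypothesis of Theorem~\ref{Thm: CleanInt} directly, since the inclusion $I$ is \emph{not} a Dirac--Jacobi transversal (a characteristic leaf is, in a sense, opposite to a transversal) and so Corollary~\ref{Cor: TrnsMap} does not apply. First I would record two pointwise facts at $x\in S$, writing $\iota(x)=x$ by abuse. Because $I$ covers the embedding $\iota$ and is fibrewise invertible, $D_xI\colon D_xL_S\to D_xL$ is injective with image $\sigma^{-1}(T_xS)$, the derivations of $L$ whose symbol is tangent to $S$ (a short check using $I^\ast s=s|_S$ and $\sigma\circ DI=T\iota\circ\sigma$); dualizing, $\ker(D_xI)^\ast=(\image D_xI)^\circ$ consists of the $L$-valued conormal covectors of $S$, i.e.~those $\alpha\in J^1_xL$ with $\alpha(\mathbbm 1)=0$ annihilating $T_xS$, and this is a smooth bundle of constant rank $\operatorname{codim} S$.

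The crucial input is that $S$ is a \emph{characteristic} leaf, so that $\sigma(\pr_D\calL)|_S=\calK_\calL|_S=TS$; hence $\pr_D\calL|_S\subseteq\sigma^{-1}(TS)=\image DI$ and, taking annihilators, $(\pr_D\calL|_S)^\circ\supseteq\ker(DI)^\ast$. Intersecting with $\iota^\ast\calL$ then becomes transparent: as in the proof of Corollary~\ref{Cor: TrnsMap}, an element $(0,\alpha)\in\bbD L$ lies in $\calL$ iff $\alpha\in\calL\cap J^1L=(\pr_D\calL)^\circ$, so
\[
\ker(DI)^\ast\cap\iota^\ast\calL=\{(0,\alpha)\mid \alpha\in\ker(DI)^\ast\cap(\pr_D\calL)^\circ\}=\ker(DI)^\ast,
\]
which has constant rank $\operatorname{codim} S$. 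By Theorem~\ref{Thm: CleanInt}, $\calL_S:=I^!\calL$ is a Dirac--Jacobi structure. That $I_S$ is then a backward Dirac--Jacobi map is immediate from Definition~\ref{def:backward_forward_DJ}, and the same definition forces any Dirac--Jacobi structure making $I_S$ backward to coincide with $I^!\calL$, which gives the asserted uniqueness.

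For the last assertion I would read off $\pr_D\calL_S$ from the defining formula of the backward transform: $\delta\in\pr_D(\calL_S)_x$ iff $(D_xI)\delta\in\pr_D\calL_x$, whence $D_xI(\pr_D(\calL_S)_x)=\image D_xI\cap\pr_D\calL_x$. By the containment $\pr_D\calL|_S\subseteq\image DI$ established above, this intersection is all of $\pr_D\calL_x$, so $DI$ carries $\pr_D\calL_S$ isomorphically onto $\pr_D\calL|_S$. Transitivity of $\calL_S$ then drops out of the naturality $\sigma\circ DI=T\iota\circ\sigma_S$: since $\sigma(\pr_D\calL|_S)=TS$, applying $DI$-identification gives $\sigma_S(\pr_D\calL_S)=TS$, i.e.~$\calK_{\calL_S}=TS$, which is transitivity.

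The main obstacle is conceptual rather than computational: one must recognize that the leaf inclusion fails transversality, so that the clean-intersection criterion has to be fed by hand, and then isolate the leaf condition $\sigma(\pr_D\calL|_S)=TS$ as precisely what yields the containment $\pr_D\calL|_S\subseteq\image DI$. Everything downstream is a short unwinding of the backward-transform formula together with the Lagrangian identity $\calL\cap J^1L=(\pr_D\calL)^\circ$.
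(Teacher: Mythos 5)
Your proposal is correct and follows essentially the same route as the paper: both verify the clean-intersection hypothesis of Theorem~\ref{Thm: CleanInt} by computing $\ker(DI)^\ast\cap\calL|_S=(\image DI)^\circ\cap(\pr_D\calL|_S)^\circ=(\image DI)^\circ$, the key input being that the leaf condition $\sigma(\pr_D\calL)|_S=T\iota(TS)$ forces $\pr_D\calL|_S\subset\image DI$. Your additional unwinding of the backward-transform formula to get $DI(\pr_D\calL_S)=\pr_D\calL|_S$, transitivity, and uniqueness is correct and simply makes explicit what the paper leaves implicit.
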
  

\begin{proof}
	We have that the $\ker(DI^\ast)\subset(\image(DI))^\circ$.
	Moreover, since $\calL=\calL^\perp$, we get $J^1L\cap \calL=(\pr_D(\calL))^\circ$.
	Hence we can compute
	\begin{align*}
		\ker(DI^\ast)\cap \calL\at{S}=(\image(DI))^\circ\cap (\pr_D(\mathcal{L}_S))^\perp=(\image(DI)+\pr_D\mathcal{L}\at{S})^\circ=(\image(DI))^\circ,
	\end{align*}
	where, in the last step, we used the fact that $\sigma(\pr_D\mathcal{L}\at{S})=T\iota(TS)$ and so $\pr_D\calL\at{S}\subset\image(DI)$.
	Therefore, $\ker(DI^\ast)\cap \calL\at{S}$ has constant rank and $I^!\calL\subset\bbD L_S$ is a Dirac--Jacobi structure by Theorem~\ref{Thm: CleanInt}. 
\end{proof}
Since the symbol map has a one dimensional kernel, we can distinguish two kinds of leaves: 

\begin{definition}
	Let $(M,L,\calL)$ be a Dirac--Jacobi manifold and $\iota:S\hookrightarrow M$ be a leaf.
	Then 
	$S$ is said to be 
	\begin{enumerate}[label={\arabic*)}]
		\item \emph{pre-contact}, if $\rank(\pr_D(I^!\calL))=\dim S+1$.
		\item \emph{locally conformal pre-symplectic} (or \emph{lcps}), if $\rank(\pr_D(I^!\calL))=\dim S$.
	\end{enumerate}
\end{definition}

In Dirac geometry, the characteristic leaves have an induced pre-symplectic form, which is induced via the backward transform of the Dirac structure along the inclusion.
In the case of Dirac-Jacobi bundles it is a bit different, since they admit two different kind of characteristic leaves which have different induced structures.

\begin{lemma}
	Let $(M,L,\calL)$ be a Dirac--Jacobi manifold and let $\iota\colon S\hookrightarrow M $ be a characteristic leaf.
	Then the rank of $(\pr_D\calL)$ is constant along $S$.
	Moreover, there are only two possible cases for $S$:
	\begin{itemize}
		\item either $(DI)DL_S=\langle\mathbbm{1}\rangle|_S\oplus\pr_D\calL|_S$ and, in this case, we will say that $S$ is a \emph{plcs leaf},
		\item or $(DI)DL_S=\pr_D\calL|_S$ and, in this case, we will say that $S$ is a \emph{precontact leaf}.
	\end{itemize}
\end{lemma}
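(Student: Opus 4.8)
The plan is to compare $\pr_D\calL|_S$ with the image $(DI)DL_S$ fiberwise and then upgrade the pointwise picture to a statement that is constant along the whole leaf. First I would identify $(DI)DL_S$: since $\iota\colon S\hookrightarrow M$ is an embedding and $I$ covers it, the morphism $DI\colon DL_S\to DL$ is fiberwise injective and its image consists exactly of those derivations of $L$ whose symbol is tangent to $S$, i.e. $(DI)(D_xL_S)=\sigma^{-1}(T_xS)$ for $x\in S$. Recalling that the Atiyah algebroid of a line bundle is transitive with one–dimensional kernel $\ker\sigma=\langle\mathbbm{1}\rangle$, this image has rank $\dim S+1$. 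By the leaf condition $T_xS=(K_\calL)_x=\sigma(\pr_D\calL_x)$, the symbol $\sigma$ maps $\pr_D\calL_x$ onto $T_xS$, whence $\pr_D\calL_x\subset\sigma^{-1}(T_xS)=(DI)(D_xL_S)$; this is also compatible with Lemma~\ref{Lem: SmoCharFol}, which identifies $\pr_D\calL|_S$ with $\pr_D\calL_S$ for the transitive structure $\calL_S$.

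Next, I would read off the pointwise dichotomy. Since $\sigma|_{\pr_D\calL_x}$ is onto $T_xS$ with kernel contained in the line $\langle\mathbbm{1}_x\rangle$, the rank of $\pr_D\calL_x$ is either $\dim S$ or $\dim S+1$, according to whether $\mathbbm{1}_x\notin\pr_D\calL_x$ or $\mathbbm{1}_x\in\pr_D\calL_x$. In the first case $\sigma|_{\pr_D\calL_x}$ is an isomorphism onto $T_xS$ and, comparing dimensions, $\sigma^{-1}(T_xS)=\langle\mathbbm{1}_x\rangle\oplus\pr_D\calL_x$, i.e. $(DI)DL_S=\langle\mathbbm{1}\rangle|_S\oplus\pr_D\calL|_S$ (the plcs case); in the second case $\pr_D\calL_x=\sigma^{-1}(T_xS)=(DI)(D_xL_S)$ (the precontact case). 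Thus the two displayed alternatives correspond exactly to the two possible values of $\rank\pr_D\calL_x$, and it only remains to show that this rank does not vary along $S$.

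The heart of the matter — and the step I expect to be the main obstacle — is the constancy of $\rank\pr_D\calL$ along $S$, since a priori this rank is only lower semicontinuous and could in principle jump up on a proper subset of the leaf. I would prove instead that the condition $\mathbbm{1}\in\pr_D\calL$ is invariant under the characteristic flows. Concretely, for any $u\in\Gamma(\calL)$ the operator $\ldsb u,-\rdsb$ is an infinitesimal Courant–Jacobi automorphism (Remark~\ref{rem:infinitesimal_CJ_aut_LieAlgebra}) whose flow is a $1$–parameter group $(F_t,\Phi_t)$ of Courant–Jacobi automorphisms of $\bbD L$; by involutivity of $\calL$ the operator $\ldsb u,-\rdsb$ preserves $\Gamma(\calL)$, so $F_t\calL=\calL$ and hence $F_t(\calL_x)=\calL_{\varphi_t(x)}$, where $\varphi_t$ is the flow of the characteristic vector field $\sigma(\pr_D u)$. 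The compatibility $(D\Phi_t)(\pr_D v)=\pr_D(F_t v)$ from Definition~\ref{def:CJ-Aut} then gives $\pr_D\calL_{\varphi_t(x)}=(D\Phi_t)(\pr_D\calL_x)$, and since every line bundle automorphism fixes the identity derivation, $(D\Phi_t)\mathbbm{1}_x=\mathbbm{1}_{\varphi_t(x)}$. Therefore $\mathbbm{1}_x\in\pr_D\calL_x$ if and only if $\mathbbm{1}_{\varphi_t(x)}\in\pr_D\calL_{\varphi_t(x)}$. As the leaf $S$ is precisely the orbit of $x$ under the pseudogroup generated by the flows of such characteristic vector fields, the type of the point is constant along $S$; equivalently $\rank\pr_D\calL$ is constant on $S$, which finishes the dichotomy and the proof.
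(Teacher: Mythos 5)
Your proof is correct, and for the substantive half of the statement it takes a genuinely different route from the paper. The paper disposes of the lemma in one line: the constancy of $\rank(\pr_D\calL)$ along $S$ is delegated to an external reference ({\cite[Lemma~5.1]{DirJacBun}}), and the dichotomy is then read off from the fact that $\langle\mathbbm{1}\rangle$ is the rank-one kernel of $\sigma\colon DL\to TM$ — which is exactly your pointwise linear-algebra argument comparing $\pr_D\calL_x$ with $\sigma^{-1}(T_xS)=(DI)(D_xL_S)$. What you add is a self-contained proof of the constancy: you observe that for $u\in\Gamma(\calL)$ the inner derivation $\ldsb u,-\rdsb$ integrates to a one-parameter family of Courant--Jacobi automorphisms $(F_t,\Phi_t)$ preserving $\calL$ (by involutivity), that the compatibility $(D\Phi_t)\circ\pr_D=\pr_D\circ F_t$ transports $\pr_D\calL_x$ to $\pr_D\calL_{\varphi_t(x)}$, and that $(D\Phi_t)\mathbbm{1}=\mathbbm{1}$, so the condition $\mathbbm{1}\in\pr_D\calL$ is constant along the orbit, i.e.\ along the leaf. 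This is the standard ``homogeneity of leaves under inner automorphisms'' argument, and it is sound here: the only points worth making explicit are that $\ldsb u,-\rdsb$ is indeed a derivation of the vector bundle $\bbD L$ with symbol $\sigma(\pr_D u)$ (so it has a flow of the asserted kind), and that a derivation preserving $\Gamma(\calL)$ has a flow preserving the subbundle $\calL$ — both routine. The trade-off is clear: the paper's version is shorter but not self-contained, while yours makes the lemma independent of the cited result at the cost of invoking the automorphism machinery of Section~2.
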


\begin{proof}
	The proof of the first part can be found in~\cite[Lemma~5.1]{DirJacBun}, then the second part is a straightforward consequence in view of the fact that $\langle\mathbbm{1}\rangle$ is the rank $1$ kernel of $\sigma:DL\to TM$.
\end{proof}

Note that this distinction is the first and probably one of the most significant conceptual differences between Dirac-Jacobi structures and Dirac structures.
Below we explain the names of the different leaves.
Let us start with precontact leaves, which are very similar to presymplectic leaves in Dirac geometry.


\begin{lemma}[{\cite[Prop.~5.4(1)]{DirJacBun}}]
	\label{Lem: PreCon}
	For any line bundle $L\to M$, the relation $\calL=\operatorname{Gr}(\varpi)$ establishes a canonical one-to-one correspondence between:
	\begin{enumerate}
		\item (transitive) Dirac--Jacobi structures $\calL\subset\bbD L$ such that $\pr_D\calL=DL$, and
		\item $L$-valued presymplectic Atiyah forms $\varpi$.
	\end{enumerate}
\end{lemma}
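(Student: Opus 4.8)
The plan is to reduce the statement entirely to Example~\ref{ex:PrecontactStructures_DJ}, which already establishes that $\varpi\mapsto\operatorname{Gr}\varpi$ is a bijection from $\Omega^2_D(L)$ onto the Lagrangian subbundles $\calL\subset\bbD L$ transversal to $J^1L$, and that $\operatorname{Gr}\varpi$ is a Dirac--Jacobi structure precisely when $\rmd_D\varpi=0$, i.e.~when $\varpi$ is presymplectic. Thus the only thing left to verify is that, for a Lagrangian $\calL$, the transversality condition $\calL\pitchfork J^1L$ is the same as the condition $\pr_D\calL=DL$ appearing in the statement. Once this equivalence is in hand, the asserted correspondence is just the restriction of the bijection of Example~\ref{ex:PrecontactStructures_DJ} to the relevant subclass.

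First I would record the isotropy identity $\calL\cap J^1L=(\pr_D\calL)^\circ$, valid for any Lagrangian $\calL$. This is immediate from $\calL=\calL^\perp$ and the explicit form of the pairing: a covector $(0,\alpha)\in J^1L$ lies in $\calL=\calL^\perp$ if and only if $\ldab(0,\alpha),(\Delta,\psi)\rdab=\alpha(\Delta)=0$ for all $(\Delta,\psi)\in\calL$, that is, if and only if $\alpha$ annihilates $\pr_D\calL$. Next I would run the dimension count: since $J^1L\cong(DL)^\ast\otimes L$ has the same rank as $DL$, we have $\rank\bbD L=2\rank DL$, while a Lagrangian subbundle and $J^1L$ each have rank $\rank DL$; hence $\calL+J^1L=\bbD L$ is equivalent to $\calL\cap J^1L=0$. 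Combining with the isotropy identity, $\calL\pitchfork J^1L$ holds iff $(\pr_D\calL)^\circ=0$, iff $\pr_D\calL=DL$, which is exactly the desired equivalence. Conversely, for $\calL=\operatorname{Gr}\varpi$ one has $\pr_D\operatorname{Gr}\varpi=DL$ directly, since $(\delta,\varpi^\flat\delta)\in\operatorname{Gr}\varpi$ for every $\delta\in DL$, so the two descriptions genuinely match.

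Finally I would note that the parenthetical transitivity is automatic: if $\pr_D\calL=DL$ then $\calK_\calL=\sigma(\pr_D\calL)=\sigma(DL)=TM$, because the symbol $\sigma\colon DL\to TM$ of the transitive gauge algebroid is surjective. I do not expect a genuine obstacle here: the content is entirely contained in Example~\ref{ex:PrecontactStructures_DJ}, and the remaining argument is pure bookkeeping. The only point requiring a little care is the isotropy identity $\calL\cap J^1L=(\pr_D\calL)^\circ$ together with the rank count that converts transversality into the purely algebraic condition $\pr_D\calL=DL$; this is the same mechanism already exploited in the proof of Corollary~\ref{Cor: TrnsMap}.
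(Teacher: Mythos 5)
Your argument is correct. Note that the paper itself offers no proof of this lemma --- it is quoted verbatim from \cite[Prop.~5.4(1)]{DirJacBun} --- so there is no in-paper proof to compare against; what you have produced is a clean, self-contained derivation from material the paper does state, namely Example~\ref{ex:PrecontactStructures_DJ}. Your reduction hinges on the equivalence, for a Lagrangian $\calL\subset\bbD L$, between $\calL\pitchfork J^1L$ and $\pr_D\calL=DL$, and both halves of that equivalence check out: the identity $\calL\cap J^1L=(\pr_D\calL)^\circ$ follows exactly as you say from $\calL=\calL^\perp$ and the form of the pairing (and is indeed the same identity the authors invoke in the proof of Corollary~\ref{Cor: TrnsMap}), while the rank count $\rank\calL=\rank J^1L=\rank DL=\tfrac12\rank\bbD L$ converts transversality into $\calL\cap J^1L=0$, hence into $(\pr_D\calL)^\circ=0$, i.e.~$\pr_D\calL=DL$. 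The observation that transitivity is automatic, because $\sigma\colon DL\to TM$ is surjective for the gauge algebroid, correctly disposes of the parenthetical in the statement. The only caveat worth flagging is that Example~\ref{ex:PrecontactStructures_DJ} is itself stated without proof in this paper, so your argument inherits that dependence; but as a derivation of the lemma from the paper's established background it is complete.
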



We can apply  Lemma \ref{Lem: PreCon} directly  to the case of pre-contact leaves, since (by their very definition) they are equipped with Dirac-Jacobi structures of this kind.
So they inherit a precontact structure for the ambient Dirac structure.
Let us now turn to locally conformal pre-symplectic leaves.

\begin{lemma}[{\cite[Prop.~5.4(2)]{DirJacBun}}]
	\label{Lem: PreSymLea}
	For any line bundle $L\to M$, the following relation
	\begin{equation*}
		\calL=\{(\nabla_\xi,\sigma^\ast(\omega^\flat \xi)+\alpha)\mid (\xi,\alpha)\in TM\oplus(\image\nabla)^\circ\}
	\end{equation*}
	establishes a canonical one-to-one correspondence between:
	\begin{enumerate}
		\item (transitive) Dirac--Jacobi structures $\calL\subset\bbD L$ such that $\langle\mathbbm{1}\rangle\oplus\pr_D\calL=DL$, and
		\item locally conformal presymplectic structures $(\omega,\nabla)$ on $L\to M$.
	\end{enumerate}
\end{lemma}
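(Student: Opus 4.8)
The plan is to realize the stated formula as one half of a pair of mutually inverse assignments, and then to reduce the two defining conditions of a Dirac--Jacobi structure (maximal isotropy and involutivity) to the two defining conditions of a locally conformal presymplectic structure (flatness of $\nabla$ and $\rmd_\nabla$-closedness of $\omega$), handled one at a time.

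First I would set up the underlying linear-algebra bijection, temporarily forgetting involutivity. Suppose $\calL\subset\bbD L$ is a Lagrangian subbundle with $\langle\mathbbm{1}\rangle\oplus\pr_D\calL=DL$. Since $\ker\sigma=\langle\mathbbm{1}\rangle$, the symbol restricts to a VB isomorphism $\sigma|_{\pr_D\calL}\colon\pr_D\calL\overset{\sim}{\to}TM$, whose inverse $\nabla\colon TM\to DL$ is a connection on $L$ with $\image\nabla=\pr_D\calL$. Because $\calL$ is Lagrangian one has $\calL\cap J^1L=(\pr_D\calL)^\circ=(\image\nabla)^\circ$, so for each $\xi$ the fibre of $\pr_D\colon\calL\to\image\nabla$ over $\nabla_\xi$ is a coset of $(\image\nabla)^\circ$. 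Hence $\omega(\xi,\eta):=\psi(\nabla_\eta)$, for any $(\nabla_\xi,\psi)\in\calL$, is independent of the chosen representative (the ambiguity lies in $(\image\nabla)^\circ$, which annihilates $\nabla_\eta$) and defines $\omega\in\Omega^2(M;L)$, antisymmetric by isotropy. Since then $\psi-\sigma^\ast(\omega^\flat\xi)$ annihilates $\image\nabla$, the subbundle $\calL$ is exactly the one in the statement. Conversely, from any connection $\nabla$ and any $\omega\in\Omega^2(M;L)$ the formula defines a subbundle that is isotropic (the pairing of two generators is $\omega(\xi,\eta)+\omega(\eta,\xi)=0$) and of rank $\dim M+1=\tfrac12\rank\bbD L$, hence Lagrangian with $\langle\mathbbm{1}\rangle\oplus\pr_D\calL=DL$. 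These two assignments are manifestly inverse, giving a canonical bijection between Lagrangian subbundles of the stated type and pairs $(\nabla,\omega)$ of a connection and an $L$-valued $2$-form.

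Next I would extract flatness. For sections $u_i=(\nabla_{\xi_i},\psi_i)$ with $\psi_i=\sigma^\ast(\omega^\flat\xi_i)+\alpha_i$ and $\alpha_i\in(\image\nabla)^\circ$, the first component of the Dorfman bracket is $[\nabla_{\xi_1},\nabla_{\xi_2}]$, whose symbol is $[\xi_1,\xi_2]$. Thus $\pr_D\ldsb u_1,u_2\rdsb\in\pr_D\calL=\image\nabla$ if and only if $[\nabla_{\xi_1},\nabla_{\xi_2}]=\nabla_{[\xi_1,\xi_2]}$, i.e. the curvature of $\nabla$ vanishes. Since involutivity forces $\pr_D\ldsb u_1,u_2\rdsb\in\pr_D\calL$, it already forces $\nabla$ to be \emph{flat}.

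Assuming flatness, the remaining content of involutivity is carried by the Courant tensor $\Upsilon_\calL(u_1,u_2,u_3)=\ldab u_1,\ldsb u_2,u_3\rdsb\rdab$, whose vanishing is equivalent to involutivity for the Lagrangian $\calL$. Expanding it with the Cartan calculus of the der-complex — using $(\rmd_D\psi)(\Delta,\Delta')=\Delta(\psi\Delta')-\Delta'(\psi\Delta)-\psi[\Delta,\Delta']$ and the derived identity $(\Lie_\Delta\psi)(\Delta')=\Delta(\psi\Delta')-\psi[\Delta,\Delta']$ — and then repeatedly substituting $\psi_i(\nabla_\eta)=\omega(\xi_i,\eta)$ together with flatness $[\nabla_{\xi_i},\nabla_{\xi_j}]=\nabla_{[\xi_i,\xi_j]}$, all six resulting terms reorganize into
\[
\Upsilon_\calL(u_1,u_2,u_3)=(\rmd_\nabla\omega)(\xi_1,\xi_2,\xi_3).
\]
Hence, once $\nabla$ is flat, involutivity of $\calL$ is equivalent to $\rmd_\nabla\omega=0$. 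Combining the two steps, $\calL$ is a Dirac--Jacobi structure of the stated type precisely when $(\nabla,\omega)$ is a locally conformal presymplectic structure, so the linear-algebra bijection upgrades to the asserted correspondence, which is canonical as the two assignments are mutually inverse.

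The step I expect to be the main obstacle is this last computation: bookkeeping the der-complex Cartan calculus so that the three derivative terms and the three bracket terms assemble exactly into $\rmd_\nabla\omega$. In particular, one must check that the $\sigma^\ast(\omega^\flat\xi_i)$ summands contribute correctly, that the $\langle\mathbbm{1}\rangle$-direction enters only through the splitting $DL=\langle\mathbbm{1}\rangle\oplus\image\nabla$ used to define $\nabla$, and that the $(\image\nabla)^\circ$-ambiguity in each $\psi_i$ genuinely cancels by isotropy. Everything else is routine linear algebra or a rank count.
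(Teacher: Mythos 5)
The paper offers no proof of this lemma: it is imported verbatim as a citation of \cite[Prop.~5.4(2)]{DirJacBun}, so there is no in-paper argument to compare against. Your proposal is a correct, self-contained proof, and it follows the route one would expect (and that the cited reference essentially takes): use $\langle\mathbbm{1}\rangle\oplus\pr_D\calL=DL$ to invert $\sigma|_{\pr_D\calL}$ into a connection $\nabla$, use $\calL=\calL^\perp$ to get $\calL\cap J^1L=(\pr_D\calL)^\circ=(\image\nabla)^\circ$ so that $\omega(\xi,\eta):=\psi(\nabla_\eta)$ is well defined and skew, match ranks to identify $\calL$ with the displayed Lagrangian family, and then split involutivity into flatness (from the $DL$-component of the bracket landing in $\image\nabla$) plus $\rmd_\nabla\omega=0$ (from the Courant tensor). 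The computation you flag as the main risk does close up: since each $\alpha_i\in(\image\nabla)^\circ$ annihilates every $\nabla_\eta$, only $\psi_i(\nabla_\eta)=\omega(\xi_i,\eta)$ enters, and with $[\nabla_{\xi_i},\nabla_{\xi_j}]=\nabla_{[\xi_i,\xi_j]}$ the six terms of $\ldab u_1,\ldsb u_2,u_3\rdsb\rdab$ assemble exactly into $(\rmd_\nabla\omega)(\xi_1,\xi_2,\xi_3)$; the evaluation on generating sections suffices because $\Upsilon_\calL$ is tensorial on Lagrangian subbundles, as the paper itself records in~\eqref{Eq: Torsion}. I see no gap.
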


A lcps leaf is clearly equipped with one of these Dirac--Jacobi structures and so it inherits a lcps structure from the ambient Dirac--Jacobi structure.
Let us summarize the previous discussion in the following

\begin{corollary}
	\label{Cor: CharFolDJ}
	Let $(M,L,\calL)$ be a Dirac--Jacobi manifold and let $\iota\colon S\hookrightarrow M $ be a characteristic leaf. 
	\begin{enumerate}[label={\arabic*)}]
		\item If $S$ is a precontact leaf, there is a unique  presymplectic Atiyah form $\varpi\in \Omega^2_D(L_S)$, s.t.~$I^!\calL=\operatorname{Gr}(\varpi)$.
		\item
		If $S$ is an lcps leaf, there exists a unique lcps structure $(\nabla,\omega)$ on $L_S$ such that $I^!\calL=\calL_{(\nabla,\omega)}$.
	\end{enumerate}	 
\end{corollary}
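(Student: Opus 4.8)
The plan is to observe that this corollary only assembles the three preceding lemmas, so I expect no new geometric input to be needed. I would begin by invoking Lemma~\ref{Lem: SmoCharFol} to record that $\calL_S := I^!\calL$ is a well-defined \emph{transitive} Dirac--Jacobi structure on $L_S \to S$, and that $DI$ identifies $\pr_D\calL_S$ with $\pr_D\calL|_S$. The crucial structural consequence I would extract is transitivity: the anchor $\sigma$ sends $\pr_D\calL_S$ onto $TS$, and its kernel inside $DL_S$ is exactly the line $\langle\mathbbm{1}\rangle$ spanned by the identity derivation.

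Each case then reduces to a rank count against $\rank DL_S = \dim S + 1$. For a precontact leaf, the defining condition $\rank(\pr_D\calL_S) = \dim S + 1$ matches $\rank DL_S$, so I would conclude $\pr_D\calL_S = DL_S$. This is exactly the hypothesis of Lemma~\ref{Lem: PreCon}, which I would apply to produce the unique presymplectic Atiyah form $\varpi\in\Omega^2_D(L_S)$ satisfying $I^!\calL = \operatorname{Gr}(\varpi)$; uniqueness comes for free from the bijection in that lemma.

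For an lcps leaf, the defining condition is $\rank(\pr_D\calL_S) = \dim S$. Here I would argue that, since transitivity already makes $\sigma$ surjective from $\pr_D\calL_S$ onto $TS$ and the two spaces have equal dimension, this restriction of $\sigma$ is an isomorphism, whence $\mathbbm{1}\notin\pr_D\calL_S$. A dimension count ($1 + \dim S = \rank DL_S$) then gives the splitting $\langle\mathbbm{1}\rangle \oplus \pr_D\calL_S = DL_S$, which is precisely the hypothesis of Lemma~\ref{Lem: PreSymLea}; applying it yields the unique lcps structure $(\nabla,\omega)$ with $I^!\calL = \calL_{(\nabla,\omega)}$.

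I do not anticipate a genuine obstacle: the argument is bookkeeping. The only points requiring care will be extracting transitivity and the identification $\pr_D\calL_S \cong \pr_D\calL|_S$ from Lemma~\ref{Lem: SmoCharFol}, together with the elementary observation that $\mathbbm{1}$ spans $\ker\sigma$; these are what let the rank conditions defining the two leaf types translate cleanly into the subbundle hypotheses of Lemmas~\ref{Lem: PreCon} and~\ref{Lem: PreSymLea}.
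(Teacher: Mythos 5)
Your proposal is correct and matches the paper's (implicit) argument: the corollary is stated as a summary of the preceding lemmas, exactly the assembly you carry out, with Lemma~\ref{Lem: SmoCharFol} supplying transitivity and the identification $\pr_D\calL_S\cong\pr_D\calL|_S$, and Lemmas~\ref{Lem: PreCon} and~\ref{Lem: PreSymLea} supplying the two correspondences. The only cosmetic difference is that you re-derive the dichotomy $\pr_D\calL_S=DL_S$ versus $\langle\mathbbm{1}\rangle\oplus\pr_D\calL_S=DL_S$ by rank counting against $\rank DL_S=\dim S+1$, whereas the paper quotes this dichotomy directly from the lemma preceding the corollary (citing \cite[Lemma~5.1]{DirJacBun}); both routes are valid and essentially equivalent.
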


\subsection{The Characteristic Leaf Correspondence Theorem(s)}

We subdivide the Characteristic Leaf Correspondence Theorme in three separate parts.
First, for any weak dual pair with connected fibers of the underlying maps,  we prove that there exists a one-to-one correspondence between the characteristic leaves of its legs that indentifies their leaf spaces and preserves the type of the leaves (Theorem~\ref{theor:CharacteristicLeafCorrespondence_I}).
Second, we discuss the relation between the transitive Dirac--Jacobi structures (i.e.~precontact or lcps) inherited by corresponding characteristic leaves (Theorem~\ref{theor:CharacteristicLeafCorrespondence_II}).
Finally, we show that corresponding characteristic leaves have the same transverse Dirac--Jacobi structure (Theorem~\ref{theor:LeafCorrespondence:transverseDJ}).

Let us start with a first lemma concerning the relation between the characteristic foliations of a Dirac--Jacobi structure and its backward transform.

\begin{lemma}
	\label{lem:LeavesCorrespondence_I}
	Let $\Phi\colon (M,L,\calL)\to (M^\prime,L^\prime,\calL^\prime)$ be a backward Dirac--Jacobi map covering a surjective submersion $\varphi\colon M\to M^\prime$ with connected fibers.
	Then the relation $\calS=\varphi^{-1}(\calS^\prime)$ establishes a 1-1 correspondence
	\begin{equation*}
		\text{characteristic leaves $\calS$ of $(M,L,\calL)$}\rightleftharpoons\text{characteristic leaves $\calS^\prime$ of $(M^\prime,L^\prime,\calL^\prime)$}.
	\end{equation*}
	Moreover, this correspondence respects the type of the leaves, i.e.~the following conditions are equaivalent
	\begin{itemize}
		\item $\calS$ is a pre-contact (resp.~locally conformal pre-symplectic) leaf of $(M,L,\calL)$,
		\item $\calS^\prime$ is a pre-contact (resp.~locally conformal pre-symplectic) leaf of $(M^\prime,L^\prime,\calL^\prime)$,.
	\end{itemize}  
\end{lemma}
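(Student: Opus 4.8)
The plan is to reduce the whole statement to one pointwise identity relating the derivation parts of $\calL$ and $\calL^\prime$, and then feed this into a standard foliation-theoretic argument. First I would unwind Definition~\ref{def:backward_forward_DJ}: since $\Phi$ is a backward Dirac--Jacobi map, $\calL=\Phi^!\calL^\prime$, and reading off the derivation part gives, for every $x\in M$,
\[
	\pr_D\calL_x=(D_x\Phi)^{-1}\bigl(\pr_D\calL^\prime_{\varphi(x)}\bigr).
\]
Using the naturality of the symbol ($\sigma^\prime\circ D\Phi=T\varphi\circ\sigma$), the identity $(D\Phi)\mathbbm{1}=\mathbbm{1}^\prime$ (immediate from $((D\Phi)\mathbbm{1})s=\Phi(\mathbbm{1}(\Phi^\ast s))=s$), and the surjectivity of $D\Phi$ (which holds because $\varphi$ is a submersion, so $T\varphi$ is surjective, while $D\Phi$ carries the kernel line $\langle\mathbbm{1}\rangle$ of $\sigma$ onto the kernel line $\langle\mathbbm{1}^\prime\rangle$ of $\sigma^\prime$), I would then upgrade this to a pullback relation between the characteristic distributions $K_\calL=\sigma(\pr_D\calL)$ and $K_{\calL^\prime}=\sigma^\prime(\pr_D\calL^\prime)$:
\[
	K_{\calL,x}=(T_x\varphi)^{-1}\bigl(K_{\calL^\prime,\varphi(x)}\bigr).
\]
The inclusion $T\varphi(K_\calL)\subseteq K_{\calL^\prime}$ together with surjectivity of $D\Phi$ gives $T\varphi(K_{\calL,x})=K_{\calL^\prime,\varphi(x)}$, while $\ker D\Phi\subseteq\pr_D\calL$ (the general property of backward transforms already invoked in the excerpt) yields $\ker T\varphi\subseteq K_\calL$; these two facts combine to the displayed equality.

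With the pullback relation in hand, the leaf correspondence is a routine foliation argument. Fix $x\in M$, let $\calS$ be the characteristic leaf through $x$ and $\calS^\prime$ the one through $\varphi(x)$. Since $\calS$ is a connected integral manifold of $K_\calL$ and $T\varphi(K_\calL)\subseteq K_{\calL^\prime}$, the image $\varphi(\calS)$ is a connected integral manifold of $K_{\calL^\prime}$ through $\varphi(x)$, hence $\varphi(\calS)\subseteq\calS^\prime$ and so $\calS\subseteq\varphi^{-1}(\calS^\prime)$. Conversely, by the pullback relation $\varphi^{-1}(\calS^\prime)$ is an integral manifold of $K_\calL$ of full rank, and the restriction $\varphi\colon\varphi^{-1}(\calS^\prime)\to\calS^\prime$ is a surjective submersion onto a connected base with connected fibers; as an open surjection with connected fibers over a connected base it has connected total space, so $\varphi^{-1}(\calS^\prime)$ is a connected integral manifold of $K_\calL$ containing $x$. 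By maximality of leaves $\varphi^{-1}(\calS^\prime)\subseteq\calS$, whence $\calS=\varphi^{-1}(\calS^\prime)$. This exhibits $\calS\mapsto\calS^\prime$ and $\calS^\prime\mapsto\varphi^{-1}(\calS^\prime)$ as mutually inverse bijections of leaf spaces.

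Finally, the type correspondence drops out of the same pointwise identity. By the lemmas recalled in this subsection, a characteristic leaf is precontact precisely when $\mathbbm{1}$ lies in $\pr_D\calL$ along it (so that $\rank\pr_D\calL$ exceeds the leaf dimension by one), and locally conformal presymplectic otherwise. Since $(D_x\Phi)\mathbbm{1}_L=\mathbbm{1}_{L^\prime}$, the relation $\pr_D\calL_x=(D_x\Phi)^{-1}(\pr_D\calL^\prime_{\varphi(x)})$ gives, for every $x\in M$,
\[
	\mathbbm{1}_L\in\pr_D\calL_x\iff\mathbbm{1}_{L^\prime}\in\pr_D\calL^\prime_{\varphi(x)},
\]
so that corresponding leaves $\calS=\varphi^{-1}(\calS^\prime)$ are simultaneously precontact or simultaneously lcps. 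The step I expect to be most delicate is the connectedness-and-maximality bookkeeping in the leaf correspondence: one must check carefully that $\varphi^{-1}(\calS^\prime)$, a priori only an immersed submanifold carrying its own leaf topology, is genuinely connected and genuinely a full-rank integral manifold of $K_\calL$, so that maximality of leaves applies; the purely algebraic steps (the two displayed identities and the observation $\mathbbm{1}_L\mapsto\mathbbm{1}_{L^\prime}$) are short and direct by comparison.
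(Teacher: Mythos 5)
Your proof is correct and follows essentially the same route as the paper's: the key pointwise identity $\pr_D\calL_x=(D_x\Phi)^{-1}\bigl(\pr_D\calL^\prime_{\varphi(x)}\bigr)$, its push-down via the symbol to the pullback relation $T_x\calF=(T_x\varphi)^{-1}T_{\varphi(x)}\calF^\prime$, and the criterion $\mathbbm{1}\in\pr_D\calL$ for distinguishing precontact from lcps leaves are exactly the three ingredients of the paper's argument. The only difference is that where you carry out the connectedness-and-maximality bookkeeping for the leaf bijection by hand, the paper simply cites Theorem E.7 and Corollary E.8 of Blaom's appendix on pullback foliations, so your version is self-contained but otherwise identical in substance.
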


\begin{proof}
	Since $\Phi$ is a backward Dirac--Jacobi map, i.e.~$\calL=\Phi^!\calL^\prime$, and covers a submersion $\varphi:M\to M^\prime$, so that the VB morphism $D\Phi:DL\to DL^\prime$ is fiberwise surjective, one easily compute that, for all $x\in M$,
	\begin{equation}
		\label{eq:proof:lem:LeavesCorrespondence_I:a}
		\pr_D\calL_x=\pr_D\{(\delta,(D_x\Phi)\alpha^\prime)\mid((D_x\Phi)\delta,\alpha^\prime)\in\calL^\prime_{\varphi(x)}\}=(D_x\Phi)^{-1}(\pr_D\calL^\prime_{\varphi(x)}).
	\end{equation}
	Since $(D_x\Phi)^{-1}\mathbbm{1}_{\varphi(x)}=\mathbbm{1}_x$, as a first consequence of the latter one gets that, for all $x\in M$,
	\begin{equation}
		\label{eq:proof:lem:LeavesCorrespondence_I:b}
		\mathbbm{1}_x\in\pr_D\calL_x\Longleftrightarrow\mathbbm{1}_{\varphi(x)}\in\pr_D\calL^\prime_{\varphi(x)}
	\end{equation}
	Denote by $\calF$ and $\calF^\prime$ the characteristic foliations of respectively $(M,L,\calL)$ and $(M^\prime,L^\prime,\calL^\prime)$.
	Then, applying the symbol map to Equation~\eqref{eq:proof:lem:LeavesCorrespondence_I:a}, one gets that $T\calF$ is the \emph{pull-back} of $T\calF^\prime$ along $\varphi$, i.e.~for all $x\in M$
	\begin{equation*}
		T_x\calF=(T_x\varphi)^{-1}T_{\varphi(x)}\calF.
	\end{equation*}
	The submersion $\varphi:M\to M^\prime$ is also surjective with connected fibers, and so Theorem E.7 and Corollary E.8 in~\cite{blaom2001geometric} imply that the map $\calS^\prime\longmapsto\varphi^{-1}(\calS^\prime)$ defines a bijection from the leaf space $M^\prime/\calF^\prime$ to the leaf space $M/\calF$.
	Finally, Equation~\eqref{eq:proof:lem:LeavesCorrespondence_I:b} assures that this bijection preserves the type of the characteristic leaves.
\end{proof}

\begin{lemma}
	\label{lem:LeavesCorrespondence_II}
	Let us consider the following weak dual pair, with underlying maps $\begin{tikzcd}
		M_0&M\arrow[l, swap, "\varphi_0"]\arrow[r, "\varphi_1"]&M_1
	\end{tikzcd}$,
	\begin{equation*}
		\begin{tikzcd}
			(M_0,L_0,\calL_0)&(M,L,\operatorname{Gr}\varpi)\arrow[l, swap, "\Phi_0"]\arrow[r, "\Phi_1"]&(M_1,L_1,\calL_1^\mathrm{opp}).
		\end{tikzcd}
	\end{equation*}
	Then the following Lagrangian families are all smooth, and so Dirac--Jacobi structures on $L\to M$,
	\begin{equation*}
		\Phi_0^!\calL_0,\quad\Phi_1^!\calL_1^\mathrm{opp},\quad(\Phi_0^!\calL_0)\star(\Phi_1^!\calL_1^\mathrm{opp}).
	\end{equation*}
	Moreover, their anchor maps have as image the same involutive subbundle of $DL$ given by
	\begin{equation}
		\label{eq:lem:LeavesCorrespondence_II}
		\pr_D(\Phi_0^!\calL_0)=\pr_D(\Phi_1^!\calL_1^\mathrm{opp})=\pr_D((\Phi_0^!\calL_0)\star(\Phi_1^!\calL_1^\mathrm{opp}))=\ker(D\Phi_0)+\ker(D\Phi_1).
	\end{equation}
	Consequently, they determine the same characteristic foliation $\calF$ of $M$ given by $T\calF=\ker T\varphi_0+\ker T\varphi_1$.
\end{lemma}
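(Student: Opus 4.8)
The plan is to read off everything from the two backward transforms $\Phi_0^!\calL_0$ and $\Phi_1^!\calL_1^\mathrm{opp}$, whose structure is already pinned down by the earlier characterizations of weak dual pairs. First I would note that, since $\Phi_0$ and $\Phi_1$ cover surjective submersions, the maps $D\Phi_i$ are fibrewise surjective and the transversality condition~\eqref{eq:def:DJ_transversal} holds trivially; hence, by Corollary~\ref{Cor: TrnsMap} together with the remark immediately following it, both $\Phi_0^!\calL_0$ and $\Phi_1^!\calL_1^\mathrm{opp}$ are genuine (smooth, constant-rank) Dirac--Jacobi structures on $L\to M$.

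Next I would compute their anchors. By Corollary~\ref{cor:prop:characterizations_WDPs_I} (applied with $S=\Phi_0$ and $T=\Phi_1$) one has the identities $\Phi_0^!\calL_0=\ker D\Phi_0+(\ker D\Phi_1)^\varpi$ and $\Phi_1^!\calL_1^\mathrm{opp}=\ker D\Phi_1+(\ker D\Phi_0)^\varpi$ inside $\bbD L$. Since $(\ker D\Phi_i)^\varpi=\{(\delta,\varpi^\flat\delta)\mid\delta\in\ker D\Phi_i\}$ projects under $\pr_D$ onto $\ker D\Phi_i$, applying $\pr_D$ to both identities immediately yields $\pr_D(\Phi_0^!\calL_0)=\pr_D(\Phi_1^!\calL_1^\mathrm{opp})=\ker D\Phi_0+\ker D\Phi_1$.

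Then I would establish smoothness of the sum together with its anchor. The key intermediate fact is that $\ker D\Phi_0+\ker D\Phi_1$ has constant rank: indeed $\Phi_0^!\calL_0\cap J^1L=(D\Phi_0)^\ast\big((\pr_D\calL_0)^\circ\big)$ has constant rank, because $(D\Phi_0)^\ast$ is injective and $\calL_0$ is a constant-rank Dirac--Jacobi structure, so by rank-nullity applied to $\pr_D\colon\Phi_0^!\calL_0\to DL$ its image $\ker D\Phi_0+\ker D\Phi_1$ has constant rank. With this in hand, Proposition~\ref{prop:smoothness_sum} applies to $\calL_1:=\Phi_0^!\calL_0$ and $\calL_2:=\Phi_1^!\calL_1^\mathrm{opp}$, whose $\pr_D$-sum is exactly $\ker D\Phi_0+\ker D\Phi_1$, and shows that the star product is a Dirac--Jacobi structure; its anchor equals $\pr_D(\Phi_0^!\calL_0)\cap\pr_D(\Phi_1^!\calL_1^\mathrm{opp})$ by the exact sequence~\eqref{Lem:Prod_Proof}, which is once more $\ker D\Phi_0+\ker D\Phi_1$. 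That $\ker D\Phi_0+\ker D\Phi_1$ is an \emph{involutive} subbundle of $DL$ then follows since it is $\pr_D$ of an involutive Lagrangian subbundle and $\pr_D$ intertwines the Dorfman bracket with the commutator bracket of $DL$ (Definition~\ref{def:omni_Lie_algbd}), so locally lifting sections and bracketing stays inside the image.

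Finally, for the characteristic foliation I would apply the symbol $\sigma\colon DL\to TM$. Because $(D\Phi_i)\mathbbm{1}=\mathbbm{1}\neq 0$, the line $\langle\mathbbm{1}\rangle=\ker\sigma$ meets $\ker D\Phi_i$ trivially, and since $\sigma$ intertwines $D\Phi_i$ with $T\varphi_i$ it restricts to an isomorphism $\ker D\Phi_i\overset{\sim}{\to}\ker T\varphi_i$; hence $\sigma(\ker D\Phi_0+\ker D\Phi_1)=\ker T\varphi_0+\ker T\varphi_1$. As all three Dirac--Jacobi structures share the anchor $\ker D\Phi_0+\ker D\Phi_1$, they share the characteristic distribution $K=\sigma(\pr_D(-))=\ker T\varphi_0+\ker T\varphi_1$ and therefore the same characteristic foliation $\calF$ with $T\calF=\ker T\varphi_0+\ker T\varphi_1$. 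The only genuinely delicate point, which I would treat most carefully, is the constant-rank claim for $\ker D\Phi_0+\ker D\Phi_1$, since a naive sum of two smooth subbundles need not have constant rank; the argument above sidesteps this by extracting the rank from the already-smooth transform $\Phi_0^!\calL_0$ rather than from the two kernels directly.
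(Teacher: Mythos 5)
Most of your argument tracks the paper's proof: establishing smoothness of $\Phi_0^!\calL_0$ and $\Phi_1^!\calL_1^\mathrm{opp}$ from the submersion hypothesis, reading off their anchors from Corollary~\ref{cor:prop:characterizations_WDPs_I}, the involutivity of the image, and the passage to the characteristic foliation via the symbol map are all in order. The genuine gap is exactly at the point you yourself flagged as delicate: the claim that $\ker D\Phi_0+\ker D\Phi_1$ has constant rank. Your justification is that $\Phi_0^!\calL_0\cap J^1L=(D\Phi_0)^\ast\bigl(\varphi_0^\ast(\pr_D\calL_0)^\circ\bigr)$ has constant rank ``because $\calL_0$ is a constant-rank Dirac--Jacobi structure''. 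But $\calL_0\cap J^1L_0=(\pr_D\calL_0)^\circ$ is precisely the part of $\calL_0$ whose rank jumps whenever the characteristic distribution $\pr_D\calL_0$ is singular, which is the typical situation in Dirac--Jacobi geometry (this is why characteristic foliations are only Stefan--Sussmann foliations). In fact $\rank\,\pr_D(\Phi_0^!\calL_0)=\rank(D\Phi_0)^{-1}(\pr_D\calL_0)=\rank\ker D\Phi_0+\rank\pr_D\calL_0$, so $\ker D\Phi_0+\ker D\Phi_1$ has \emph{non-constant} rank as soon as the leg $(M_0,L_0,\calL_0)$ has a singular characteristic foliation --- and such legs do occur, e.g.\ in the self-dual pair of Theorem~\ref{thm:existence_self-dual_pair} for any non-regular Jacobi manifold. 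Consequently your appeal to Proposition~\ref{prop:smoothness_sum}, whose hypothesis is precisely the constant rank of the $\pr_D$-sum, does not go through.

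The paper sidesteps this entirely: rather than a rank argument, it uses Remark~\ref{rem:tiny_technical_remark} to derive the explicit identity
\begin{equation*}
	(\Phi_0^!\calL_0)\star(\Phi_1^!\calL_1^\mathrm{opp})=\left((\ker D\Phi_0+\ker D\Phi_1)\oplus(\ker D\Phi_0+\ker D\Phi_1)^\circ\right)^\varpi
\end{equation*}
and concludes smoothness from this description together with Lemma~\ref{Lem: Product} (cf.\ the criterion of Remark~\ref{Rem: SoothnessProd}: the singular distribution $\ker D\Phi_0+\ker D\Phi_1$, being a sum of two genuine subbundles, is generated by its smooth sections, which is the relevant notion of smoothness here rather than constant rank). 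To repair your proof you should replace the constant-rank step and the use of Proposition~\ref{prop:smoothness_sum} by this explicit description of the star product; the remaining parts of your argument can stay as they are.
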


\begin{proof}
	The regular LB morphism $\Phi_i:L\to L_i$ covers a submersion, so $\ker(D\Phi_i)^\ast=0$, and $\Phi_i^!\calL_i\subset\bbD L_i$ is a Dirac--Jacobi structure, for $i=0,1$, by Theorem~\ref{Thm: CleanInt}.
	Further, from Corollary~\ref{cor:prop:characterizations_WDPs_I} we know that
	\begin{equation}
		\label{eq:proof:lem:LeavesCorrespondence_II:a}
		\Phi_0^!\calL_0=\ker D\Phi_0+(\ker D\Phi_1)^\varpi\ \text{and}\ \Phi_1^!\calL_1^\text{opp}=\ker D\Phi_1+(\ker D\Phi_0)^\varpi,
	\end{equation}
	So, using Remark~\ref{rem:tiny_technical_remark}, one can prove the following
	\begin{equation}
		\label{eq:proof:lem:LeavesCorrespondence_II:b}
		(\Phi_0^!\calL_0)\star(\Phi_1^!\calL_1^\mathrm{opp})=\left((\ker D\Phi_0+\ker D\Phi_1)\oplus(\ker D\Phi_0+\ker D\Phi_1)^\circ\right)^\varpi.
	\end{equation}	
	This shows that $(\Phi_0^!\calL_0)\star(\Phi_1^!\calL_1^\mathrm{opp})$ is smooth, and so a Dirac--Jacobi structure, by Lemma~\ref{Lem: Product}.
	Now the remaining part of the statement follows immediately from Equations~\ref{eq:proof:lem:LeavesCorrespondence_II:a} and~\ref{eq:proof:lem:LeavesCorrespondence_II:b}.
\end{proof}

Using Lemmas~\ref{lem:LeavesCorrespondence_I} and~\ref{lem:LeavesCorrespondence_II} we can prove the characteristic leaf correspondence theorem. 

\begin{theorem}[{\bf Characteristic Leaf Correspondence I}]
	\label{theor:CharacteristicLeafCorrespondence_I}
	Consider a weak dual pair
	\begin{equation}
		\label{eq:theor:LeafCorrespondence}
		\begin{tikzcd}
			(M_0,L_0,\calL_0)&(M,L,\operatorname{Gr}\varpi)\arrow[l, swap, "\Phi_0"]\arrow[r, "\Phi_1"]&(M_1,L_1,\calL_1^\mathrm{opp}),
		\end{tikzcd}
	\end{equation}
	whose underlying maps $\begin{tikzcd}
		M_0&M\arrow[l, swap, "\varphi_0"]\arrow[r, "\varphi_1"]&M_1
	\end{tikzcd}$ have connected fibers.
	Then the relation $\varphi_1^{-1}\calS_1=\varphi_0^{-1}\calS_0$ establishes a 1-1 correspondence
	\begin{equation*}
		\text{characteristic leaves $\calS_0$ of $(M_0,L_0,\calL_0)$}\rightleftharpoons\text{characteristic leaves $\calS_1$ of $(M_1,L_1,\calL_1)$}.
	\end{equation*}
	Moreover, this correspondence respects the type of the leaves, i.e.~the following conditions are equivalent
	\begin{itemize}
		\item $\calS_0$ is a pre-contact (resp.~locally conformal pre-symplectic) leaf of $(M_0,L_0,\calL_0)$,
		\item $\calS_1$ is a pre-contact (resp.~locally conformal pre-symplectic) leaf of $(M_1,L_1,\calL_1)$.
	\end{itemize}
\end{theorem}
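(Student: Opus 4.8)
The plan is to realize the sought correspondence as a composition of two leaf correspondences on the central manifold $M$, one coming from each leg, and then to read off type preservation from the fact that the two backward transforms living on $M$ share the same anchor image. First I would observe that, since $\Phi_0\colon L\to L_0$ and $\Phi_1\colon L\to L_1$ cover the surjective submersions $\varphi_0$ and $\varphi_1$, the clean-intersection criterion (Theorem~\ref{Thm: CleanInt}, as already recorded in Lemma~\ref{lem:LeavesCorrespondence_II}) guarantees that $\Phi_0^!\calL_0$ and $\Phi_1^!\calL_1^{\mathrm{opp}}$ are Dirac--Jacobi structures on $L\to M$, and that by construction
\begin{equation*}
	\Phi_0\colon (M,L,\Phi_0^!\calL_0)\to (M_0,L_0,\calL_0),\qquad \Phi_1\colon (M,L,\Phi_1^!\calL_1^{\mathrm{opp}})\to (M_1,L_1,\calL_1^{\mathrm{opp}})
\end{equation*}
are backward Dirac--Jacobi maps covering surjective submersions with connected fibers. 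Applying Lemma~\ref{lem:LeavesCorrespondence_I} to each of them produces two type-preserving bijections: $\calS_0\mapsto\varphi_0^{-1}(\calS_0)$ between characteristic leaves of $(M_0,L_0,\calL_0)$ and of $(M,L,\Phi_0^!\calL_0)$, and $\calS_1\mapsto\varphi_1^{-1}(\calS_1)$ between characteristic leaves of $(M_1,L_1,\calL_1^{\mathrm{opp}})$ and of $(M,L,\Phi_1^!\calL_1^{\mathrm{opp}})$.

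Next I would invoke Lemma~\ref{lem:LeavesCorrespondence_II}, whose crucial output is that the two Dirac--Jacobi structures $\Phi_0^!\calL_0$ and $\Phi_1^!\calL_1^{\mathrm{opp}}$ on $M$ have the \emph{same} anchor image, namely $\ker(D\Phi_0)+\ker(D\Phi_1)$, and hence induce one and the same characteristic foliation $\calF$ of $M$ with $T\calF=\ker T\varphi_0+\ker T\varphi_1$. Thus the two correspondences above pass through identical intermediate leaves $\calS$ of $M$, and composing them yields the bijection $\calS_0\leftrightarrow\calS_1$ characterized by $\varphi_0^{-1}(\calS_0)=\calS=\varphi_1^{-1}(\calS_1)$, which is exactly the claimed identification of leaf spaces.

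The only delicate point, and the step I would handle most carefully, concerns type preservation across the composition: an intermediate leaf $\calS\subset M$ a priori carries two notions of type, one inherited from $\Phi_0^!\calL_0$ and one from $\Phi_1^!\calL_1^{\mathrm{opp}}$, and these must coincide in order for the chain $\emph{type}(\calS_0)=\emph{type}(\calS)=\emph{type}(\calS_1)$ to close up. This is precisely where the equality of anchor images $\pr_D(\Phi_0^!\calL_0)=\pr_D(\Phi_1^!\calL_1^{\mathrm{opp}})$ from Lemma~\ref{lem:LeavesCorrespondence_II} becomes decisive: since the precontact-versus-lcps dichotomy is detected solely by whether $\mathbbm{1}\in\pr_D$ along the leaf (cf.~Equation~\eqref{eq:proof:lem:LeavesCorrespondence_I:b} in the proof of Lemma~\ref{lem:LeavesCorrespondence_I}), the two types of $\calS$ agree. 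Finally, I would remark that passing from $\calL_1^{\mathrm{opp}}$ back to $\calL_1$ alters neither the characteristic foliation nor the leaf types, because the opposite operation only reverses the sign on the $J^1L_1$-component while leaving $\pr_D\calL_1$ unchanged; this lets me phrase the correspondence and its type preservation directly in terms of $(M_1,L_1,\calL_1)$, as stated in the theorem.
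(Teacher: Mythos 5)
Your proposal is correct and follows essentially the same route as the paper's proof: apply Lemma~\ref{lem:LeavesCorrespondence_I} to the two backward Dirac--Jacobi maps $\Phi_i\colon(M,L,\Phi_i^!\calL_i)\to(M_i,L_i,\calL_i)$ and then use Lemma~\ref{lem:LeavesCorrespondence_II} to identify the two intermediate characteristic foliations on $M$. Your extra care about the two a priori notions of type on the intermediate leaf, and about $\calL_1^{\mathrm{opp}}$ versus $\calL_1$, only makes explicit details the paper leaves implicit.
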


\begin{proof}
	Let us first apply Lemma~\ref{lem:LeavesCorrespondence_I} to the backward Dirac--Jacobi map $\Phi_i:(M,L,\Phi_i^!\calL_i)\to(M_i,L_i,\calL_i)$, for $i=0,1$.
	So, one gets that the relation $\calS=\varphi_i^{-1}\calS_i$ establishes a canonical 1-1 correspondence between the characteristic leaves $\calS_i$ of $(M_i,L_i,\calL_i)$ and the characteristic leaves $\calS$ of $(M,L,\Phi_i^!\calL_i)$.
	Additionally, this correspondence preserves the type of the characteristic leaves.
	Further, by Lemma~\ref{lem:LeavesCorrespondence_II}, one gets that the Dirac--Jacobi structures $\Phi_i^!\calL_i$, with $i=0,1$, give rise to the same characteristic foliation of $M$ as $(\Phi_0^!\calL_0)\star(\Phi_1^!\calL_1)$, and this completes the proof.
\end{proof}

\begin{remark}
	\label{rem:LeafCorrespondence}
	From the proof of Theorem \ref{theor:CharacteristicLeafCorrespondence_I}, we can extract that, given a weak dual pair as the following
	\begin{equation*}
	\begin{tikzcd}
		(M_0,L_0,\calL_0)&(M,L,\operatorname{Gr}\varpi)\arrow[l, swap, "\Phi_0"]\arrow[r, "\Phi_1"]&(M_1,L_1,\calL_1^\mathrm{opp}),
	\end{tikzcd}
	\end{equation*}
	for $i=0,1$, the relation $\calS=\varphi_i^{-1}\calS_i$ establishes a 1-1 correspondence between the characteristic leaves $\calS$ of $(M,L,(\Phi_0^!\calL_0)\star(\Phi_1^!\calL_1^\mathrm{opp}))$ and the characteristic leaves $\calS_i$ of $(M_i,L_i,\calL_i)$.
	Exactly from the composition of these two bijections one obtains the characteristic leaf correspondence described in Theorem~\ref{theor:CharacteristicLeafCorrespondence_I}.
\end{remark}

In the following, we are going to show that, given the weak dual pair~\eqref{eq:theor:LeafCorrespondence} and two corresponding characteristic leaves, one can also relate their inherited transitive Dirac--Jacobi structures and their transverse geometries.
Let us start setting some notation.
Fix corresponding characteristic leaves $\calS_1$ of $(M_1,L_1,\calL_1)$ and $\calS_2$ of $(M_2,L_2,\calL_2)$.
Then $\calS:=\varphi_0^{-1}\calS_0=\varphi_1^{-1}\calS_1$ is a characteristic leaf of $(M,L,(\Phi_0^!\calL_0)\star(\Phi_1^1\calL_1))$ of the same type of $\calS_i$, for $i=0,1$.
Introduce the restricted line bundles $\ell:=L|_\calS\to\calS$ and $\ell_i:=L_i|_{\calS_i}\to\calS_i$, for $i=0,1$, and the regular line bundle morphisms $I:\ell\to L$ and $I_i:\ell_i\to L$ covering respectively the immersions $\iota:\calS\to M$ and $\iota_i:\calS_i\to\calS$, for $i=0,1$.
Further, by Theorem E.7 in~\cite{blaom2001geometric}, we get that $\Phi_i:L\to L_i$ induces a regular LB morphism $\Phi_i|_\calS:\ell\to\ell_i$ covering a surjective submersion $\varphi_i|_\calS:\calS\to\calS_i$, for $i=0,1$.

\begin{theorem}[{\bf Characteristic Leaf Correspondence II}]
	\label{theor:CharacteristicLeafCorrespondence_II}
	Consider a weak dual pair
	\begin{equation}
		\begin{tikzcd}
			(M_0,L_0,\calL_0)&(M,L,\operatorname{Gr}\varpi)\arrow[l, swap, "\Phi_0"]\arrow[r, "\Phi_1"]&(M_1,L_1,\calL_1^\mathrm{opp}),
		\end{tikzcd}
	\end{equation}
	whose underlying maps $\begin{tikzcd}
		M_0&M\arrow[l, swap, "\varphi_0"]\arrow[r, "\varphi_1"]&M_1
	\end{tikzcd}$ have connected fibers.
	Let $\calS_0$ and $\calS_1$ be corresponding characteristic leaves as in Theorem~\ref{theor:CharacteristicLeafCorrespondence_I}.
	Then there are only two possible cases.
	\begin{enumerate}[label={\arabic*)}]
		\item
		\label{enumitem:prop:CharLeafCorresp:1}
		$\calS_i$ is a precontact leaf with presymplectic Atiyah form $\varpi_i\in \Omega^2_D(\ell_i)$, for $i=0,1$, and
		\begin{equation}
			\label{eq:prop:CharLeafCorresp:1}
			I^\ast\varpi=\Phi_0|_{\calS_0}^\ast\varpi_0 - \Phi_1|_{\calS_1}^\ast\varpi_1\in\Omega^2_D(\ell).
		\end{equation}
		\item
		\label{enumitem:prop:CharLeafCorresp:2}
		$\calS_i$ is a lcps leaf with lcps structure $(\omega_i,\nabla^i)$ on $\ell_i\to\calS_i$, for $i=0,1$, then the following holds
		\begin{equation}
			\label{eq:prop:CharLeafCorresp:2}
			\rmd_\nabla(I^\ast\theta)=\Phi_0|_{\calS_0}^\ast\varpi_0-\Phi_1|_{\calS_1}^\ast\varpi_1
		\end{equation}
		where $\theta\in\Omega^1(M;L)$ is the $L$-valued $1$-form on $M$ given by $\theta\circ\sigma=\iota_{\mathbbm{1}}\varpi$, and $\nabla:T\calS\to D\ell$ is the unique $T\calS$-connection on $\ell$ such that $(D\Phi_i)\circ\nabla=\nabla^i\circ (T\varphi_i)$, for $=0,1$.
	\end{enumerate}
\end{theorem}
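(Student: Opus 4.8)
The plan is to transport the entire problem onto the common leaf $\calS=\varphi_0^{-1}\calS_0=\varphi_1^{-1}\calS_1$ and to read off both identities from the single structural relation $\Phi_0^!\calL_0=(\Phi_1^!\calL_1)^\varpi$ granted by condition~\ref{enumitem:prop:characterizations_WDPs_I:1} of Proposition~\ref{prop:characterizations_WDPs_I} (here $\calL_1$ denotes the Dirac--Jacobi structure whose opposite is the right leg). The basic bookkeeping is the equality of regular LB morphisms $\Phi_i\circ I=I_i\circ(\Phi_i|_\calS)\colon\ell\to L_i$, which by contravariant functoriality of the backward transform gives $I^!\Phi_i^!\calL_i=(\Phi_i|_\calS)^!(I_i^!\calL_i)$ as Lagrangian families. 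These are all smooth Dirac--Jacobi structures: $\Phi_i^!\calL_i$ by Theorem~\ref{Thm: CleanInt}, and $I_i^!\calL_i$, $I^!\Phi_i^!\calL_i$ by Lemma~\ref{Lem: SmoCharFol}, since $\calS_i$ is a characteristic leaf of $(M_i,L_i,\calL_i)$ and, by Theorem~\ref{theor:CharacteristicLeafCorrespondence_I}, $\calS$ is a characteristic leaf of $(M,L,\Phi_i^!\calL_i)$. Applying $I^!$ to $\Phi_0^!\calL_0=(\Phi_1^!\calL_1)^\varpi$ and using that the backward transform intertwines $B$-field transforms, $\Phi^!(\calL^B)=(\Phi^!\calL)^{\Phi^\ast B}$ (as in the proof of Proposition~\ref{prop:TransversePullBackWDPs}), I obtain the master identity on $\ell\to\calS$
\begin{equation}
	(\Phi_0|_\calS)^!(I_0^!\calL_0)=\big((\Phi_1|_\calS)^!(I_1^!\calL_1)\big)^{I^\ast\varpi}. \tag{$\star$}
\end{equation}

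In the precontact case Corollary~\ref{Cor: CharFolDJ}(1) gives $I_i^!\calL_i=\operatorname{Gr}\varpi_i$. I would then invoke the two elementary facts $\Psi^!\operatorname{Gr}\eta=\operatorname{Gr}(\Psi^\ast\eta)$ for a regular LB morphism covering a submersion (as in Corollary~\ref{cor:product_contact_structures}) and $(\operatorname{Gr}\eta)^B=\operatorname{Gr}(\eta+B)$, so that $(\star)$ becomes $\operatorname{Gr}((\Phi_0|_\calS)^\ast\varpi_0)=\operatorname{Gr}((\Phi_1|_\calS)^\ast\varpi_1+I^\ast\varpi)$. Injectivity of the graph correspondence of Example~\ref{ex:PrecontactStructures_DJ} yields $I^\ast\varpi=(\Phi_0|_\calS)^\ast\varpi_0-(\Phi_1|_\calS)^\ast\varpi_1$, which is~\eqref{eq:prop:CharLeafCorresp:1}.

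For the lcps case I would first establish two computational lemmas about the transitive structures $\calL_{(\nabla',\omega')}$ of Lemma~\ref{Lem: PreSymLea}. First, for a regular LB morphism $\Psi\colon\ell\to\ell'$ covering a submersion $\psi$, one checks $\ker D\Psi\subset\image\nabla$ for the pullback connection $\nabla$ (characterized by $(D\Psi)\circ\nabla=\nabla'\circ T\psi$), whence $\pr_D(\Psi^!\calL_{(\nabla',\omega')})=\image\nabla$ and $\Psi^!\calL_{(\nabla',\omega')}=\calL_{(\nabla,\Psi^\ast\omega')}$. Second, since $\exp(B)$ fixes the $D\ell$-component, a $B$-field transform by a closed Atiyah $2$-form leaves $\pr_D$ and hence $\nabla$ unchanged, and evaluating the shifted jets on $\image\nabla$ gives $(\calL_{(\nabla,\omega')})^B=\calL_{(\nabla,\omega'+\nabla^\ast B)}$ with $(\nabla^\ast B)(\xi,\zeta)=B(\nabla_\xi,\nabla_\zeta)$. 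Feeding $I_i^!\calL_i=\calL_{(\nabla^i,\omega_i)}$ (Corollary~\ref{Cor: CharFolDJ}(2)) into $(\star)$ and comparing the $\pr_D$'s forces the two pullback connections to coincide; this common connection is exactly the $\nabla$ of the statement, and its uniqueness follows because $\image\nabla=\pr_D((\Phi_0|_\calS)^!I_0^!\calL_0)$ is intrinsic to the leaf. Comparing the $2$-form data then gives $(\Phi_0|_\calS)^\ast\omega_0=(\Phi_1|_\calS)^\ast\omega_1+\nabla^\ast(I^\ast\varpi)$.

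It finally remains to identify $\nabla^\ast(I^\ast\varpi)$ with $\rmd_\nabla(I^\ast\theta)$, which is a pure Cartan-calculus identity on $\calS$. Since $\rmd_D(I^\ast\varpi)=I^\ast\rmd_D\varpi=0$ and $\iota_{\mathbbm 1}(I^\ast\varpi)=I^\ast(\theta\circ\sigma)=(I^\ast\theta)\circ\sigma$, I would evaluate $\rmd_D(I^\ast\varpi)=0$ on the triple $(\mathbbm 1,\nabla_\xi,\nabla_\zeta)$; using $[\mathbbm 1,-]=0$, $\scrL_{\mathbbm 1}=\id$ and $\sigma[\nabla_\xi,\nabla_\zeta]=[\xi,\zeta]$, this collapses precisely to $(I^\ast\varpi)(\nabla_\xi,\nabla_\zeta)=\rmd_\nabla(I^\ast\theta)(\xi,\zeta)$, with no flatness needed. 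Combining yields $\rmd_\nabla(I^\ast\theta)=(\Phi_0|_\calS)^\ast\omega_0-(\Phi_1|_\calS)^\ast\omega_1$, i.e.~\eqref{eq:prop:CharLeafCorresp:2} (the $\omega_i$ being what the statement writes as $\varpi_i$). I expect the main obstacle to be the lcps case: it is the only place requiring genuine computation, namely the explicit behaviour of $\calL_{(\nabla,\omega)}$ under backward and $B$-field transforms together with the above der-calculus identity, whereas the precontact case and the reduction $(\star)$ are essentially formal.
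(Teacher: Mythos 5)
Your proposal is correct and follows the same overall strategy as the paper: restrict the structural identity of the weak dual pair to the common leaf $\calS$, identify the resulting transitive Dirac--Jacobi structures via the graph/lcps correspondences, and finish the lcps case with the der-calculus identity $\varpi(\nabla_\xi,\nabla_\zeta)=\rmd_\nabla(I^\ast\theta)(\xi,\zeta)$ (your evaluation of $\rmd_D(I^\ast\varpi)=0$ on $(\mathbbm{1},\nabla_\xi,\nabla_\zeta)$ is equivalent to the paper's use of $\varpi=\rmd_D(\sigma^\ast\theta)$). The one genuine difference is the bookkeeping: the paper computes $I^!\bigl((\Phi_0^!\calL_0)\star(\Phi_1^!\calL_1^{\mathrm{opp}})\bigr)$ in two ways, relying on Lemma~\ref{lem:LeavesCorrespondence_II} and the explicit expression of the star product as a $B$-field transform of $(\ker D\Phi_0+\ker D\Phi_1)\oplus(\ker D\Phi_0+\ker D\Phi_1)^\circ$, whereas you apply $I^!$ directly to the single relation $\Phi_0^!\calL_0=(\Phi_1^!\calL_1)^\varpi$ from Corollary~\ref{cor:prop:characterizations_WDPs_I}. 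Your route avoids the star product entirely at the cost of two small lemmas ($\Psi^!\calL_{(\nabla',\omega')}=\calL_{(\nabla,\Psi^\ast\omega')}$ and $(\calL_{(\nabla,\omega')})^B=\calL_{(\nabla,\omega'+\nabla^\ast B)}$), both of which check out; the paper's route instead reuses machinery already set up for Theorem~\ref{theor:CharacteristicLeafCorrespondence_I}. Both are complete proofs.
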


\begin{proof}
	First of all, by Theorem~\ref{theor:CharacteristicLeafCorrespondence_I} we already know that there are only two possible cases:
	\begin{enumerate}[label={$\arabic*^\prime$)}]
		\item
		\label{enumitem:proof:prop:CharLeafCorresp:1}
		$\calS_i$ is a pre-contact leaf with presymplectic Atiyah form $\varpi_i\in \Omega^2_D(\ell_i)$, for $i=0,1$,
		\item
		\label{enumitem:proof:prop:CharLeafCorresp:2}
		$\calS_i$ is a lcps leaf with lcps structure $(\omega_i,\nabla^i)$ on $\ell_i\to\calS_i$, for $i=0,1$.
	\end{enumerate}
	Let us assume~\ref{enumitem:proof:prop:CharLeafCorresp:1}.
	By definition of a pre-contact leaf we have that $I_i^!\calL_i=\operatorname{Gr}\varpi_i$, for $i=0,1$.
	Therefore, using $\Phi_i\circ I=I_i\circ\Phi_i|_\calS$, for $i=0,1$, we can compute
	\begin{equation}
		\label{eq:proof:prop:CharLeafCorresp:1a}
		\begin{aligned}
			I^!((\Phi_0^!\calL_0)\star(\Phi_1^!\calL_1^\textrm{opp}))&=(I^!\Phi_0^!\calL_0)\star(I^!\Phi_1^!\calL_1^\textrm{opp})
			\\
			&=\operatorname{Gr}(\Phi_0|_\calS^\ast\varpi_0)\star\operatorname{Gr}(\Phi_1|_\calS^\ast\varpi_1)^\textrm{opp}=\operatorname{Gr}(\Phi_0|_\calS^\ast\varpi_0-\Phi_1|_\calS^\ast\varpi_1).
		\end{aligned}
	\end{equation}
	Moreover, $\calS=\varphi_i^{-1}\calS_i$ is a pre-contact leaf of $(M,L,(\Phi_0^!\calL_0)\star(\Phi_1^!\calL_1)^\textrm{opp})$, and so by Equation~\ref{eq:lem:LeavesCorrespondence_II} we get
	\begin{equation*}
		D\ell= \pr_D\left.\left((\Phi_0^!\calL_0)\oplus(\Phi_1^!\calL_1)\right)\right|_\calS=(\ker D\Phi_0+\ker D\Phi_1)|_\calS.
	\end{equation*}
	Consequently, using also Equation~\eqref{eq:proof:lem:LeavesCorrespondence_II:b}, we can easily compute
	\begin{equation}
		\label{eq:proof:prop:CharLeafCorresp:1b}
		I^!((\Phi_0^!\calL_0)\star(\Phi_1^!\calL_1^\textrm{opp}))=I^!\left(\left((\ker D\Phi_0+\ker D\Phi_1)\oplus(\ker D\Phi_0+\ker D\Phi_1)^\circ\right)^\omega\right)=\operatorname{Gr}(I^\ast\varpi).
	\end{equation}
	Now, from the comparison of Equations~\eqref{eq:proof:prop:CharLeafCorresp:1b} and~\eqref{eq:proof:prop:CharLeafCorresp:1a}, we obtain Equation~\eqref{eq:prop:CharLeafCorresp:1}.
	
	Let us assume~\ref{enumitem:proof:prop:CharLeafCorresp:2}.
	By definition of a lcps leaf, the $T\calS_i$-connection $\nabla^i$ on $\ell_i\to\calS_i$ is determined by
	\begin{equation}
		\label{eq:proof:prop:CharLeafCorresp:2a}
		\image\{\nabla^i:T\calS_i\to D\ell_i\}=\left.(\pr_D\calL_i)\right|_\calS\subset D\ell_i,\ \ \text{for}\ i=0,1,
	\end{equation}
	so that, additionally, by Lemma~\ref{Lem: PreSymLea} the $\rmd_{\nabla^i}$-closed $\ell_i$-valued $2$-form $\omega_i$ is uniquely determined by
	\begin{equation}
		\label{eq:proof:prop:CharLeafCorresp:2b}
		I_i^!\calL_i=\{(\nabla^i_\xi,\sigma^\ast(\omega_i^\flat \xi)+\alpha)\mid (\xi,\alpha)\in T\calS_i\oplus(\image\nabla^i)^\circ\},\ \ \text{for}\ i=0,1.
	\end{equation}
	Moreover, by Remark~\ref{rem:LeafCorrespondence}, $\calS=\varphi_i^{-1}\calS_i$ is a lcps leaf of $(M,L,(\Phi_0^!\calL_0)\star(\Phi_1^!\calL_1)^\textrm{opp})$, with inherited lcps structure $(\omega,\nabla)$ on $\ell\to\calS$.
	
	First, let us recall that the $T\calS$-connection $\nabla$ on $\ell\to\calS$ is determined by
	\begin{equation}
		\label{eq:proof:prop:CharLeafCorresp:2c}
		\image\{\nabla:T\calS\to D\ell\}=\left.\left(\pr_D((\Phi_0^!\calL_0)\star(\Phi_1^!\calL_1)^\textrm{opp})\right)\right|_\calS\overset{\eqref{eq:proof:lem:LeavesCorrespondence_II:b}}{=}\left.\left(\ker D\Phi_0+\ker D\Phi_1\right)\right|_\calS\subset D\ell,
	\end{equation}
	where we also used Equation~\eqref{eq:proof:lem:LeavesCorrespondence_II:b}.
	So, by definition of a lcps leaf, one gets the direct sum decomposition
	\begin{equation}
		\label{eq:proof:prop:CharLeafCorresp:2d}
		D\ell= \langle\mathbbm{1}\rangle\oplus\pr_D\left.\left((\Phi_0^!\calL_0)\oplus(\Phi_1^!\calL_1)\right)\right|_\calS=\langle\mathbbm{1}\rangle\oplus(\ker D\Phi_0+\ker D\Phi_1)|_\calS.
	\end{equation}
	In view of Equations~\eqref{eq:proof:prop:CharLeafCorresp:2a} and~\eqref{eq:proof:prop:CharLeafCorresp:2c}, $D\Phi_i$ maps $\image\{\nabla:T\calS\to D\ell\}$ into $\image\{\nabla^i:T\calS_i\to D\ell_i\}$, for $i=0,1$.
	So, by the direct sum decomposition above, $\nabla$ is also equivalently characterized by
	\begin{equation}
		\label{eq:proof:prop:CharLeafCorresp:2e}
		(D\Phi_i)\nabla_\xi=\nabla^i_{(T\varphi_i)\xi},\ \text{for all}\ \xi\in T\calS\ \text{and}\ i=0,1.
	\end{equation}

	By Lemma~\ref{Lem: PreSymLea} and Equations~\eqref{eq:proof:prop:CharLeafCorresp:2b}--\eqref{eq:proof:prop:CharLeafCorresp:2e}, the $\rmd_\nabla$-closed $\ell$-valued $2$-form $\omega$ is uniquely determined by
	\begin{align*}
		\{(\nabla_\xi,\sigma^\ast(\omega^\flat \xi)+\alpha)\mid (\xi,\alpha)\in T\calS\oplus(\image\nabla)^\circ\}&=I^!((\Phi_0^!\calL_0)\star(\Phi_1^!\calL_1)^\textrm{opp})\\
		&=(\Phi_0|_\calS^!I_0^!\calL_0)\star(\Phi_1|_\calS^!I_1^!\calL_1^\textrm{opp})\\
		&=\{(\nabla_\xi,\sigma^\ast((\Phi_0|_{\calS_0}^\ast\omega_0-\Phi_1|_{\calS_1}^\ast\omega_1)^\flat \xi)+\alpha)\mid (\xi,\alpha)\in T\calS\oplus(\image\nabla)^\circ\},
	\end{align*}
	On the other hand, using directly Equations~\eqref{eq:proof:lem:LeavesCorrespondence_II:b} and~\eqref{eq:proof:prop:CharLeafCorresp:2c}, a straightforward computation shows that
	\begin{align*}
			I^!((\Phi_0^!\calL_0)\star(\Phi_1^!\calL_1)^\textrm{opp})
			&=I^!\left(\left((\ker D\Phi_0+\ker D\Phi_1)\oplus(\ker D\Phi_0+\ker D\Phi_1)^\circ\right)^\varpi\right)\\
			&=\{(\nabla_\xi,(DI)^\ast(\varpi^\flat\nabla_\xi)+\alpha)\mid (\xi,\alpha)\in T\calS\oplus(\image\nabla)^\circ\},
	\end{align*}
	Since $\theta\circ\sigma=\iota_\mathbbm{1}\varpi$ implies $\varpi=\rmd_D(\theta\circ\sigma)$, comparing the last two equations one gets that
	\begin{equation*}
		(\Phi_0|_{\calS_0}^\ast\omega_0-\Phi_1|_{\calS_1}^\ast\omega_1)(\xi,\zeta)=\varpi(\nabla_\xi,\nabla_\zeta)
		=\nabla_\xi(\theta(\zeta))-\nabla_\zeta(\theta(\xi))-\theta([\xi,\zeta])=\rmd_\nabla(I^\ast\theta)(\xi,\zeta),
	\end{equation*}
	for all $x\in\calS$ and $\xi,\zeta\in T_x\calS$.
	The latter shows that Equation~\eqref{eq:prop:CharLeafCorresp:2} and so it completes the proof.
\end{proof}

Let us now pass to the transverse geometry, i.e.~the structures on transversals, in fact we have seen that there is a leaf correspondence, but leaves need not to be isomorphic.
Minimal transversals on the other hand are locally isomorphic as Dirac--Jacobi manifolds.

\begin{theorem}[{\bf Characteristic Leaf Correspondence III}]
	\label{theor:LeafCorrespondence:transverseDJ}
	Consider a weak dual pair
	\begin{equation}
		\label{eq:theor:LeafCorrespondence:transverseDJ}
		\begin{tikzcd}
			(M_0,L_0,\calL_0)&(M,L,\operatorname{Gr}\varpi)\arrow[l, swap, "\Phi_0"]\arrow[r, "\Phi_1"]&(M_1,L_1,\calL_1^\mathrm{opp}),
		\end{tikzcd}
	\end{equation}
	whose underlying maps $\begin{tikzcd}
		M_0&M\arrow[l, swap, "\varphi_0"]\arrow[r, "\varphi_1"]&M_1
	\end{tikzcd}$ have connected fibers.
	Let $\calS_0$ and $\calS_1$ be corresponding characteristic leaves as in Theorem~\ref{theor:CharacteristicLeafCorrespondence_I}.
	Then the Dirac--Jacobi structure transverse to $\calS_0$ in $(M_0,L_0,\calL_0)$ is isomorphic to the Dirac--Jacobi structure transverse to $\calS_1$ in $(M_1,L_1,\calL_1)$.
\end{theorem}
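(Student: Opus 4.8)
The plan is to reduce the assertion to the case of corresponding \emph{point}-leaves, where it becomes a local isomorphism, by pulling the given weak dual pair~\eqref{eq:theor:LeafCorrespondence:transverseDJ} back along minimal transversals to $\calS_0$ and $\calS_1$. First I would fix a point $p\in\calS:=\varphi_0^{-1}\calS_0=\varphi_1^{-1}\calS_1$ and set $p_i:=\varphi_i(p)\in\calS_i$. Recall (cf.~\cite{DirJacBun}) that the Dirac--Jacobi structure transverse to $\calS_i$ is represented by the backward transform $I_i^!\calL_i$ along the inclusion $I_i\colon L_{N_i}\hookrightarrow L_i$ of a \emph{minimal} transversal $N_i$ to $\calS_i$ through $p_i$, i.e.\ one with $T_{p_i}N_i\oplus T_{p_i}\calS_i=T_{p_i}M_i$; its germ at $p_i$ is independent of the choices, and since $N_i$ is transverse to $\calS_i$ the characteristic leaf of $I_i^!\calL_i$ through $p_i$ is the single point $\{p_i\}$. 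Applying Proposition~\ref{prop:TransversePullBackWDPs} to $I_0$ and $I_1$, after shrinking the $N_i$ around the $p_i$ and replacing $M$ by a neighbourhood of $p$ so that the new leg maps cover surjective submersions, yields a weak dual pair
\begin{equation*}
	\begin{tikzcd}
		(N_0,L_{N_0},I_0^!\calL_0)&(\Sigma,L_\Sigma,\operatorname{Gr}\varpi_\Sigma)\arrow[l, swap, "S_\Sigma"]\arrow[r, "T_\Sigma"]&(N_1,L_{N_1},(I_1^!\calL_1)^{\mathrm{opp}})
	\end{tikzcd}
\end{equation*}
connecting the two transverse structures, with underlying surjective submersions $s_\Sigma,t_\Sigma$.

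The key observation is that the leaf correspondence forces the two leg maps to share their fibres along the central leaf. Indeed, by Theorem~\ref{theor:CharacteristicLeafCorrespondence_I} the point-leaves $\{p_0\}$ and $\{p_1\}$ correspond to a characteristic leaf $\calS_\Sigma\ni p_\Sigma$ of $\Sigma$ with $\calS_\Sigma\subseteq s_\Sigma^{-1}(p_0)\cap t_\Sigma^{-1}(p_1)$, while Lemma~\ref{lem:LeavesCorrespondence_II} gives $T\calS_\Sigma=\ker Ts_\Sigma+\ker Tt_\Sigma$. The inclusion $\calS_\Sigma\subseteq s_\Sigma^{-1}(p_0)$ forces $\ker Tt_\Sigma\subseteq\ker Ts_\Sigma$ along $\calS_\Sigma$, and symmetrically, so that $\ker T_{p_\Sigma}s_\Sigma=\ker T_{p_\Sigma}t_\Sigma$. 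A dimension count shows $\dim N_0=\dim M-\dim\calS=\dim N_1$, so the two transverse structures are equidimensional. I would then choose a regular LB section $j\colon L_{N_0}\hookrightarrow L_\Sigma$ of $S_\Sigma$ with $j(p_0)$ in the fibre over $p_\Sigma$ and $j(N_0)$ transverse to $\ker Ts_\Sigma$; by the displayed equality of kernels, $j(N_0)$ is then also transverse to the fibres of $t_\Sigma$ at $p_\Sigma$, whence $\Psi:=T_\Sigma\circ j\colon L_{N_0}\to L_{N_1}$ is a regular LB isomorphism onto a neighbourhood of $p_1$, covering a local diffeomorphism $\psi\colon N_0\to N_1$ with $\psi(p_0)=p_1$.

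Finally I would read off the isomorphism of transverse structures from the weak dual pair relation. By Proposition~\ref{prop:characterizations_WDPs_I}\,\ref{enumitem:prop:characterizations_WDPs_I:1} (equivalently Corollary~\ref{cor:prop:characterizations_WDPs_I}) applied to the pulled-back pair, $S_\Sigma^!(I_0^!\calL_0)=(T_\Sigma^!(I_1^!\calL_1))^{\varpi_\Sigma}$. Since $\operatorname{Gr}\varpi_\Sigma$ is transitive and $j(N_0)$ is transverse to the characteristic directions of $T_\Sigma^!(I_1^!\calL_1)$ at $p_\Sigma$, the section $j$ is a Dirac--Jacobi transversal to both, so the backward transforms along $j$ are smooth near $p_0$. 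Using $S_\Sigma\circ j=\mathrm{id}$, the contravariant functoriality of the backward transform, its compatibility with $B$-field transforms, and $T_\Sigma\circ j=\Psi$, restricting the displayed identity along $j$ gives
\begin{equation*}
	I_0^!\calL_0=j^!S_\Sigma^!(I_0^!\calL_0)=\bigl(j^!T_\Sigma^!(I_1^!\calL_1)\bigr)^{j^\ast\varpi_\Sigma}=\bigl(\Psi^!(I_1^!\calL_1)\bigr)^{B},\qquad B:=j^\ast\varpi_\Sigma\in\Omega^2_{D,\mathrm{cl}}(L_{N_0}).
\end{equation*}
As $\Psi^!(I_1^!\calL_1)=(\mathbb{D}\Psi^{-1})(I_1^!\calL_1)$, this exhibits $I_0^!\calL_0$ as the image of $I_1^!\calL_1$ under the Courant--Jacobi isomorphism $\exp(B)\circ\mathbb{D}\Psi^{-1}$, proving that the two transverse Dirac--Jacobi structures are isomorphic. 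The step I expect to be the main obstacle is the technical verification, after suitable shrinking, that $S_\Sigma$ and $T_\Sigma$ genuinely cover surjective submersions so that Proposition~\ref{prop:TransversePullBackWDPs} applies; once the resulting weak dual pair between the transverse structures is in hand, the remainder is a formal manipulation of backward transforms and $B$-fields.
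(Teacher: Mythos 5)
Your argument is correct in substance, but it takes a genuinely different route from the paper. The paper works ``upstairs to downstairs'': it fixes a single minimal transversal $\calQ$ to $\calS$ in $M$ at a point $x$, observes (via Lemma~\ref{lem:LeavesCorrespondence_II}, since $\ker T\varphi_i\subset T\calS$) that $\varphi_i$ maps $\calQ$ diffeomorphically onto a minimal transversal $\calQ_i$ to $\calS_i$, and then simply restricts the identity $\Phi_0^!\calL_0=(\Phi_1^!\calL_1)^\varpi$ along the inclusion $I_\calQ$, yielding $I_{\calQ_0}^!\calL_0=(\Psi^!(I_{\calQ_1}^!\calL_1))^B$ with $\Psi=\Phi_{1\perp}\circ\Phi_{0\perp}^{-1}$ and $B=(I_\calQ\circ\Phi_{0\perp}^{-1})^\ast\varpi$ in three lines of backward-transform calculus. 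You instead go ``downstairs to upstairs'': you choose minimal transversals $N_i$ in the legs independently, pull the whole weak dual pair back along them via Proposition~\ref{prop:TransversePullBackWDPs}, and then extract the isomorphism from the pulled-back pair by choosing a common section $j$ of $S_\Sigma$ and using $\ker Ts_\Sigma=\ker Tt_\Sigma$ along the central point-leaf fibre. Your final backward-transform computation is the same formal manipulation as the paper's, and your identification $\ker T_{p_\Sigma}s_\Sigma=\ker T_{p_\Sigma}t_\Sigma$ is sound (the point-leaves force $\calS_\Sigma=s_\Sigma^{-1}(p_0)=t_\Sigma^{-1}(p_1)$, so both kernels equal $T\calS_\Sigma$). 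What your approach buys is a stronger intermediate statement --- an actual weak dual pair connecting the two transverse Dirac--Jacobi structures, which is of independent interest for Morita-type considerations --- at the price of the deferred verification that $s_\Sigma,t_\Sigma$ are surjective submersions with connected fibres after shrinking (needed both for Proposition~\ref{prop:TransversePullBackWDPs} and to invoke Theorem~\ref{theor:CharacteristicLeafCorrespondence_I} on the pulled-back pair); this does go through, using $T\calS=\ker T\varphi_0+\ker T\varphi_1$ to correct lifts, but it is exactly the bookkeeping the paper's choice of a single upstairs transversal avoids.
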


\begin{proof}
Set $\calS=\varphi_1^{-1}(\calS_1)=\varphi_2^{-1}(\calS_2)$.
Fix a point $x\in\calS$ and set $x_0:=\varphi_0(x)\in\calS_0$ and $x_1:=\varphi_1(x)\in\calS_1$.
Further, choose a \emph{minimal transversal} $\calQ$ to $\calS$ in $M$ at $x$, i.e.~a submanifold $\calQ\subset M$, with $x\in\calQ$, such that \begin{equation}
	\label{eq:proof:theor:LeafCorrespondence:transverseDJ:a}
	T_xM=T_x\calS\oplus T_x\calQ.
\end{equation}
Recall from Lemma~\ref{lem:LeavesCorrespondence_II} that $(T_x\varphi_i)^{-1}(T_{x_i}\calS_i)=T_x\calS$, for $i=0,1$.
So, up to replacing $M$ and $M_i$ with suitable neighbourhoods of $x$ and $x_i$ respectively, one can assume that, for $i=1,2$,
\begin{itemize}
	\item $\calQ_i:=\varphi_i(\calQ)$ is \emph{minimal transverse} to $\calS_i$ in $M_i$ at $x_i$, i.e.~a submanifold $\calQ_i\subset M_i$, with $x_i\in\calQ_i$, s.t.
	\begin{equation}
		\label{eq:proof:theor:LeafCorrespondence:transverseDJ:b}
		T_{x_i}M_i=T_{x_i}\calS_i\oplus T_{x_i}\calQ_i.
	\end{equation}
\item the regular line bundle morphism $\Phi_i:L\to L_i$, covering $\varphi:M\to M_i$, induces a line bundle isomorphism $\Phi_{i\perp}:L|_\calQ\to L_i|_{\calQ_i}$, covering a diffeomorphism $\varphi_{i\perp}:\calQ\to \calQ_i$, such that
\begin{equation}
	\label{eq:proof:theor:LeafCorrespondence:transverseDJ:c}
	\Phi_i\circ I_Q=I_{Q_i}\circ\Phi_{i\perp},
\end{equation}
where $I_\calQ:L|_\calQ\to L$ and $I_{\calQ_i}:L_i|_{\calQ_i}\to L_i$ are the regular LB morphisms given by the inclusions.
\end{itemize}
Now, using Equation~\eqref{eq:proof:theor:LeafCorrespondence:transverseDJ:c} and the identity $\Phi_0^!\calL_0=(\Phi_1^!\calL_1)^\varpi$, one can easily compute
\begin{equation*}
	\Phi_{0\perp}^!I_{\calQ_0}^!\calL_0=I_\calQ^!\Phi_0^!\calL_0=I_\calQ^!((\Phi_1^!\calL_1)^\varpi)=(I_\calQ^!\Phi_1^!\calL_1)^{I_\calQ^\ast\varpi}=(\Phi_{1\perp}^!I_{\calQ_1}^!\calL_1)^{I_\calQ^\ast\varpi}.
\end{equation*}
This immediately implies that the transverse Dirac--Jacobi structures $I_{\calQ_0}^!\calL_0$ and $I_{\calQ_1}^!\calL_1$ are isomorphic, i.e.
\begin{equation}
	\label{eq:isomorphism_transverseDJ}
	I_{\calQ_0}^!\calL_0=(\Psi^!(I_{\calQ_1}^!\calL_1))^B
\end{equation}
with LB iso $\Psi:=\Phi_{1\perp}\circ\Phi_{0\perp}^{-1}\colon L_0|_{\calQ_0}\to L_1|_{\calQ_1}$ and closed Atiyah form $B:=(I_\calQ\circ\Phi_{0\perp}^{-1})^\ast\varpi\in\Omega^2_D(L_0|_{\calQ_0})$.
\end{proof}

As the next corollary shows, when the two legs of the  weak dual pair~\eqref{eq:theor:LeafCorrespondence:transverseDJ} are Jacobi manifolds, the isomorphism~\eqref{eq:isomorphism_transverseDJ} between the transverse Dirac--Jacobi structures (that, in this case, are Jacobi structures) holds without the involvement of the $B$-field transformation (up to wisely choose the minimal transversals).

\begin{corollary}
	\label{cor:LeafCorrespondence:transverseDJ}
	Consider a weak dual pair connecting two Jacobi manifolds
	\begin{equation}
		\label{eq:cor:LeafCorrespondence:transverseDJ}
		\begin{tikzcd}
			(M_0,L_0,\operatorname{Gr}J_0)&(M,L,\operatorname{Gr}\varpi)\arrow[l, swap, "\Phi_0"]\arrow[r, "\Phi_1"]&(M_1,L_1,\operatorname{Gr}(-J_1)),
		\end{tikzcd}
	\end{equation}
	and assume also that the underlying maps $\begin{tikzcd}
		M_0&M\arrow[l, swap, "\varphi_0"]\arrow[r, "\varphi_1"]&M_1
	\end{tikzcd}$ have connected fibers.
	Let $\calS_0$ and $\calS_1$ be corresponding characteristic leaves as in Theorem~\ref{theor:CharacteristicLeafCorrespondence_I}.
	Then there exist minimal transversals $\calQ_i$ to $\calS_i$, for $i=0,1$, and a line bundle isomorphism $\Psi:L_0|_{\calQ_0}\to L_1|_{\calQ_1}$ such that
	\begin{equation*}
		I_{\calQ_0}^!{\operatorname{Gr}J_0}=\Psi^!(I_{\calQ_1}^!{\operatorname{Gr}J_1}),
	\end{equation*}
	where $I_{\calQ_0}:L_0|_{\calQ_0}\to L_0$ and $I_{\calQ_1}:L_1|_{\calQ_1}\to L_1$ denote the regular LB morphisms given by the inclusions.
\end{corollary}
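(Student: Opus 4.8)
The plan is to reduce the claim to a judicious choice of the minimal transversal in the middle manifold $M$, so that the $B$-field appearing in Theorem~\ref{theor:LeafCorrespondence:transverseDJ} vanishes. Recall from the proof of that theorem that, once a minimal transversal $\calQ$ to $\calS = \varphi_0^{-1}\calS_0 = \varphi_1^{-1}\calS_1$ through a point $x$ is fixed, one obtains the isomorphism $I_{\calQ_0}^!\operatorname{Gr}J_0 = (\Psi^!(I_{\calQ_1}^!\operatorname{Gr}J_1))^B$, with $\Psi = \Phi_{1\perp}\circ\Phi_{0\perp}^{-1}$ and $B = (I_\calQ\circ\Phi_{0\perp}^{-1})^\ast\varpi$. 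Hence it suffices to produce a minimal transversal $\calQ$ with $I_\calQ^\ast\varpi = 0$: then $B = 0$ and the stated equality follows at once, with $\calQ_i := \varphi_i(\calQ)$.

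The first step is to record the linear-algebraic consequences of the Jacobi hypothesis. By Remark~\ref{rem:full_contact_dual_pairs}, since both legs are Jacobi, $\varpi$ is regular with radical $R := \ker\varpi^\flat \subset \ker D\Phi_0\cap\ker D\Phi_1$. Counting ranks as in Proposition~\ref{prop:isotropy_bundle}, this inclusion upgrades the orthogonality $\varpi(\ker D\Phi_0,\ker D\Phi_1)=0$ to the equalities $(\ker D\Phi_0)^{\perp\varpi} = \ker D\Phi_1$ and $(\ker D\Phi_1)^{\perp\varpi} = \ker D\Phi_0$. Consequently, writing $C = \ker D\Phi_0 + \ker D\Phi_1$, so that $\sigma^{-1}(T\calS) = C + \langle\mathbbm{1}\rangle$ by Lemma~\ref{lem:LeavesCorrespondence_II}, one finds $C^{\perp\varpi} = \ker D\Phi_0\cap\ker D\Phi_1\subset C$; since moreover $\langle\mathbbm{1}\rangle^{\perp\varpi} = \sigma^{-1}(\ker\theta)$, where $\theta\circ\sigma = \iota_{\mathbbm{1}}\varpi$ is the precontact form associated to $\varpi$ by Proposition~\ref{prop:Atiyah_presymplectic_forms}, it follows that $\sigma^{-1}(T\calS)\subset\bbD L$ is \emph{coisotropic} with respect to $\varpi$.

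Next I would exploit this coisotropy to construct an isotropic minimal transversal. Since $\varpi$ is closed and regular, its kernel integrates to a foliation $\calG$ of a neighbourhood of $x$ in $M$ with $T\calG = \sigma(R)\subset T\calS$, and $\varpi$ descends to a symplectic Atiyah form $\bar\varpi$ on the local leaf space, i.e.\ to a contact structure on a line bundle over a manifold $\bar M$ of dimension $\dim M_0 + \dim M_1 + 1$; by the previous step the image $\bar\calS$ of $\calS$ is a coisotropic submanifold of $(\bar M,\bar\varpi)$. In a contact manifold an isotropic submanifold transverse to a coisotropic one always exists through a prescribed point, so choosing such a $\bar\calQ\subset\bar M$ and pulling it back along the projection $\pi$ yields a submanifold $\calQ\subset M$ that is transverse to $\calS$, of complementary dimension, and satisfies $I_\calQ^\ast\varpi = (\pi|_\calQ)^\ast(I_{\bar\calQ}^\ast\bar\varpi) = 0$. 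This is exactly the transversal we need, and the corollary follows from Theorem~\ref{theor:LeafCorrespondence:transverseDJ} applied to it.

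The main obstacle is precisely this last construction: passing from the pointwise isotropic complement, whose existence is immediate from coisotropy, to an honest isotropic submanifold with $I_\calQ^\ast\varpi\equiv 0$. This is where the closedness of $\varpi$ is essential, and it must be carried out compatibly with the line-bundle structure — that is, the coisotropic reduction of $(M,L,\varpi)$ along the kernel foliation of $\varpi$ has to be performed at the level of the gauge algebroid, using that $\mathbbm{1}\notin R$ so that the reduced object is again a genuine contact line bundle. Once this reduction and the standard existence of isotropic transversals to coisotropic submanifolds in contact geometry are in place, the remainder is the short computation recorded above.
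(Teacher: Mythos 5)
Your overall strategy is the paper's: arrange for the middle minimal transversal $\calQ$ to satisfy $I_\calQ^\ast\varpi=0$, so that the $B$-field of Theorem~\ref{theor:LeafCorrespondence:transverseDJ} vanishes; and your linear-algebraic preparation (regularity of $\varpi$, the equalities $(\ker D\Phi_0)^{\perp\varpi}=\ker D\Phi_1$, coisotropy of $\sigma^{-1}(T\calS)$) is correct. The gap is in the step that actually produces $\calQ$: the assertion that ``in a contact manifold an isotropic submanifold transverse to a coisotropic one always exists through a prescribed point'' is false as stated. In $\bbR^3$ with the contact form $\rmd z-y\,\rmd x$, the hypersurface $N=\{z=0\}$ is coisotropic (every hypersurface is), yet at the origin $T_0N$ coincides with the contact hyperplane, so no isotropic curve is transverse to $N$ there. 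Beyond coisotropy one needs $\bar\calS$ to be nowhere tangent to the contact distribution, i.e.\ $T\calS+\ker\theta=TM$ upstairs; this is exactly the crux of the paper's proof (its identity~\eqref{eq:proof:cor:LeafCorrespondence:transverseDJ:2}, obtained from $\mathbbm{1}\notin(\ker D\Phi_i)^{\perp\varpi}$). Your computation $C^{\perp\varpi}=\ker D\Phi_0\cap\ker D\Phi_1$ does deliver this, since $\mathbbm{1}\notin\ker D\Phi_i$, but you never draw the conclusion, and your closing paragraph explicitly claims the pointwise isotropic complement ``is immediate from coisotropy'' --- it is not. A secondary slip: $\pi^{-1}(\bar\calQ)$ has dimension $\operatorname{codim}\calS+\rank\ker\varpi^\flat$ and its excess directions are tangent to $\calS$, so the literal preimage is not a minimal transversal; you must take a suitable lift of $\bar\calQ$ inside $\pi^{-1}(\bar\calQ)$.

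Once the missing transversality $TM=\ker\theta+T\calF$ is in place, the coisotropic reduction and the contact-geometric existence theorem become unnecessary overhead: the paper simply picks a minimal transversal with $T\calQ\subset\ker\theta$ and observes that $\varpi=\rmd_D(\sigma^\ast\theta)$ forces $I_\calQ^\ast\varpi=\rmd_D(\sigma^\ast(I_\calQ^\ast\theta))=0$. I would recommend repairing your argument by making that identity explicit and then shortcutting to the paper's direct choice of $\calQ$.
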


\begin{proof}
	Let us start by setting $\calS=\varphi_1^{-1}(\calS_1)=\varphi_2^{-1}(\calS_2)$.
	Further, let us fix a point $x\in\calS$, and set $x_0:=\varphi_0(x)\in\calS_0$ and $x_1:=\varphi_1(x)\in\calS_1$.
	
	Let us notice that, in view of Lemma~\ref{lem:LeavesCorrespondence_II} and the proof of Theorem~\ref{theor:CharacteristicLeafCorrespondence_I}, one gets that $\calS$ is a leaf of the singular foliation $\calF$ on $M$ given by $T\calF=\ker T\varphi_0+\ker T\varphi_1=\sigma(\ker D\Phi_0+\ker D\Phi_0)$.
	In other words
	\begin{equation}
		\label{eq:proof:cor:LeafCorrespondence:transverseDJ:1}
		TS=(T\calF)|_\calS.
	\end{equation}


	Since the legs of the weak dual pairs~\eqref{eq:cor:LeafCorrespondence:transverseDJ} are Jacobi manifolds, from Remark~\ref{rem:full_contact_dual_pairs} it follows that the presymplectic Atiyah form $\varpi$ is regular, with $\ker\varpi^\flat\subset\ker D\Phi_0\cap\ker D\Phi_1$.
	So, in particular, $\mathbbm{1}\notin\ker\varpi^\flat$ and $\langle\mathbbm{1}\rangle^{\perp\varpi}$ is a corank $1$ subbundle of $DL$.
	From this and $\ker D\Phi_0\subset(\ker D\Phi_1)^{\perp\varpi}$ it follows that
	\begin{equation*}
		DL=\langle\mathbbm{1}\rangle^{\perp\varpi}+\ker D\Phi_0=\langle\mathbbm{1}\rangle^{\perp\varpi}+\ker D\Phi_1=\langle\mathbbm{1}\rangle^{\perp\varpi}+\ker D\Phi_0+\ker D\Phi_1,
	\end{equation*}
	i.e.~$\mathbbm{1}\notin(\ker D\Phi_i)^{\perp\varpi}$, with $i=0,1$.
	Therefore, we also obtain the following
	\begin{equation}
		\label{eq:proof:cor:LeafCorrespondence:transverseDJ:2}
		TM=\sigma(\langle\mathbbm{1}\rangle^{\perp\varpi})+\sigma(\ker D\Phi_0+\ker D\Phi_1)=\ker\theta+T\calF,
	\end{equation}
	where $\theta\in\Omega^1(M;L)$ denotes the regular precontact form on $M$ such that $\iota_{\mathbbm{1}}\varpi=\sigma^\ast\theta$.
	
	Now, as a consequence of Equations~\eqref{eq:proof:cor:LeafCorrespondence:transverseDJ:1} and~\eqref{eq:proof:cor:LeafCorrespondence:transverseDJ:2}, we can choose a minimal transversal $\calQ$ to $\calS$ in $M$ at $x$ such that $T\calQ\subset(\ker\theta)|_\calQ=\sigma(\ker(\sigma^\ast\theta))|_\calQ$.
	Consequently, one gets $I_\calQ^\ast\varpi=0$ as follows
	\begin{equation}
		\label{eq:proof:cor:LeafCorrespondence:transverseDJ:3}
		I_\calQ^\ast\varpi=I_\calQ^\ast(\rmd_D(\iota_{\mathbbm{1}}\varpi)=I_\calQ^\ast(\rmd_D(\sigma^\ast\theta)=\rmd_D(\sigma^\ast(I_\calQ^\ast\theta))=0,
	\end{equation}
	where $I_\calQ:L|_\calQ\to L$ denotes the regular line bundle morphism given by the inclusion.
	Further, up to restrict to working in a small neighborhood of $x$ in $M$, we also obtain, for $i=0,1$,
	\begin{itemize}
		\item a minimal transversal $\calQ_i:=\varphi_i(\calQ)$ to $\calS_i$ in $M_i$ at $x_i$, and
		\item a LB isomorphism $\Phi_{i\perp}:L|_\calQ\to L_i|_{\calQ_i}$, over a diffeomorphism $\varphi_{i\perp}:\calQ\to \calQ_i$, satisfying~\eqref{eq:proof:theor:LeafCorrespondence:transverseDJ:c}.
	\end{itemize}
	Then, the proof of Theorem~\ref{theor:LeafCorrespondence:transverseDJ} shows that the transverse Dirac--Jacobi structures are related as follows
	\begin{equation}
		\label{eq:proof:cor:LeafCorrespondence:transverseDJ}
		I_{\calQ_0}^!{\operatorname{Gr}J_0}=(\Psi^!(I_{\calQ_1}^!{\operatorname{Gr}J_1}))^B
	\end{equation}
	with LB iso $\Psi:=\Phi_{1\perp}\circ\Phi_{0\perp}^{-1}\colon L_0|_{\calQ_0}\to L_1|_{\calQ_1}$ and closed Atiyah form $B:=(I_\calQ\circ\Phi_{0\perp}^{-1})^\ast\varpi\in\Omega^2_D(L_0|_{\calQ_0})$.
	From Equation~\eqref{eq:proof:cor:LeafCorrespondence:transverseDJ:2} it follows that $B=0$ and this completes the proof.
\end{proof}

\appendix

\section{Presymplectic Leaf Correspondence for Weak Dual Pairs in Dirac Geometry}
\label{sec:presymplectic_leaf_correspondence}

The arguments used in Section~\ref{sec:characteristic_leaf_correspondence} can be easily adapted to get an analogue characteristic leaf correspondence theorem for weak dual pairs in Dirac geometry (in the sense of~\cite{FM2018}).
Therefore, for future reference and the reader's convenience, this Appendix describes the Presymplectic Leaf Correspondence Theorem for weak dual pairs in Dirac geometry (not yet present in literature).
Since the proofs are immediately obtained, mutatis mutandis, from the ones of the analogue results in Section~\ref{sec:characteristic_leaf_correspondence}, we omit them.

Consider a weak dual pair of forward Dirac maps with connected fibers
\begin{equation}
	\label{eq:weak_dual_pair:Dirac}
	\begin{tikzcd}
		(M_0,\calL_0)&(M,\operatorname{Gr}\omega)\arrow[l, swap, "\varphi_0"]\arrow[r, "\varphi_1"]&(M_1,\calL_1^\mathrm{opp}).
	\end{tikzcd}
\end{equation}
Then one can easily prove the following three theorems describing the presymplectic leaf correspondence.
\begin{theorem}
	\label{theor:CharacteristicLeafCorrespondence_I:Dirac}
	The relation $\varphi_1^{-1}\calS_1=\varphi_0^{-1}\calS_0$ establishes a 1-1 correspondence
	\begin{equation*}
		\text{presymplectic leaves $\calS_0$ of $(M_0,\calL_0)$}\rightleftharpoons\text{presymplectic leaves $\calS_1$ of $(M_1,\calL_1)$}.
	\end{equation*}
\end{theorem}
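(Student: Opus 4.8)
The plan is to transcribe, \emph{mutatis mutandis}, the proof of Theorem~\ref{theor:CharacteristicLeafCorrespondence_I} into the Dirac setting, exploiting the fact that in Dirac geometry every characteristic leaf is presymplectic, so the precontact/lcps dichotomy of the Dirac--Jacobi case simply disappears and the argument shortens accordingly. The two technical ingredients to reprove are the Dirac analogues of Lemma~\ref{lem:LeavesCorrespondence_I} and Lemma~\ref{lem:LeavesCorrespondence_II}.

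First I would establish the Dirac analogue of Lemma~\ref{lem:LeavesCorrespondence_I}: for a backward Dirac map $\varphi\colon(M,\calL)\to(M^\prime,\calL^\prime)$ covering a surjective submersion with connected fibers, the relation $\calS=\varphi^{-1}(\calS^\prime)$ is a bijection between presymplectic leaves. The key point is that, since $\varphi$ is a submersion and $\calL=\varphi^!\calL^\prime$, the cotangent pullback $(T\varphi)^\ast$ is fibrewise injective, so a short computation gives $\pr_T\calL_x=(T_x\varphi)^{-1}(\pr_T\calL^\prime_{\varphi(x)})$ for all $x\in M$; applying the anchor shows that the characteristic distribution $T\calF$ is the pullback of $T\calF^\prime$ along $\varphi$. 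The bijection of leaf spaces is then exactly the pullback-foliation statement of Theorem~E.7 and Corollary~E.8 in~\cite{blaom2001geometric}, applied to the connected-fibre surjective submersion $\varphi$. Smoothness of $\varphi^!\calL^\prime$ is automatic, since $\ker(T\varphi)^\ast=0$, by the clean-intersection criterion (the Dirac analogue of Theorem~\ref{Thm: CleanInt}).

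Second I would prove the Dirac analogue of Lemma~\ref{lem:LeavesCorrespondence_II}: for the weak dual pair~\eqref{eq:weak_dual_pair:Dirac}, the Dirac structures $\varphi_0^!\calL_0$, $\varphi_1^!\calL_1^\mathrm{opp}$ and their sum $(\varphi_0^!\calL_0)\star(\varphi_1^!\calL_1^\mathrm{opp})$ all share the single characteristic foliation $\calF$ with $T\calF=\ker T\varphi_0+\ker T\varphi_1$. This rests on the Dirac version of Corollary~\ref{cor:prop:characterizations_WDPs_I} (available in~\cite{FM2018}), namely $\varphi_0^!\calL_0=\ker T\varphi_0+(\ker T\varphi_1)^\omega$ and symmetrically $\varphi_1^!\calL_1^\mathrm{opp}=\ker T\varphi_1+(\ker T\varphi_0)^\omega$, together with the Dirac analogue of Remark~\ref{rem:tiny_technical_remark}; taking $\pr_T$ of these expressions yields the common image $\ker T\varphi_0+\ker T\varphi_1$, while smoothness of the sum follows as in Lemma~\ref{Lem: Product}.

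Finally I would compose. Applying the first step to each leg $\varphi_i\colon(M,\varphi_i^!\calL_i)\to(M_i,\calL_i)$ yields a bijection between the presymplectic leaves $\calS_i$ of $(M_i,\calL_i)$ and the leaves $\calS=\varphi_i^{-1}(\calS_i)$ of $(M,\varphi_i^!\calL_i)$; the second step identifies the foliations of $\varphi_0^!\calL_0$ and $\varphi_1^!\calL_1^\mathrm{opp}$ on $M$, so the two bijections compose into the single correspondence $\calS_0\leftrightarrow\calS_1$ characterized by $\varphi_0^{-1}\calS_0=\varphi_1^{-1}\calS_1$. I expect no serious obstacle here: the argument is strictly simpler than its Dirac--Jacobi model, the only genuinely nontrivial input being the foliation-pullback bijection from~\cite{blaom2001geometric}, while the remaining steps are the routine linear-algebra identities already verified in Section~\ref{sec:characteristic_leaf_correspondence}.
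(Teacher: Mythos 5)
Your proposal is correct and follows exactly the route the paper intends: the appendix explicitly states that the Dirac statements are obtained mutatis mutandis from Section~\ref{sec:characteristic_leaf_correspondence}, and your transcription of Lemmas~\ref{lem:LeavesCorrespondence_I} and~\ref{lem:LeavesCorrespondence_II} (pullback of the characteristic distribution, the foliation bijection from~\cite{blaom2001geometric}, and the identification of the three foliations via the Dirac version of Corollary~\ref{cor:prop:characterizations_WDPs_I}) is precisely that adaptation. The observation that the precontact/lcps dichotomy disappears in the Dirac case is also accurate and correctly identified as the only simplification.
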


\begin{theorem}
	\label{theor:CharacteristicLeafCorrespondence_II:Dirac}
	Let $\calS_0$ and $\calS_1$ be corresponding presymplectic leaves.
	Then the presymplectic structures $\omega_i$ inherited by $\calS_i$, with $i=0,1$, are related as follows 
	\begin{equation*}
		i_\calS^\ast\omega=\varphi_0|_{\calS_0}^\ast\omega_0 - \varphi_1|_{\calS_1}^\ast\omega_1\in\Omega^2(\calS).
	\end{equation*}
\end{theorem}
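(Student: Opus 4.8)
The plan is to transport, essentially verbatim, the argument for case~\ref{enumitem:prop:CharLeafCorresp:1} of Theorem~\ref{theor:CharacteristicLeafCorrespondence_II} to the Dirac setting, after observing that in Dirac geometry there is only one type of leaf (presymplectic): the lcps alternative~\ref{enumitem:prop:CharLeafCorresp:2} was forced by the presence of the unit derivation $\mathbbm{1}\in DL$, which has no counterpart in $TM$, so it simply disappears and the statement becomes a single identity rather than a case distinction. First I would fix the leaf $\calS := \varphi_0^{-1}(\calS_0) = \varphi_1^{-1}(\calS_1)$ furnished by Theorem~\ref{theor:CharacteristicLeafCorrespondence_I:Dirac}, write $i_\calS\colon\calS\hookrightarrow M$ for the inclusion and $\varphi_i|_\calS\colon\calS\to\calS_i$ for the induced surjective submersions (smooth and well defined by the Dirac analogue of Lemma~\ref{lem:LeavesCorrespondence_II} together with the fibration results of~\cite{blaom2001geometric}).

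The core of the proof consists in computing the single object $i_\calS^!\left((\varphi_0^!\calL_0)\star(\varphi_1^!\calL_1^{\mathrm{opp}})\right)$ in two different ways. On the one hand, using that backward transform sends the graph of a $2$-form to the graph of its pullback, that it distributes over the sum $\star$ (the Dirac analogue of Lemma~\ref{Lem: BackProd}), and the factorisation $\varphi_i\circ i_\calS = i_{\calS_i}\circ(\varphi_i|_\calS)$ together with the defining relations $i_{\calS_i}^!\calL_i=\operatorname{Gr}\omega_i$, I would obtain
\begin{equation*}
	i_\calS^!\left((\varphi_0^!\calL_0)\star(\varphi_1^!\calL_1^{\mathrm{opp}})\right) = \operatorname{Gr}\left((\varphi_0|_\calS)^\ast\omega_0 - (\varphi_1|_\calS)^\ast\omega_1\right),
\end{equation*}
mirroring~\eqref{eq:proof:prop:CharLeafCorresp:1a}. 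On the other hand, the Dirac analogue of Lemma~\ref{lem:LeavesCorrespondence_II} expresses $(\varphi_0^!\calL_0)\star(\varphi_1^!\calL_1^{\mathrm{opp}})$ as the $\omega$-gauge transform of the big isotropic subbundle $(\ker T\varphi_0+\ker T\varphi_1)\oplus(\ker T\varphi_0+\ker T\varphi_1)^\circ$; since $\calS$ is a presymplectic leaf one has $T\calS=(\ker T\varphi_0+\ker T\varphi_1)|_\calS$, so backward-transforming this expression along $i_\calS$ collapses it to $\operatorname{Gr}(i_\calS^\ast\omega)$, exactly as in~\eqref{eq:proof:prop:CharLeafCorresp:1b}. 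Comparing the two graphs yields the desired identity $i_\calS^\ast\omega=(\varphi_0|_\calS)^\ast\omega_0-(\varphi_1|_\calS)^\ast\omega_1$.

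Since the two displayed computations are formally identical to the Dirac--Jacobi ones, the genuine work is bookkeeping rather than a conceptual obstacle: the real prerequisite is to re-establish the Dirac versions of Corollary~\ref{cor:prop:characterizations_WDPs_I} (the identity $\varphi_0^!\calL_0=(\varphi_1^!\calL_1^{\mathrm{opp}})^\omega$), of Lemma~\ref{Lem: BackProd}, and of Lemma~\ref{lem:LeavesCorrespondence_II} (the precise shape of the sum Dirac structure), which are the inputs both evaluations rely on. The one point I expect to need a moment's care is the final collapse: verifying that restricting the big isotropic subbundle to $\calS$ and gauging by $\omega$ truly returns $\operatorname{Gr}(i_\calS^\ast\omega)$, where the annihilator summand dies under $i_\calS^!$ precisely because $(\ker T\varphi_0+\ker T\varphi_1)|_\calS=T\calS$. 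Everything else is a routine translation, which is why the statement can be asserted mutatis mutandis.
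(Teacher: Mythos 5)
Your proposal is correct and follows essentially the same route the paper intends: the appendix explicitly states that the proof is obtained mutatis mutandis from Theorem~\ref{theor:CharacteristicLeafCorrespondence_II}, and your two evaluations of $i_\calS^!\left((\varphi_0^!\calL_0)\star(\varphi_1^!\calL_1^{\mathrm{opp}})\right)$ reproduce exactly the computations~\eqref{eq:proof:prop:CharLeafCorresp:1a} and~\eqref{eq:proof:prop:CharLeafCorresp:1b} of the precontact case, with the correct observation that the lcps alternative disappears because $TM$ has no analogue of the unit derivation $\mathbbm{1}$.
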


\begin{theorem}
	Let $\calS_0$ and $\calS_1$ be corresponding presymplectic leaves.
	Then the Dirac structure transverse to $\calS_0$ in $(M_0,\calL_0)$ is isomorphic to the Dirac structure transverse to $\calS_1$ in $(M_1,\calL_1)$, i.e., for any minimal transversals $\calQ_i$ to $\calS_i$, with $i=0,1$, there exist a diffeomorphism $\psi:\calQ_0\to\calQ_1$ and a closed $2$-form $B\in\Omega^2(\calQ_0)$ such that
	\begin{equation}
		\label{eq:isomorphism_transverse:Dirac}
		i_{\calQ_0}^!{\operatorname{Gr}\pi_0}=(\psi^!(i_{\calQ_1}^!{\operatorname{Gr}\pi_1}))^B,
	\end{equation}
	where $i_{\calQ_0}:\calQ_0\to M_0$ and $i_{\calQ_1}:\calQ_1\to M_1$ denote the inclusion maps.
\end{theorem}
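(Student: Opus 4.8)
The plan is to follow, \emph{mutatis mutandis}, the proof of Theorem~\ref{theor:LeafCorrespondence:transverseDJ}, replacing the omni-Lie algebroid $\bbD L$, the $L$-valued Atiyah forms and the gauge algebroid $DL$ by the generalized tangent bundle $\bbT M=TM\oplus T^\ast M$, ordinary differential forms and $TM$. First I would fix the common leaf $\calS:=\varphi_0^{-1}(\calS_0)=\varphi_1^{-1}(\calS_1)$, which by the Dirac analogue of Lemma~\ref{lem:LeavesCorrespondence_II} is a presymplectic leaf of the intermediate Dirac structure $(\varphi_0^!\calL_0)\star(\varphi_1^!\calL_1^\mathrm{opp})$, with $T\calS=\ker T\varphi_0+\ker T\varphi_1$ and $(T_x\varphi_i)^{-1}(T_{x_i}\calS_i)=T_x\calS$ for all $x$. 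I would then fix a point $x\in\calS$ and set $x_i:=\varphi_i(x)\in\calS_i$.

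Second, I would choose a minimal transversal $\calQ$ to $\calS$ in $M$ at $x$, i.e.~a submanifold with $T_xM=T_x\calS\oplus T_x\calQ$. Using the connected-fiber hypothesis together with the relation $(T_x\varphi_i)^{-1}(T_{x_i}\calS_i)=T_x\calS$ — exactly as in the proof of Theorem~\ref{theor:LeafCorrespondence:transverseDJ} — after shrinking $M$ and the $M_i$ to suitable neighbourhoods of $x$ and $x_i$, one obtains that $\calQ_i:=\varphi_i(\calQ)$ is a minimal transversal to $\calS_i$ at $x_i$ and that $\varphi_i$ restricts to a diffeomorphism $\varphi_{i\perp}\colon\calQ\to\calQ_i$ satisfying $\varphi_i\circ i_\calQ=i_{\calQ_i}\circ\varphi_{i\perp}$, where $i_\calQ$, $i_{\calQ_i}$ denote the inclusions.

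Third comes the key computation. By the Dirac analogue of Corollary~\ref{cor:prop:characterizations_WDPs_I} (equivalently Proposition~\ref{prop:characterizations_WDPs_I}), the weak dual pair~\eqref{eq:weak_dual_pair:Dirac} satisfies $\varphi_0^!\calL_0=(\varphi_1^!\calL_1)^\omega$. Combining this with the functoriality of the backward transform, its naturality under $B$-field transforms, and the commutation $\varphi_i\circ i_\calQ=i_{\calQ_i}\circ\varphi_{i\perp}$, I would compute
\begin{equation*}
	\varphi_{0\perp}^! i_{\calQ_0}^!\calL_0 = i_\calQ^!\varphi_0^!\calL_0 = i_\calQ^!\big((\varphi_1^!\calL_1)^\omega\big) = \big(i_\calQ^!\varphi_1^!\calL_1\big)^{i_\calQ^\ast\omega} = \big(\varphi_{1\perp}^! i_{\calQ_1}^!\calL_1\big)^{i_\calQ^\ast\omega}.
\end{equation*}
Setting $\psi:=\varphi_{1\perp}\circ\varphi_{0\perp}^{-1}\colon\calQ_0\to\calQ_1$ and $B:=(i_\calQ\circ\varphi_{0\perp}^{-1})^\ast\omega$, which is closed because $\omega$ is, this rearranges to $i_{\calQ_0}^!\calL_0=(\psi^!(i_{\calQ_1}^!\calL_1))^B$. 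Since the Dirac structure transverse to a presymplectic leaf is the graph of a Poisson bivector $\pi_i$, i.e.~$i_{\calQ_i}^!\calL_i=\operatorname{Gr}\pi_i$, this is precisely~\eqref{eq:isomorphism_transverse:Dirac}.

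The hard part is not the final algebra, which is a verbatim transcription of the Dirac--Jacobi case, but the two smoothness-and-regularity points one must verify carefully: that all the backward transforms appearing are genuine (smooth) Dirac structures, and that the restricted maps $\varphi_{i\perp}$ really are diffeomorphisms onto minimal transversals. The first follows from the Dirac clean-intersection criterion (the analogue of Corollary~\ref{Cor: TrnsMap} and Theorem~\ref{Thm: CleanInt}) applied to the transversality $T_xM=T_x\calS\oplus T_x\calQ$ together with $T\calS=\ker T\varphi_0+\ker T\varphi_1$; the second uses the connected-fiber hypothesis via Theorem~E.7 and Corollary~E.8 of~\cite{blaom2001geometric}, exactly as in Lemma~\ref{lem:LeavesCorrespondence_I} and Theorem~\ref{theor:LeafCorrespondence:transverseDJ}. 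Everything else is the routine translation of the Dirac--Jacobi argument to the Dirac setting of~\cite{FM2018}.
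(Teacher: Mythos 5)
Your proposal is correct and is precisely the argument the paper intends: the appendix explicitly omits the proof because it is obtained, mutatis mutandis, from Theorem~\ref{theor:LeafCorrespondence:transverseDJ}, and your translation (common leaf $\calS$, minimal transversal $\calQ$ pushed forward to $\calQ_i$ via the restricted diffeomorphisms $\varphi_{i\perp}$, then the backward-transform computation using $\varphi_0^!\calL_0=(\varphi_1^!\calL_1)^\omega$) matches that proof step for step. The smoothness and regularity points you flag are exactly the ones handled in the Dirac--Jacobi case via Theorem~\ref{Thm: CleanInt} and the cited results of~\cite{blaom2001geometric}, so nothing further is needed.
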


As in the next corollary, when the two legs of the  weak dual pair~\eqref{eq:weak_dual_pair:Dirac} are Poisson manifolds, the isomorphism~\eqref{eq:isomorphism_transverse:Dirac} between the transverse Dirac structures (that, in this case, are Poisson structures) holds without the involvement of the $B$-field transformation (up to wisely choose the minimal transversals).

\begin{corollary}
	\label{cor:LeafCorrespondence:transverse:Dirac}
	Assume that the weak dual pair~\eqref{eq:weak_dual_pair:Dirac} connects two Poisson manifolds, i.e.~$\calL_i=\operatorname{Gr}\pi_i$ with $\pi_i\in\frakX^2(M_i)$, for $i=0,1$.
	Let $\calS_0$ and $\calS_1$ be corresponding presymplectic leaves.
	Then there exist minimal transversals $\calQ_i$ to $\calS_i$, with $i=0,1$, and a diffeomorphism $\psi:\calQ_0\to \calQ_1$ such that
	\begin{equation*}
		i_{\calQ_0}^!{\operatorname{Gr}\pi_0}=\psi^!(i_{\calQ_1}^!{\operatorname{Gr}\pi_1}).
	\end{equation*}
\end{corollary}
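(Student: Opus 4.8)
The plan is to transcribe the proof of Corollary~\ref{cor:LeafCorrespondence:transverseDJ} to the Dirac setting, the whole point being to arrange that the $B$-field in~\eqref{eq:isomorphism_transverse:Dirac} vanishes by a clever choice of minimal transversal. First I would record that, exactly as in the proof of Theorem~\ref{theor:LeafCorrespondence:transverseDJ}, for the common leaf $\calS:=\varphi_0^{-1}(\calS_0)=\varphi_1^{-1}(\calS_1)$ of the foliation $\calF$ with $T\calF=\ker T\varphi_0+\ker T\varphi_1$, any minimal transversal $\calQ$ to $\calS$ in $M$ through a point $x$ produces minimal transversals $\calQ_i:=\varphi_i(\calQ)$ to $\calS_i$, a diffeomorphism $\psi=\varphi_{1\perp}\circ\varphi_{0\perp}^{-1}$, and the closed $2$-form $B=(i_\calQ\circ\varphi_{0\perp}^{-1})^\ast\omega$. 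Since pulling back along the diffeomorphism $\varphi_{0\perp}$ does not affect vanishing, it therefore suffices to exhibit a minimal transversal $\calQ$ with $i_\calQ^\ast\omega=0$; then $B=0$ and~\eqref{eq:isomorphism_transverse:Dirac} collapses to the desired equality $i_{\calQ_0}^!\operatorname{Gr}\pi_0=\psi^!(i_{\calQ_1}^!\operatorname{Gr}\pi_1)$.

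Next I would use that the legs are Poisson. By the Dirac counterpart of Remark~\ref{rem:full_contact_dual_pairs}, the presymplectic form $\omega$ is regular with $\ker\omega^\flat\subset\ker T\varphi_0\cap\ker T\varphi_1\subset T\calS$. Passing to the reduced symplectic space $T_xM/\ker\omega^\flat$ and combining the orthogonality $\omega(\ker T\varphi_0,\ker T\varphi_1)=0$ with the weak-dual-pair dimension count, the images of $\ker T\varphi_0$ and $\ker T\varphi_1$ become mutual symplectic orthogonals. A short computation then gives $(T_x\calS)^{\perp_\omega}=\ker T_x\varphi_0\cap\ker T_x\varphi_1\subset T_x\calS$, that is, $\calS$ is coisotropic for $\omega$.

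With coisotropy secured, I would produce the isotropic transversal through a normal form: after splitting off the constant-rank kernel $\ker\omega^\flat$, a presymplectic coisotropic Darboux chart around $x$ presents $\calS$ as a coordinate coisotropic subspace, and in such a chart a complementary submanifold $\calQ$ of dimension $\operatorname{codim}\calS$ on which $\omega$ restricts to zero is read off directly. For this $\calQ$ one has $i_\calQ^\ast\omega=0$, hence $B=0$, and the proof concludes. This normal-form step is exactly what replaces, in the Dirac setting, the Reeb primitive $\varpi=\rmd_D\sigma^\ast\theta$ from Corollary~\ref{cor:LeafCorrespondence:transverseDJ}: there the hyperplane field $\ker\theta$ immediately supplied a transversal with $I_\calQ^\ast\varpi=0$, whereas a bare closed $2$-form $\omega$ carries no such canonical primitive.

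The hard part will be precisely this last existence statement. At the level of $T_x$, an $\omega$-isotropic complement to the coisotropic $T_x\calS$ always exists by linear algebra; but the transverse Dirac structure is governed by the germ of $\omega$ along $\calQ$, so one genuinely needs $i_\calQ^\ast\omega$ to vanish as a form and not only at $x$. Turning the pointwise isotropic complement into an honest isotropic transversal submanifold is where the coisotropic normal form does the real work, and the presence of the degenerate directions $\ker\omega^\flat$ (absent in the strict-dual-pair case, where $\omega$ is symplectic) is the technical subtlety that most needs care when transcribing the Jacobi argument.
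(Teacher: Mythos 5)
Your proof is correct, and it follows the same skeleton as the paper's (omitted) argument, which is declared to be a mutatis mutandis transcription of Corollary~\ref{cor:LeafCorrespondence:transverseDJ}: everything reduces to exhibiting one minimal transversal $\calQ$ to $\calS=\varphi_0^{-1}(\calS_0)=\varphi_1^{-1}(\calS_1)$ with $i_\calQ^\ast\omega=0$, after which the $B$-field in~\eqref{eq:isomorphism_transverse:Dirac} vanishes and the conclusion drops out. Where you genuinely diverge is in how that transversal is produced, and you correctly identify this as the one step where the transcription is not literal. In the Jacobi case the paper exploits the acyclicity of the der-complex, writing $\varpi=\rmd_D(\iota_{\mathbbm{1}}\varpi)=\rmd_D(\sigma^\ast\theta)$, so that any minimal transversal tangent to the hyperplane field $\ker\theta$ (available because $TM=\ker\theta+T\calF$) automatically satisfies $I_\calQ^\ast\varpi=\rmd_D(\sigma^\ast(I_\calQ^\ast\theta))=0$; a bare closed $2$-form has no such canonical primitive. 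Your replacement is symplectic linear algebra: the Dirac analogues of Remark~\ref{rem:full_contact_dual_pairs} and of the dual-pair identities give $\ker\omega^\flat\subset\ker T\varphi_0\cap\ker T\varphi_1$ and $(\ker T\varphi_0)^{\perp_\omega}=\ker T\varphi_1$, whence $(T_x\calS)^{\perp_\omega}=\ker T_x\varphi_0\cap\ker T_x\varphi_1\subset T_x\calS$, i.e.~$\calS$ is coisotropic; the constant-rank Darboux theorem then lets you take $\calQ$ to be an affine slice in the direction of a linear isotropic complement of $T_x\calS$, which is isotropic as a submanifold because $\omega$ has constant coefficients in such a chart (you do not actually need a normal form for the pair $(\omega,\calS)$, only for $\omega$). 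This buys a self-contained Dirac argument at the price of invoking the presymplectic Darboux theorem, where the paper's mechanism is cheaper but unavailable. Two points to make explicit in a write-up: the existence of an isotropic \emph{complement} to a coisotropic subspace (not merely an isotropic subspace of the right dimension) deserves a one-line lemma, and, exactly as in the proof of Theorem~\ref{theor:LeafCorrespondence:transverseDJ}, one should check that this $\calQ$ still maps diffeomorphically onto minimal transversals $\calQ_i=\varphi_i(\calQ)$, which follows from $T_x\calQ\cap T_x\calS=0$ together with $T_x\calS=(T_x\varphi_i)^{-1}(T_{x_i}\calS_i)$.
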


\small
\addtocontents{toc}{\SkipTocEntry}
\section*{Acknowledgements}

We are grateful to María Amelia Salazar, Luca Vitagliano and Cornelia Vizman for useful discussions and helpful suggestions.
The first author is supported by the DFG research training group ``gk1821: Cohomological Methods in Geometry''.
The second author is supported by an FWO postdoctoral fellowship, further he is member of the National Group for Algebraic and Geometric Structures, and their Applications (GNSAGA – INdAM).
The authors also acknowledge the partial support by the FWO research project G083118N (Belgium) and the hospitality by the DipMat at the University of Salerno they received during the early phases of the preparation of this paper.

\bibliographystyle{plain}
\bibliography{references}

\end{document}